\newcommand{\vx}{\mathbf{x}}
\newcommand{\vy}{\mathbf{y}}
\newcommand{\vz}{\mathbf{z}}
\newcommand{\vu}{\mathbf{u}}
\newcommand{\vv}{\mathbf{v}}
\newcommand{\vp}{\mathbf{p}}
\newcommand{\vr}{\mathbf{r}}
\newcommand{\vs}{\mathbf{s}}
\newcommand{\vw}{\mathbf{\omega}}
\newcommand{\lO}{\Tilde{O}}
\newcommand{\vA}{\mathbf{A}}
\newcommand{\vB}{\mathbf{B}}
\newcommand{\vC}{\mathbf{C}}
\newcommand{\vO}{\mathbf{O}}
\newcommand{\vM}{\mathbf{M}}
\newcommand{\vJ}{\mathbf{J}}
\newcommand{\vH}{\mathbf{H}}
\newcommand{\vI}{\mathbf{I}}
\newcommand{\norm}[1]{\left\lvert#1\right\rvert}
\newcommand{\dotprod}[2]{\left \langle #1, #2 \right \rangle}
\newcommand{\grad}{\nabla}
\newcommand{\sX}{\mathcal{X}}
\newcommand{\sY}{\mathcal{Y}}
\newcommand{\sZ}{\mathcal{Z}}
\newcommand{\bR}{\mathbb{R}}
\newcommand{\revprod}{\widetilde{\prod}}
\newcommand{\E}[1]{\mathbb{E}[#1]}
\newcommand{\vm}{\mathbf{m}}
\newcommand{\bE}{\mathbb{E}}
\newcommand{\ttau}{\Tilde{\tau}}
      \theoremstyle{plain}
      \newtheorem{assumption}{Assumption}
      \newtheorem{theorem}{Theorem}
      \newtheorem{lemma}{Lemma}
        \newtheorem*{assumption*}{Assumption}
      \newtheorem{theorem*}{Theorem}
      \newtheorem*{lemma*}{Lemma}
       \newtheorem*{corollary*}{Corollary}
       \newtheorem*{remark*}{Remark}
\title{Sampling without Replacement Leads to Faster Rates in Finite-Sum Minimax Optimization}
\author{%
  Aniket~Das  \thanks{Currently at Google Research. Contact: ketd@google.com} \\
  Indian Institute of Technology Kanpur\\
  \texttt{aniketd@iitk.ac.in} \\
  \And
  Bernhard Sch\"{o}lkopf \\
  Max Planck Institute for Intelligent Systems \\
  \texttt{bs@tuebingen.mpg.de} \\
  \AND
  Michael Muehlebach \\
  Max Planck Institute for Intelligent Systems \\
  \texttt{michaelm@tuebingen.mpg.de} \\
}
\begin{document}

\maketitle

\begin{abstract}
We analyze the convergence rates of stochastic gradient algorithms for smooth finite-sum minimax optimization and show that, for many such algorithms, sampling the data points \emph{without replacement} leads to faster convergence compared to sampling with replacement. For the smooth and strongly convex-strongly concave setting, we consider gradient descent ascent and the proximal point method, and present a unified analysis of two popular without-replacement sampling strategies, namely \emph{Random Reshuffling} (RR), which shuffles the data every epoch, and \emph{Single Shuffling} or \emph{Shuffle Once} (SO), which shuffles only at the beginning. We obtain tight convergence rates for RR and SO and demonstrate that these strategies lead to faster convergence than uniform sampling.
Moving beyond convexity, we obtain similar results for smooth nonconvex-nonconcave objectives satisfying a two-sided Polyak-\L{}ojasiewicz inequality. Finally, we demonstrate that our techniques are general enough to analyze the effect of \emph{data-ordering attacks}, where an adversary manipulates the order in which data points are supplied to the optimizer. Our analysis also recovers tight rates for the \emph{incremental gradient} method, where the data points are not shuffled at all.
\end{abstract}

\section{Introduction}
\label{sec:intro}
The approximate solution of large-scale optimization problems using first-order stochastic gradient methods constitutes one of the foundations of classical machine learning. However, emerging problems in machine learning go beyond pattern recognition and involve real-world decision making, where learning algorithms interact with unknown or even adversarial environments or are deployed in multi-agent settings. Decision making in such environments often involves solving a \emph{minimax optimization} problem of the form $\min_{\vx} \max_{\vy} F(\vx,\vy)$, whose analysis has been a focus of research in mathematics, economics, and theoretical computer science \citep{VonNeumannMorgenstern,FacchineiPang,CostisComplexityMinimax}. Recent examples of its applications in machine learning include adversarial learning \citep{Madry_Adv_Robust, Sinha_Adv_Robust, AdversarialExampleGames}, reinforcement learning \citep{Littman94markovgames, MARLSurvey, SimonDuMinimaxRL, GameMBRL}, imitation learning \citep{FazelLQR, CaiLQR, ErmonGAIL}, and generative adversarial networks \citep{GAN_Original, WassersteinGAN}. In most of these applications, the objective $F(\vx, \vy)$ has a finite-sum structure, i.e., $F(\vx, \vy) = 1/n\sum_{i=1}^{n}f_i(\vx, \vy)$ where $n$ denotes the size of the dataset and each component function $f_i$ denotes the objective associated with the $i^{\textrm{th}}$ data point. The resultant problem is known as \emph{finite-sum minimax optimization}:
\begin{equation}
\label{p:minimax-sum}
\min_{\vx \in \bR^{d_{\vx}}} \max_{\vy \in \bR^{d_{\vy}}} \frac{1}{n}\sum_{i=1}^{n} f_i(\vx, \vy).
\end{equation}
When $n$ is large and the $f_i$'s are differentiable (which holds for most applications), approximate solutions to \eqref{p:minimax-sum} are computed using stochastic gradient algorithms. These algorithms typically sample an index $i \in [n]$ at each iteration as per some specified sampling routine, and use the gradients of $f_i$ to compute the next iterate. Among these methods, perhaps the simplest and most commonly used algorithm is Stochastic Gradient Descent Ascent (SGDA), a natural extension of Stochastic Gradient Descent (SGD) for minimax optimization.

Similar to the stochastic optimization literature, analysis of stochastic minimax optimization often assumes that, at every iteration, the index $i$ is sampled \emph{uniformly with replacement}. Analysis of the resulting algorithm closely parallels the analysis of SGD, and relies on the fact that uniform sampling leads to unbiased gradient estimates. Recent works \citep{GowerSGD,GidelSGDAEC} have also extended this paradigm to i.i.d uniform sampling of mini-batches. While uniform sampling assumptions simplify theoretical analysis, practical implementations of these algorithms often deviate from this paradigm, and instead incorporate various heuristics, which are often empirically found to improve runtime. A common and notable heuristic is to replace uniform sampling by procedures that perform multiple passes over the entire dataset, and in each such pass (called an epoch), sample the data points \emph{without replacement}. Thus, each data point is sampled exactly once in every epoch. These procedures are generally implemented using one of the following approaches:

\textbf{Random Reshuffling} (RR): Uniformly sample a random permutation of $[n]$ at the start of every epoch, and process the data points within that epoch as per the order specified by the permutation.

\textbf{Single Shuffling} or \textbf{Shuffle Once} (SO): Uniformly sample a random permutation at the beginning and reuse it across all epochs to order the data points.

\textbf{Incremental Gradient} (IG): Do not permute the data points at all and follow a fixed deterministic data ordering for every epoch.

Sampling without replacement is ubiquitous in both stochastic minimization \citep{BottouRR2009, SafranShamirRR2020, BengioDLOpt} and stochastic minimax optimization \citep{GAN_Original, WassersteinGAN} as it often exhibits faster runtime than uniform sampling. However, these empirical benefits come at the cost of limited theoretical understanding, due to the absence of provably unbiased gradient estimates.

It is well known in the \emph{optimization} literature that SGD with replacement has a tight rate of $O(1/nK)$ for smooth and strongly convex minimization \citep{RakhlinSridharan, JainNagarajSGD}, where $n$ is the number of component functions and $K$ denotes the number of epochs. On the contrary, recent works on SGD without replacement for smooth and strongly convex minimization \citep{AhnRR2020, NagarajRR2020, MischenkoRR2020,NgyuenRR2020} show that both RR and SO achieve a non-asymptotic rate of $\Tilde{\mathcal{O}}(1/nK^2)$, once the number of epochs $K$ is larger than a certain threshold $K_0$ (usually polynomial in the condition number), and thereby converge faster than SGD with replacement. These rates match the lower bound of $\Omega(1/nK^2)$ for RR and SO established in prior works \citep{RajputRR2020, SafranShamirRR2020}, modulo logarithmic factors. For RR, prior works have also established a similar $\Tilde{\mathcal{O}}(1/nK^2)$ rate for nonconvex objectives satisfying the Polyak-\L{}ojasiewicz (P\L{}) inequality \citep{AhnRR2020, MischenkoRR2020}. While the asymptotic behavior of IG has been known to the community for a long time in both smooth and non-smooth settings \citep{BertsekasIG, NedicBertsekas}, non-asymptotic $\Tilde{\mathcal{O}}(1/K^2)$ convergence rates have been established quite recently \citep{MischenkoRR2020, NgyuenRR2020, GurbuzIG2019}, and are complemented by a matching $\Omega(1/K^2)$ lower bound \citep{SafranShamirRR2020}.

\subsection{Contributions}
\label{sec:contri}
Although the empirical benefits of sampling without replacement have been substantiated for minimization, analysis of these methods for \emph{minimax optimization} have received much less attention, despite being widely prevalent in many applications. Our work aims to fill this gap by analyzing these methods for minimax optimization. To this end, our main contributions are as follows:

\textbf{Unified analysis of RR and SO for smooth strongly convex-strongly concave problems:} 
We analyze RR and SO in conjunction with simultaneous \emph{Gradient Descent Ascent} (GDA), calling the resulting algorithms GDA-RR and GDA-SO, respectively. Assuming the components $f_i$ are smooth and $F$ is strongly convex-strongly concave, we present a unified analysis of GDA-RR/SO and establish a convergence rate of $\Tilde{O}(\exp (-K/5\kappa^2) + 1/nK^2)$ for both (where $\kappa$ is the condition number). Comparing with lower bounds, we show that our rates are \emph{nearly tight}, i.e., they differ from the lower bound only by an exponentially decaying term. Moreover, when $K \geq 10 \kappa^2 \log(n^{\nicefrac{1}{2}}K)$, the convergence rate matches the lower bounds for GDA-RR/SO, modulo logarithmic factors, and also converges provably faster than SGDA with replacement. Under the same setting, we obtain similar guarantees for the RR and SO variants of the \emph{Proximal Point Method} (PPM), named PPM-RR and PPM-SO respectively. Our analysis for both GDA-RR/SO and PPM-RR/SO is general enough to cover smooth strongly monotone finite-sum variational inequalities, which covers minimization, minimax optimization, and multiplayer games.

\textbf{RR for smooth two-sided P\L{} objectives:}
We consider a class of nonconvex-nonconcave problems where the objective $F$ satisfies a \emph{two-sided Polyak-\L{}ojasiewicz} inequality. For such problems, we propose an algorithm that combines RR with two-timescale \emph{Alternating Gradient Descent Ascent} (AGDA), which we call AGDA-RR. We show that AGDA-RR has a nearly tight convergence rate of $\Tilde{O}(\exp (-K/365\kappa^3) + 1/nK^2)$ when the gradient variance is uniformly bounded.  When $K \geq 730\kappa^3 \log(n^{1/2}K)$, this rate matches the lower bound (modulo logarithmic factors) and improves on the best known rates of with-replacement algorithms for this class of problems. 

\textbf{Minimax optimization under data ordering attacks:}
Our techniques for analyzing RR/SO generalize to the analysis of finite-sum minimax optimization under \emph{data ordering attacks} \citep{Papernot_DataOrdering_2021}. These attacks target the inherent randomness assumptions of stochastic gradient algorithms, significantly increasing training time and reducing model quality, only by manipulating the order in which the algorithm receives data points, without performing any data contamination. To model these attacks, we propose the \emph{Adversarial Shuffling} (AS) setup, where the data points are shuffled every epoch by a computationally unrestricted adversary. In this setup, we show that GDA and PPM (now called GDA-AS and PPM-AS) have a convergence rate of $\Tilde{O}(\exp (-K/5\kappa^2) + 1/K^2)$ for smooth strongly convex-strongly concave objectives, and AGDA (now called AGDA-AS) has a rate of $\Tilde{O}(\exp (-K/365\kappa^3) + 1/K^2)$ for two-sided P\L{} objectives. We note that, compared to RR and SO, the convergence rate worsens by a factor of $1/n$ for large enough $K$. When $n$ is large (true for most applications), this slowdown significantly impacts convergence and thus, theoretically justifies the empirical observations in prior work \citep{Papernot_DataOrdering_2021}. We also establish that our analysis in the AS regime also applies to the Incremental Gradient (IG) variants of these algorithms (namely GDA-IG, PPM-IG, and AGDA-IG), and use this to show that our obtained rates for GDA-RR and AGDA-RR are nearly tight. 

To the best of our knowledge, our work is the first to: 1) analyze RR, SO, and IG for strongly monotone unconstrained variational inequalities, 2) analyze RR and IG for a class of nonconvex-nonconcave minimax problems, 3) provably demonstrate the advantages of sampling without replacement for both these settings and justify its empirical benefits in a wide variety of problems ranging from minimization, minimax optimization to smooth multiplayer games,  4) analyze sampling without replacement under data-ordering attacks. Furthermore, unlike prior works on sampling without replacement for minimax optimization \citep{Eric_DRO, Eric_DRO_OGDA}, which are restricted to random reshuffling and require the component functions to be convex-concave, Lipschitz, and smooth, our analysis does not impose any restrictions on the components $f_i$ other than smoothness, allowing them to be arbitrary nonconvex-nonconcave functions.

\section{Notation and Preliminaries}
\label{sec:notation}
We work with Euclidean spaces $(\bR^d, \dotprod{.}{.})$ equipped with the standard inner product $\dotprod{\vx_1}{\vx_2}$ and the induced norm $\norm{\vx}$. For any $\vx \in \bR^{d_\vx}$ and $\vy \in \bR^{d_\vy}$, we denote $\vz = (\vx, \vy) \in \bR^d$ where $d = d_\vx+d_\vy$. Moreover, for any $\vz_1 = (\vx_1, \vy_1) \in \bR^d$ and $\vz_2 = (\vx_2, \vy_2) \in \bR^d$, $\dotprod{\vz_1}{\vz_2} = \dotprod{\vx_1}{\vx_2} + \dotprod{\vy_1}{\vy_2}$ and $\norm{\vz_1}^2 = \norm{\vx_1}^2 + \norm{\vy_1}^2$. Whenever $\vz = (\vx, \vy)$ is clear from the context, we write $f(\vx, \vy)$ as $f(\vz)$. We use $\mathbb{S}_n$ to denote the set of all permutations of $[n] = \{1, \ldots, n \}$. For any matrix $\vA$, its operator norm is denoted by $\norm{\vA} = \sup_{\norm{\vx}=1} \norm{\vA \vx}$. We use the $O$ notation to characterize the dependence of our convergence rates on $n$ and $K$, suppressing numerical and problem-specific constants such as $\kappa, \mu, \sigma$, etc. Additionally, we use the $\Tilde{O}$ notation to suppress logarithmic factors of $n$ and $K$.

Our work studies finite-sum minimax optimization \eqref{p:minimax-sum}. Solutions to \eqref{p:minimax-sum} are known as \emph{global minimax points} of $F = 1/n \sum_{i=1}^{n} f_i$, which \emph{we assume to always exist}. We also assume that the components $f_i$ are continuously differentiable, and hence, the same applies to $F$. This allows us to define the \emph{gradient operators} $\omega_i : \bR^d \rightarrow \bR^d$ and $\nu : \bR^d \rightarrow \bR^d$ as follows: 
\begin{align*}
    \omega_i(\vx, \vy) = [\grad_\vx f_i(\vx, \vy), -\grad_\vy f_i(\vx, \vy)], \ \
    \nu(\vx, \vy) = 1/n\sum_{i=1}^{n} \omega_i(\vx, \vy).
\end{align*}
We also impose the following smoothness assumption on the components $f_i$.
\begin{assumption}[Component Smoothness]
\label{as:comp-smooth}
The component functions $f_i$ are $l$-smooth, i.e., each gradient operator $\omega_i$ is $l$-Lipschitz
$$\norm{\omega_i(\vz_2) - \omega_i(\vz_1)} \leq l \norm{\vz_2 - \vz_1}.$$
Consequently, the operator $\nu$ is also $l$-Lipschitz, i.e., $F$ is $l$-smooth.
\end{assumption}
\section{Analysis for Strongly Convex-Strongly Concave Objectives} 
\label{sec:sc-sc-analysis}
In this section, we analyze two very popular without-replacement algorithms for finite-sum minimax optimization, Gradient Descent Ascent (GDA) without replacement and Proximal Point Method (PPM) without replacement. For each of these, we present a unified analysis of the Random Reshuffling (RR) and Shuffle Once (SO) variants (called GDA-RR/SO and PPM-RR/SO respectively). For a fixed $K > 0$, GDA-RR/SO approximately solves \eqref{p:minimax-sum} by iterating over the entire dataset for $K$ epochs, and within each epoch, uses the  operators $\omega_i$ to perform the following iterative update:
\begin{equation}
\label{eqn:gda-wor-update}
\vz^k_i \leftarrow \vz^{k}_{i-1} - \alpha \vw_{\tau_k(i)}(\vz^{k}_{i-1}) \ \forall i \in [n],
\end{equation}
where $\tau_k$ is a uniformly sampled random permutation of $[n]$ and $0 < \alpha < 1/l$ is a constant step-size. GDA-RR resamples $\tau_k$ at the start of every epoch, whereas GDA-SO samples it only once in the beginning. The details of both algorithms are presented in Algorithm \ref{algo:gda-rr-so}. The Proximal Point Method without replacement is a closely related algorithm which, instead of performing gradient descent-style updates within an epoch, solves the following implicit update equation for $\vz^k_i$:
\begin{equation}
\label{eqn:ppm-wor-update}
\vz^k_i = \vz^{k}_{i-1} - \alpha \vw_{\tau_k(i)}(\vz^{k}_{i}) \ \forall i \in [n].
\end{equation}
As before, $\tau_k$ is resampled at every epoch for PPM-RR, and sampled once and fixed for all epochs for PPM-SO. We present the details in Algorithm \ref{algo:ppm-rr-so}. The $l$-smoothness of $\omega_i$ along with the choice of $\alpha < 1/l$ ensures that $\vz^k_i$ is uniquely defined, since it is a fixed point of the contraction mapping $\zeta(\vz) = \vz^{k}_{i-1} - \alpha \vw_{\tau_k(i)}(\vz)$. This method is actually a generalization of the (stochastic) proximal point  method for minimization problems, and is popular for problems where \eqref{eqn:ppm-wor-update} can be solved easily or in closed form. We refer the readers to \citet{RockafellarMinimax, NecoaraSPPM} for a review of this method and its connections to the original proximal point method for minimization.
\begin{figure*}[t]
\begin{minipage}[h]{0.40\textwidth}
\begin{algorithm}[H]
\label{algo:gda-rr-so}
\SetAlgoLined
\SetKwInOut{Input}{Input}
\SetKwInOut{Output}{Output}
\Input{Number of epochs $K$, step-size $\alpha > 0$, and initialization $\vz_0$}
 Initialize $\vz^{1}_{0} \leftarrow \vz_0$\\
 \textcolor{olive}{SO}: Sample  $\tau \sim \textrm{Uniform}(\mathbb{S}_n)$ \\
 \For{$k \in [K]$}{
  \textcolor{violet}{RR}: Sample $\tau_k \sim \textrm{Uniform}(\mathbb{S}_n)$ \\
  \textcolor{olive}{SO}: $\tau_k \leftarrow \tau$ \\
  \textcolor{magenta}{AS}: Adversary chooses $\tau_k \in \mathbb{S}_n$ \\
  \For{$i \in [n]$}{
    $\vz^{k}_{i} \leftarrow \vz^{k}_{i-1} - \alpha  \omega_{\tau_k(i)}(\vz^{k}_{i-1})$\\
  }
   $\vz^{k+1}_{0}  \leftarrow \vz^{k}_{n}$\\
 }
 \caption{GDA-\textcolor{violet}{RR}/\textcolor{olive}{SO}/\textcolor{magenta}{AS}}
\end{algorithm}
\end{minipage}%
\hfill
\begin{minipage}[h]{0.50\textwidth}
\begin{algorithm}[H]
\label{algo:ppm-rr-so}
\SetAlgoLined
\SetKwInOut{Input}{Input}
\SetKwInOut{Output}{Output}
\Input{Number of epochs $K$, step-size $\alpha > 0$, and initialization $\vz_0$}
 Initialize $\vz^{1}_{0} \leftarrow \vz_0$\\
 \textcolor{olive}{SO}: Sample $\tau \sim \textrm{Uniform}(\mathbb{S}_n)$ \\
 \For{$k \in [K]$}{
  \textcolor{violet}{RR}: Sample $\tau_k \sim \textrm{Uniform}(\mathbb{S}_n)$ \\
  \textcolor{olive}{SO}: $\tau_k \leftarrow \tau$ \\
  \textcolor{magenta}{AS}: Adversary chooses $\tau_k \in \mathbb{S}_n$ \\
  \For{$i \in [n]$}{
    Solve the implicit update for $\vz^{k}_i$ where, \\
    $\vz^{k}_{i} = \vz^{k}_{i-1} - \alpha  \omega_{\tau_k(i)}(\vz^{k}_{i})$\\
  }
   $\vz^{k+1}_{0}  \leftarrow \vz^{k}_{n}$\\
 }
 \caption{PPM-\textcolor{violet}{RR}/\textcolor{olive}{SO}/\textcolor{magenta}{AS}}
\end{algorithm}
 \end{minipage}
\caption{GDA-\textcolor{violet}{RR}/\textcolor{olive}{SO}/\textcolor{magenta}{AS} and PPM-\textcolor{violet}{RR}/\textcolor{olive}{SO}/\textcolor{magenta}{AS} for solving \eqref{p:strong-monotone-vi}. \textcolor{violet}{Violet} lines denote steps that are only performed for \textcolor{violet}{RR}, \textcolor{olive}{Olive} lines denote the same for \textcolor{olive}{SO} and \textcolor{magenta}{Magenta} for \textcolor{magenta}{AS}.}
\end{figure*}
\subsection{Setting}
\label{sec:sc-sc-setting}
We analyze GDA-RR/SO and PPM-RR/SO for smooth finite-sum strongly convex-strongly concave (or SC-SC) objectives. This allows us to formulate the minimax optimization problem for $F$ as a \emph{root finding problem} for the gradient operator $\nu$, as described below.
\begin{assumption}[Strong Convexity-Strong Concavity]
\label{as:sc-sc}
The objective $F$ is $\mu$ strongly convex-strongly concave (or SC-SC), i.e., $F(., \vy)$ is $\mu$-strongly convex for any $\vy \in \bR^{d_{\vy}}$ and $-F(\vx, .)$ is $\mu$-strongly convex for any $\vx \in \bR^{d_{\vx}}$.
\end{assumption}
Assumption \ref{as:sc-sc} has the following consequences for the gradient operator $\nu$:
\begin{lemma}
\label{lem:sc-sc-monotone-vi}
Let $F$ satisfy Assumptions \ref{as:comp-smooth} and \ref{as:sc-sc}. Then, the gradient operator $\nu$ is $\mu$-strongly monotone:
$$\dotprod{\nu(\vz_1) - \nu(\vz_2)}{\vz_1 - \vz_2} \geq \mu \norm{\vz_1 - \vz_2}^2 \ \forall \ \vz_1, \vz_2 \in \bR^d.$$
Furthermore, \eqref{p:minimax-sum} admits a unique solution $\vz^*$, which is also the unique solution of $\nu(\vz^*) = 0$.
\end{lemma}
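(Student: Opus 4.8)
The plan is to establish the two claims separately: $\mu$-strong monotonicity of $\nu$, and then existence and uniqueness of a zero of $\nu$ together with its identification as the minimax solution.

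For strong monotonicity, I would write $\vz_1 = (\vx_1,\vy_1)$, $\vz_2 = (\vx_2,\vy_2)$ and expand
\[
\dotprod{\nu(\vz_1) - \nu(\vz_2)}{\vz_1 - \vz_2} = \dotprod{\grad_\vx F(\vz_1) - \grad_\vx F(\vz_2)}{\vx_1 - \vx_2} - \dotprod{\grad_\vy F(\vz_1) - \grad_\vy F(\vz_2)}{\vy_1 - \vy_2}.
\]
The key is to apply the first-order characterization of $\mu$-strong convexity to the slices that Assumption \ref{as:sc-sc} makes available: $F(\cdot,\vy_1)$ and $F(\cdot,\vy_2)$ are $\mu$-strongly convex, and $-F(\vx_1,\cdot)$, $-F(\vx_2,\cdot)$ are $\mu$-strongly convex. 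Writing the four resulting inequalities with base points chosen so that their left-hand sides are $F(\vx_2,\vy_1)-F(\vx_1,\vy_1)$, $F(\vx_1,\vy_2)-F(\vx_2,\vy_2)$, $F(\vx_1,\vy_1)-F(\vx_1,\vy_2)$, and $F(\vx_2,\vy_2)-F(\vx_2,\vy_1)$, and summing them, all eight function values cancel; the gradient terms reassemble into $-\dotprod{\nu(\vz_1)-\nu(\vz_2)}{\vz_1-\vz_2}$ and the quadratic terms into $\mu(\norm{\vx_1-\vx_2}^2 + \norm{\vy_1-\vy_2}^2) = \mu\norm{\vz_1-\vz_2}^2$, which rearranges to the claim. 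Getting this pairing right so that every function value telescopes away is the one spot that needs care; the rest is bookkeeping.

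For the root of $\nu$, uniqueness is immediate from strong monotonicity (two roots $\vz_1,\vz_2$ would force $0 \ge \mu\norm{\vz_1-\vz_2}^2$). For existence, I would combine Assumption \ref{as:comp-smooth} ($\nu$ is $l$-Lipschitz) with strong monotonicity to get $\norm{(\vz - \alpha\nu(\vz)) - (\vz' - \alpha\nu(\vz'))}^2 \le (1 - 2\alpha\mu + \alpha^2 l^2)\norm{\vz-\vz'}^2$, so $\vz \mapsto \vz - \alpha\nu(\vz)$ is a contraction for any $\alpha \in (0, 2\mu/l^2)$; Banach's fixed-point theorem then supplies a unique $\vz^*$ with $\nu(\vz^*) = 0$. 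Finally, to match this with \eqref{p:minimax-sum}: since $F$ is differentiable on all of $\bR^{d_\vx}\times\bR^{d_\vy}$, any global minimax point is stationary in each argument and hence a zero of $\nu$; conversely, Assumption \ref{as:sc-sc} makes every zero $(\vx^*,\vy^*)$ of $\nu$ a global saddle point — the chain $F(\vx^*,\vy) \le F(\vx^*,\vy^*) \le F(\vx,\vy^*)$ for all $\vx,\vy$ follows from first-order optimality of a strongly convex (resp. strongly concave) function — and therefore a global minimax point. Uniqueness of the zero of $\nu$ then forces the minimax solution to be unique and to coincide with $\vz^*$. The only genuinely nontrivial step in the whole argument is the four-inequality bookkeeping in the strong-monotonicity part.
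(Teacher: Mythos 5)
Your strong-monotonicity argument is exactly the paper's: the same four first-order strong convexity/concavity inequalities, paired so that all eight function values telescope, and your Banach fixed-point construction of the unique root (contraction $\vz \mapsto \vz - \alpha\nu(\vz)$ using $l$-Lipschitzness and $\mu$-strong monotonicity) is the same device the paper uses in its auxiliary lemma. Where you diverge is in identifying the root with the minimax solution. The paper argues in one direction only: from $\nu(\vz^*)=0$ it deduces that $\vy^*$ is the unique maximizer of $F(\vx^*,\cdot)$ (strong concavity) and, via Danskin's theorem, that $\grad\Phi(\vx^*)=\grad_\vx F(\vx^*,\vy^*)=0$, so $\vx^*$ is the unique minimizer of the strongly convex $\Phi$; uniqueness of the minimax point then comes for free from uniqueness of these two optimizers, with no converse needed. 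You instead prove root $\Rightarrow$ saddle point $\Rightarrow$ global minimax point (which is fine, and avoids Danskin) and then lean on the converse, ``every global minimax point is a zero of $\nu$,'' to transfer uniqueness of the root to uniqueness of the minimax solution. That converse is true here, but not for the reason you give: differentiability of $F$ alone does not make a global minimax point $\vx$-stationary (for general smooth nonconcave-in-$\vy$ objectives it can fail). What rescues it is precisely the strong concavity in $\vy$: the inner maximizer $\vy^*(\vx)$ is unique, so Danskin gives that $\Phi$ is differentiable with $\grad\Phi(\vx)=\grad_\vx F(\vx,\vy^*(\vx))$, and only then does $\vx'\in\arg\min_{\vx}\Phi(\vx)$ yield $\grad_\vx F(\vx',\vy')=0$. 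So either insert that Danskin step (as the paper does) or drop the converse altogether and obtain uniqueness the paper's way, from strong convexity of $\Phi$ and strong concavity of $F(\vx^*,\cdot)$; as written, this one justification is the only soft spot in an otherwise correct proof.
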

Lemma \ref{lem:sc-sc-monotone-vi} allows us to recast \eqref{p:minimax-sum} for SC-SC objectives as the following \emph{root finding problem}:
\begin{equation}
\label{p:strong-monotone-vi}
\textrm{Find } \vz \in \bR^d \textrm{ such that } \nu(\vz) = 1/n\sum_{i=1}^{n} \vw_i(\vz) = 0.
\end{equation}
Problem \eqref{p:strong-monotone-vi} is more general than SC-SC minimax optimization, and is a special case of strongly monotone variational inequalities \citep{FacchineiPang} without constraints. Notably, \eqref{p:strong-monotone-vi} includes the Nash Equilibrium problem for unconstrained \emph{multiplayer} games with smooth strongly convex objectives \citep{ScutariGamesVIP} and is sometimes called a \emph{finite-sum unconstrained variational inequality} in the literature \citep{GidelSGDAEC}. We also highlight that smooth strongly convex \emph{optimization} is a special case of \eqref{p:strong-monotone-vi}. However, unlike the optimization setting, $\nu$ is no longer restricted to be the gradient of a strongly convex function, which, as we shall see, has important consequences for the attainable convergence rates of our algorithms. 
\subsection{Analysis of RR/SO}
\label{sec:rr-so}
We now state the expected last-iterate convergence guarantees for Algorithms \ref{algo:gda-rr-so} and \ref{algo:ppm-rr-so} for solving \eqref{p:strong-monotone-vi}, where the expecation is taken over the stochasticity of the sampled permutation(s). 
\begin{theorem}[Convergence of GDA-RR/SO and PPM-RR/SO]
\label{thm:gda-wor-convergence}
Consider Problem \eqref{p:strong-monotone-vi} for the $\mu$-strongly monotone operator $\nu(\vz) = \nicefrac{1}{n} \sum_{i=1}^{n} \omega_i(\vz)$ where each $\omega_i$ is $l$-Lipschitz, but not necessarily monotone. Let $\vz^*$ denote the unique root of $\nu$. Then, there exists a step-size $\alpha \leq \nicefrac{\mu}{5nl^2}$ for which both GDA-RR/SO and PPM-RR/SO satisfy the following for any $K \geq 1$:
$$\E{|\vz^{K+1}_0 - \vz^*|^2} \leq 2e^{\nicefrac{-K}{5\kappa^2}}|\vz_0 - \vz^*|^2 + \frac{2\mu^2 + 8 \kappa^2 \sigma^2_* \log^3 (|\nu(\vz_0)|n^{1/2}K/\mu)}{\mu^2 nK^2} = \Tilde{O}(e^{\nicefrac{-K}{5\kappa^2}} + \nicefrac{1}{nK^2}),$$
where $\kappa=\nicefrac{l}{\mu}$ is the condition number and $\sigma_*^2 = \nicefrac{1}{n}\sum_{i=1}^{n}\norm{\vw_{i}(\vz^*)}^2$ is the gradient variance at $\vz^*$.
\end{theorem}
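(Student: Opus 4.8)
## Proof Strategy for Theorem~\ref{thm:gda-wor-convergence}

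\textbf{Overall plan.} The plan is to analyze a single epoch as a perturbation of the deterministic ``full-gradient'' step $\vz \mapsto \vz - n\alpha\,\nu(\vz)$, track the per-epoch contraction together with the accumulated shuffling error, and then unroll the resulting one-step recursion over $K$ epochs. Throughout I would use the effective step-size $\eta = n\alpha$ and note that the condition $\alpha \le \mu/(5nl^2)$ translates to $\eta \le \mu/(5l^2) = 1/(5\kappa l)$, which is the regime where a full-gradient step on a $\mu$-strongly-monotone, $l$-Lipschitz operator contracts: $\norm{\vz - \eta\nu(\vz) - \vz^*}^2 \le (1 - \mu\eta)\norm{\vz - \vz^*}^2$ (this uses only Lemma~\ref{lem:sc-sc-monotone-vi} and Assumption~\ref{as:comp-smooth}, via $\langle \nu(\vz)-\nu(\vz^*),\vz-\vz^*\rangle \ge \mu\norm{\vz-\vz^*}^2$ and $\norm{\nu(\vz)}\le l\norm{\vz-\vz^*}$, together with $\mu\eta \le \mu/(5\kappa l)\le 1/5$).

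\textbf{Key steps.} First I would write one epoch of GDA-RR/SO explicitly: summing the updates \eqref{eqn:gda-wor-update} over $i\in[n]$ gives
\begin{equation*}
\vz^{k}_{n} = \vz^{k}_{0} - \alpha\sum_{i=1}^{n}\omega_{\tau_k(i)}(\vz^{k}_{i-1}) = \vz^{k}_{0} - \eta\,\nu(\vz^{k}_0) + \alpha\sum_{i=1}^{n}\bigl(\omega_{\tau_k(i)}(\vz^{k}_0) - \omega_{\tau_k(i)}(\vz^{k}_{i-1})\bigr),
\end{equation*}
so the epoch is a full-gradient step plus an error term $\vr_k := \alpha\sum_{i}(\omega_{\tau_k(i)}(\vz^k_0)-\omega_{\tau_k(i)}(\vz^k_{i-1}))$. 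Second, I would bound $\norm{\vr_k}$: each inner iterate satisfies $\norm{\vz^k_{i-1}-\vz^k_0}\le \alpha\sum_{j<i}\norm{\omega_{\tau_k(j)}(\vz^k_{j-1})}$, and by $l$-Lipschitzness plus a discrete Grönwall argument one gets $\norm{\vz^k_{i-1}-\vz^k_0}\lesssim n\alpha\,(\norm{\nu(\vz^k_0)} + \text{local deviation})$, ultimately yielding a bound of the form $\norm{\vr_k} \lesssim n^2\alpha^2 l\,(\norm{\nu(\vz^k_0)} + \sigma_*) \lesssim \eta\cdot\kappa^{-1}(\norm{\nu(\vz^k_0)}+\sigma_*)$; crucially $\norm{\nu(\vz^k_0)}\le l\norm{\vz^k_0-\vz^*}$, so $\vr_k$ is a genuine higher-order perturbation. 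Third — and this is where the with/without-replacement distinction enters — I would extract the extra $1/n$ by a finer expansion of $\vr_k$ around $\vz^*$: writing $\omega_{\tau_k(i)}(\vz^k_0)-\omega_{\tau_k(i)}(\vz^k_{i-1})\approx$ (Jacobian terms) and using that $\mathbb{E}_{\tau_k}$ of the relevant partial sums of centered increments $\omega_i(\vz^*) - \nu(\vz^*)$ is controlled by sampling-without-replacement variance bounds (the partial-sum-of-a-random-permutation estimate, $\mathbb{E}\|\sum_{i\le m}(\omega_{\tau(i)}(\vz^*)-\nu(\vz^*))\|^2 \le m\sigma_*^2$ and similar), which is the standard device from the RR minimization literature. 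This gives, after taking expectations over $\tau_k$ conditioned on $\vz^k_0$, a one-epoch inequality of the shape
\begin{equation*}
\mathbb{E}\norm{\vz^{k+1}_0-\vz^*}^2 \le \bigl(1-\tfrac{\mu\eta}{2}\bigr)\mathbb{E}\norm{\vz^{k}_0-\vz^*}^2 + C\,\eta^3 n\,\sigma_*^2 + (\text{still higher-order terms absorbed for }\eta\text{ small}).
\end{equation*}
Fourth, I would unroll this linear recursion: with $\rho = 1-\mu\eta/2$ and error $e=C\eta^3 n\sigma_*^2$, $\mathbb{E}\norm{\vz^{K+1}_0-\vz^*}^2 \le \rho^K\norm{\vz_0-\vz^*}^2 + e/(1-\rho) \le e^{-\mu\eta K/2}\norm{\vz_0-\vz^*}^2 + 2C\eta^2 n\sigma_*^2/\mu$. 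Finally I would tune $\eta$: choosing $\eta \asymp \frac{1}{\mu K}\log(\cdot)$ (as large as the constraint $\eta\le 1/(5\kappa l)$ permits, which forces the $e^{-K/5\kappa^2}$ term once $K$ is large) balances the two terms and produces $\tilde O(e^{-K/5\kappa^2} + \frac{\kappa^2\sigma_*^2}{\mu^2 nK^2})$; the $\log^3$ factor arises from the logarithmic choice of $\eta$ feeding cubically through the error term, and the $2\mu^2/(\mu^2 nK^2)$ piece comes from handling the case where the step-size constraint binds. The PPM case is handled in parallel: the implicit update gives $\vz^k_n = \vz^k_0 - \eta\nu$ evaluated at shifted points, and since PPM is at least as contractive as GDA (the resolvent of a monotone operator is nonexpansive), the same error analysis applies with no worse constants.

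\textbf{Main obstacle.} The delicate step is Step three — extracting the $1/n$ improvement. The naive bound on $\vr_k$ treats $\sum_i \omega_{\tau_k(i)}(\vz^k_{i-1})$ term-by-term and loses the gain; one must expand each $\omega$ around the common point $\vz^*$, separate the ``bias'' part (sums of centered gradients over a random prefix, where sampling without replacement makes $\mathbb{E}\|\sum_{i\le m}(\omega_{\tau(i)}(\vz^*)-\nu(\vz^*))\|^2 \le \frac{m(n-m)}{n-1}\sigma_*^2$, vanishing at $m=n$) from the ``drift'' part (which is higher-order in $\eta$ and handled by Grönwall), and carefully verify that all cross terms and the Jacobian-curvature remainders are $o(\eta^2 n)$ in expectation. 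Keeping the constants explicit enough to land the $1/(5\kappa^2)$ in the exponent — rather than some unspecified $1/\mathrm{poly}(\kappa)$ — requires care in how the Grönwall constant and the $1-\mu\eta/2$ factor are traded off, but poses no conceptual difficulty once the step-size is taken small enough.
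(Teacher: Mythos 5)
Your high-level plan is the same as the paper's: view one epoch as a noisy full-batch step, extract the variance reduction from the without-replacement partial-sum bound $\mathbb{E}\|\sum_{j\le m}\omega_{\tau(j)}(\vz^*)\|^2 = \frac{m(n-m)}{n-1}\sigma_*^2$ applied at the fixed point $\vz^*$, unroll a $(1-\mu n\alpha/2)$-contraction, and tune $\alpha=\min\{\mu/5nl^2,\,\Theta(\log(\cdot)/\mu nK)\}$. However, there are two genuine gaps. First, your per-epoch recursion is obtained by "taking expectations over $\tau_k$ conditioned on $\vz^k_0$", which is only valid for RR: in the Shuffle-Once case there is a single permutation $\tau$, the iterate $\vz^k_0$ is a deterministic function of $\tau$, and conditioned on $\vz^k_0$ the permutation is no longer uniform, so the sampling-without-replacement variance lemma cannot be invoked per epoch. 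The paper's argument is structured precisely to avoid this: every permutation-dependent quantity except the partial sums of the \emph{fixed} vectors $\omega_i(\vz^*)$ is bounded deterministically (all Jacobian-integral matrices have norm at most $l$, the epoch matrix satisfies $\|\vH_k\|\le 1-n\alpha\mu/2$ for every $\tau_k\in\mathbb{S}_n$), the recursion is unrolled pathwise, and the expectation is taken only once at the end, where Lemma \ref{app-lem:sample-mean-wor} applies because the $\omega_i(\vz^*)$ are independent of $\tau$. Your sketch needs this restructuring to cover SO. Second, your dispatch of PPM-RR/SO via "the resolvent of a monotone operator is nonexpansive" does not apply: the components $\omega_i$ are \emph{not} assumed monotone, so the per-step resolvents $(\vI+\alpha\omega_{\tau_k(i)})^{-1}$ need not be nonexpansive. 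The contraction must be established at the epoch level using only the strong monotonicity of the average, which is what the paper does by showing $\sigma_{\min}(\vH_k)\ge 1+3n\alpha\mu/4$ (via Weyl's inequality) for the implicit epoch map and then inverting.

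Two smaller points. Your stated per-epoch error $C\eta^3 n\sigma_*^2$ is inconsistent with the $1/(nK^2)$ rate you claim at the end: with the cited partial-sum bound, the per-epoch squared noise is of order $\alpha^4 l^2 n^3\sigma_*^2=l^2\eta^4\sigma_*^2/n$, so after the Young trade-off the additive term should be of order $l^2\eta^3\sigma_*^2/(\mu n)$, not $\eta^3 n\sigma_*^2$; as written, unrolling and tuning would give an $n\sigma_*^2/K^2$ term, off by $n^2$. Also, the $2/(nK^2)$ constant term arises in the case where the logarithmic step-size is the smaller one (using $\|\nu(\vz_0)\|\ge\mu\|\vz_0-\vz^*\|$), not in the case where the constraint $\alpha\le\mu/5nl^2$ binds. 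These are fixable bookkeeping issues, but the SO conditioning and the PPM contraction are the steps that would actually fail as proposed.
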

\begin{proof}We present an outline for GDA-RR/SO and defer the full proof to Appendix \ref{app-sec:gda-wor} (for GDA-RR/SO) and Appendix \ref{app-sec:ppm-wor} (for PPM-RR/SO). Furthermore, we recall that the updates of GDA-RR/SO are given by $\vz^{k}_{i} = \vz^{k}_{i-1} - \alpha \vw_{\tau_k(i)}(\vz^{k}_{i-1})$.

We begin with the following key insight from earlier works on sampling without replacement for minimization \citep{HaochenSraRR19, AhnRR2020, NedicBertsekas, GurbuzIG2019}: for small enough step-sizes, the \emph{epoch iterates} $\vz^k_0$ of GD without replacement approximately follow the trajectory of full-batch gradient descent. To this end, we derive the following \emph{epoch-level} update rule for GDA-RR/SO by linearizing $\vw_{\tau_k(i)}(\vz^{k}_{i-1})$ around $\vz^*$:
\begin{equation}
\label{eqn:sketch-gda-rr-so}
\vz^{k+1}_0 - \vz^* = \vH_k(\vz^{k}_0 - \vz^*) + \alpha^2 \vr_k,
\end{equation}
where $\norm{\vH_k} \leq 1 - n\alpha \mu/2$ and $\vr_k = \sum_{i=1}^{n-1} \vA_{\tau_k(i)} \sum_{j=1}^{i} \vw_{\tau_k(j)}(\vz^*)$ with $\norm{\vA_{\tau_k(i)}} \leq le^{1/5}$. The term $\vr_k$ encapsulates the noise of the stochastic gradient updates accumulated over an entire epoch. To ensure convergence, we control the influence of the noise term $\vr_k$ by using standard properties of without-replacement sample means to show that $\mathbb{E}[|\vr_k|^2] \leq l^2 n^3 \sigma^2_* / 4$ for both RR and SO. We then complete the proof by unrolling \eqref{eqn:sketch-gda-rr-so} for $K$ epochs, substituting the upper bounds for $\norm{\vH_k}$ and $\mathbb{E}[|\vr_k|^2]$ wherever necessary, and setting $\alpha = \min \{  \mu/5nl^2, 2 \log (\norm{\nu(\vz_0 )}n^{1/2}K/\mu )/\mu n K \}$.

As we show in Appendix \ref{app-sec:gda}, the update rule \eqref{eqn:sketch-gda-rr-so} resembles the linearized update rule of full batch GDA with added noise. In fact, for $n=1$, $\vr_k = 0$ and thus, we recover the rates of full-batch GDA. Expressing GDA-RR/SO (and later AS) as noisy full-batch GDA in this fashion is a central component of our unified analysis, and relies on the fact that $\sum_{i=1}^{n} \omega_{\tau_k(i)}(\vz^*) = 0 \ \forall \ \tau_k \in \mathbb{S}_n$, which is specific to sampling without replacement. Comparing to SGDA with replacement, we note that sampling the components i.i.d. uniformly as $u(i) \sim \textrm{Uniform}([n])$ gives rise to an \emph{additional} noise term $\alpha \vp_k$ in the update rule, where $\vp_k = \sum_{i=1}^{n} \omega_{u(i)}(\vz^*)$ vanishes only in expecation, and has a variance of $\mathbb{E}[|\vp_k|^2] = n\sigma^2_*$. Subsequently, the dominant noise term for SGDA updates is $O(\alpha^2 n \sigma^2_*)$ whereas that of GDA-RR/SO is $O(\alpha^4 n^3 \sigma^2_*)$, which qualitatively demonstrates the \emph{implicit variance reduction} of sampling without replacement. As we shall see in the complete proof, this allows RR/SO to converge faster (for large enough $K$) by carefully selecting $\alpha$.
\end{proof}
\textbf{Comparison with lower bounds:} Since smooth strongly convex minimization is a special case of \eqref{p:strong-monotone-vi}, the $\Omega(1/nK^2)$ lower bound established in prior works \citep{SafranShamirRR2020, RajputRR2020} for smooth strongly convex minimization using GD with RR/SO also applies to GDA-RR/SO. Comparing with this lower bound, we note that the convergence rate of GDA-RR/SO is \emph{nearly tight}, i.e., it differs from the lower bound only by an exponentially decaying term.  In fact, for $K \geq 10\kappa^2 \log(n^{1/2}K)$, the convergence rate becomes $\Tilde{O}(1/nK^2)$, which precisely matches the lower bound, modulo logarithmic factors. 

\textbf{Comparison with uniform sampling:} Similarly, the $\Omega(1/nK)$ lower bound of SGD with replacement for smooth and strongly convex functions \citep{RakhlinSridharan} also applies to SGDA with replacement. On the contrary, both GDA-RR and GDA-SO converge with a faster rate of $\Tilde{O}(1/nK^2)$ when $K \geq 10\kappa^2 \log(n^{1/2}K)$. Thus, GDA-RR/SO provably outperform SGDA with replacement (modulo logarithmic factors) when $K \geq 10\kappa^2 \log(n^{1/2}K)$. As we show in Appendix \ref{app-sec:gda-wor}, the $\kappa^2$ dependence of this inequality cannot be improved for constant step-sizes. A similar argument also applies to stochastic PPM. To the best of our knowledge, the fastest known convergence rate for stochastic PPM is $O(1/nK)$ for minimizing smooth strongly convex functions \citep{NecoaraSPPM}. Hence, Theorem \ref{thm:gda-wor-convergence} suggests that PPM-RR/SO enjoy a faster $\Tilde{O}(1/nK^2)$ convergence rate \emph{for both minimization and minimax optimization} when $K \geq 10\kappa^2 \log(n^{1/2}K)$.
\subsection{Analysis in the Adversarial Shuffling Regime}
\label{sec:as}
We now consider without-replacement minimax optimization algorithms in an adversarial setting. We focus on a novel class of training-time attacks known as \emph{data ordering attacks} proposed by \citet{Papernot_DataOrdering_2021}. These attacks differ from standard data-perturbation attacks \citep{GoodfellowPGD} and exploit the fact that most implementations of stochastic gradient optimizers do not verify whether the permutation $\tau_k$ is truly sampled at random. \citet{Papernot_DataOrdering_2021} propose three distinct attack strategies, namely, \emph{batch reordering}, which changes the order in which mini-batches are supplied to the algorithm, \emph{batch reshuffling}, which changes the order in which individual data points are supplied, and \emph{replacing} which prevents certain data points from being observed by the algorithm by consistently replacing them with other data points in the training set.

We analyze the convergence of without-replacement GDA and PPM under batch reshuffling attacks. To this end, we consider an adversarial modification of RR/SO where the permutations $\tau_k$, instead of being sampled by the algorithm, are now selected by an adversary using a strategy unknown to the algorithm. We also the assume that, while choosing $\tau_k$, the adversary is computationally unrestricted and has complete knowledge of all the components $\omega_i$, the minimax point $\vz^*$, and the iterates $\vz^{k}_{i}$ observed so far. We call this setup \emph{Adversarial Shuffling} (AS) and obtain convergence rates of GDA and PPM (named GDA-AS and PPM-AS) when solving \eqref{p:strong-monotone-vi}. Thus, our analysis naturally holds for minimization, minimax optimization as well as finite-sum multiplayer games. The details are stated in Algorithms \ref{algo:gda-rr-so} and \ref{algo:ppm-rr-so}, respectively. Our last iterate convergence guarantees are deterministic and hold uniformly over any sequence of permutations $\tau_1, \ldots, \tau_K$ that the adversary can choose. 
\begin{theorem}[Convergence of GDA-AS and PPM-AS]
\label{thm:gda-as-convergence}
Consider Problem \eqref{p:strong-monotone-vi} for the $\mu$-strongly monotone operator $\nu(\vz) = \nicefrac{1}{n} \sum_{i=1}^{n} \omega_i(\vz)$ where each $\omega_i$ is $l$-Lipschitz, but not necessarily monotone. Let $\vz^*$ denote the unique root of $\nu$. Then, there exists a step-size $\alpha \leq \nicefrac{\mu}{5nl^2}$ for which both GDA-AS and PPM-AS satisfy the following for any $K \geq 1$:
$$\max_{\tau_1, \ldots, \tau_K \in \mathbb{S}_n} \! |\vz^{K+1}_0 - \vz^*|^2 \! \leq \! 2e^{\nicefrac{-K}{5\kappa^2}}|\vz_0 - \vz^*|^2 \! +  \frac{2\mu^2 \! + \! 24 \kappa^2 \sigma^2_* \log^3 (|\nu(\vz_0)|K/\mu)}{\mu^2 K^2} \! = \! \Tilde{O}(e^{\nicefrac{-K}{5\kappa^2}} \! +  \nicefrac{1}{K^2}),$$
where $\kappa, \sigma^2_*$ are as defined in Theorem \ref{thm:gda-wor-convergence} and $\tau_1, \ldots, \tau_K$ are the permutations chosen by the adversary.
\end{theorem}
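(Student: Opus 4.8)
The plan is to follow the proof of Theorem~\ref{thm:gda-wor-convergence} almost verbatim, replacing every step that invokes the randomness of the permutations $\tau_k$ by a worst-case estimate that holds uniformly over $\mathbb{S}_n$; this is possible because randomness enters that proof \emph{only} when bounding the per-epoch noise term $\vr_k$. Concretely, I would first re-derive the \emph{deterministic} epoch-level recursion: writing $\vw_{\tau_k(i)}(\vz^k_{i-1}) = \vw_{\tau_k(i)}(\vz^*) + \vB^k_i(\vz^k_{i-1}-\vz^*)$ with the averaged Jacobian $\vB^k_i = \int_0^1 \grad\vw_{\tau_k(i)}\bigl(\vz^* + t(\vz^k_{i-1}-\vz^*)\bigr)\,dt$ (so $\norm{\vB^k_i}\le l$) and unrolling the $n$ inner updates of epoch $k$, one obtains exactly
\[
\vz^{k+1}_0 - \vz^* = \vH_k(\vz^k_0 - \vz^*) + \alpha^2 \vr_k, \qquad \vr_k = \sum_{i=1}^{n-1}\vA^k_{\tau_k(i)}\sum_{j=1}^{i}\vw_{\tau_k(j)}(\vz^*),
\]
where $\vH_k = \prod_{i=n}^{1}(\vI - \alpha\vB^k_i)$ and each $\vA^k_{\tau_k(i)}$ is a product of at most $n-1$ factors of the form $\vI - \alpha\vB$ times one factor $\vB$. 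Since $\alpha \le \mu/(5nl^2) \le 1/(5nl)$, each $\norm{\vI - \alpha\vB^k_i} \le e^{\alpha l} \le e^{1/(5n)}$, so $\norm{\vA^k_{\tau_k(i)}} \le l e^{1/5}$; and $\mu$-strong monotonicity of $\nu$ together with $\alpha \le \mu/(5nl^2)$ gives $\norm{\vH_k} \le 1 - n\alpha\mu/2$ --- exactly as in Theorem~\ref{thm:gda-wor-convergence}, and with no appeal to randomness. For PPM-AS the same recursion results after replacing the descent factors $\vI - \alpha\vB^k_i$ by the corresponding resolvent-type factors, whose norms are likewise $l$-controlled.

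The only step that genuinely has to be redone is the bound on $\vr_k$, which can no longer be taken in expectation. Here I would use the structural identity that is the raison d'\^{e}tre of without-replacement sampling and which the adversary \emph{cannot destroy by reordering}: a whole epoch of gradients evaluated at the optimum sums to zero, $\sum_{j=1}^{n}\vw_{\tau_k(j)}(\vz^*) = n\,\nu(\vz^*) = 0$, for \emph{every} $\tau_k \in \mathbb{S}_n$. Hence $\vr_k$ involves only \emph{partial} sums, and the triangle inequality followed by Cauchy--Schwarz gives, for every $i < n$ and every $\tau_k \in \mathbb{S}_n$,
\[
\Bigl\lVert \sum_{j=1}^{i}\vw_{\tau_k(j)}(\vz^*) \Bigr\rVert \le \sum_{j=1}^{n}\norm{\vw_j(\vz^*)} \le \sqrt{n}\,\Bigl(\sum_{j=1}^{n}\norm{\vw_j(\vz^*)}^2\Bigr)^{1/2} = n\sigma_*,
\]
so that $\norm{\vr_k}^2 \le l^2 e^{2/5} n^4 \sigma_*^2$ uniformly over all adversarial choices. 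This is exactly the RR/SO bound $\E{\norm{\vr_k}^2} \le l^2 n^3 \sigma_*^2/4$ degraded by a single factor of $n$, and that lost factor of $n$ is precisely what turns the $\nicefrac{1}{nK^2}$ rate of Theorem~\ref{thm:gda-wor-convergence} into the $\nicefrac{1}{K^2}$ rate asserted here.

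Finally, unrolling the recursion over the $K$ epochs and using $\sum_{k=1}^{K}(1 - n\alpha\mu/2)^{K-k} \le 2/(n\alpha\mu)$ gives
\[
\norm{\vz^{K+1}_0 - \vz^*} \le (1 - n\alpha\mu/2)^K \norm{\vz_0 - \vz^*} + \frac{2\alpha l e^{1/5} n \sigma_*}{\mu};
\]
squaring via $(a+b)^2 \le 2a^2 + 2b^2$ and choosing $\alpha = \min\bigl\{ \mu/(5nl^2),\ 2\log(\norm{\nu(\vz_0)}K/\mu)/(\mu n K) \bigr\}$ balances the two terms. The first choice makes the contraction factor $\le e^{-K/5\kappa^2}$, producing the $2e^{-K/5\kappa^2}\norm{\vz_0 - \vz^*}^2$ term; the second makes it $\le \mu/(\norm{\nu(\vz_0)}K)$, which combined with $\norm{\vz_0 - \vz^*} \le \norm{\nu(\vz_0)}/\mu$ (a consequence of strong monotonicity) produces the $2\mu^2/(\mu^2 K^2)$ term, while the noise term becomes $\Tilde{O}(\kappa^2 \sigma_*^2/(\mu^2 K^2))$ (the exact logarithmic power following from the step-size tuning, as in the appendix proof of Theorem~\ref{thm:gda-wor-convergence}). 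Since every inequality above holds for an \emph{arbitrary fixed} sequence $\tau_1, \ldots, \tau_K$, it holds a fortiori for the maximum over all sequences the adversary can choose. I expect no deep obstacle here: the content is conceptual rather than technical --- an adversary restricted to \emph{reshuffling} (as opposed to resampling \emph{with} replacement) can only perturb the order of points \emph{within} an epoch, against which the order-independent cancellation $\sum_j \vw_{\tau_k(j)}(\vz^*) = 0$ already keeps the drift $\vr_k$ polynomially bounded; a with-replacement adversary, by contrast, could engineer a non-vanishing bias $\sum_i \vw_{u(i)}(\vz^*)$ and prevent convergence altogether. The remaining work is step-size bookkeeping, with the logarithmic factors the only mildly delicate part.
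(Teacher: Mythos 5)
Your proposal is correct and follows essentially the same route as the paper: Steps 1--2 of the RR/SO analysis (the linearized epoch recursion $\vz^{k+1}_0-\vz^*=\vH_k(\vz^k_0-\vz^*)+\alpha^2\vr_k$ with $\norm{\vH_k}\le 1-n\alpha\mu/2$, and the analogous invertible recursion for PPM) hold for every permutation, and the only change is replacing the expectation bound on $\vr_k$ by a worst-case bound exploiting the order-independent cancellation $\sum_{j=1}^n\vw_{\tau_k(j)}(\vz^*)=0$, which costs exactly one factor of $n$ and turns $\nicefrac{1}{nK^2}$ into $\nicefrac{1}{K^2}$. The only deviations are bookkeeping: your uniform bound $\norm{\vr_k}^2\le l^2e^{2/5}n^4\sigma_*^2$ is about twice the paper's $3l^2n^4\sigma_*^2/4$, and summing $\norm{\vr_k}$ through the geometric series before squaring (rather than the paper's Young-with-factor-$K$ step) yields slightly different constants and a $\log^2$ rather than $\log^3$ factor, so the displayed constant $24\kappa^2\sigma_*^2\log^3(\cdot)$ is not reproduced verbatim, but the claimed $\Tilde{O}(e^{-K/5\kappa^2}+\nicefrac{1}{K^2})$ guarantee follows all the same.
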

\textbf{Convergence rates of IG and comparison with lower bounds:} We note that the Incremental Gradient and the Incremental Proximal Point Methods, which do not shuffle the data, are a special case of GDA-AS/PPM-AS with $\tau_1, \ldots, \tau_K = id$. Thus, Theorem \ref{thm:gda-as-convergence} also gives us convergence rates for GDA-IG/PPM-IG. Moreover, since GDA-AS generalizes GDA-IG and \eqref{p:strong-monotone-vi} covers minimization, the $\Omega(1/K^2)$ lower bound of IG established in prior works \citep{SafranShamirRR2020} for smooth strongly convex minimization also applies to GDA-AS. Thus, our obtained rate for GDA-AS is nearly tight and matches the lower bound (modulo logarithmic factors) for $K \geq 10\kappa^2 \log(K)$.

\textbf{Effectiveness of batch reshuffling:} When $K \geq 10\kappa^2 \log(K)$, $\Tilde{O}(1/K^2)$ becomes the dominant term in the convergence rate of AS. This is worse than that of RR/SO by a factor of $1/n$ and causes a significant slowdown in convergence, since, in many applications, the dataset size $n$ is much larger than $K$. Thus, our analysis justifies the effectiveness of batch reshuffling attacks in reducing model accuracy and increasing training time, which is empirically verified by \citet{Papernot_DataOrdering_2021}.
\section{RR for Two-Sided P\L{} Objectives}
\label{sec:2pl}
We now analyze RR for a class of smooth nonconvex-nonconcave problems where the objective $F$ satisfies a \emph{two-sided Polyak \L{}ojasiewicz inequality}, first proposed in \citet{YangAGDA2020}. We denote this function class as 2P\L{} and formally state the assumption as follows. 
\begin{assumption}[Two-sided Polyak \L{}ojasiewicz Inequality or 2P\L{} condition]
\label{as:2pl}
For any $\vx \in \bR^{d_{\vx}}, \vy \in \bR^{d_{\vy}}$, the sets $\arg \max_{\Tilde{\vy}} F(\vx, \Tilde{\vy})$ and $\arg \min_{\Tilde{\vx}} F(\Tilde{\vx}, \vy)$ are non-empty. Furthermore, there exist positive constants $\mu_1, \mu_2$ such that $F$ satisfies the following:
\begin{align*}
    \norm{\grad_\vx F(\vx, \vy)}^2 \geq 2\mu_1 [F(\vx, \vy) - \min_{\Tilde{\vx} \in \bR^{d_\vx}} F(\Tilde{\vx}, \vy)], \
    \norm{\grad_\vy F(\vx, \vy)}^2 \geq 2\mu_2 [ \max_{\Tilde{\vy} \in \bR^{d_\vy}} F(\vx, \Tilde{\vy})-F(\vx, \vy)].
\end{align*}
\end{assumption}
The 2P\L{} condition is satisfied in several practical settings, including, but not limited to, robust least squares \citep{RobustLS}, imitation learning for linear quadratic regulators \citep{FazelLQR, CaiLQR}, and various other problems in reinforcement learning and robust control \citep{SimonDuMinimaxRL, CaiLQR}. Clearly, any SC-SC function is 2P\L{}. However, 2P\L{} functions need not be SC-SC, or even convex-concave. We refer the readers to \citet{YangAGDA2020} for a detailed discussion of the 2P\L{} class and its applications.

Analysis of RR for 2P\L{} objectives is challenging not only due to nonconvexity-nonconcavity, but also because $F$ may not have a unique minimax point. Indeed, as we demonstrate in Appendix \ref{app-sec:agda-lemmas}, it is possible to construct 2P\L{} functions where the set of minimax points is an unbounded proper subset of $\bR^d$. Hence, the notion of \emph{gradient variance at the minimax point}, which we used in our earlier analyses, is no longer meaningful. To overcome this, we impose the following assumption. 
\begin{assumption}[Bounded Gradient Variance]
\label{as:bgv}
There exists a positive constant $\sigma$ such that the component gradient operators $\omega_i$ satisfy the following for any $\vz \in \bR^{d}$:
$$1/n\sum_{i=1}^{n} \norm{\vw_i(\vz) - \nu(\vz)}^2 \leq \sigma^2.$$
\end{assumption}
\begin{figure*}[ht]
\begin{minipage}[h]{0.5\textwidth}
\begin{algorithm}[H]
\label{algo:agda-rr}
\SetAlgoLined
\SetKwInOut{Input}{Input}
\SetKwInOut{Output}{Output}
\Input{Number of epochs $K$, step-sizes $\alpha, \beta > 0$, and initialization $(\vx_0, \vy_0)$}
 Initialize $(\vx^{1}_{0}, \vy^{1}_0) \leftarrow (\vx_0, \vy_0)$\\
 \For{$k \in [K]$}{
  Sample a permutation $\tau_k \in \mathbb{S}_n$ \\
  \For{$i \in [n]$}{
    $\vx^{k}_{i} \leftarrow \vx^{k}_{i-1} - \alpha  \grad_{\vx} f_{\tau_k(i)}(\vx^{k}_{i-1}, \vy^k_0)$\\
  }
  Sample a permutation $\pi_k \in \mathbb{S}_n$ \\
  \For{$i \in [n]$}{
    $\vy^{k}_{i} \leftarrow \vy^{k}_{i-1} + \beta  \grad_{\vy} f_{\pi_k(i)}(\vx^{k}_{n}, \vy^k_{i-1})$\\
  }
   $(\vx^{k+1}_{0}, \vy^{k+1}_{0})  \leftarrow (\vx^{k}_{n}, \vy^{k}_{n})$\\
 }
 \caption{AGDA-RR}
\end{algorithm}
\end{minipage}%
\begin{minipage}[h]{0.5\textwidth}
\begin{algorithm}[H]
\label{algo:agda-as}
\SetAlgoLined
\SetKwInOut{Input}{Input}
\SetKwInOut{Output}{Output}
\Input{Number of epochs $K$, step-sizes $\alpha, \beta > 0$, and initialization $(\vx_0, \vy_0)$}
 Initialize $(\vx^{1}_{0}, \vy^{1}_0) \leftarrow (\vx_0, \vy_0)$\\
 \For{$k \in [K]$}{
  Adversary chooses a permutation $\tau_k \in \mathbb{S}_n$ \\
  \For{$i \in [n]$}{
    $\vx^{k}_{i} \leftarrow \vx^{k}_{i-1} - \alpha  \grad_{\vx} f_{\tau_k(i)}(\vx^{k}_{i-1}, \vy^k_0)$\\
  }
  Adversary chooses a permutation $\pi_k \in \mathbb{S}_n$ \\
  \For{$i \in [n]$}{
    $\vy^{k}_{i} \leftarrow \vy^{k}_{i-1} + \beta  \grad_{\vy} f_{\pi_k(i)}(\vx^{k}_{n}, \vy^k_{i-1})$\\
  }
   $(\vx^{k+1}_{0}, \vy^{k+1}_{0})  \leftarrow (\vx^{k}_{n}, \vy^{k}_{n})$\\
 }
 \caption{AGDA-AS}
\end{algorithm}
 \end{minipage}
\end{figure*}
\subsection{Analysis of AGDA-RR and AGDA-AS}
\label{sec:agda-rr-as}
In order to establish the provable benefits of RR for smooth finite-sum minimax optimization of 2P\L{} objectives, we propose the \emph{Alternating Gradient Descent Ascent with Random Reshuffling} (AGDA-RR) algorithm. AGDA-RR achieves near-optimal convergence guarantees for 2P\L{} objectives by combining RR with \emph{alternating updates} \citep{GidelNegativeMomentum, GuojunStabilityGames, GidelStabilityAlt} and \emph{timescale separation} \citep{LinJinJordanSGDA2020, TannerTimescaleLocal, TannerTimescaleGlobal}, two ideas that have been very useful for improving convergence and stability in nonconvex-nonconcave minimax optimization. Within each epoch $k \in [K]$, AGDA-RR uniformly samples a random permutation $\tau_k$, makes one full pass over the dataset, and performs gradient descent (with RR) updates for the variable $\vx$ using the permutation $\tau_k$. This is followed by sampling another permutation $\pi_k$ and performing gradient ascent (with RR) updates for $\vy$ using the permutation $\pi_k$. The detailed procedure is stated in Algorithm \ref{algo:agda-rr}. We also analyze a variant of AGDA-RR in the adversarial shuffling setting, which we call AGDA-AS. The procedure, as described in Algorithm \ref{algo:agda-as}, is almost identical to AGDA-RR, except that the permutations $\tau_k$ and $\pi_k$ are chosen by an adversary. 

Before presenting a convergence analysis, we highlight that the absence of a unique minimax point prevents us from using the squared distance to the optimum as a Lyapunov function. To this end, we use the Lyapunov function $V_{\lambda} : \bR^d \rightarrow \bR$ which was previously suggested by \citet{YangAGDA2020}. We begin by first defining the \emph{best response function} $\Phi : \bR^d \rightarrow \bR$ and its minimum $\Phi^*$ as follows,
\begin{align*}
    \Phi(\vx) = \max_{\vy \in \bR^{d_{\vy}}} F(\vx, \vy), \ \ \
    \Phi^* = \min_{\vx \in \bR^{d_{\vx}}} \Phi(\vx) = \min_{\vx \in \bR^{d_{\vx}}} \max_{\vy \in \bR^{d_{\vy}}} F(\vx, \vy). 
\end{align*}
Assumption \ref{as:2pl} ensures that $\Phi$ is well defined and the existence of a global minimax point guarantees that $\Phi^*$ is finite. Subsequently, for any $\lambda > 0$, we define the Lyapunov function $V_{\lambda}$ as 
$$V_{\lambda}(\vx, \vy) = [\Phi(\vx) - \Phi^*] + \lambda[\Phi(\vx) - F(\vx, \vy)].$$
By definition of $\Phi$, $V_{\lambda}$ is non-negative for any $\lambda > 0$ and $V_{\lambda}(\vz) = 0$ if and only if $\vz$ is a minimax point of $F$. Hence, we present our convergence proofs for AGDA-RR and AGDA-AS in terms of $V_{\lambda}$.
\begin{theorem}[Convergence of AGDA-RR/AS]
\label{thm:agda-rr}
Let Assumptions \ref{as:comp-smooth}, \ref{as:2pl}, and \ref{as:bgv} hold and let $\eta = \nicefrac{73l^2}{2\mu^2_2}$. Then, there exists a step-size $\alpha \leq \nicefrac{1}{5\eta n l}$ such that for $\beta = \eta \alpha$, AGDA-RR satisfies the following for $\lambda = \nicefrac{1}{10}$ and any $K \geq 1$:
$$\mathbb{E}[V_{\lambda}(\vz^{K+1}_0)] \leq e^{\nicefrac{-K}{365\kappa^3}}V_{\lambda}(\vz_0) + \frac{\mu_1 + c\kappa^8\sigma^2 \log^2(V_{\lambda}(\vz_0)n^{1/2}K)}{\mu_1 nK^2} = \Tilde{O}(e^{\nicefrac{-K}{365\kappa^3}} + \nicefrac{1}{nK^2}),$$
where $\kappa = \max \{ \nicefrac{l}{\mu_1}, \nicefrac{l}{\mu_2} \}$ and $c>0$ is a constant independent of $\kappa, \mu_1, \mu_2, \sigma^2$. Under the same setting, AGDA-AS satisfies the following ($\hat{c} > 0$ is a constant independent of $\kappa, \mu_1, \mu_2, \sigma^2$):
$$\max_{\tau_1, \pi_1, \ldots, \tau_K, \pi_K \in \mathbb{S}_n} \! \! V_{\lambda}(\vz^{K+1}_0) \! \leq \! e^{\nicefrac{-K}{365\kappa^3}}V_{\lambda}(\vz_0) + \frac{\mu_1 \! + \! \hat{c}\kappa^8\sigma^2 \log^2(V_{\lambda}(\vz_0)K)}{\mu_1 K^2} \! = \! \Tilde{O}(e^{\nicefrac{-K}{365\kappa^3}} + \nicefrac{1}{K^2}),$$
where $\tau_1, \pi_1 \ldots, \tau_K, \pi_K$ are the permutations chosen by the adversary. 
\end{theorem}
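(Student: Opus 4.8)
The plan is to reduce one epoch of AGDA-RR to a \emph{noisy full-batch alternating GDA step} on the best-response potential $\Phi$, and then unroll a two-component Lyapunov recursion, following the template of Theorem \ref{thm:gda-wor-convergence} but adapted to the non-unique-minimax, bounded-variance setting. First I would record the structural consequences of the 2P\L{} condition, as in \citet{YangAGDA2020}: $\Phi(\vx) = \max_{\vy} F(\vx, \vy)$ is differentiable with $O(\kappa l)$-Lipschitz gradient and satisfies $\norm{\grad \Phi(\vx)}^2 \ge 2\mu_1(\Phi(\vx) - \Phi^*)$; moreover, for every $\vy$, $\norm{\grad_\vx F(\vx,\vy) - \grad \Phi(\vx)}^2 \le \tfrac{2l^2}{\mu_2}[\Phi(\vx) - F(\vx,\vy)]$ by composing $l$-smoothness with the quadratic-growth consequence of P\L{}-in-$\vy$. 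This last inequality is what lets the $\vx$-phase of AGDA-RR, which descends $\grad_\vx F(\cdot, \vy^k_0)$ rather than $\grad \Phi$, still decrease $\Phi$, with the mismatch absorbed into the second Lyapunov term $b_k := \Phi(\vx^k_0) - F(\vx^k_0, \vy^k_0)$; I would track $a_k := \Phi(\vx^k_0) - \Phi^*$ separately.

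Next, for the $\vx$-phase with $\vy^k_0$ frozen, I would bound the intra-epoch drift $\norm{\vx^k_i - \vx^k_0}$ and, using $l$-smoothness of each $f_i$, collapse the $n$ gradient steps into a single epoch update $\vx^k_n = \vx^k_0 - \alpha n\, \grad_\vx F(\vx^k_0, \vy^k_0) + \alpha^2 \vr^{\vx}_k$, structurally the analogue of \eqref{eqn:sketch-gda-rr-so}, the key identity once more being $\sum_{i=1}^n \grad_\vx f_{\tau_k(i)}(\cdot) = n\,\grad_\vx F(\cdot)$, which makes the discretization error second order in $\alpha$. I would split $\vr^{\vx}_k$ into a deterministic piece controlled by $\norm{\grad_\vx F(\vx^k_0, \vy^k_0)}$ (hence by $a_k+b_k$) and a stochastic piece whose second moment is $O(\mathrm{poly}(\kappa)\, l^2 n^3 \sigma^2)$ in expectation, bounded via Assumption \ref{as:bgv} and standard variance bounds for without-replacement partial sums. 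An identical expansion handles the $\vy$-phase applied to $F(\vx^k_n, \cdot)$, which is $\mu_2$-P\L{} in $\vy$, so that gradient ascent contracts the gap $\Phi(\vx^k_n) - F(\vx^k_n, \cdot)$.

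Feeding these epoch updates into the smoothness descent lemma for $\Phi$ and the P\L{} descent lemma for $F(\vx^k_n, \cdot)$, and using the mismatch bound, I would obtain, for the timescale ratio $\eta = \tfrac{73 l^2}{2\mu_2^2}$, step sizes $\alpha \le \tfrac{1}{5\eta n l}$ and $\beta = \eta\alpha$, a coupled recursion $a_{k+1} \lesssim (1 - c_1\alpha n\mu_1)a_k + c_2 \tfrac{l^2}{\mu_2}\alpha n\, b_k + \alpha^2 E_k$ and $b_{k+1} \lesssim (1 - c_3\eta\alpha n\mu_2)b_k + c_4(a_k - a_{k+1}) + \alpha^2 E_k$, with $E_k = O(\mathrm{poly}(\kappa)\, l^2 n^3 \sigma^2)$ in expectation (and $n^4$ for AS). The combination $V_\lambda = a + \lambda b$ with $\lambda = \tfrac{1}{10}$ makes the cross terms cancel up to a net geometric decay, giving $\E{V_\lambda(\vz^{k+1}_0)} \le (1 - \tfrac{1}{365\kappa^3})\E{V_\lambda(\vz^k_0)} + C\kappa^c l^2 n^3 \sigma^2 \alpha^2$ for absolute constants $C,c$. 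Unrolling over $K$ epochs and choosing $\alpha = \min\{\tfrac{1}{5\eta n l},\ \Theta(\log(V_\lambda(\vz_0)n^{1/2}K))/(nK)\}$ balances the two terms and yields $e^{-K/365\kappa^3}V_\lambda(\vz_0) + \Tilde{O}(\kappa^8\sigma^2/(\mu_1 nK^2))$; the $\kappa^8$ and $\log^2$ factors come from the polynomial-in-$\kappa$ prefactor in $E_k$ and the form of the optimal $\alpha$. For AGDA-AS nothing above used the randomness of $\tau_k,\pi_k$ except the expected partial-sum variance bound; replacing it by its deterministic worst case, larger by a factor $n$, leaves the rate $1/(365\kappa^3)$ and the stability cap on $\alpha$ untouched while turning the $1/(nK^2)$ term into $1/K^2$.

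The main obstacle is closing this coupled two-timescale recursion while simultaneously carrying the $O(\alpha^2)$ within-epoch reshuffling error through \emph{both} phases: one must take $\eta = \Theta(\kappa^2)$ large enough that $\vy^k_0$ tracks its maximizer fast enough for the $b_k$ contribution to the $a$-recursion to be dominated, yet keep $\eta\alpha^2 n$ small enough for stability, and then verify that $V_{1/10}$ contracts at rate $1/(365\kappa^3)$. Pinning down the explicit constants — $365\kappa^3$, $\eta = 73l^2/(2\mu_2^2)$, $\lambda = 1/10$ — is where essentially all of the bookkeeping with $\mu_1, \mu_2, l$ and the $O(\kappa l)$-smoothness of $\Phi$ resides.
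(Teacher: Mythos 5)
Your plan follows the paper's proof of Theorem \ref{thm:agda-rr} essentially step for step: the same linearized epoch-level updates $\vx^{k+1}_0 = \vx^k_0 - n\alpha\,\grad_\vx F(\vx^k_0,\vy^k_0) + \alpha^2\vr_k$ and $\vy^{k+1}_0 = \vy^k_0 + n\beta\,\grad_\vy F(\vx^{k+1}_0,\vy^k_0)+\beta^2\vs_k$, the same without-replacement variance bound for partial sums, the same properties of $\Phi$ ($L$-smoothness with $L = l + l^2/\mu_2$, P\L{} with $\mu_1$, and the mismatch bound $\norm{\grad_\vx F(\vx^k_0,\vy^k_0) - \grad\Phi(\vx^k_0)}^2\le \tfrac{2l^2}{\mu_2}b_k$ via quadratic growth), the same Lyapunov combination $V_{1/10}$ with $\eta = 73l^2/2\mu_2^2$, the same two-regime step-size choice, and the same treatment of AS by replacing the expected variance bound with its worst case (a factor $n$ larger).

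The one step that would fail as written is your per-epoch noise accounting. You state the coupled recursion with additive error $\alpha^2 E_k$, $E_k = O(\mathrm{poly}(\kappa)\,l^2 n^3\sigma^2)$, i.e., per-epoch error of order $\alpha^2 n^3 l^2\sigma^2$. But the $\sigma^2$-part of the reshuffling noise must enter the $a_k,b_k$ recursions with an \emph{additional} factor of order $\alpha/n$ beyond $\alpha^2$: in the descent lemma for $\Phi$ (and for $F$ in $\vy$) the perturbation $\alpha^2\vr_k$ appears only paired with step-size-weighted gradient terms, so after Young's inequality it contributes $\tfrac{\alpha^3}{n}\norm{\vr_k}^2$ plus cross terms; combined with $\bE[\norm{\vr_k}^2] \le \tfrac{3n^4l^2}{4}\norm{\grad_\vx F(\vx^k_0,\vy^k_0)}^2 + \tfrac{n^3l^2\sigma^2}{2}$, the gradient part is reabsorbed into the contraction and the $\sigma^2$-part contributes only $O\bigl(\eta^3 n^2\alpha^3 l^2\sigma^2\bigr)$ per epoch (this is the paper's $C_3$), and $O(n^3\alpha^3)$ in the AS case. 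If you instead carry $\alpha^2 n^3 l^2\sigma^2$ through the unrolling, dividing by the contraction $\Theta(n\alpha\mu_1)$ leaves an accumulated error $\Theta(n^2\alpha\, l^2\sigma^2/\mu_1)$, and the tuned $\alpha = \Theta\bigl(\log(V_\lambda(\vz_0)n^{1/2}K)/(\mu_1 nK)\bigr)$ then yields only $\Tilde{O}(n/K)$, not the claimed $\Tilde{O}(1/nK^2)$; so your final "balancing" step is inconsistent with your stated intermediate bound. With the finer $n^2\alpha^3$ (resp.\ $n^3\alpha^3$) accounting, which is exactly what the paper's Step 3 performs, your argument coincides with the paper's and produces the stated $\kappa^8$ and $\log^2$ factors.
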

\textbf{Convergence to a Saddle Point:} As demonstrated in Appendix \ref{app-sec:agda-conv-guarantee}, the convergence guarantee of Theorem \ref{thm:agda-rr}, which is presented in terms of $V_\lambda$, can be easily translated into an equivalent convergence guarantee in terms of $\textrm{dist}(\vz, \sZ^*)^2$, where $\sZ^*$ denotes the set of saddle points of $F$. In particular, Theorem \ref{thm:agda-rr} implies the following convergence guarantee for AGDA-RR:
\begin{align*}
   \bE[\textrm{dist}(\vz^{K+1}_0, \sZ^*)^2] = \Tilde{O}(e^{-\nicefrac{K}{365\kappa^3}} + \nicefrac{1}{nK^2}), 
\end{align*}
as well as the following convergence rate for AGDA-AS:
\begin{align*}
    \max_{\tau_1, \pi_1, \ldots, \tau_K, \pi_K \in \mathbb{S}_n}  \textrm{dist}(\vz^{K+1}_0, \sZ^*)^2 = \Tilde{O}(e^{-\nicefrac{K}{365\kappa^3}} + \nicefrac{1}{K^2}),
\end{align*}

\textbf{Comparison with lower bounds:} Strongly convex minimization is a special case of 2P\L{} minimax optimization, since minimizing the strongly convex function $f$ is equivalent to minimax optimization of the 2P\L{} function $F(\vx, \vy) = f(\vx) - \dotprod{\vy}{\vy}$. In fact, the $\vx$ iterates of AGDA-RR for $F$ are exactly that of GD with RR for $f$. Hence, the $\Omega(1/nK^2)$ lower bound for strongly convex minimization using GD with RR also applies to AGDA-RR. Thus our convergence rate for AGDA-RR is nearly tight and matches the lower bound (modulo logarithmic factors) for $K \geq 730 \kappa^3 \log(n^{1/2}K)$. Similarly, the Incremental Gradient version of AGDA is a special case of AGDA-AS with $\tau_1,\pi_1,\ldots,\tau_K,\pi_K = id$ and hence, AGDA-AS is nearly tight and matches the $\Omega(1/K^2)$ lower bound (modulo logarithmic factors) for $K \geq 730 \kappa^3 \log(K)$.

\textbf{Comparison with stochastic AGDA:} Similarly, the $\Omega(1/nK)$ lower bound of SGD with replacement also holds for the Stochastic AGDA algorithm \citep{YangAGDA2020}, which samples the component functions with replacement and performs two-timescale alternating updates similar to AGDA-RR. Hence, Theorem \ref{thm:agda-rr} demonstrates that AGDA-RR provably outperforms stochastic AGDA when $K \geq 730 \kappa^3 \log(n^{1/2}K)$.

\textbf{Bounded iterate assumption} Assumption \ref{as:bgv} is also used in analyzing RR for P\L{} function minimization \citep{MischenkoRR2020}. In this setting, an alternative \emph{bounded iterate assumption}, which assumes that all the iterates $\vz^k_i$ lie within a compact set, has also been used \citep{AhnRR2020}. As shown in Appendix \ref{app-sec:agda-rr}, our proof of Theorem \ref{thm:agda-rr} easily adapts to this assumption. In the absence of either assumption, \citet{KLRR2021} use time-varying step-sizes to obtain \emph{asymptotic} $O(1/K^2)$ rates for RR on P\L{} (and more generally for K\L{}) minimization.

\begin{figure}[t]
     \hspace{-0.05\textwidth}
     \begin{subfigure}[b]{0.372\textwidth}
         \centering
         \includegraphics[width=\textwidth]{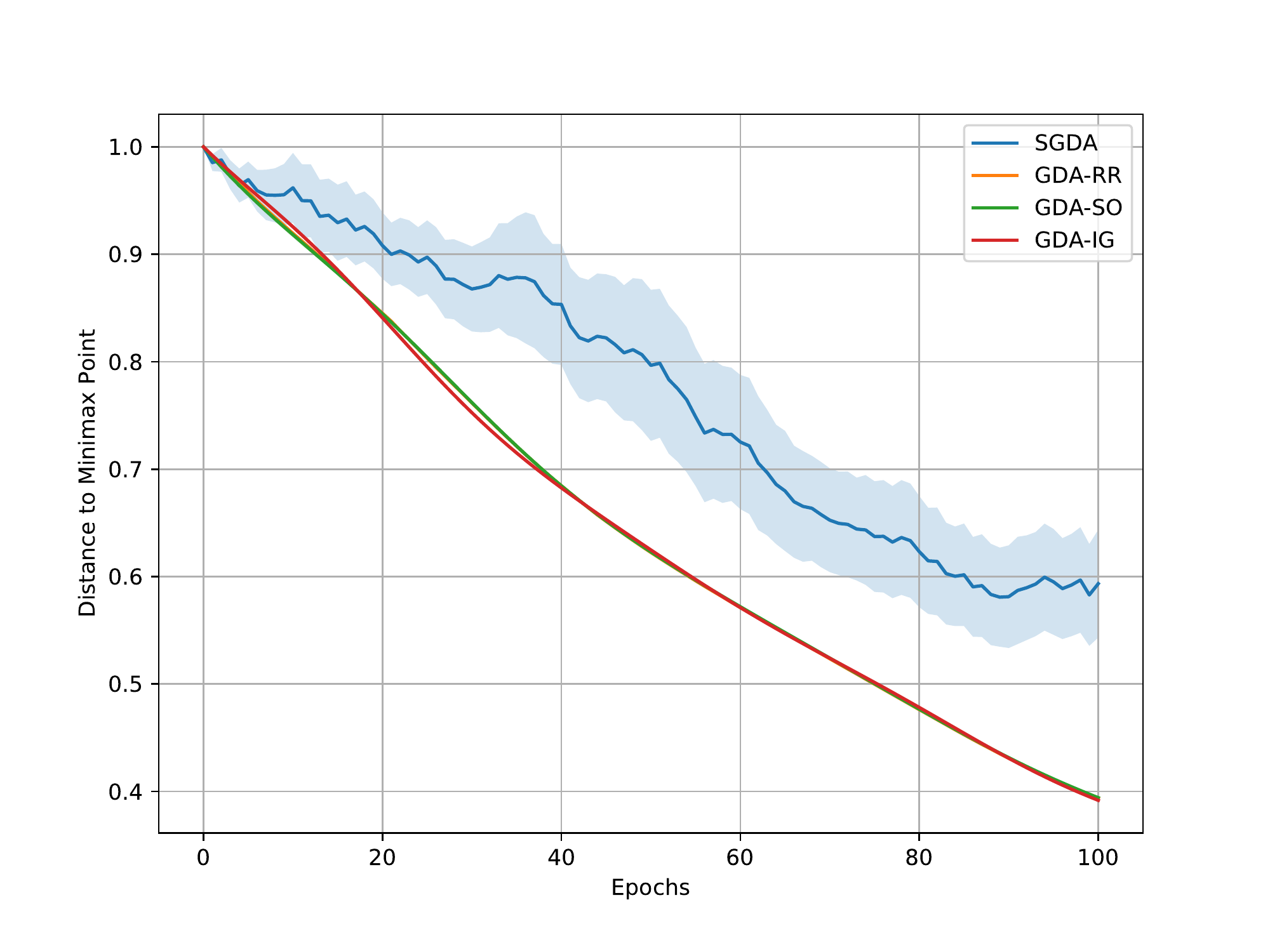}
         \caption{GDA}
         \label{graph:gda}
     \end{subfigure}
     \hspace{-0.045\textwidth}
     \begin{subfigure}[b]{0.372\textwidth}
         \centering
         \includegraphics[width=\textwidth]{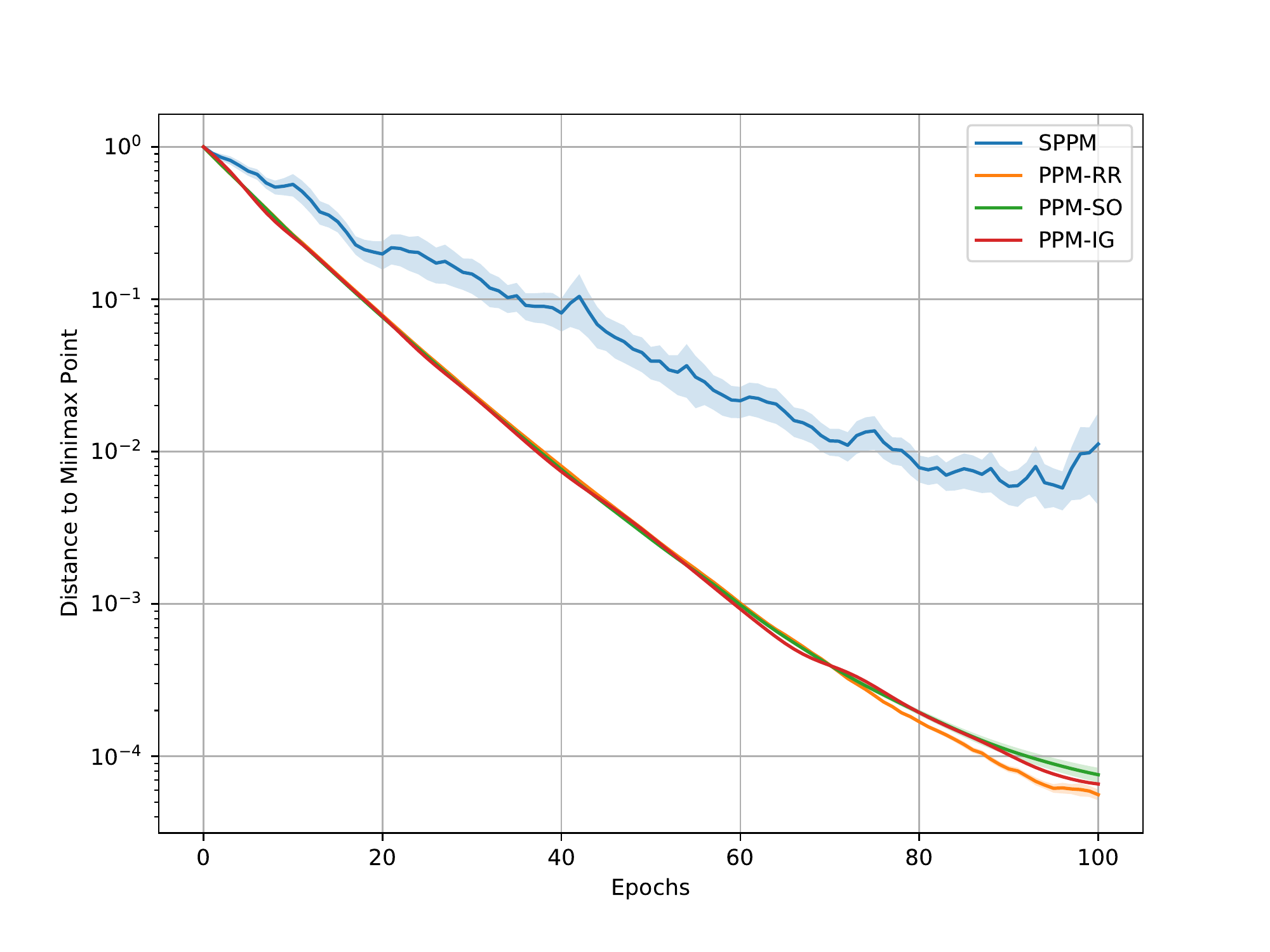}
         \caption{PPM}
         \label{graph:ppm}
     \end{subfigure}
     \hspace{-0.045\textwidth}
     \begin{subfigure}[b]{0.372\textwidth}
         \centering
         \includegraphics[width=\textwidth]{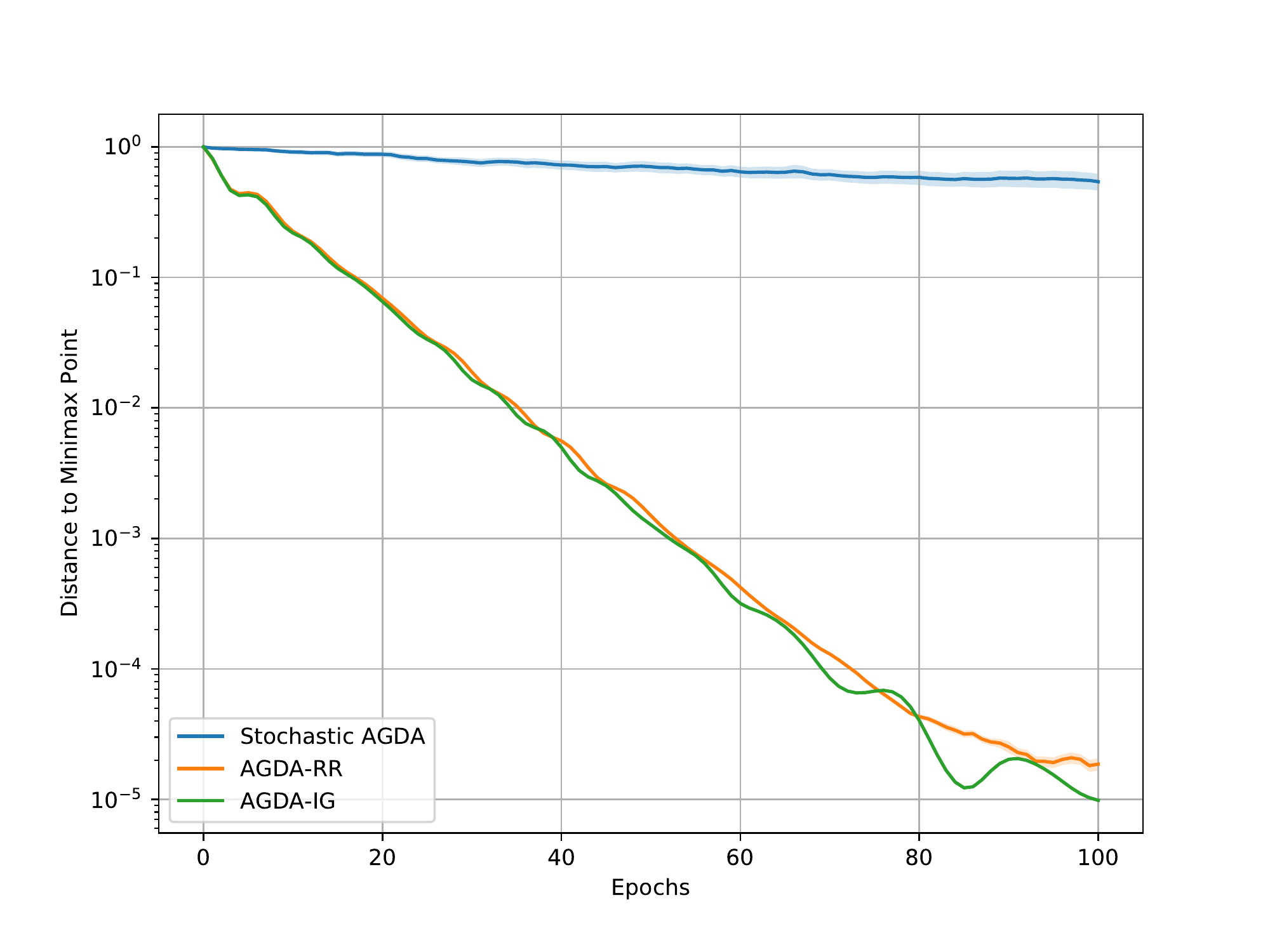}
         \caption{AGDA}
         \label{graph:agda}
     \end{subfigure}
        \caption{Relative distance of the epoch iterates from the global minimax point (i.e. $|\vz^k_0 - \vz^*|^2/|\vz_0 - \vz^*|^2$ vs $k$). The solid lines are the average over 50 runs and the shaded regions are 95\%  confidence intervals. The y-axis of \ref{graph:gda} is on a linear scale whereas that of \ref{graph:ppm} and \ref{graph:agda} is on a logarithmic scale.}
        \label{fig:quad-plots}
\end{figure}

\section{Experiments}
\label{sec:exps}
We evaluate our theoretical results by benchmarking on finite-sum SC-SC quadratic minimax games. This class of problems appears in several applications such as reinforcement learning \citep{SimonDuMinimaxRL}, robust regression, \citep{RobustLS} and online learning \citep{KoolenOnlineQuadratic}. The objective $F$ and the components $f_i$ are given by:
\begin{align*}
F(\vx, \vy) &= 1/n \sum_{i=1}^{n} f_i(\vx, \vy) =  \frac{1}{2} \vx^T \vA \vx + \vx^T \vB \vy - \frac{1}{2} \vy^T \vC \vy, \\
f_i(\vx, \vy) &= \frac{1}{2} \vx^T \vA_i \vx + \vx^T \vB_i \vy - \frac{1}{2} \vy^T \vC_i \vy - \vu_i^T \vx - \vv_i^T \vy,
\end{align*}
where $\vA$ and $\vC$ are strictly positive definite. We generate the components $f_i$ randomly, such that $\sum_{i=1}^{n} \vu_i = \sum_{i=1}^{n} \vv_i = 0$ and the expected singular values of $\vB$ are larger than that of $\vA$ and $\vC$. This ensures that the bilinear coupling term $\vx^T \vB \vy$ is sufficiently strong, since a weak coupling practically reduces to quadratic minimization, which has already been investigated in prior works. Finally, to investigate how the presence of nonconvex-nonconcave components impacts convergence, a few randomly chosen $f_i$'s are allowed to be nonconvex-nonconcave quadratics.  For each algorithm analyzed in the text, we benchmark sampling without replacement against uniform sampling by running each method for 100 epochs using constant step-sizes that are selected independently for each method via grid search. Further details regarding the setup is discussed in Appendix \ref{app-sec:experiments}.

We present our results in Figure \ref{fig:quad-plots}, where we plot the relative distance of the epoch iterates from the minimax point, defined as $|\vz^k_0 - \vz^*|^2/|\vz_0 - \vz^*|^2$, averaged over 50 independent runs. In agreement with our theoretical findings, sampling without replacement consistently outperforms uniform sampling across all three setups. Furthermore, to demonstrate that our observations are not particular to one specific instance, we repeat the experiment for 20 independently sampled quadratic games, and for each instance, perform 5 independent runs of each algorithm and plot the average relative distance of the epoch iterates from the minimax point. The results, presented in Figure \ref{fig:avg-quad-plots}, substantiates the superior convergence of sampling without replacement across multiple instances.

\begin{figure}[t]
     \hspace{-0.05\textwidth}
     \begin{subfigure}[b]{0.372\textwidth}
         \centering
         \includegraphics[width=\textwidth]{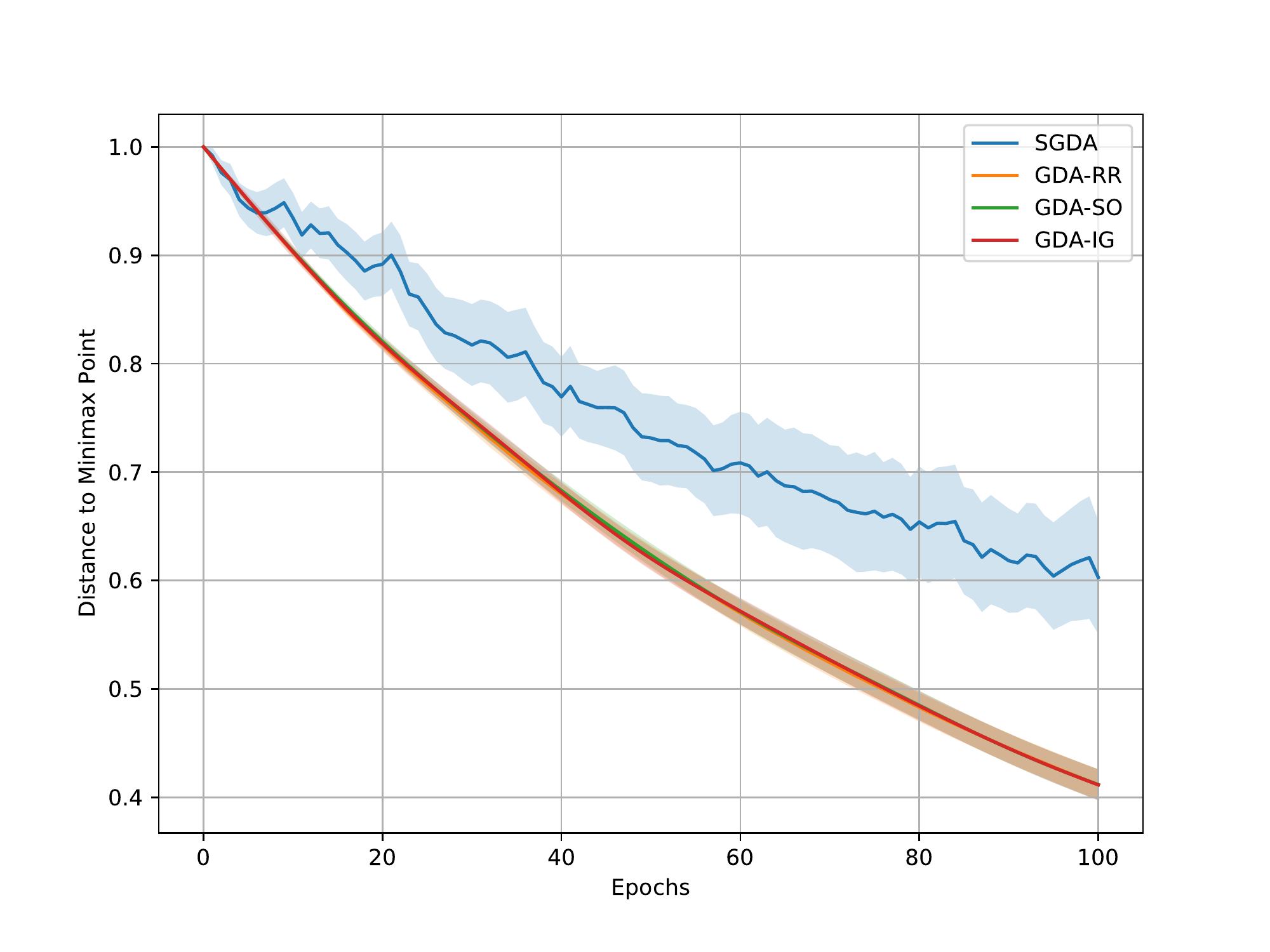}
         \caption{GDA}
         \label{graph:gda}
     \end{subfigure}
     \hspace{-0.045\textwidth}
     \begin{subfigure}[b]{0.372\textwidth}
         \centering
         \includegraphics[width=\textwidth]{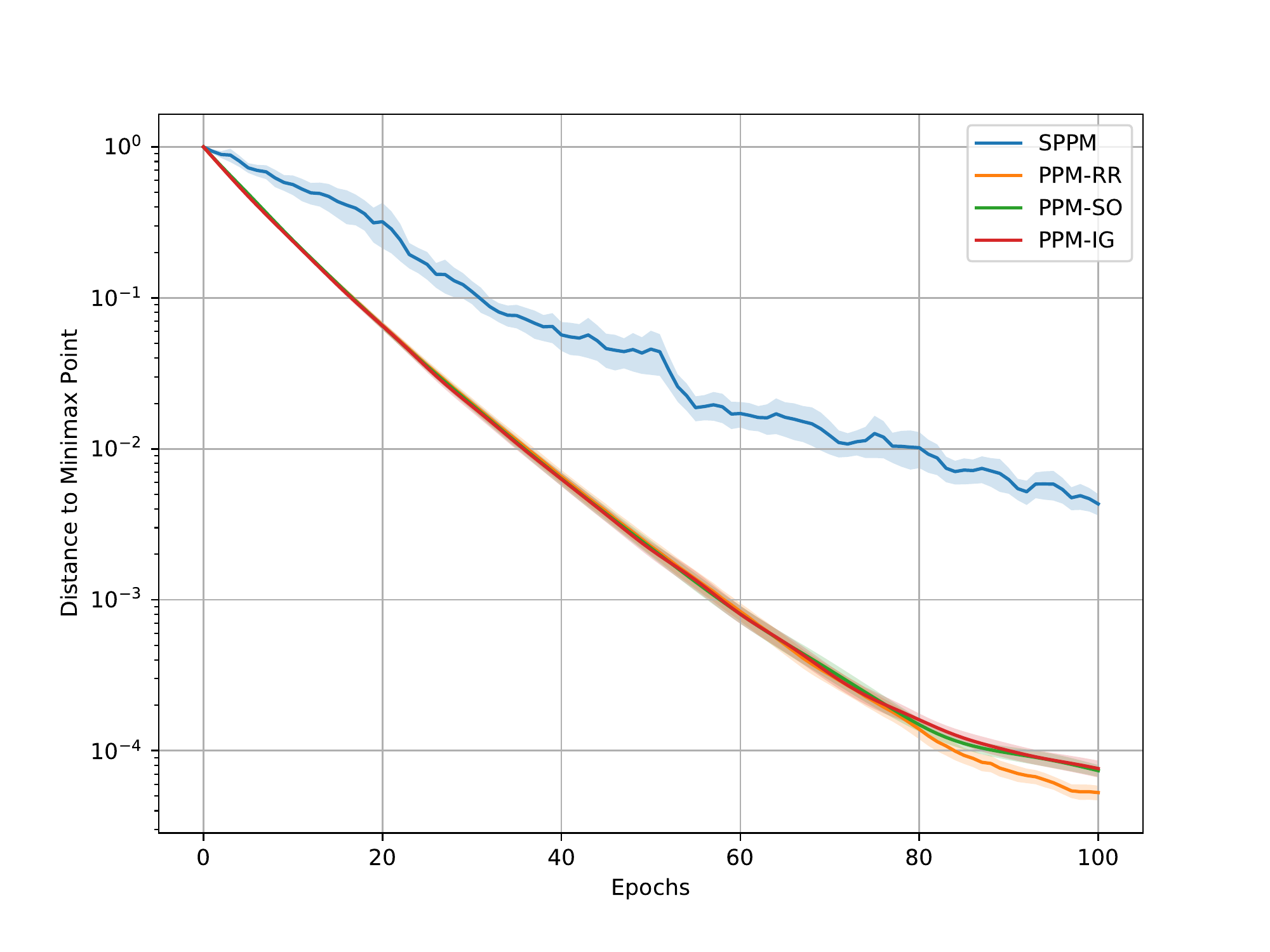}
         \caption{PPM}
         \label{graph:ppm}
     \end{subfigure}
     \hspace{-0.045\textwidth}
     \begin{subfigure}[b]{0.372\textwidth}
         \centering
         \includegraphics[width=\textwidth]{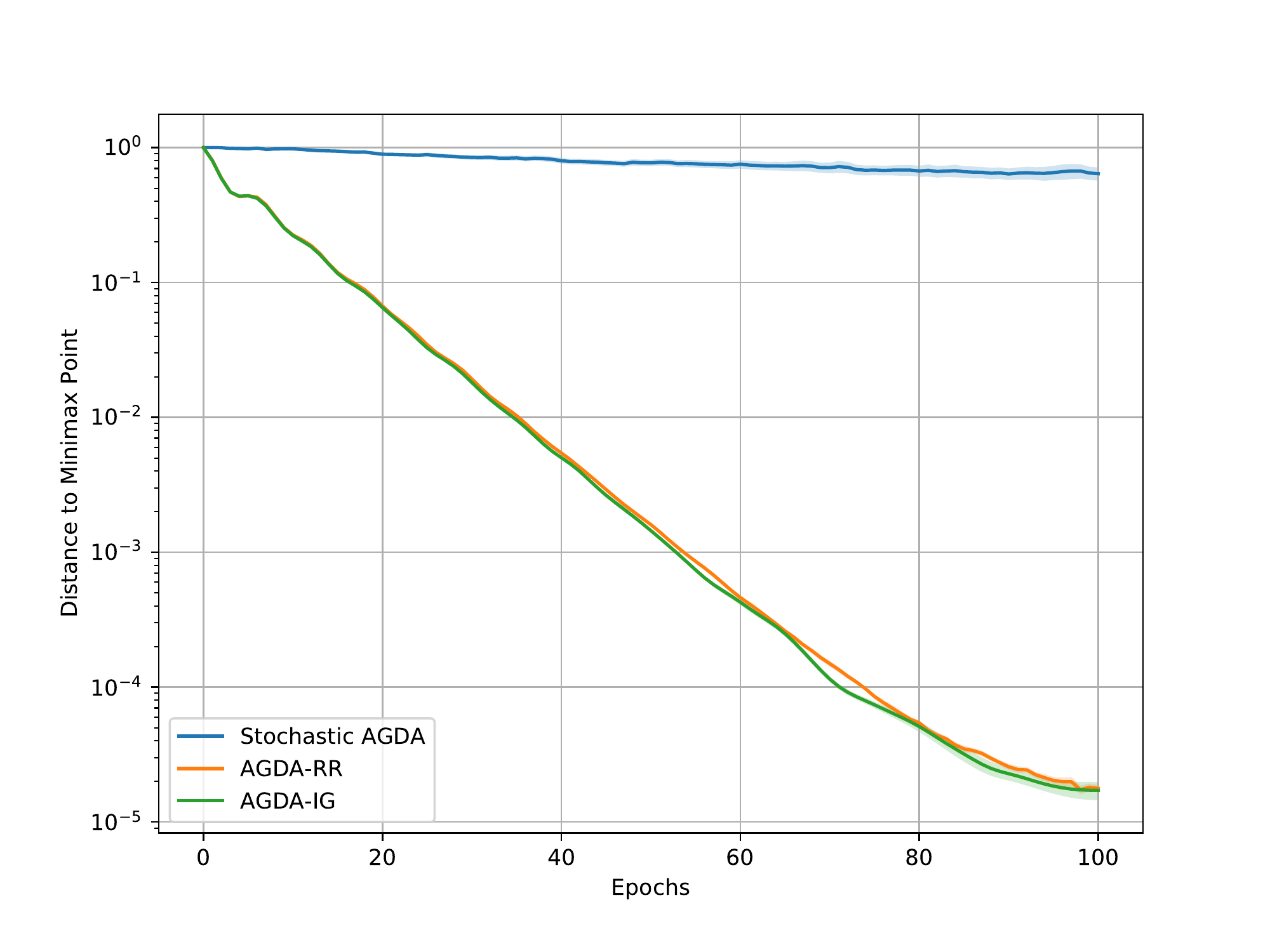}
         \caption{AGDA}
         \label{graph:agda}
     \end{subfigure}
        \caption{Convergence of GDA, PPM and AGDA averaged over 20 random instances. Shaded regions represent 95\% confidence intervals.}
        \label{fig:avg-quad-plots}
\end{figure}
\section{Conclusion}
\label{sec:conclusion}
We derived near optimal convergence rates for several without-replacement stochastic gradient algorithms for finite-sum minimax optimization, and demonstrated that they converge faster than algorithms that use uniform sampling. We considered two problem classes, strongly convex-strongly concave problems (generalized to unconstrained strongly monotone variational inequalities) and nonconvex-nonconcave problems with two-sided P\L{} objectives. We also formally defined \emph{adversarial shuffling}, where an attacker can control the order in which data points are supplied to the optimizer, and analyzed minimax optimization in this regime. Interesting future directions include the analysis of inexact proximal point methods, more general function classes, and time-varying step-sizes.

\section*{Acknowledgements and Disclosure of Funding}
Michael Muehlebach and Bernhard Sch\"{o}lkopf thank the German Research Foundation and the Branco Weiss Fellowship, administered by ETH Zurich, for the generous support.
\bibliography{references}

\clearpage
\section*{Checklist}

\begin{enumerate}

\item For all authors...
\begin{enumerate}
  \item Do the main claims made in the abstract and introduction accurately reflect the paper's contributions and scope?
    \answerYes{}
  \item Did you describe the limitations of your work?
    \answerYes{ Limitations such as structural assumptions on the function class and noise variance are clearly stated wherever appropriate.}
  \item Did you discuss any potential negative societal impacts of your work?
    \answerNo{ The work is purely of a theoretical nature and analyzes existing algorithms that are already widely prevalent.}
  \item Have you read the ethics review guidelines and ensured that your paper conforms to them?
    \answerYes{}
\end{enumerate}

\item If you are including theoretical results...
\begin{enumerate}
  \item Did you state the full set of assumptions of all theoretical results?
    \answerYes{ Assumptions are stated in Sections 2, 3 and 4.}
        \item Did you include complete proofs of all theoretical results?
    \answerYes{ A proof sketch of the main result is presented in Section 3.2 while full proofs are deferred to the appendix.}
\end{enumerate}

\item If you ran experiments...
\begin{enumerate}
  \item Did you include the code, data, and instructions needed to reproduce the main experimental results (either in the supplemental material or as a URL)?
    \answerYes{ They are included in the supplementary material}
  \item Did you specify all the training details (e.g., data splits, hyperparameters, how they were chosen)?
    \answerYes{.Key details are specified in Section 5 and further details are specified in the appendix.}
        \item Did you report error bars (e.g., with respect to the random seed after running experiments multiple times)?
    \answerYes{ All results are reported with 95\% confidence intervals.}
        \item Did you include the total amount of compute and the type of resources used (e.g., type of GPUs, internal cluster, or cloud provider)?
    \answerYes{.These details are provided in the appendix.}
\end{enumerate}

\item If you are using existing assets (e.g., code, data, models) or curating/releasing new assets...
\begin{enumerate}
  \item If your work uses existing assets, did you cite the creators?
    \answerNA{}
  \item Did you mention the license of the assets?
    \answerNA{}
  \item Did you include any new assets either in the supplemental material or as a URL?
    \answerNA{}
  \item Did you discuss whether and how consent was obtained from people whose data you're using/curating?
    \answerNA{}
  \item Did you discuss whether the data you are using/curating contains personally identifiable information or offensive content?
    \answerNA{}
\end{enumerate}

\item If you used crowdsourcing or conducted research with human subjects...
\begin{enumerate}
  \item Did you include the full text of instructions given to participants and screenshots, if applicable?
    \answerNA{}
  \item Did you describe any potential participant risks, with links to Institutional Review Board (IRB) approvals, if applicable?
    \answerNA{}
  \item Did you include the estimated hourly wage paid to participants and the total amount spent on participant compensation?
    \answerNA{}
\end{enumerate}

\end{enumerate}


\newpage
\appendix
\section*{Appendix}
\tableofcontents
\clearpage
\section{Notation}
\label{app-sec:notation}
In addition to the existing notation introduced in Section \ref{sec:notation}, we introduce the matrix product notation $\prod_{i=1}^{n} \vM_i = \vM_1 \vM_2 \ldots \vM_n$ where $\vM_1, \vM_2, \ldots \vM_n$ are square matrices of the same size. Since matrix multiplication is not commutative, the ordering of the indices is important. To this end, we denote the reverse ordered matrix product as follows:
\begin{equation*}
    \revprod_{i=1}^{n} \vM_i = \vM_n \vM_{n-1} \ldots \vM_1.
\end{equation*}
We highlight that the above product is not necessarily equal to $\prod_{i=1}^{n} \vM_i$ due to the noncommutativity of matrix products. Furthermore, we define the empty (reverse) matrix product (i.e. the matrix product or reverse matrix product to be indexed over empty sets) to be the identity matrix $\vI$, where the size of $\vI$ is clear from the context. Finally, we use $\sigma_{\max}(\vA)$ to denote the maximum singular value of the matrix $\vA$ (which is equal to its operator norm $\norm{\vA}$) and $\sigma_{\min}(\vA)$ to denote its minimum singular value.
\section{Useful Lemmas}
\label{app-sec:lemmas}
In this section, we present the proofs of some properties that are important for our convergence analysis. We begin with a lemma on the differentiability properties of Lipschitz and strongly monotone operators, which is repeatedly used in all our proofs. 
\begin{lemma}[Differentiability properties of Lipschitz operators]
\label{app-lem:lip-operator-property}
Let $\nu : \bR^d \rightarrow \bR^d$ be an $l$-Lipschitz continuous operator. Then, $\nu$ is Lebesgue almost everywhere differentiable, i.e., the Jacobian $\grad \nu(\vz)$ is well defined everywhere outside of a set of Lebesgue measure zero and satisfies $\norm{\grad \nu(\vz)} \leq l$, wherever it is defined. Additionally, if $\nu$ is $\mu$-strongly monotone, then $\nu$ has a unique root, and $\vv^T \grad \nu(\vz) \vv \geq \mu|\vv|{}^2$ for any $\vv \in \bR^d$ and any $\vz \in \bR^d$ such that $\grad \nu(\vz)$ is well defined.
\end{lemma}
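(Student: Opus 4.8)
The plan is to treat the two assertions separately: almost-everywhere differentiability follows from a classical theorem, while all the quantitative bounds are extracted from first principles using difference quotients.

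For the differentiability claim, I would invoke Rademacher's theorem, which guarantees that every Lipschitz map $\bR^d \rightarrow \bR^d$ is differentiable outside a set of Lebesgue measure zero. To bound the Jacobian, fix a point $\vz$ at which $\grad \nu(\vz)$ exists and an arbitrary unit vector $\vu$. The directional derivative $\grad \nu(\vz)\vu = \lim_{t \rightarrow 0}(\nu(\vz + t\vu) - \nu(\vz))/t$ satisfies $\norm{\grad \nu(\vz)\vu} \leq \limsup_{t \rightarrow 0}\norm{\nu(\vz + t\vu) - \nu(\vz)}/|t| \leq l$ by $l$-Lipschitzness, and taking the supremum over all unit vectors $\vu$ gives $\norm{\grad \nu(\vz)} \leq l$.

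For the strongly monotone case, existence and uniqueness of the root both follow by noting that, for any $0 < \alpha < 2\mu/l^2$, the map $T(\vz) = \vz - \alpha \nu(\vz)$ is a contraction: expanding $\norm{T(\vz_1) - T(\vz_2)}^2$ and using $l$-Lipschitzness together with $\mu$-strong monotonicity gives $\norm{T(\vz_1) - T(\vz_2)}^2 \leq (1 - 2\alpha\mu + \alpha^2 l^2)\norm{\vz_1 - \vz_2}^2$ with contraction factor strictly below one, so Banach's fixed-point theorem yields a unique fixed point $\vz^*$, which is precisely the unique root of $\nu$. (Uniqueness alone can also be read off immediately by substituting two roots into the strong monotonicity inequality.) For the quadratic-form bound, fix $\vz$ at which $\grad \nu(\vz)$ exists and an arbitrary $\vv \in \bR^d$; applying $\mu$-strong monotonicity to the pair $\vz + t\vv$ and $\vz$ gives $\dotprod{\nu(\vz + t\vv) - \nu(\vz)}{\vv} \geq \mu t \norm{\vv}^2$ for every $t > 0$, and dividing by $t$ and letting $t \downarrow 0$---using that the difference quotient $(\nu(\vz + t\vv) - \nu(\vz))/t$ converges to $\grad \nu(\vz)\vv$---yields $\vv^\top \grad \nu(\vz)\vv \geq \mu \norm{\vv}^2$.

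I do not expect a serious obstacle here. The only nontrivial input is Rademacher's theorem, which is quoted rather than proved, and everything else reduces to elementary manipulation of difference quotients and the two defining inequalities. The single point requiring mild care is ensuring the limit arguments are performed at points of \emph{full} differentiability rather than at points where merely a directional derivative exists; Rademacher's theorem supplies exactly this almost everywhere, so the difference quotients genuinely converge to the linear map $\grad \nu(\vz)$ and every passage to the limit is legitimate.
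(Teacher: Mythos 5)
Your proposal is correct and follows essentially the same route as the paper: Rademacher's theorem for almost-everywhere differentiability, difference quotients combined with Lipschitzness and strong monotonicity for the bounds $\norm{\grad \nu(\vz)} \leq l$ and $\vv^T \grad\nu(\vz)\vv \geq \mu\norm{\vv}^2$, and a Banach fixed-point argument on the map $\vz \mapsto \vz - \alpha\nu(\vz)$ for the unique root (the paper simply fixes $\alpha = \mu/l^2$ where you allow any $\alpha \in (0, 2\mu/l^2)$, an immaterial difference).
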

\begin{proof}
The first statement of this lemma is a direct consequence of Rademacher's Theorem which states that any Lipschitz continuous function defined on $\bR^d$ is differentiable everywhere outside of a set of zero Lebesgue measure. The second statement is proved as follows:

Consider any $\vz \in \bR^d$ such that $\grad \nu(\vz)$ is well defined and let $\vv \in \bR^d$ be arbitrary. By the Lipschitz continuity of $\nu$, it follows that for any $t \in \bR$:
\begin{align*}
    \norm{\nu(\vz + t \vv) - \nu(\vz)} &\leq l \norm{t\vv}.
\end{align*}
Rearranging and taking limits results in,
\begin{align*}
    \lim_{t \to 0} \norm{\frac{\nu(\vz + t \vv) - \nu(\vz)}{t}} &\leq l \norm{\vv}.
\end{align*}
By the continuity of norms and the definition of the Jacobian, it follows that $\norm{\grad \nu(\vz) \vv} \leq l \norm{\vv} \ \forall \ \vv \in \bR^d$, which further implies that $\norm{\grad \nu(\vz)} \leq l$. This concludes the proof of the second statement. 

For the proof of the third statement, we consider the operator $\zeta(\vz) = \vz - \frac{\mu}{l^2}\nu(\vz)$ and observe that any fixed point of $\zeta$ is a root of $\nu$ and vice versa.

From the $l$-smoothness and $\mu$-strong monotonicity of $\nu$, we infer that for any $\vz_1, \vz_2 \in \bR^d$:
\begin{align*}
    \norm{\zeta(\vz_2) - \zeta(\vz_1)}^2 &= \norm{\vz_2 - \vz_1}^2 - \frac{2 \mu}{l^2} \dotprod{\nu(\vz_2) - \nu(\vz_1)}{\vz_2 - \vz_1} + \frac{\mu^2}{l^4}\norm{\nu(\vz_2) - \nu(\vz_1)}^2 \\
    &\leq (1 - \frac{ \mu^2}{l^2})\norm{\vz_2 - \vz_1}^2 < \norm{\vz_2 - \vz_1}^2.
\end{align*}
Thus, $\zeta$ is a contraction mapping, and has a unique fixed point $\vz^*$ by the Banach Fixed Point Theorem, which implies that $\vz^*$ is the unique root of $\nu$.
Finally, consider any $\vz \in \bR^d$ such that $\grad \nu(\vz)$ is well defined and let $\vv \in \bR^d$ be arbitrary. By the strong monotonicity of $\nu$, it follows that for any $t \in \bR$:
\begin{align*}
    \dotprod{\nu(\vz + t \vv) - \nu(\vz)}{t \vv} \geq \mu \norm{t \vv}^2.
\end{align*}
Rearranging and taking limits results in
\begin{align*}
    \dotprod{\vv}{\lim_{t \to 0} \frac{\nu(\vz + t \vv) - \nu(\vz)}{t}} \geq \mu \norm{\vv}^2.
\end{align*}
By definition of the Jacobian and the continuity of inner products, it follows that $\vv^T \grad \nu(\vz) \vv \geq \mu \norm{\vv}^2$.
\end{proof}
We now present a lemma on the properties of the gradient operator of a strongly convex-strongly concave function, which was earlier stated as Lemma \ref{lem:sc-sc-monotone-vi} in Section \ref{sec:sc-sc-setting}. This lemma establishes the equivalence between the minimax optimization problem for a smooth and strongly convex-strongly concave function $F$ and the root finding problem for its corresponding gradient operator $\nu$.
\begin{lemma}[Equivalence of minimax optimization and root finding]
\label{app-lem:minmax-root-find-equivalence}
 Let $F : \bR^{d_{\vx}} \times\bR^{d_{\vy}}  \rightarrow \bR$ be an $l$-smooth and $\mu$-strongly convex-strongly concave function and let $\nu(\vx, \vy) = [\grad_{\vx} F(\vx, \vy), -\grad_{\vy} F(\vx, \vy)]$. Then $\nu$ is $l$-Lipschitz continuous, $\mu$-strongly monotone and has a unique root $\vz^*$, which is also the unique global minimax point of $F$.
\end{lemma}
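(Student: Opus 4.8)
The plan is to verify each of the four claims — Lipschitzness, strong monotonicity, existence/uniqueness of a root, and the identification of that root with the global minimax point — in turn, leaning on the variational characterizations of smoothness and strong convexity coordinatewise.

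\textbf{Lipschitzness.} First I would note that $l$-smoothness of $F$ means $\grad F$ is $l$-Lipschitz on $\bR^d$; since $\nu(\vx,\vy) = [\grad_\vx F, -\grad_\vy F]$ is obtained from $\grad F = [\grad_\vx F, \grad_\vy F]$ by flipping the sign of the $\vy$-block, and sign-flipping a block is an orthogonal (norm-preserving) transformation, $\norm{\nu(\vz_1)-\nu(\vz_2)} = \norm{\grad F(\vz_1) - \grad F(\vz_2)} \le l\norm{\vz_1-\vz_2}$. (Alternatively one invokes Assumption \ref{as:comp-smooth} directly, since $F$ here plays the role of a single component.)

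\textbf{Strong monotonicity.} This is essentially Lemma \ref{lem:sc-sc-monotone-vi}, so in principle I could just cite it; but to make the appendix self-contained I would give the short argument. Write $\vz_j = (\vx_j,\vy_j)$. By $\mu$-strong convexity of $F(\cdot,\vy_1)$ and of $F(\cdot,\vy_2)$, adding the two first-order strong-convexity inequalities (with the roles of the points swapped) gives $\dotprod{\grad_\vx F(\vx_1,\vy_1) - \grad_\vx F(\vx_1,\vy_2)}{\dots}$ — more precisely, the standard "two-point" consequence of strong convexity applied to $\vx \mapsto F(\vx,\vy)$ for a \emph{fixed} $\vy$ does not immediately combine across different $\vy$'s, so the cleaner route is: strong convexity of $F(\cdot,\vy)$ for every $\vy$ plus strong concavity of $F(\vx,\cdot)$ for every $\vx$ imply, via the standard saddle-function computation, $\dotprod{\nu(\vz_1)-\nu(\vz_2)}{\vz_1-\vz_2} \ge \mu\norm{\vz_1-\vz_2}^2$. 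Concretely, I would use that $\dotprod{\grad_\vx F(\vx_1,\vy_1)-\grad_\vx F(\vx_2,\vy_1)}{\vx_1-\vx_2}\ge \mu\norm{\vx_1-\vx_2}^2$ and $\dotprod{\grad_\vx F(\vx_2,\vy_1)-\grad_\vx F(\vx_2,\vy_2)}{\vx_1-\vx_2}$ combined with the analogous $\vy$-terms, where the cross terms cancel because $\grad_\vx(-F) $ and $\grad_\vy$ of the same function satisfy $\dotprod{\grad_\vx F(\vx_2,\vy_1)-\grad_\vx F(\vx_2,\vy_2)}{\vx_1-\vx_2} + \dotprod{-\grad_\vy F(\vx_2,\vy_1)+\grad_\vy F(\vx_1,\vy_1)}{\vy_1-\vy_2}$ telescopes to zero by symmetry of the Hessian (or, without assuming twice-differentiability, by a convexity-in-$\vx$/concavity-in-$\vy$ argument applied to the function evaluated along segments). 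This cancellation of the mixed (bilinear-coupling) terms is the one place requiring care, and I expect it to be the main obstacle; the rest is bookkeeping.

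\textbf{Unique root and identification with the minimax point.} Given Lipschitzness and $\mu$-strong monotonicity, Lemma \ref{app-lem:lip-operator-property} immediately yields that $\nu$ has a unique root $\vz^*$. It remains to show $\vz^* = (\vx^*,\vy^*)$ is the unique global minimax point. For existence/uniqueness of the minimax point, I would argue: $\Phi(\vx) := \max_\vy F(\vx,\vy)$ is well-defined (the inner max is attained and unique by $\mu$-strong concavity) and is itself $\mu$-strongly convex (a standard fact for SC-SC functions), hence has a unique minimizer $\vx^*$; setting $\vy^* := \arg\max_\vy F(\vx^*,\vy)$ gives the unique saddle point. Then $\nu(\vx^*,\vy^*) = 0$ because $\grad_\vy F(\vx^*,\vy^*)=0$ (interior maximizer) and $\grad_\vx F(\vx^*,\vy^*) = \grad\Phi(\vx^*) = 0$ (Danskin's theorem, using uniqueness of the inner maximizer, plus the fact that $\vx^*$ minimizes $\Phi$ over all of $\bR^{d_\vx}$). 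Conversely, any root of $\nu$ is a stationary point of the (strongly) convex $F(\cdot,\vy)$ and the (strongly) concave $F(\vx,\cdot)$, hence a global saddle point; uniqueness of the root (from Lemma \ref{app-lem:lip-operator-property}) then forces it to coincide with $(\vx^*,\vy^*)$. Assembling these pieces completes the proof.
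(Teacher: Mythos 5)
Your Lipschitzness argument and your treatment of the root/minimax identification are sound and essentially match the paper (unique root via Lemma \ref{app-lem:lip-operator-property}, Danskin plus strong convexity of $\Phi$ and strong concavity in $\vy$; note only that citing Lemma \ref{lem:sc-sc-monotone-vi} would be circular here, since this appendix lemma \emph{is} its proof). The genuine gap is in your strong-monotonicity step. After splitting the gradient differences at the intermediate point $(\vx_2,\vy_1)$, your two cross terms are
$\dotprod{\grad_\vx F(\vx_2,\vy_1)-\grad_\vx F(\vx_2,\vy_2)}{\vx_1-\vx_2}$ and $-\dotprod{\grad_\vy F(\vx_1,\vy_1)-\grad_\vy F(\vx_2,\vy_1)}{\vy_1-\vy_2}$; writing them as integrals of the mixed Hessian, the first integrates $\grad^2_{\vx\vy}F$ along the segment $\{\vx_2\}\times[\vy_2,\vy_1]$ and the second along $[\vx_2,\vx_1]\times\{\vy_1\}$. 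Symmetry of the Hessian cancels such terms only when both blocks are evaluated at the \emph{same} point, which is not the case here, so the claimed telescoping to zero fails whenever the coupling is not bilinear (and a pointwise-a.e. second-derivative argument along a single segment, which would repair it, needs $\grad F$ to be differentiated along the diagonal segment from $\vz_2$ to $\vz_1$, not along your two axis-parallel segments, and extra care since $F$ is only assumed $C^{1,1}$).

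The route you dismissed at the start is in fact the one that works, and it is what the paper does: write the first-order $\mu$-strong-convexity inequality for $F(\cdot,\vy_1)$ from $\vx_1$ to $\vx_2$ and for $F(\cdot,\vy_2)$ from $\vx_2$ to $\vx_1$, and the $\mu$-strong-concavity inequality for $F(\vx_2,\cdot)$ from $\vy_1$ to $\vy_2$ and for $F(\vx_1,\cdot)$ from $\vy_2$ to $\vy_1$. Adding the four inequalities, the function values on the left-hand sides cancel identically, and the gradient terms assemble exactly into $\dotprod{\nu(\vz_2)-\nu(\vz_1)}{\vz_2-\vz_1}\geq\mu\norm{\vz_2-\vz_1}^2$; no mixed-Hessian or cross terms ever appear, so no symmetry or twice-differentiability is needed. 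Replacing your cross-term cancellation with this four-inequality argument closes the gap; the rest of your proposal can stand as is.
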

\begin{proof}
The $l$-Lipschitz continuity of $\nu$ follows by definition of the $l$-smoothness of $F$, as described in Assumption \ref{as:comp-smooth}. To establish $\mu$-strong monotonicity, we proceed as follows:

Consider any $\vz_1 = (\vx_1, \vy_1)$ and $\vz_2 = (\vx_2, \vy_2)$ in $\bR^d$. Since $F$ is $\mu$-strongly convex in $\vx$ and $\mu$-strongly concave in $\vy$, it follows that:
\begin{align*}
    F(\vx_2, \vy_1) - F(\vx_1, \vy_1) &\geq \dotprod{\grad_{\vx} F(\vx_1, \vy_1)}{\vx_2 - \vx_1} + \frac{\mu}{2}\norm{\vx_2 - \vx_1}^2, \\
    F(\vx_1, \vy_2) - F(\vx_2, \vy_2) &\geq \dotprod{\grad_{\vx} F(\vx_2, \vy_2)}{\vx_1 - \vx_2} + \frac{\mu}{2}\norm{\vx_2 - \vx_1}^2, \\
    F(\vx_2, \vy_2) - F(\vx_2, \vy_1) &\geq \dotprod{\grad_{\vy} F(\vx_2, \vy_2)}{\vy_2 - \vy_1} + \frac{\mu}{2}\norm{\vy_2 - \vy_1}^2, \\
    F(\vx_1, \vy_1) - F(\vx_1, \vy_2) &\geq \dotprod{\grad_{\vy} F(\vx_1, \vy_1)}{\vy_1 - \vy_2} + \frac{\mu}{2}\norm{\vy_2 - \vy_1}^2.
\end{align*}
Adding all four inequalities, and substituting the definition of $\nu$, we get,
\begin{align*}
    \dotprod{\nu(\vz_2) - \nu(\vz_1)}{\vz_2 - \vz_1} \geq \mu \norm{\vz_2 - \vz_1}^2.
\end{align*}

To prove the second part of this lemma, we define $\Phi(\vx) = \max_{\vy \in \bR^{d_{\vy}}} F(\vx, \vy)$. Furthermore, we note that $\Phi$ is strongly convex and by Danskin's Theorem, $\grad \Phi(\vx) = \grad_{\vx} F(\vx, \vy^*(\vx))$ where $\vy^*(\vx) \in \bR^{d_{\vy}}$ is the unique solution of $F(\vx, \vy^*(\vx)) = \max_{\vy \in \bR^{d_{\vy}}} F(\vx, \vy)$, with uniqueness being guaranteed by the strong concavity of $F$ in $\vy$. Since $\nu$ is $l$-Lipschitz continuous and $\mu$-strongly monotone, it is guaranteed to have a unique root $\vz^* = (\vx^*, \vy^*)$ by Lemma \ref{app-lem:lip-operator-property}, which implies that $\grad_{\vx} F(\vx^*, \vy^*) = \grad_{\vy} F(\vx^*, \vy^*) = 0$. Since $F$ is strongly concave in $\vy$, this further implies that $\vy^*$ is the unique maximizer of $F(\vx^*, \vy)$. Moreover, by Danskin's Theorem, $\grad \Phi(\vx^*) = \grad_{\vx} F(\vx^*, \vy^*) = 0$ which implies that $\vx^*$ is the unique minimizer of $\Phi$, since $\Phi$ is strongly convex. Thus, by definition of a global minimax point, we conclude that $\vz^*$ is the unique global minimax point of $F$.
\end{proof}
We conclude this section with the proof of a standard result in statistics on the variance of the population mean estimator under simple random sampling without replacement \cite{CochranSamplingWOR, RiceSamplingWOR}.  This property constitutes a central component of our convergence proofs and allows us to sharply bound the noise in the iterates of RR and SO. This lemma has also been used in \citet{MischenkoRR2020} to analyze RR/SO for minimization. 
\begin{lemma}[Properties of without-replacement sample means : Lemma 1 in \citet{MischenkoRR2020}]
\label{app-lem:sample-mean-wor}
Let $\tau$ be a uniformly sampled random permutation of $[n]$, and let $\vv_1, \vv_2, ..., \vv_n$ be arbitrary vectors that are \textbf{independent of $\tau$}, with sample mean $\vm = 1/n\sum_{i=1}^{n} \vv_i$ and sample variance $\sigma^2 = 1/n \sum_{i=1}^{n} \norm{\vv_i - \vm}^2$. For any $i \in [n]$, let the without-replacement sample average be defined as $\hat{\vm}^{(i)}_{\tau} = 1/i\sum_{j=1}^{i} \vv_{\tau(j)}$. Then, $\E{\hat{\vm}^{(i)}_{\tau}} = \vm$ and $\E{|\hat{\vm}^{(i)}_{\tau} - \vm|^2} = \frac{n-i}{n-1}\frac{\sigma^2}{i}$, where the expectation is taken over the uniform random permutation $\tau$.
\end{lemma}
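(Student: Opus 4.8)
The plan is to follow the classical computation for simple random sampling without replacement, in two stages: first unbiasedness, then the exact second moment.

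\textbf{Unbiasedness.} Since $\tau$ is a uniform random permutation and independent of the $\vv_l$, each index $\tau(j)$ is marginally uniform on $[n]$, so $\E{\vv_{\tau(j)}} = \vm$ for every $j \in [i]$. Averaging over $j = 1, \ldots, i$ and using linearity of expectation (conditioning on the $\vv_l$ if they are themselves random) gives $\E{\hat{\vm}^{(i)}_{\tau}} = \vm$.

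\textbf{Second moment.} First reduce to the centered case: replacing $\vv_l$ by $\vv_l - \vm$ leaves $\sigma^2$ unchanged and shifts $\hat{\vm}^{(i)}_{\tau}$ by $\vm$, so it suffices to show $\E{\norm{\hat{\vm}^{(i)}_{\tau}}^2} = \frac{n-i}{n-1}\frac{\sigma^2}{i}$ when $\sum_l \vv_l = 0$ and $\sigma^2 = \frac{1}{n}\sum_l \norm{\vv_l}^2$. Expanding $\norm{\hat{\vm}^{(i)}_{\tau}}^2 = \frac{1}{i^2}\sum_{j,k=1}^{i}\dotprod{\vv_{\tau(j)}}{\vv_{\tau(k)}}$ into $i$ diagonal terms $\norm{\vv_{\tau(j)}}^2$ and $i(i-1)$ cross terms $\dotprod{\vv_{\tau(j)}}{\vv_{\tau(k)}}$ with $j \ne k$, I would compute the two expectations separately: each diagonal term has expectation $\frac{1}{n}\sum_l \norm{\vv_l}^2 = \sigma^2$, and each cross term has expectation $\frac{1}{n(n-1)}\sum_{l \ne m}\dotprod{\vv_l}{\vv_m}$, since $(\tau(j),\tau(k))$ is uniform over ordered pairs of distinct indices. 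The key identity is $\sum_{l \ne m}\dotprod{\vv_l}{\vv_m} = \norm{\sum_l \vv_l}^2 - \sum_l \norm{\vv_l}^2 = -n\sigma^2$ in the centered case, so each cross term contributes $-\sigma^2/(n-1)$; collecting the $\frac{1}{i^2}$ prefactor and simplifying $\frac{1}{i^2}\bigl(i\sigma^2 - i(i-1)\sigma^2/(n-1)\bigr) = \frac{\sigma^2}{i}\bigl(1 - \frac{i-1}{n-1}\bigr)$ yields exactly $\frac{n-i}{n-1}\frac{\sigma^2}{i}$.

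\textbf{Main obstacle.} There is no genuine obstacle here — this is a textbook fact about without-replacement sampling — but the one place needing care is the cross-term expectation: one must argue that for a uniform permutation the ordered pair $(\tau(j),\tau(k))$ with $j \neq k$ is uniform over all $n(n-1)$ ordered pairs of distinct indices, and then count the $i(i-1)$ such terms correctly before simplifying. It is also worth checking the edge cases $i=1$ (no cross terms, and the formula correctly returns $\sigma^2$) and $n=1$ (which forces $i=1$, $\sigma^2=0$, so both sides vanish and the factor $\frac{n-i}{n-1}$ is never genuinely $0/0$).
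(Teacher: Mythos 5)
Your proposal is correct and follows essentially the same route as the paper's proof: marginal uniformity of $\tau(j)$ for unbiasedness, then expansion of the squared norm into diagonal terms with expectation $\sigma^2$ and cross terms with expectation $-\sigma^2/(n-1)$ (your preliminary centering of the $\vv_l$ is only a cosmetic simplification of the paper's computation with $\vv_{\tau(j)}-\vm$). The cross-term uniformity over ordered pairs of distinct indices, which you flag as the point needing care, is exactly the step the paper also verifies explicitly.
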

\begin{proof}
Since $\vv_1, \ldots, \vv_n$ is independent of the uniformly sampled permutation $\tau$, it follows that for any $j \in [n]$,
\begin{align*}
    \bE[\vv_{\tau(j)}] = \sum_{k=1}^{n} \mathbb{P}_{\textrm{Uniform}}\left[\tau(j) = k\right] \vv_k = \sum_{k=1}^{n} \frac{(n-1)!}{n!} \vv_k = 1/n \sum_{k=1}^{n} \vv_k = \vm.
\end{align*}
Hence, by linearity of expectations,
\begin{align*}
    \bE[\hat{\vm}^{(i)}_{\tau}] = 1/i\sum_{j=1}^{i} \bE[\vv_{\tau(j)}] = \vm.
\end{align*}
Similarly, we can show that for any $j \in [n]$,
\begin{align*}
    \bE[\norm{\vv_{\tau(j)} - \vm}^2] = \sum_{k=1}^{n} \frac{(n-1)!}{n!} \norm{\vv_{k} - \vm}^2 = 1/n \sum_{k=1}^{n}\norm{\vv_{k} - \vm}^2 = \sigma^2.
\end{align*}
We now consider any $j_1, j_2 \in [n]$ such that $j_1 \neq j_2$ and infer that,
\begin{align*}
    \bE[\dotprod{\vv_{\tau(j_1)} - \vm}{\vv_{\tau(j_2)} - \vm}] &= \sum_{1 \leq k_1 \neq k_2 \leq n} \mathbb{P}_{\textrm{Uniform}}\left[\tau(j_1) = k_1, \tau(j_2) = k_2 \right] \dotprod{\vv_{k_1} - \vm}{\vv_{k_2} - \vm} \\
    &= \frac{(n-2)!}{n!}\sum_{k_1 = 1}^{n} \sum_{1 \leq k_2 \neq k_1 \leq n} \dotprod{\vv_{k_1} - \vm}{\vv_{k_2} - \vm} \\
    &= \frac{1}{n(n-1)}\sum_{k_1=1}^{n}\left[\left(\sum_{k_2=1}^{n} \dotprod{\vv_{k_1} - \vm}{\vv_{k_2} - \vm} \right) - \norm{\vv_{k_1} - \vm}^2 \right] \\
    &= \frac{1}{n(n-1)} \sum_{k_1 = 1}^{n}\dotprod{\vv_{k_1} - \vm}{\sum_{k_2=1}^{n} \vv_{k_2} - \vm} - \frac{1}{n(n-1)}\sum_{k_1=1}^{n}\norm{\vv_{k_1} - \vm}^2 \\
    &= -\frac{\sigma^2}{n-1}.
\end{align*}
From the above identities, we conclude that,
\begin{align*}
    \bE[|\hat{\vm}^{(i)}_{\tau} - \vm|^2] &= \bE[|1/i \sum_{j=1}^{i} \vv_{\tau(j)} - \vm|^2] \\
    &= \nicefrac{1}{i^2} \bE[|\sum_{j=1}^{i} \vv_{\tau(j)} - \vm|^2] \\
    &= \nicefrac{1}{i^2}\sum_{j=1}^{i}\sum_{k=1}^{i} \bE[\dotprod{\vv_{\tau(j)} - \vm}{\vv_{\tau(k)} - \vm}] \\
    &= \nicefrac{1}{i^2} \left( \sum_{j=1}^{i} \bE[\norm{\vv_{\tau(j)} - \vm}^2] + \sum_{1 \leq j \neq k \leq i}\bE[\dotprod{\vv_{\tau(j)} - \vm}{\vv_{\tau(k)} - \vm}] \right) \\
    &= \nicefrac{1}{i^2}[i \sigma^2 - \frac{i(i-1)}{n-1}\sigma^2] \\
    &= \frac{n-i}{n-1}\frac{\sigma^2}{i}.
\end{align*}
\end{proof}
\section{Analysis of GDA without Replacement}
\label{app-sec:gda}
In this section, we present the proofs of convergence for simultaneous Gradient Descent Ascent without replacement (i.e. GDA-RR/SO/AS) for solving the finite-sum strongly monotone root finding problem. Our proofs of the same rely on two key techniques: 1. exploiting the Lipschitz continuity of the components $\omega_i$ to \emph{linearize} the dynamics of GDA without replacement about $\vz^*$ (where $\vz^*$ denotes the unique root of $\nu$), 2. using Lemma \ref{app-lem:sample-mean-wor} to control the noise in the iterates of GDA without replacement. 

For ease of exposition, we first illustrate the linearization technique by presenting a linearization-based proof of the well known exponential convergence rate of full-batch GDA for solving the root finding problem under Lipschitz continuity and strong monotonicity assumptions. This serves as a prelude for our unified proof of convergence of GDA-RR and GDA-SO for solving the root finding problem for finite-sum strongly monotone operators. We conclude this section by presenting a proof of convergence of GDA-AS, which demonstrates how our techniques for analyzing RR and SO can be easily adapted to the adversarial shuffling setup.
\subsection{Analysis of Full-batch GDA by Linearization}
\label{app-sec:gda-batch}
\begin{theorem}[Convergence of full-batch GDA]
\label{app-thm:batch-gda-convergence}
Consider Problem \eqref{p:strong-monotone-vi} for the $l$-Lipschitz and $\mu$-strongly monotone operator $\nu : \bR^d \rightarrow \bR^d$ and let $\vz^*$ denote the unique root of $\nu$. For any step-size $\alpha > 0$, the iterates $\vz_k$ of full-batch GDA satisfy the following recurrence:
\begin{align*}
    \vz_{k+1} - \vz^* = (\vI - \alpha \vM_k) (\vz_{k} - \vz^*),
\end{align*}
where the spectral norm of $\vI - \alpha \vM_k$ is bounded as $\norm{\vI - \alpha \vM_k} \leq (1 - 2\alpha \mu + \alpha^2 l^2)^{1/2}$. Consequently, setting $\alpha = \mu/l^2$ gives us the following last iterate convergence guarantee:
\begin{align*}
    \norm{\vz_{K+1} - \vz^{*}}^2 \leq (1 - 1/\kappa^2)^K \norm{\vz_{0} - \vz^{*}}^2,
\end{align*}
where $\kappa = l/\mu$. Furthermore, the exponential convergence rate of $1 - 1/\kappa^2$ is optimal for GDA up to constant factors.
\end{theorem}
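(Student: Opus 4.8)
The plan is to analyze the full-batch GDA update $\vz_{k+1} = \vz_k - \alpha \nu(\vz_k)$ by exploiting the almost-everywhere differentiability of the Lipschitz operator $\nu$ (Lemma \ref{app-lem:lip-operator-property}) together with a mean-value-type argument. Since $\nu(\vz^*) = 0$, I would write
\begin{align*}
\nu(\vz_k) = \nu(\vz_k) - \nu(\vz^*) = \left( \int_0^1 \grad \nu(\vz^* + t(\vz_k - \vz^*)) \, dt \right)(\vz_k - \vz^*) =: \vM_k (\vz_k - \vz^*),
\end{align*}
where $\vM_k$ is the averaged Jacobian along the segment joining $\vz^*$ and $\vz_k$. (One must be slightly careful since $\grad\nu$ is only defined a.e.; this is handled by a standard mollification/approximation argument, or by noting the fundamental theorem of calculus still applies to Lipschitz functions along almost every line.) This immediately yields $\vz_{k+1} - \vz^* = (\vI - \alpha \vM_k)(\vz_k - \vz^*)$.

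Next I would bound $\norm{\vI - \alpha \vM_k}$. For any unit vector $\vv$, using that $\vM_k$ is an average of Jacobians $\grad\nu(\vz)$, Lemma \ref{app-lem:lip-operator-property} gives both $\norm{\vM_k \vv} \leq l$ (from $\norm{\grad\nu(\vz)} \leq l$) and $\vv^T \vM_k \vv \geq \mu \norm{\vv}^2$ (from $\vv^T \grad\nu(\vz)\vv \geq \mu\norm{\vv}^2$, integrated over $t$). Hence
\begin{align*}
\norm{(\vI - \alpha \vM_k)\vv}^2 = \norm{\vv}^2 - 2\alpha \vv^T \vM_k \vv + \alpha^2 \norm{\vM_k \vv}^2 \leq (1 - 2\alpha\mu + \alpha^2 l^2)\norm{\vv}^2,
\end{align*}
which establishes the claimed spectral norm bound. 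Setting $\alpha = \mu/l^2$ makes the factor $1 - 2\mu^2/l^2 + \mu^2/l^2 = 1 - 1/\kappa^2$, and unrolling the recurrence over $K$ steps gives $\norm{\vz_{K+1} - \vz^*}^2 \leq (1 - 1/\kappa^2)^K \norm{\vz_0 - \vz^*}^2$.

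For optimality, I would exhibit a worst-case instance: take $d = 2$ and $\nu(\vz) = \vA \vz$ with $\vA = \begin{bmatrix} \mu & -b \\ b & \mu \end{bmatrix}$ for a suitable $b$ with $\mu^2 + b^2 = l^2$, so that $\nu$ is exactly $l$-Lipschitz and $\mu$-strongly monotone (the antisymmetric part contributes nothing to monotonicity). Here $\vz^* = 0$ and $\vz_{k+1} = (\vI - \alpha\vA)\vz_k$, so the convergence factor is exactly $\norm{\vI - \alpha\vA}^2 = (1-\alpha\mu)^2 + \alpha^2 b^2 = 1 - 2\alpha\mu + \alpha^2 l^2$, and minimizing over $\alpha$ yields precisely $1 - \mu^2/l^2 = 1 - 1/\kappa^2$. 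Thus no step-size (and, with a little more care comparing to the rotation structure, no constant-step scheme) beats $1 - 1/\kappa^2$ up to constants.

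The main obstacle I anticipate is the measure-theoretic subtlety in the integral representation of $\nu(\vz_k) - \nu(\vz^*)$: $\grad\nu$ exists only Lebesgue-a.e., so the segment from $\vz^*$ to $\vz_k$ could in principle lie in the bad set. The clean fix is either to approximate $\nu$ by smooth operators $\nu_\epsilon$ (convolution with a mollifier), which remain $l$-Lipschitz and $\mu$-strongly monotone, carry out the argument, and pass to the limit; or to define $\vM_k$ directly as any matrix satisfying $\nu(\vz_k) - \nu(\vz^*) = \vM_k(\vz_k - \vz^*)$ with $\norm{\vM_k \vv} \le l\norm{\vv}$ on the relevant direction and the monotonicity bound following from $\dotprod{\nu(\vz_k) - \nu(\vz^*)}{\vz_k - \vz^*} \ge \mu\norm{\vz_k - \vz^*}^2$ applied to the single direction $\vv = \vz_k - \vz^*$ — which is actually all that is needed to bound $\norm{\vz_{k+1} - \vz^*}$. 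I would go with the latter, more elementary route in the main proof and relegate the Jacobian viewpoint to intuition.
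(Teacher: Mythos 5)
Your proposal is correct and, in its main thrust, follows the same route as the paper: linearize $\nu$ around $\vz^*$ via the averaged Jacobian $\vM_k=\int_0^1\grad\nu(t\vz_k+(1-t)\vz^*)\,\mathrm{d}t$ (well defined through the Rademacher-based Lemma \ref{app-lem:lip-operator-property}), bound $\norm{(\vI-\alpha\vM_k)\vv}^2\le(1-2\alpha\mu+\alpha^2l^2)\norm{\vv}^2$ using $\vv^T\vM_k\vv\ge\mu\norm{\vv}^2$ and $\norm{\vM_k}\le l$, then set $\alpha=\mu/l^2$ and unroll. Two deviations are worth recording. First, your preferred ``single-direction'' fallback---expanding $\norm{\vz_{k+1}-\vz^*}^2=\norm{\vz_k-\vz^*}^2-2\alpha\dotprod{\nu(\vz_k)-\nu(\vz^*)}{\vz_k-\vz^*}+\alpha^2\norm{\nu(\vz_k)-\nu(\vz^*)}^2$ and invoking strong monotonicity and Lipschitzness directly---is the classical contraction argument and does sidestep the measure-theoretic subtlety entirely; but it establishes only the convergence guarantee, not the stated operator-norm bound $\norm{\vI-\alpha\vM_k}\le(1-2\alpha\mu+\alpha^2l^2)^{1/2}$, which quantifies over all directions $\vv$, so for that clause you still need the averaged-Jacobian (or mollification) viewpoint, exactly as the paper uses it (your concern about the segment possibly lying in the null set is legitimate but standard, and your mollification or Fubini-type fix closes it; the paper simply glosses over this). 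Second, your lower-bound instance $\nu(\vz)=\vA\vz$ with $\vA=\begin{bmatrix}\mu & -b\\ b & \mu\end{bmatrix}$ and $\mu^2+b^2=l^2$ differs from the paper's quadratic $f(x,y)=\tfrac{\mu}{2}(x^2-y^2)+(l-\mu)xy$ and is in fact slightly sharper: since $(\vI-\alpha\vA)^T(\vI-\alpha\vA)=(1-2\alpha\mu+\alpha^2l^2)\vI$, every iteration contracts by exactly $1-2\alpha\mu+\alpha^2l^2\ge 1-1/\kappa^2$ for every step-size, whereas the paper's construction only yields the factor $1-2/\kappa^2$; both suffice for the ``optimal up to constant factors'' claim.
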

\begin{proof}
The iterates of full-batch GDA with step-size $\alpha$ have the following update rule:
\begin{align*}
    \vz_{k+1} = \vz_k - \alpha \nu(\vz_k).
\end{align*}
Interpreting the time evolution of $\vz_k$ as a discrete-time dynamical system, we analyze the convergence of GDA by applying \emph{linearization}, which is a standard technique in the analysis of discrete and continuous-time dynamical systems. To this end, we proceed as follows.

Let $g : [0, 1] \rightarrow \bR^d$ be defined as $g(t) = \nu(t \vz_k + (1 - t) \vz^*)$. We note that the Lipschitz continuity of $\nu$ implies the Lipschitz continuity of $g$. As a result, the Fundamental Theorem of Calculus for Lebesgue Integrals applies to $g$ (since Lipschitz continuous functions defined on a compact interval are also absolutely continuous). It follows that $g(1) = g(0) + \int_{0}^{1} g^\prime (t) \textrm{d}t$ which results in the following expansion:
\begin{align*}
    \nu(\vz_k) &= \nu(\vz^*) + \int_{0}^{1} \grad \nu(t \vz_k + (1 - t) \vz^*)(\vz_k - \vz^*) \textrm{d}t \\ 
    &= \vM_k(\vz_k - \vz^*),
\end{align*}
where $\nu(\vz^*) = 0$ and $\vM_k = \int_{0}^{1} \grad \nu(t \vz_k + (1 - t) \vz^*)\textrm{d}t$ is well defined because $\grad \nu(\vz)$ is defined Lebesgue-almost everywhere by Lemma \ref{app-lem:lip-operator-property}.

Substituting the above expansion into the update rule of GDA gives us the following:
\begin{equation}
\label{app-eqn:batch-gda-lds}
\vz_{k+1} - \vz^* = (\vI - \alpha \vM_k)(\vz_k - \vz^*).
\end{equation}
We now proceed to obtain a convergence guarantee by upper bounding the spectral norm of $\vI - \alpha \vM_k$. Note that by Lemma \ref{app-lem:lip-operator-property}, $\grad \nu(\vz)$ is well defined Lebesgue-almost everywhere, and wherever defined, satisfies $|\grad \nu(\vz)| \leq l$ and $\vv^{T} \grad \nu(\vz) \vv \geq \mu \norm{\vv}^2 \ \forall \ \vv \in \bR^d$. Using standard properties of Lebesgue integrals and inner products, we conclude that $|\vM_k| \leq l$ and $\vv^T \vM_k \vv \geq \mu \norm{\vv}^2 \ \forall \ \vv \in \bR^d$. Hence, for any $\vv \in \bR^d$:
\begin{align*}
    \norm{(\vI - \alpha \vM_k) \vv}^2 &= \norm{\vv}^2 - \alpha\vv^T \vM_k \vv - \alpha \vv^T \vM^T_k \vv + \norm{\vM_k \vv}^2 \\
    &\leq (1 - 2 \alpha \mu + \alpha^2 l^2) \norm{\vv}^2.
\end{align*}
Hence, $\norm{\vI - \alpha \vM_k} \leq (1 - 2\alpha \mu + \alpha^2 l^2)^{1/2}$ for any $\alpha > 0$. It follows that,
\begin{align*}
    \norm{\vz_{k+1} - \vz^{*}}^2 \leq (1 - 2\alpha \mu + \alpha^2 l^2)  \norm{\vz_{k} - \vz^{*}}^2.
\end{align*}
Setting $\alpha = \mu/l^2$, we obtain the following convergence guarantee:
\begin{align*}
    \norm{\vz_{K+1} - \vz^{*}}^2 \leq (1 - 1/\kappa^2)^K \norm{\vz_{0} - \vz^{*}}^2.
\end{align*}
We now establish the optimality of the $1 - 1/\kappa^2$ exponential convergence rate by presenting a lower bound construction on a two-dimensional quadratic problem. 

Consider any positive constants $l, \mu$ such that $l > \mu$ and the quadratic $f : \bR^2 \rightarrow \bR$ given by $f(x, y) = \frac{\mu}{2}(x^2 - y^2) + (l - \mu)xy$. It is easy to see that $f$ is $l$-smooth and $\mu$-strongly convex strongly concave. Thus, by Lemma \ref{app-lem:minmax-root-find-equivalence}, we infer that $\nu(\vz) = \nu(x, y) = [\grad_x f(x, y), -\grad_y f(x,y)]$ is $l$-smooth and $\mu$-strongly monotone. Furthermore, we also note that  $\nu(\vz) = \vM \vz$ where $\vM$ is given by:
\begin{align*}
    \vM = \begin{bmatrix}
\mu & l - \mu \\
\mu - l & \mu
\end{bmatrix}.
\end{align*}
Thus, it is easy to see that $\vz^* = 0$ is the unique root of $\nu$, where uniqueness is guaranteed by Lemma \ref{app-lem:lip-operator-property}. Moreover, for any step-size $\alpha > 0$, the iterates of GDA on $f$ are given by $\vz_{k+1} = (\vI - \alpha \vM) \vz_k$. Thus, by taking norms on both sides, we conclude that:
\begin{align*}
    \norm{\vz_{k+1}}^2 &= \norm{(\vI - \alpha \vM)\vz_k}^2 = [1 - 2\alpha \mu  + \alpha^2 [\mu^2 + (l-\mu)^2]]\norm{\vz_k}^2 \\
    &\geq (1 - \frac{\mu^2}{\mu^2 + (l - \mu)^2})\norm{\vz_k}^2 =  (1 - \frac{1}{1 + (\kappa - 1)^2})\norm{\vz_k}^2,
\end{align*}
where the inequality is obtained by minimizing the quadratic $1 - 2\alpha \mu  + \alpha^2 [\mu^2 + (l-\mu)^2]$ with respect to $\alpha$.

We note that by Young's inequality, $\kappa^2 = [1 + (\kappa - 1)]^2 \leq 2[1 + (\kappa - 1)^2]$ and thus, 
\begin{align*}
    1 - \frac{1}{1 + (\kappa - 1)^2} \geq 1 - \frac{2}{\kappa^2},
\end{align*}
which leads us to conclude that for any $\alpha > 0$, the iterates $\vz_k$ of GDA on $\nu$ satisfy the following:
\begin{align*}
    \norm{\vz_{k+1} - \vz^*}^2 \geq (1 - 2/\kappa^2) \norm{\vz_{k} - \vz^*}^2.
\end{align*}
This establishes that the convergence rate of $1 - 1/\kappa^2$ is optimal for GDA up to constant factors. We contrast this with the smooth and strongly convex minimization setting where gradient descent enjoys a faster convergence rate of $1 - 1/\kappa$.
\end{proof}
\subsection{A Unified Analysis of GDA-RR and GDA-SO}
\label{app-sec:gda-wor}
\begin{theorem}[Convergence of GDA-RR/SO]
\label{app-thm:gda-wor-convergence}
Consider Problem \eqref{p:strong-monotone-vi} for the $\mu$-strongly monotone operator $\nu(\vz) = \nicefrac{1}{n} \sum_{i=1}^{n} \omega_i(\vz)$ where each $\omega_i$ is $l$-Lipschitz, but not necessarily monotone. Let $\vz^*$ denote the unique root of $\nu$. Then, for any $\alpha \leq \frac{\mu}{5nl^2}$ and $K \geq 1$, the iterates of GDA-RR and GDA-SO satisfy the following:
\begin{equation*}
    \E{\norm{\vz^{K+1}_0 - \vz^*}^2} \leq 2e^{-n\alpha \mu K}\norm{\vz_0 - \vz^*}^2 + \frac{l^2 \sigma_*^2 \alpha^3 n^2 K}{ \mu}.
\end{equation*}
Setting $\alpha = \min \{  \mu/5nl^2, 2 \log (\norm{\nu(\vz_0 )}n^{1/2}K/\mu )/\mu n K \}$ results in the following expected last-iterate convergence guarantee for both GDA-RR and GDA-SO which holds for any $K \geq 1$:

\begin{align*}
\E{|\vz^{K+1}_0 - \vz^*|^2} &\leq  2e^{\nicefrac{-K}{5\kappa^2}}|\vz_0 - \vz^*|^2 + \frac{2\mu^2 + 8 \kappa^2 \sigma^2_* \log^3 (|\nu(\vz_0)|n^{1/2}K/\mu)}{\mu^2 nK^2} \\
&= \Tilde{O}(e^{\nicefrac{-K}{5\kappa^2}} + \nicefrac{1}{nK^2}).
\end{align*}
\end{theorem}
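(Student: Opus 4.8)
The plan is to first establish the intermediate bound $\E{\norm{\vz^{K+1}_0 - \vz^*}^2} \leq 2e^{-n\alpha\mu K}\norm{\vz_0 - \vz^*}^2 + l^2\sigma_*^2\alpha^3 n^2 K/\mu$ valid for every $\alpha \le \mu/(5nl^2)$, and then to substitute the stated choice of $\alpha$. The driving idea is that one epoch of GDA-RR/SO equals a single full-batch GDA step with step-size $n\alpha$ plus a perturbation that is genuinely second order in $\alpha$, after which we iterate the resulting contraction.

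\textbf{Epoch recursion.} Summing \eqref{eqn:gda-wor-update} over $i\in[n]$ and using that $\tau_k$ is a permutation, hence $\sum_{i=1}^n \omega_{\tau_k(i)}(\vz^k_0) = n\,\nu(\vz^k_0)$, gives
\begin{equation*}
\vz^{k+1}_0 - \vz^* = \big[(\vz^k_0 - \vz^*) - n\alpha\,\nu(\vz^k_0)\big] \;-\; \alpha\sum_{i=1}^n\big[\omega_{\tau_k(i)}(\vz^k_{i-1}) - \omega_{\tau_k(i)}(\vz^k_0)\big].
\end{equation*}
The first bracket is exactly a full-batch GDA step for $\nu$ with step-size $n\alpha$; since $\nu$ is $l$-Lipschitz and $\mu$-strongly monotone (Lemma \ref{lem:sc-sc-monotone-vi}), the linearization argument of Theorem \ref{app-thm:batch-gda-convergence} bounds its norm by $(1 - 2n\alpha\mu + n^2\alpha^2 l^2)^{1/2}\norm{\vz^k_0 - \vz^*}$, which the constraint $\alpha \le \mu/(5nl^2)$ renders at most $(1-\tfrac95 n\alpha\mu)^{1/2}\norm{\vz^k_0-\vz^*}$.

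\textbf{The reshuffling error --- the hard part.} The second term is the cost of without-replacement sampling, and this is where I expect the work to concentrate, because the individual $\omega_i$ are not assumed monotone, so no within-epoch sub-step need contract; monotonicity reappears only after averaging a full epoch (via $\sum_i\omega_{\tau_k(i)}(\cdot)=n\nu(\cdot)$), and one must argue that the remaining ``cross-talk'' between the nonmonotone per-step Jacobians and the iterate drift is truly of order $\alpha^2$. Using $\norm{\omega_{\tau_k(i)}(\vz^k_{i-1}) - \omega_{\tau_k(i)}(\vz^k_0)}\le l\norm{\vz^k_{i-1}-\vz^k_0}$ reduces this to bounding the within-epoch drift $\norm{\vz^k_{i-1}-\vz^k_0}$. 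I would unroll $\vz^k_{i-1}-\vz^k_0=-\alpha\sum_{j<i}\omega_{\tau_k(j)}(\vz^k_{j-1})$, expand each inner term via the integral mean-value form $\omega_{\tau_k(j)}(\vz^k_{j-1})=\omega_{\tau_k(j)}(\vz^*)+\vJ^k_j(\vz^k_{j-1}-\vz^*)$ with $\norm{\vJ^k_j}\le l$ (Lemma \ref{app-lem:lip-operator-property}), and use $\norm{\vz^k_{j-1}-\vz^*}\le\norm{\vz^k_0-\vz^*}+\norm{\vz^k_{j-1}-\vz^k_0}$; a discrete Gr\"onwall inequality for the drifts (with multiplier $(1+\alpha l)^n\le e^{1/5}$ since $n\alpha l\le\tfrac15$) then yields $\norm{\vz^k_{i-1}-\vz^k_0}\le e^{1/5}\big(\alpha\norm{\sum_{j<i}\omega_{\tau_k(j)}(\vz^*)}+n\alpha l\norm{\vz^k_0-\vz^*}\big)$. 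Substituting back and collecting the telescoped coefficient matrices, the reshuffling error splits into a term of operator norm $O(\alpha^2 n^2 l^2)$ acting on $\vz^k_0-\vz^*$ and a noise term $\alpha^2\vr_k$ with $\vr_k=\sum_{i=1}^{n-1}\vA_{\tau_k(i)}\sum_{j=1}^i\omega_{\tau_k(j)}(\vz^*)$, $\norm{\vA_{\tau_k(i)}}\le le^{1/5}$. Since $\alpha\le\mu/(5nl^2)$ makes the $O(\alpha^2 n^2 l^2)$ term and the $n^2\alpha^2 l^2$ full-batch correction small relative to the $n\alpha\mu$ contraction gain, everything collapses to the exact recursion $\vz^{k+1}_0-\vz^* = \vH_k(\vz^k_0-\vz^*)+\alpha^2\vr_k$ with $\norm{\vH_k}\le 1-\tfrac{n\alpha\mu}{2}$.

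\textbf{Noise bound and assembly.} To bound $\E{\norm{\vr_k}^2}$, pull out $\norm{\vA_{\tau_k(i)}}\le le^{1/5}$ and apply Cauchy--Schwarz to reach $\norm{\vr_k}^2\le l^2e^{2/5}(n-1)\sum_{i=1}^{n-1}\norm{\sum_{j=1}^i\omega_{\tau_k(j)}(\vz^*)}^2$; the vectors $\omega_1(\vz^*),\dots,\omega_n(\vz^*)$ are deterministic with mean $\nu(\vz^*)=0$ and average squared norm $\sigma_*^2$, so Lemma \ref{app-lem:sample-mean-wor} gives $\E{\norm{\sum_{j=1}^i\omega_{\tau_k(j)}(\vz^*)}^2}=\tfrac{i(n-i)}{n-1}\sigma_*^2$, and $\sum_{i=1}^{n-1}i(n-i)=\tfrac{(n-1)n(n+1)}{6}$ yields $\E{\norm{\vr_k}^2}\le\tfrac14 l^2 n^3\sigma_*^2$. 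This step uses only that the $\omega_i(\vz^*)$ are independent of the permutation, so it holds verbatim for SO; for AS one instead takes a worst-case bound over $\tau_k$, which is larger by a factor $n$ and is exactly the source of the $1/n$ gap in Theorem \ref{thm:gda-as-convergence}. Then, squaring the epoch recursion, taking expectations, controlling the cross term with $2ab\le\gamma a^2+\gamma^{-1}b^2$ (with $\gamma$ a small multiple of $n\alpha\mu$), and using $(1-\tfrac{n\alpha\mu}{2})^2\le e^{-n\alpha\mu}$, one obtains a per-epoch recursion $\E{\norm{\vz^{k+1}_0-\vz^*}^2}\le e^{-n\alpha\mu}\,\E{\norm{\vz^k_0-\vz^*}^2}+O\!\big(l^2\sigma_*^2\alpha^3 n^2/\mu\big)$; unrolling over $K$ epochs and bounding the geometric sum of noise contributions by $K$ gives the intermediate bound. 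Finally, $\alpha=\min\{\mu/(5nl^2),\,2\log(\norm{\nu(\vz_0)}n^{1/2}K/\mu)/(\mu nK)\}$ balances the two terms: the first branch handles small $K$ and gives $e^{-K/5\kappa^2}$, while in the second $n\alpha\mu K=2\log(\cdot)$ removes the exponential and $\alpha^3=O(\log^3(\cdot)/(\mu nK)^3)$ turns the noise into $O(\kappa^2\sigma_*^2\log^3(\cdot)/(\mu^2 nK^2))$, with $\norm{\nu(\vz_0)}\ge\mu\norm{\vz_0-\vz^*}$ (strong monotonicity) used for the residual initial-condition term. The PPM-RR/SO case follows the same route after replacing $\omega_{\tau_k(i)}(\vz^k_{i-1})$ with $\omega_{\tau_k(i)}(\vz^k_i)$ throughout and noting each implicit update is a well-defined contraction.
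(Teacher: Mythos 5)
Your proposal follows essentially the same route as the paper's proof: linearize around $\vz^*$ with integral-form Jacobians to express each epoch as noisy full-batch GDA, $\vz^{k+1}_0-\vz^*=\vH_k(\vz^k_0-\vz^*)+\alpha^2\vr_k$ with $\norm{\vH_k}\le 1-n\alpha\mu/2$, bound $\E{\norm{\vr_k}^2}\le l^2n^3\sigma_*^2/4$ via the without-replacement sample-mean lemma (Lemma \ref{app-lem:sample-mean-wor}), unroll over epochs, and tune the two-branch step-size with the same case analysis (including $\norm{\nu(\vz_0)}\ge\mu\norm{\vz_0-\vz^*}$). The only deviations are bookkeeping — a drift/Gr\"onwall telescoping argument in place of the paper's explicit induction plus summation by parts, and per-epoch squaring with a Young cross-term instead of unrolling in norm and squaring once at the end — and the latter loses a constant factor in the exponential term, so to recover the stated $2e^{-n\alpha\mu K}$ prefactor exactly one should unroll the recursion in norm first, as the paper does.
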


\begin{proof}
Without loss of generality, we express the iterate-level update rule of GDA-RR and GDA-SO jointly as
\begin{equation}
\label{app-eqn:gda-wor-itr}
    \vz^{k}_{i} = \vz^{k}_{i-1} - \alpha \vw_{\tau_k(i)}(\vz^k_{i-1}),
\end{equation}
where $k \in [K]$ and $i \in [n]$. We note that for GDA-RR, $\tau_k \sim \textrm{Uniform}(\mathbb{S}_n)$ for every $k \in [K]$ (i.e. $\tau_k$ is a permutation of $[n]$ that is resampled uniformly in every epoch), whereas for GDA-SO, $\tau_k = \tau \ \forall k \in [K]$ where $\tau \sim \textrm{Uniform}(\mathbb{S}_n)$ (i.e. the permutation $\tau$ is uniformly sampled before the first epoch, then reused for all subsequent epochs). 

As discussed in the proof sketch presented in Section \ref{sec:rr-so}, our proof begins by adapting the linearization technique illustrated in Theorem \ref{app-thm:batch-gda-convergence} to GDA without replacement in order to derive a linearized epoch-level update rule of the form $\vz^{k+1}_0 - \vz^* = \vH_k (\vz^{k}_0 - \vz^*) + \alpha^2 \vr_k$. We then use the Lipschitz continuity and strong monotonicity properties of $\nu$ to upper bound $\norm{\vH_k}$, and use Lemma \ref{app-lem:sample-mean-wor} to upper bound $\bE[|\vr_k|^2]$. Equipped with these bounds, the proof is concluded by unrolling the epoch-level update rule for $K$ epochs and carefully choosing $\alpha$ as stated above.

\paragraph{Step 1: Linearized epoch-level update rule}
Our approach for this step is inspired by the following insight developed by prior works that have analyzed sampling without replacement for minimization: for small enough step-sizes, the epoch iterates $\vz^k_0$ of gradient descent without replacement closely track the iterates of full-batch gradient descent. To this end, we treat GDA without replacement as a noisy version of full-batch GDA by focusing on the time evolution of the epoch iterates $\vz^k_0$. At the same time, we wish to adapt the proof technique of Theorem \ref{app-thm:batch-gda-convergence}, which hinges on linearization about $\vz^*$, to the analysis of GDA without replacement. This motivates us to perform the following decomposition of the stochastic gradients, where we linearize $\omega_{\tau_k(i)}(\vz^k_{i-1})$ about $\vz^k_0$, and subsequently linearize $\omega_{\tau_k(i)}(\vz^k_0)$ about $\vz^*$:
\begin{align*}
    \vw_{\tau_k(i)}(\vz^k_{i-1}) &= \vw_{\tau_k(i)}(\vz^{*}) + [\vw_{\tau_k(i)}(\vz^k_0) - \vw_{\tau_k(i)}(\vz^*)] + [\vw_{\tau_k(i)}(\vz^k_{i-1}) - \vw_{\tau_k(i)}(\vz^k_0)] \\
    &= \vw_{\tau_k(i)}(\vz^{*}) + \int_{0}^{1} \grad \omega_{\tau_k(i)}(t\vz^k_0 + (1 - t) \vz^*)(\vz^k_0 - \vz^*) \textrm{d}t \\
    &+ \int_{0}^{1} \grad \omega_{\tau_k(i)}(t\vz^k_{i-1} + (1 - t) \vz^k_0)(\vz^k_{i-1} - \vz^k_0) \textrm{d}t. 
\end{align*}
We define the matrices $\vM_{\tau_k(i)}$ and $\vJ_{\tau_k(i)}$ as follows:
\begin{align*}
    \vM_{\tau_k(i)} &= \int_{0}^{1} \grad \omega_{\tau_k(i)}(t\vz^k_0 + (1 - t) \vz^*) \textrm{d}t, \\
    \vJ_{\tau_k(i)} &= \int_{0}^{1} \grad \omega_{\tau_k(i)}(t\vz^k_{i-1} + (1 - t) \vz^k_0) \textrm{d}t.
\end{align*}
Repeating the same argument as in Theorem \ref{app-thm:batch-gda-convergence}, we use Lemma \ref{app-lem:lip-operator-property} and the $l$-Lipschitz continuity of $\omega_i(\vz) \ \forall i \in [n]$ to conclude that $\vM_{\tau_k(i)}$ and $\vJ_{\tau_k(i)}$ are well defined and bounded with  $\norm{\vM_{\tau_k(i)}} \leq l$ and $\norm{\vJ_{\tau_k(i)}} \leq l$. It follows that
\begin{equation}
\label{app-eqn:gda-wor-grads}
    \vw_{\tau_k(i)}(\vz^k_{i-1}) = \vw_{\tau_k(i)}(\vz^{*}) + \vM_{\tau_k(i)}(\vz^k_0 - \vz^*) + \vJ_{\tau_k(i)}(\vz^k_{i-1} - \vz^k_0).
\end{equation}
Substituting \eqref{app-eqn:gda-wor-grads} into the update equation \eqref{app-eqn:gda-wor-itr} for $\vz^k_1$ gives us the following:
\begin{align*}
    \vz^k_1 - \vz^* &=
    (\vI - \alpha \vM_{\tau_k(1)})(\vz^k_0 - \vz^*) - \alpha \vJ_{\sigma_k(1)}(\vz^k_0 - \vz^k_0) -  \alpha \vw_{\tau_k(1)}(\vz^*) \\
    &= (\vI - \alpha \vM_{\tau_k(1)})(\vz^k_0 - \vz^*) - \alpha \vw_{\tau_k(1)}(\vz^*).
\end{align*}
Repeating the same for $\vz^k_2$ yields,
\begin{align*}
    \vz^k_2 - \vz^* &= \vz^k_1 - \vz^* - \alpha \vw_{\tau_k(2)}(\vz^*) - \alpha \vM_{\tau_k(2)}(\vz^k_0 - \vz^*) - \alpha \vJ_{\tau_k(2)}(\vz^k_1 - \vz^k_0) \\
    &= (\vI - \alpha \vJ_{\tau_k(2)})(\vz^k_1 - \vz^*) - \alpha (\vM_{\tau_k(2)} - \vJ_{\tau_k(2)})(\vz^k_0 - \vz^*) - \alpha \vw_{\tau_k(2)}(\vz^*) \\
    &= (\vI - \alpha \vJ_{\tau_k(2)})[(\vI - \alpha \vM_{\tau_k(1)})(\vz^k_0 - \vz^*) - \alpha \vw_{\tau_k(1)}(\vz^*)] \\
    &- \alpha (\vM_{\tau_k(2)} - \vJ_{\tau_k(2)})(\vz^k_0 - \vz^*) - \alpha \vw_{\tau_k(2)}(\vz^*) \\ 
    &= [(\vI - \alpha \vJ_{\tau_k(2)})(\vI - \alpha \vM_{\tau_k(1)}) - \alpha (\vM_{\tau_k(2)} - \vJ_{\tau_k(2)})](\vz^k_0 - \vz^*) \\
    & - \alpha [\vw_{\tau_k(2)}(\vz^*) + (\vI - \alpha \vJ_{\tau_k(2)})\vw_{\tau_k(1)}(\vz^*)] \\
    &= [\vI - \alpha \vM_{\tau_k(2)} - \alpha(\vI - \alpha \vJ_{\tau_k(2)})\vM_{\tau_k(1)}](\vz^k_0 - \vz^*) \\
    &- \alpha [\vw_{\tau_k(2)}(\vz^*) + (\vI - \alpha \vJ_{\tau_k(2)})\vw_{\tau_k(1)}(\vz^*)].
\end{align*}
Applying the same process to subsequent iterates, the update rule for $\vz^k_i$ can then be obtained recursively. We propose that, for any $i \in [n]$, 
\begin{align}
\label{app-eqn:gda-wor-step-induction}
\vz^k_i - \vz^* &= [\vI - \alpha \sum_{j=1}^{i} \left(\revprod_{t=j+1}^{i}(\vI - \alpha \vJ_{\tau_k(t)})\right)\vM_{\tau_k(j)}](\vz^k_0 - \vz^*) \\ \nonumber
&- \alpha \sum_{j=1}^{i}\left(\revprod_{t=j+1}^{i}(\vI - \alpha \vJ_{\tau_k(t)})\right)\vw_{\tau_k(j)}(\vz^*).
\end{align}
We clarify that the matrix products in \eqref{app-eqn:gda-wor-step-induction} are in reverse order, and hence reduce to the empty product, which is defined to be $\vI$, when $j=i$.

As verified above, \eqref{app-eqn:gda-wor-step-induction} is satisfied for $i=1,2$. We now prove that it holds for any $i \in [n]$ by induction.

Assume \eqref{app-eqn:gda-wor-step-induction} is true for some $i \in [n]$. Using \eqref{app-eqn:gda-wor-grads} to derive the update of $\vz^k_{i+1}$ yields,
\begin{align*}
    \vz^k_{i+1} - \vz^* &= \vz^k_i - \vz^* - \alpha \vw_{\tau_k(i+1)}(\vz^*) - \alpha \vM_{\tau_k(i+1)}(\vz^k_0 - \vz^*) - \alpha \vJ_{\tau_k(i+1)}(\vz^k_i - \vz^k_0) \\
    &= (\vI - \alpha \vJ_{\tau_k(i+1)})(\vz^k_i - \vz^*) - \alpha (\vM_{\tau_k(i+1)} - \vJ_{\tau_k(i+1)})(\vz^k_0 - \vz^*) - \alpha \vw_{\tau_k(i+1)}(\vz^*) \\
    &= (\vI - \alpha \vJ_{\tau_k(i+1)})[\vI - \alpha \sum_{j=1}^{i} \left(\revprod_{t=j+1}^{i}(\vI - \alpha \vJ_{\tau_k(t)})\right)\vM_{\tau_k(j)}](\vz^k_0 - \vz^*) \\
    &- \alpha [\omega_{\tau_k(i+1)}(\vz^*) + \sum_{j=1}^{i}(I - \alpha \vJ_{\tau_k(i+1)})\left(\revprod_{t=j+1}^{i}(\vI - \alpha \vJ_{\tau_k(t)})\right)\vw_{\tau_k(j)}(\vz^*)] \\
    &- \alpha (\vM_{\tau_k(i+1)} - \vJ_{\tau_k(i+1)})(\vz^k_0 - \vz^*) \\
    &= [\vI - \alpha \vM_{\tau_k(i+1)} - \alpha \sum_{j=1}^{i}(\vI - \alpha \vJ_{\tau_k(i+1)}) \left(\revprod_{t=j+1}^{i}(\vI - \alpha \vJ_{\tau_k(t)})\right)\vM_{\tau_k(j)}](\vz^k_0 - \vz^*) \\
    &- \alpha \sum_{j=1}^{i+1}\left(\revprod_{t=j+1}^{i+1}(\vI - \alpha \vJ_{\tau_k(t)})\right)\vw_{\tau_k(j)}(\vz^*).
\end{align*}
It follows that,
\begin{align*}
\vz^{k}_{i+1} - \vz^* &= [\vI - \alpha \sum_{j=1}^{i+1} \left(\revprod_{t=j+1}^{i+1}(\vI - \alpha \vJ_{\tau_k(t)})\right)\vM_{\tau_k(j)}](\vz^k_0 - \vz^*) \\
&- \alpha \sum_{j=1}^{i+1}\left(\revprod_{t=j+1}^{i+1}(\vI - \alpha \vJ_{\tau_k(t)})\right)\vw_{\tau_k(j)}(\vz^*),
\end{align*}
which is simply \eqref{app-eqn:gda-wor-step-induction} for the iterate $\vz^{k}_{i+1}$. Hence, by induction, \eqref{app-eqn:gda-wor-step-induction} holds for any $i \in [n]$.  As before, the reverse matrix products for the summand $j=i+1$ is empty (in both the sums) and hence, is equal to $\vI$. 

Since \eqref{app-eqn:gda-wor-step-induction} holds for any $i \in [n]$, it holds for $\vz^{k+1}_0 = \vz^{k}_n$ which gives us the following epoch-level update rule
\begin{align}
    \vz^{k+1}_0 - \vz^* &= [\vI - \alpha \sum_{j=1}^{n} \left(\revprod_{t=j+1}^{n}(\vI - \alpha \vJ_{\tau_k(t)})\right)\vM_{\tau_k(j)}](\vz^k_0 - \vz^*) \nonumber\\ 
    &- \alpha \sum_{j=1}^{n}\left(\revprod_{t=j+1}^{n}(\vI - \alpha \vJ_{\tau_k(t)})\right)\vw_{\tau_k(j)}(\vz^*).
\label{app-eqn:gda-wor-incomplete-epoch}
\end{align}
We simplify the second term in the right hand side of \eqref{app-eqn:gda-wor-incomplete-epoch} using the summation by parts identity. To this end, we define $a_j$ and $b_j$ as
\begin{align*}
    a_j &= \revprod_{t=j+1}^{n}(\vI - \alpha \vJ_{\tau_k(t)}), \\
    b_j &= \vw_{\tau_k(j)}(\vz^*),
\end{align*}
and also note that,
\begin{align*}
    \sum_{j=1}^{n}b_j = \sum_{j=1}^{n} \vw_{\tau_k(j)}(\vz^*) = n \nu(\vz^{*}) = 0,
\end{align*}
since $\vz^*$ is the unique root of $\nu$. We now apply the summation by parts identity as follows:
\begin{align*}
    \sum_{j=1}^{n} a_j b_j &= a_n \sum_{j=1}^{n}b_j - \sum_{i=1}^{n-1} (a_{i+1} - a_i)\sum_{j=1}^{i}b_j \\
    &= - \sum_{i=1}^{n-1} \left[\revprod_{t=i+2}^{n}(\vI - \alpha \vJ_{\tau_k(t)}) - \revprod_{t=i+1}^{n}(\vI - \alpha \vJ_{\tau_k(t)}) \right]\sum_{j=1}^{i}\vw_{\tau_k(j)}(\vz^*) \\
    &= - \alpha \sum_{i=1}^{n-1}\left[\revprod_{t=i+2}^{n}(\vI - \alpha \vJ_{\tau_k(t)}) \right]\vJ_{\tau_k(i+1)}\sum_{j=1}^{i}\vw_{\tau_k(j)}(\vz^*).
\end{align*}
Hence, we conclude that
\begin{align*}
    \sum_{j=1}^{n}\left[\revprod_{t=j+1}^{n}(\vI - \alpha \vJ_{\tau_k(t)})\right]\vw_{\tau_k(j)}(\vz^*) = -\alpha \sum_{i=1}^{n-1}\left[\revprod_{t=i+2}^{n}(\vI - \alpha \vJ_{\tau_k(t)}) \right]\vJ_{\tau_k(i+1)}\sum_{j=1}^{i}\vw_{\tau_k(j)}(\vz^*),
\end{align*}
and introduce $\vH_k$ and $\vr_k$ as
\begin{align*}
    \vH_k &= \vI - \alpha \sum_{j=1}^{n} \left(\revprod_{t=j+1}^{n}(\vI - \alpha \vJ_{\tau_k(t)})\right)\vM_{\tau_k(j)}, \\
    \vr_k &= \sum_{i=1}^{n-1}\left[\revprod_{t=i+2}^{n}(\vI - \alpha \vJ_{\tau_k(t)}) \right]\vJ_{\tau_k(i+1)}\sum_{j=1}^{i}\vw_{\tau_k(j)}(\vz^*).
\end{align*}
By substituting the above expressions in \eqref{app-eqn:gda-wor-incomplete-epoch}, we obtain the desired epoch-level update rule for GDA-RR and GDA-SO:
\begin{equation}
\label{app-eqn:gda-wor-epoch-update}
    \vz^{k+1}_0 - \vz^* = \vH_k (\vz^k_0 - \vz^*) + \alpha^2 \vr_k.
\end{equation}
We observe that the linearized epoch-level update rule \eqref{app-eqn:gda-wor-epoch-update} closely resembles the linearized update rule \eqref{app-eqn:batch-gda-lds} of full-batch GDA as derived in Theorem \ref{app-thm:batch-gda-convergence} with an additional noise term $\alpha^2 \vr_k$. In fact, for $n=1$, which corresponds to the full-batch regime, it is easy to see that $\vr_k = 0$ and \eqref{app-eqn:gda-wor-epoch-update} exactly recovers the linearized full-batch GDA update rule \eqref{app-thm:batch-gda-convergence}. As we shall see in Theorem \ref{app-thm:gda-as-convergence}, the epoch iterates of GDA-AS also follow the exact same update. The derivation of such an epoch-level update rule which simultaneously handles GDA-RR, GDA-SO and GDA-AS is a cornerstone of our unified analysis. Intuition behind this strategy dates back to the foundational works of \citet{NedicBertsekas} and \citet{GurbuzRR2019}, where the authors interpret GD with RR as a stochastic approximation of full-batch GDA, and then obtain asymptotic $O(1/K^2)$ rates using Chung's Lemma. Our linearization strategy (which we also use for analyzing PPM and AGDA) essentially formalizes those insights for minimax optimization in a manner that enables the derivation of non-asymptotic rates.

Similar to the procedure followed in Theorem \ref{app-thm:batch-gda-convergence}, we now upper bound $\norm{\vH_k}$ as follows:
\paragraph{Step 2: Upper bounding $\norm{\vH_k}$} We define the matrix $\vM$ as,
\begin{align*}
    \vM &= 1/n\sum_{j=1}^{n}\vM_{\tau_k(j)} = 1/n \sum_{j=1}^{n} \int_{0}^{1} \grad \omega_{\tau_k(j)}(t\vz^k_0 + (1-t)\vz^*)\textrm{d}t = \int_{0}^{1} \grad \nu(t\vz^k_0 + (1-t)\vz^*)\textrm{d}t.
\end{align*}
Following the same arguments as in Theorem \ref{app-thm:batch-gda-convergence}, we note that the $l$-smoothness and $\mu$-strong monotonicity of $\nu$ implies $\norm{\vM} \leq l$ and $\vv^T \vM \vv \geq \mu \norm{\vv}^2 \ \forall \vv \in \bR^d$.  

We now simplify $\vH_k$ by unrolling the sum, beginning with the last index $j=n$:
\begin{align*}
    \vH_k &= \vI - \alpha \sum_{j=1}^{n} \left(\revprod_{t=j+1}^{n}(\vI - \alpha \vJ_{\tau_k(t)})\right)\vM_{\tau_k(j)} \\
    &= \vI - \alpha \vM_{\tau_k(n)} - \alpha (\vI - \alpha \vJ_{\tau_k(n)})\vM_{\tau_k(n-1)} - \alpha (\vI - \alpha \vJ_{\tau_k(n)}) (\vI - \alpha \vJ_{\tau_k(n-1)})\vM_{\tau_k(n-2)} \\
    &- \ldots -\alpha \left(\revprod_{t=2}^{n}(\vI - \alpha \vJ_{\tau_k(t)}) \right)\vM_{\tau_k(1)}.
\end{align*}
Expanding each product and grouping by powers of $\alpha$, we obtain the following:
\begin{align*}
    \vH_k &= \vI - \alpha \sum_{t_1 = 1}^{n} \vM_{\tau_k(t_1)} + \alpha^2 \sum_{1 \leq t_1 < t_2 \leq n} \vJ_{\tau_k(t_2)} \vM_{\tau_k(t_1)} - \alpha^3 \! \! \sum_{1 \leq t_1 < t_2 < t_3 \leq n} \! \! \vJ_{\tau_k(t_3)}\vJ_{\tau_k(t_2)} \vM_{\tau_k(t_1)} \\
    &+ \ldots + (-1)^n \alpha^n \sum_{1 \leq t_1 < \ldots < t_n \leq n} \vJ_{\tau_k(t_n)} \vJ_{\tau_k(t_{n-1})} \ldots \vJ_{\tau_k(t_2)}\vM_{\tau_k(t_1)}.
\end{align*}
We note that by definition of $\vM$, $\sum_{t_1 = 1}^{n}\vM_{\tau_k(t_1)} = n \vM$. Regrouping the remaining terms, we obtain:
\begin{equation*}
    \vH_k = \vI - n\alpha \vM + \sum_{j=2}^{n}(-1)^j\alpha^j\sum_{1 \leq t_1 < t_2 < \ldots < t_j \leq n} \vJ_{\tau_k(t_j)} \vJ_{\tau_k(t_{j-1})} \ldots \vJ_{\tau_k(t_2)}\vM_{\tau_k(t_1)}.
\end{equation*}
Hence, by the triangle inequality, it can be inferred that
\begin{equation}
\label{app-eqn:gda-wor-incomplete-spec-bound}
    \norm{\vH_k} \leq \norm{\vI - n \alpha \vM} + \sum_{j=2}^{n}\alpha^j\sum_{1 \leq t_1 < t_2 < \ldots < t_j \leq n} \norm{\vJ_{\tau_k(t_j)}} \norm{\vJ_{\tau_k(t_{j-1})}} \ldots \norm{\vJ_{\tau_k(t_2)}}\norm{\vM_{\tau_k(t_1)}}.
\end{equation}
We begin by bounding $\norm{\vI - n \alpha \vM}$ in a procedure similar to that of Theorem \ref{app-thm:batch-gda-convergence}. Observe that for any $\vv \in \bR^d$
\begin{align*}
    \norm{(\vI - n\alpha \vM) \vv}^2 &= \norm{\vv}^2 - n\alpha\vv^T \vM \vv - n\alpha \vv^T \vM \vv + n^2\norm{\vM \vv}^2 \\
    &\leq (1 - 2 n \alpha \mu + n^2 \alpha^2 l^2) \norm{\vv}^2,
\end{align*}
where the last inequality follows from $\norm{\vM \vv} \leq \norm{\vM}\norm{\vv} \leq l\norm{\vv}$ and $\vv^T \vM^T \vv = \vv^T \vM \vv \geq \mu \norm{\vv}^2$. As the above holds true for any $\vv \in \bR^d$, it follows that
\begin{align*}
    \norm{\vI - n\alpha \vM} &\leq \sqrt{1 - 2n \alpha \mu + n^2 \alpha^2 l^2} 
    \leq \sqrt{1 - \frac{9n\alpha\mu}{5}}
    \leq 1-\frac{9n\alpha\mu}{10},
\end{align*}
where we use $\alpha \leq \frac{\mu}{5nl^2}$ in the second and last inequalities.

It now remains to bound the right hand side of \eqref{app-eqn:gda-wor-incomplete-spec-bound}. Note that the constraint $1 \leq t_1 < t_2 < \ldots < t_j \leq n$ implies that there exist only $\binom{n}{j}$ possible choices for the $t_i$'s. Furthermore, as stated in Step 1, the $l$-smoothness of the components $\vw_i(\vz)$ implies that $\norm{\vM_{\tau_k(i)}}, \norm{\vJ_{\tau_k(i)}} \leq l, \forall i \in [n]$. Hence,
\begin{align*}
    \sum_{j=2}^{n}\alpha^j\sum_{1 \leq t_1 < t_2 < \ldots < t_j \leq n} \norm{\vJ_{\tau_k(t_j)}} \norm{\vJ_{\tau_k(t_{j-1})}} \ldots \norm{\vJ_{\tau_k(t_2)}}\norm{\vM_{\tau_k(t_1)}} &\leq \sum_{j=2}^{n}\alpha^j \binom{n}{j}l^j.
\end{align*}
Combining the previous inequalities allows us to bound $\norm{\vH_k}$ as follows: 
\begin{align*}
    \norm{\vH_k} &\leq 1 - \frac{9n\alpha\mu}{10} + \sum_{j=2}^{n}(\alpha l)^j \binom{n}{j} \leq 1 - \frac{9n\alpha\mu}{10} + \sum_{j=2}^{n} (\alpha n l)^j \leq 1 - \frac{9n\alpha\mu}{10} + \frac{\alpha^2 n^2 l^2}{1 - \alpha n l} \\
    &\leq 1 - \frac{9n\alpha\mu}{10} + \frac{5\alpha^2 n^2 l^2}{4} \leq 1 - \frac{9n\alpha  \mu}{10} + \frac{n\alpha \mu}{4} \leq 1 - n\alpha \mu/2,
\end{align*}
where we substitute $\alpha \leq \frac{\mu}{5nl^2} \leq 1/5$ wherever required. 

We highlight that our analysis so far holds for any permutation $\tau_k \in \mathbb{S}_n$. Consequently, Steps 1 and 2 directly generalize to the adversarial shuffling regime without any modifications. We use this as a starting point for our analysis of GDA-AS in Theorem \ref{app-thm:gda-as-convergence}. 

We now proceed to control the magnitude of the noise term $\vr_k$ by bounding its expected squared norm $\E{\norm{\vr_k}^2}$, where the expectation is taken over the single uniform permutation $\tau$ for GDA-SO and over independent uniform permutations $\tau_1, \tau_2, \ldots, \tau_K$ for GDA-RR. 

\paragraph{Step 3: Upper bounding $\bE[\norm{\vr_k}^2]$}
Recall that, as per Step 1, $\norm{\vJ_{\tau_k(t)}} \leq l$. Hence, applying the triangle inequality yields,
\begin{align*}
    \norm{\vr_k} &\leq \sum_{i=1}^{n-1}\left[\revprod_{t=i+2}^{n}\norm{\vI - \alpha \vJ_{\tau_k(t)}} \right]\norm{\vJ_{\tau_k(i+1)}}|\sum_{j=1}^{i}\vw_{\tau_k(j)}(\vz^*)| \\
    &\leq l(1 + \alpha l)^{n}\sum_{i=1}^{n-1}|\sum_{j=1}^{i}\vw_{\tau_k(j)}(\vz^*)| \\
    &\leq l(1 + \frac{\mu}{5nl})^{n}\sum_{i=1}^{n-1}|\sum_{j=1}^{i}\vw_{\tau_k(j)}(\vz^*)| 
    \leq le^{1/5}\sum_{i=1}^{n-1}|\sum_{j=1}^{i}\vw_{\tau_k(j)}(\vz^*)|.
\end{align*}
To bound $\norm{\vr_k}^2$, we use the inequality $(\sum_{i=1}^{n}x_i)^2 \leq n \sum_{i=1}^{n} x_i^2$ which follows from Young's inequality. This yields:
\begin{align*}
    \norm{\vr_k}^2 &\leq l^2 e^{2/5}\left(\sum_{i=1}^{n-1}|\sum_{j=1}^{i}\vw_{\tau_k(j)}(\vz^*)|\right)^2 \leq l^2 e^{2/5}\left(\sum_{i=1}^{n-1}i|1/i\sum_{j=1}^{i}\vw_{\tau_k(j)}(\vz^*)|\right)^2 \\ 
    &\leq l^2 e^{2/5}(n-1)\sum_{i=1}^{n-1}i^2 |1/i\sum_{j=1}^{i}\vw_{\tau_k(j)}(\vz^*)|^2.
\end{align*}
We now convert the above inequality into an upper bound for $\E{\norm{\vr_k}^2}$. For the sake of clarity, we illustrate the case of GDA-RR and GDA-SO separately. However, we remark that the proof for both cases is practically identical and relies on an application of Lemma \ref{app-lem:sample-mean-wor} to the term $|1/i\sum_{j=1}^{i}\vw_{\tau_k(j)}(\vz^*)|^2$.

\paragraph{Step 3A: $\bE[\norm{\vr_k}^2]$ for GDA-RR}
We write $\bE[\norm{\vr_k}^2] = \bE_{\tau_1, \ldots, \tau_K}[\norm{\vr_k}^2]$ to explicitly denote the dependence of the expectation on the uniform random permutations $\tau_1, \ldots, \tau_K$. We then use Lemma \ref{app-lem:sample-mean-wor} to bound $\bE[\norm{\vr_k}^2]$ as follows:
\begin{align*}
    \bE[\norm{\vr_k}^2] &\leq l^2 e^{2/5}(n-1)\sum_{i=1}^{n-1}i^2 \bE_{\tau_1, \ldots, \tau_K}[|1/i\sum_{j=1}^{i}\vw_{\tau_k(j)}(\vz^*)|^2] \\
    &\leq l^2 e^{2/5}(n-1)\sum_{i=1}^{n-1}i^2 \bE_{\tau_k}[|1/i\sum_{j=1}^{i}\vw_{\tau_k(j)}(\vz^*)|^2] \\
    &\leq l^2 e^{2/5}(n-1) \sum_{i=1}^{n-1}i^2 \frac{n-i}{i(n-1)}\sigma_*^2 = l^2 e^{2/5} \sigma_*^2 \sum_{i=1}^{n-1}i(n-i) \\
    &\leq l^2 n^3 \sigma_*^{2}/4,
\end{align*}
where $\sigma_*^{2}$ denotes the gradient variance at the minimax point $\vz^*$, defined as $\sigma_*^2 = 1/n\sum_{i=1}^{n}\norm{\vw_{i}(\vz^*)}^2$. We highlight that the second inequality follows from the independence of the uniformly sampled permutations $\tau_1, \ldots, \tau_K$ and the third inequality follows from Lemma \ref{app-lem:sample-mean-wor} (recall that $1/n\sum_{j=1}^{n} \vw(\vz^*) = \nu(\vz^*) = 0$).

\paragraph{Step 3B: $\bE[\norm{\vr_k}^2]$ for GDA-SO}
For GDA-SO, $\tau_1 = \ldots = \tau_K = \tau$ where $\tau$ is a uniformly sampled random permutation. Hence, this yields the following:  
\begin{align*}
    \norm{\vr_k}^2 \leq l^2 e^{2/5}(n-1)\sum_{i=1}^{n-1}i^2 |1/i\sum_{j=1}^{i}\vw_{\tau(j)}(\vz^*)|^2.
\end{align*}
Proceeding along the same lines as Step 3A, we obtain
\begin{align*}
    \E{\norm{\vr_k}^2} &\leq l^2 e^{2/5}(n-1)\sum_{i=1}^{n-1}i^2 \bE_{\tau}[ |1/i\sum_{j=1}^{i}\vw_{\tau(j)}(\vz^*)|^2] \\
    &\leq l^2 e^{2/5}(n-1) \sum_{i=1}^{n-1}i^2 \frac{n-i}{i(n-1)}\sigma_*^2 \\
    &\leq l^2 n^3 \sigma_*^{2}/4.
\end{align*}
Hence, for both GDA-RR and GDA-SO, $\vr_k$ is bounded in expectation as $\E{\norm{\vr_k}^2} \leq l^2 n^3 \sigma_*^{2}/4$.

Having bounded both $\norm{\vH_k}$ and $\bE[\norm{\vr_k}^2]$, we now proceed to present a complete unified proof of convergence for GDA-RR and GDA-SO.
\paragraph{Step 4: Convergence analysis}
Unrolling \eqref{app-eqn:gda-wor-epoch-update} for $K$ epochs and setting $\vz^1_0 = \vz_0$ gives us the following:
\begin{align*}
    \vz^{K+1}_0 - \vz^* = \left[\revprod_{k=1}^{K}\vH_k\right](\vz_0 - \vz^*) + \alpha^2 \sum_{k=1}^{K}\left[\revprod_{j=k+1}^{K}\vH_j\right]\vr_k.
\end{align*}
Using the triangle inequality and the bound $\norm{\vH_k} \leq 1 - n\alpha \mu / 2$ results in:
\begin{align*}
    \norm{\vz^{K+1}_0 - \vz^*} &\leq  \left[\revprod_{k=1}^{K}\norm{\vH_k}\right]\norm{\vz_0 - \vz^*} + \alpha^2 \sum_{k=1}^{K}\left[\revprod_{j=k+1}^{K}\norm{\vH_j}\right]\norm{\vr_k} \\
    &\leq (1 - n \alpha \mu /2)^K \norm{\vz_0 - \vz^*} + \alpha^2 \sum_{k=1}^{K}(1 - n\alpha\mu/2)^{K-k}\norm{\vr_k}.
\end{align*}
By applying Young's inequality, we conclude the following: 
\begin{align*}
    \norm{\vz^{K+1}_0 - \vz^*}^2 &\leq 2(1 - n\alpha\mu/2)^{2K}\norm{\vz_0 - \vz^*}^2 + 2 \alpha^4 K \sum_{k=1}^{K} (1-n\alpha \mu/2)^{2(K-k)}|\vr_k|^2 \\
    &\leq 2e^{-n\alpha \mu K}\norm{\vz_0 - \vz^*}^2 + 2 \alpha^4 K \sum_{k=1}^{K} (1 - n\alpha \mu/2)^{2(K-k)} |\vr_k|^2.
\end{align*}
Taking expectations (with respect to the uniform random permutations $\tau_1, \ldots, \tau_K$ for RR and $\tau$ for SO respectively) on both sides of the above inequality and substituting the upper bound on $\bE[\norm{\vr_k}^2]$ as derived in Step 3 gives us the following:
\begin{align*}
    \E{\norm{\vz^{K+1}_0 - \vz^*}^2} &\leq 2e^{-n\alpha \mu K}\norm{\vz_0 - \vz^*}^2 + 2 \alpha^4 K \sum_{k=1}^K (1 - n \alpha \mu/2)^{2(K-k)} \E{\norm{\vr_k}^2} \\
    &\leq 2e^{-n\alpha \mu K}\norm{\vz_0 - \vz^*}^2 + \left(l^2 n^3 \sigma_*^2 / 2 \right) \alpha^4 K \sum_{k=1}^K (1 - n \alpha \mu/2)^{2(K-k)}.
\end{align*}
By simplifying the last term, we obtain the following guarantee that holds for GDA-RR and GDA-SO whenever $\alpha \leq \frac{\mu}{5nl^2}$,
\begin{equation}
\label{app-eqn:gda-wor-alpha-rate}
    \E{\norm{\vz^{K+1}_0 - \vz^*}^2} \leq 2e^{-n\alpha \mu K}\norm{\vz_0 - \vz^*}^2 + \frac{l^2 \sigma_*^2 \alpha^3 n^2 K}{ \mu}.
\end{equation}
To complete the proof, we substitute $\alpha = \min \{  \mu/5nl^2, 2 \log (\norm{\nu(\vz_0 )}n^{1/2}K/\mu )/\mu n K \}$ in \eqref{app-eqn:gda-wor-alpha-rate}.

Under this choice of step-size, using $\alpha \leq \frac{2 \log \left(\norm{\nu(\vz_0 )}n^{1/2}K/\mu \right)}{\mu n K}$, we bound the second term in the right hand side of \eqref{app-eqn:gda-wor-alpha-rate} as
\begin{align*}
    \frac{l^2 \sigma_*^2 \alpha^3 n^2 K}{\mu} \leq \frac{8l^2 \sigma_*^2}{\mu^4}\frac{\log^3\left(\norm{\nu(\vz_0 )}n^{1/2}K/\mu \right)}{nK^2}.
\end{align*}
By substituting the above expression into \eqref{app-eqn:gda-wor-alpha-rate}, we obtain the following:
\begin{equation}
\label{app-eqn:gda-wor-rate-eqn-1}
\E{\norm{\vz^{K+1}_0 - \vz^*}^2} \leq 2e^{-n\alpha \mu K}\norm{\vz_0 - \vz^*}^2 + \frac{8\kappa^2 \sigma_*^2}{\mu^2}\frac{\log^3\left(\norm{\nu(\vz_0 )}n^{1/2}K/\mu \right)}{nK^2}.
\end{equation}
We now consider the following cases:
\paragraph{Case 1: $\frac{\mu}{5nl^2} \leq \frac{2 \log \left(\norm{\nu(\vz_0 )}n^{1/2}K/\mu \right)}{\mu n K}$} By our choice of the step-size, $\alpha = \frac{\mu}{5nl^2}$, which implies that
\begin{align*}
    2e^{-n\alpha\mu K}\norm{\vz_0-\vz^*}^2 = 2e^{-\frac{K}{5\kappa^2}}\norm{\vz_0-\vz^*}^2.
\end{align*}
By substituting into \eqref{app-eqn:gda-wor-rate-eqn-1}, we obtain the following rate for Case 1, 
\begin{equation}
\label{app-eqn:gda-wor-convergence-rate-case1}
\E{\norm{\vz^{K+1}_0 - \vz^*}^2} \leq 2e^{-\frac{K}{5\kappa^2}}\norm{\vz_0 - \vz^*}^2 + \frac{8\kappa^2 \sigma_*^2}{\mu^2}\frac{\log^3\left(\norm{\nu(\vz_0 )}n^{1/2}K/\mu \right)}{nK^2}.
\end{equation}
\paragraph{Case 2: $\frac{2 \log \left(\norm{\nu(\vz_0 )}n^{1/2}K/\mu \right)}{\mu n K} \leq \frac{\mu}{5nl^2}$}
By our choice of the step-size, $\alpha = \frac{2 \log \left(\norm{\nu(\vz_0 )}n^{1/2}K/\mu \right)}{\mu n K}$. It follows that:
\begin{align*}
    2e^{-n\alpha\mu K}\norm{\vz_0-\vz^*}^2 &= 2e^{-2 \log \left(\norm{\nu(\vz_0 )}n^{1/2}K/\mu \right)} 
    \leq \frac{\mu \norm{\vz_0 - \vz^*}^2}{\norm{\nu(\vz_0)}^2}\frac{2}{nK^2} \leq \frac{2}{nK^2},
\end{align*}
where the last inequality follows from the $\mu$-strong monotonicity of $\nu$.

Substituting the above inequality into \eqref{app-eqn:gda-wor-rate-eqn-1}, we obtain the following rate for Case 2, 
\begin{equation}
\label{app-eqn:gda-wor-convergence-rate-case2}
\E{\norm{\vz^{K+1}_0 - \vz^*}^2} \leq \frac{2\mu^2 + 8\kappa^2 \sigma_*^2\log^3\left(\norm{\nu(\vz_0 )}n^{1/2}K/\mu \right)}{\mu^2 n K^2}.
\end{equation}
Taking the maximum of the right hand side of \eqref{app-eqn:gda-wor-convergence-rate-case1} and \eqref{app-eqn:gda-wor-convergence-rate-case2}, we finally obtain the desired last-iterate convergence rate which holds for both cases:

\begin{equation}
\label{app-eqn:gda-wor-constant-convergence-rate}
\E{\norm{\vz^{K+1}_0 - \vz^*}^2} \leq 2e^{-\frac{K}{5\kappa^2}}\norm{\vz_0 - \vz^*}^2 +  \frac{2\mu^2 + 8\kappa^2 \sigma_*^2\log^3\left(\norm{\nu(\vz_0 )}n^{1/2}K/\mu \right)}{\mu^2 n K^2}.
\end{equation}
Suppressing constant terms and logarithmic factors, we obtain the following:
\begin{equation}
\label{app-eqn:gda-wor-convergence-rate}
\E{\norm{\vz^{K+1}_0 - \vz^*}^2} = \Tilde{O}(e^{\nicefrac{-K}{5\kappa^2}} + \nicefrac{1}{nK^2}).
\end{equation}
We highlight that the obtained last-iterate convergence guarantee holds for both GDA-RR and GDA-SO, and is applicable for any number of epochs $K \geq 1$. 
\end{proof}
\paragraph{Recovering the rates of full-batch GDA} We note that when $n=1$, i.e., in the non-stochastic regime, GDA-RR/SO reduces to full-batch GDA and $\sigma^2_* = 0$. Substituting this into \eqref{app-eqn:gda-wor-constant-convergence-rate}, we note that Theorem \ref{app-thm:gda-wor-convergence} implies a last-iterate convergence guarantee of $O(\exp(-K/5\kappa^2))$, which matches the tight convergence rate of full-batch GDA as established in Theorem \ref{app-thm:batch-gda-convergence}. Thus, our analysis of GDA-RR/SO automatically recovers the rates of full-batch GDA and the exponential dependence of $-K/5\kappa^2$ is optimal up to constant factors. We highlight that the above argument holds even when $n > 1$ as long as $\sigma^2_* = 0$ (which occurs whenever $\vz^*$ is the common root of all the operators $\omega_i$). 

\paragraph{Comparison with lower bounds and uniform sampling} As discussed earlier in Section \ref{sec:rr-so}, it is easy to see that the obtained convergence rate for GDA-RR/SO is nearly tight, i.e., differs from the lower bound only by an exponentially decaying term $\exp(-K/5\kappa^2)$. In fact, Theorem \ref{app-thm:gda-wor-convergence} implies an $\lO(1/nK^2)$ convergence rate (suppressing constants such as $\kappa, \mu, \sigma_*$ as well as logarithmic factors) when $\exp(-K/5\kappa^2) + 1/nK^2 = O(1/nK^2)$, which holds when $K$ satisfies an epoch requirement of the form $K \geq 10\kappa^2 \log(n^{1/2}K)$. Thus, as per Theorem \ref{app-thm:gda-wor-convergence}, GDA-RR/SO improves upon SGDA with replacement and matches the lower bound (modulo logarithmic factors) when $K \geq 10\kappa^2 \log(n^{1/2}K)$. Furthermore, since our analysis of GDA-RR/SO recovers the rates of full-batch GDA, one can infer that the dependence of this epoch requirement on $\kappa$ cannot be improved for constant step-sizes (i.e., using constant step-sizes, one cannot obtain $\Tilde{O}(1/nK^2)$ rates by imposing an epoch requirement of the form $K \geq C\kappa^{a} \log(n^{1/2}K)$ with $a < 2$), as it would contradict the optimality of the $-K/5\kappa^2$ exponential dependence. We remark that it might be possible to relax or even remove this epoch requirement for matching the lower bound, by either assuming strong monotonicity of the components $\omega_i$ or by using time-varying step-sizes, as was done in \citet{MischenkoRR2020} and \citet{AhnRR2020} respectively, in the minimization setting. 
\subsection{Analysis of GDA-AS}
\label{app-sec:gda-as}
\begin{theorem}[Convergence of GDA-AS]
\label{app-thm:gda-as-convergence}
Consider Problem \eqref{p:strong-monotone-vi} for the $\mu$-strongly monotone operator $\nu(\vz) = \nicefrac{1}{n} \sum_{i=1}^{n} \omega_i(\vz)$ where each $\omega_i$ is $l$-Lipschitz, but not necessarily monotone. Let $\vz^*$ denote the unique root of $\nu$. Then, for any $\alpha \leq \frac{\mu}{5nl^2}$ and $K \geq 1$, the iterates of GDA-AS satisfy the following:
\begin{equation*}
    \max_{\tau_1, \ldots, \tau_K \in \mathbb{S}_n} \norm{\vz^{K+1}_0 - \vz^*}^2 \leq 2e^{-n\alpha \mu K}\norm{\vz_0 - \vz^*}^2 + \frac{3l^2 \sigma_*^2 \alpha^3 n^3 K}{ \mu}.
\end{equation*}
Setting $\alpha = \min \{  \mu/5nl^2, 2 \log (\norm{\nu(\vz_0 )}K/\mu )/\mu n K \}$ results in the following expected last-iterate convergence guarantee for GDA-AS, which holds for any $K \geq 1$:
\begin{align*}
\max_{\tau_1, \ldots, \tau_K \in \mathbb{S}_n} \norm{\vz^{K+1}_0 - \vz^*}^2 &\leq 2e^{\nicefrac{-K}{5\kappa^2}}|\vz_0 - \vz^*|^2  +  \frac{2\mu^2  +  24 \kappa^2 \sigma^2_* \log^3 (|\nu(\vz_0)|K/\mu)}{\mu^2 K^2} \\
&= \Tilde{O}(e^{\nicefrac{-K}{5\kappa^2}}  +  \nicefrac{1}{K^2}).
\end{align*}
\end{theorem}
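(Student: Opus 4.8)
The plan is to reuse the analysis of GDA-RR/SO (Theorem~\ref{app-thm:gda-wor-convergence}) wherever possible. Steps~1 and~2 of that proof --- the derivation of the linearized epoch-level update rule $\vz^{k+1}_0 - \vz^* = \vH_k(\vz^k_0 - \vz^*) + \alpha^2 \vr_k$ and the contraction bound $\norm{\vH_k} \leq 1 - n\alpha\mu/2$ --- were carried out for an \emph{arbitrary} permutation $\tau_k \in \mathbb{S}_n$, using only that $\sum_{j=1}^n \omega_{\tau_k(j)}(\vz^*) = n\nu(\vz^*) = 0$ for every permutation. Hence they transfer verbatim to any adversarially chosen sequence $\tau_1, \dots, \tau_K$, and I would start the proof by quoting them, so that $\vr_k = \sum_{i=1}^{n-1}\big[\revprod_{t=i+2}^{n}(\vI - \alpha\vJ_{\tau_k(t)})\big]\vJ_{\tau_k(i+1)}\sum_{j=1}^{i}\omega_{\tau_k(j)}(\vz^*)$ with $\norm{\vJ_{\tau_k(t)}}, \norm{\vM_{\tau_k(t)}} \le l$.

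The only genuinely new ingredient is a \emph{deterministic} (worst-case over permutations) bound on $\norm{\vr_k}$, replacing the in-expectation bound that Step~3 obtained from Lemma~\ref{app-lem:sample-mean-wor}. Starting from the same estimate $\norm{\vr_k}^2 \le l^2 e^{2/5}(n-1)\sum_{i=1}^{n-1} i^2 \big|\tfrac1i \sum_{j=1}^{i}\omega_{\tau_k(j)}(\vz^*)\big|^2$, I would bound each partial average by Cauchy--Schwarz together with the fact that $\tau_k(1),\dots,\tau_k(i)$ are distinct indices: $\big|\tfrac1i\sum_{j=1}^{i}\omega_{\tau_k(j)}(\vz^*)\big|^2 \le \tfrac1i\sum_{j=1}^n \norm{\omega_j(\vz^*)}^2 = \tfrac{n\sigma_*^2}{i}$, giving $\norm{\vr_k}^2 \le \tfrac12 l^2 e^{2/5} n^2(n-1)^2\sigma_*^2 = O(l^2 n^4\sigma_*^2)$. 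This is a factor $n$ larger than the $\tfrac14 l^2 n^3\sigma_*^2$ the RR/SO proof obtained in expectation --- the cost of losing the negative covariance between without-replacement samples --- and it is precisely this factor that degrades the rate from $1/nK^2$ to $1/K^2$; one cannot do better in general, since an adversary can keep every partial sum $\sum_{j=1}^i\omega_{\tau_k(j)}(\vz^*)$ of order $\Theta(n\sigma_*)$.

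The remainder is mechanical and mirrors Step~4 of the RR/SO proof: unroll the epoch recursion over $K$ epochs, apply the triangle inequality with $\norm{\vH_k}\le 1-n\alpha\mu/2$, use Young's inequality $(\sum_k a_k)^2 \le K\sum_k a_k^2$, and sum the geometric series $\sum_{k=1}^K(1-n\alpha\mu/2)^{2(K-k)} \le 2/(n\alpha\mu)$ (valid because $\alpha \le \mu/5nl^2$ forces $n\alpha\mu \le \tfrac15$), which yields the intermediate bound $\max_{\tau_1,\dots,\tau_K} \norm{\vz^{K+1}_0 - \vz^*}^2 \le 2e^{-n\alpha\mu K}\norm{\vz_0-\vz^*}^2 + 3l^2\sigma_*^2\alpha^3 n^3 K/\mu$. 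Substituting $\alpha = \min\{\mu/5nl^2,\ 2\log(\norm{\nu(\vz_0)}K/\mu)/\mu n K\}$ and splitting into the two cases according to which term attains the minimum --- using $\mu$-strong monotonicity, $\norm{\nu(\vz_0)} \ge \mu\norm{\vz_0-\vz^*}$, in the case where the logarithmic step-size is active to turn $2e^{-n\alpha\mu K}\norm{\vz_0-\vz^*}^2$ into an $O(1/K^2)$ term --- and taking the larger of the two resulting bounds gives the stated $\Tilde{O}(e^{-K/5\kappa^2}+1/K^2)$ guarantee. The main (and essentially only) obstacle is the worst-case estimate of $\norm{\vr_k}$; everything downstream is a transcription of the RR/SO argument with $n^3$ in place of $n^2$.
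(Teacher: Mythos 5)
Your proposal is correct and follows essentially the same route as the paper's proof: Steps 1--2 of the RR/SO analysis are reused verbatim for arbitrary permutations, the noise term is bounded deterministically (your Cauchy--Schwarz bound $|\tfrac1i\sum_{j=1}^i\omega_{\tau_k(j)}(\vz^*)|^2\le n\sigma_*^2/i$ over distinct indices yields the same $\norm{\vr_k}^2\le \tfrac34 l^2 n^4\sigma_*^2$ worst-case estimate the paper derives), and the unrolling, geometric-series summation, and two-case step-size substitution are identical, giving the stated $3l^2\sigma_*^2\alpha^3 n^3K/\mu$ intermediate bound and final $\Tilde{O}(e^{-K/5\kappa^2}+1/K^2)$ rate.
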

\begin{proof}
The iterate-level update rule of GDA-AS can be expressed as
\begin{equation}
\label{app-eqn:gda-as-itr}
    \vz^{k}_{i} = \vz^{k}_{i-1} - \alpha \vw_{\tau_k(i)}(\vz^k_{i-1}),
\end{equation}
where $k \in [K]$, $i \in [n]$ and $\tau_k \in \mathbb{S}_n$ denotes an arbitrary permutation chosen by the adversary at the start of epoch $k$. Recall that Steps 1 and 2 of Theorem \ref{app-thm:gda-wor-convergence} hold for any permutation $\tau_k \in \mathbb{S}_n$, and consequently, they are directly applicable to GDA-AS. As a result, we can conclude that the epoch-level update rule of GDA-AS is the same as that of GDA-RR/SO, and is given by:

\begin{equation}
\label{app-eqn:gda-as-epoch-update}
    \vz^{k+1}_0 - \vz^* = \vH_k (\vz^k_0 - \vz^*) + \alpha^2 \vr_k.
\end{equation}
Note that $\vH_k$ and $\vr_k$ are as defined in Theorem \ref{app-thm:gda-wor-convergence}, and are given by:
\begin{align*}
    \vH_k &= \vI - \alpha \sum_{j=1}^{n} \left(\revprod_{t=j+1}^{n}(\vI - \alpha \vJ_{\tau_k(t)})\right)\vM_{\tau_k(j)}, \\
    \vr_k &= \sum_{i=1}^{n-1}\left[\revprod_{t=i+2}^{n}(\vI - \alpha \vJ_{\tau_k(t)}) \right]\vJ_{\tau_k(i+1)}\sum_{j=1}^{i}\vw_{\tau_k(j)}(\vz^*),
\end{align*}
where, as before, $\vM_{\tau_k(i)} = \int_{0}^{1} \grad \omega_{\tau_k(i)}(t\vz^k_0 + (1 - t) \vz^*) \textrm{d}t$ and $\vJ_{\tau_k(i)} = \int_{0}^{1} \grad \omega_{\tau_k(i)}(t\vz^k_i + (1 - t) \vz^k_0) \textrm{d}t$. Furthermore, by Step 2 of Theorem \ref{app-thm:gda-wor-convergence}, $\norm{\vH_k} \leq 1 - n \alpha \mu / 2$.

Our analysis now proceeds by controlling the influence of the noise term $\vr_k$. Recall that this was achieved in our analysis of GDA-RR/SO by using Lemma \ref{app-lem:sample-mean-wor} to upper bound $\bE[\norm{\vr_k}^2]$. However, the same strategy does not apply to GDA-AS since the permutations $\tau_1, \ldots, \tau_K$ are chosen by the adversary as per an arbitrary strategy that is unknown to us. We circumvent this by instead deriving an upper bound for $\norm{\vr_k}^2$ that holds uniformly for any permutation $\tau_k \in \mathbb{S}_n$.

We begin by repeating some initial steps from Step 3 of Theorem \ref{app-thm:gda-wor-convergence}. Applying $\norm{\vJ_{\tau_k(t)}} \leq l$ gives us the following:
\begin{align*}
    \norm{\vr_k} &\leq \sum_{i=1}^{n-1}\left[\revprod_{t=i+2}^{n}\norm{\vI - \alpha \vJ_{\tau_k(t)}} \right]\norm{\vJ_{\tau_k(i+1)}}|\sum_{j=1}^{i}\vw_{\tau_k(j)}(\vz^*)| \leq l(1 + \alpha l)^{n}\sum_{i=1}^{n-1}|\sum_{j=1}^{i}\vw_{\tau_k(j)}(\vz^*)| \\
    &\leq l(1 + \frac{\mu}{5nl})^{n}\sum_{i=1}^{n-1}|\sum_{j=1}^{i}\vw_{\tau_k(j)}(\vz^*)| \leq le^{1/5}\sum_{i=1}^{n-1}|\sum_{j=1}^{i}\vw_{\tau_k(j)}(\vz^*)|.
\end{align*}
By Young's inequality, we conclude,
\begin{align*}
    \norm{\vr_k}^2 &\leq l^2 e^{2/5}\left(\sum_{i=1}^{n-1}|\sum_{j=1}^{i}\vw_{\tau_k(j)}(\vz^*)|\right)^2  \\ 
    &\leq l^2 e^{2/5}(n-1)\sum_{i=1}^{n-1}|\sum_{j=1}^{i}\vw_{\tau_k(j)}(\vz^*)|^2 \\
    &\leq l^2 e^{2/5}(n-1)\sum_{i=1}^{n-1} i \sum_{j=1}^{i}|\vw_{\tau_k(j)}(\vz^*)|^2 \\
    &\leq l^2 e^{2/5}(n-1)\sum_{i=1}^{n-1} i \sum_{j=1}^{n}|\vw_{\tau_k(j)}(\vz^*)|^2 = l^2 e^{2/5}(n-1)\sum_{i=1}^{n-1} i \sum_{j=1}^{n}|\vw_{j}(\vz^*)|^2 \\
    &= l^2 e^{2/5}(n-1)(n \sigma^2_*)\sum_{i=1}^{n-1} i
    \leq 3 l^2 n^4 \sigma_*^2 / 4.
\end{align*}
We note that the above sequence of inequalities holds for any permutation $\tau_k$, and hence, the upper bound $\norm{\vr_k}^2 \leq 3 l^2 n^4 \sigma_*^2 / 4$ holds for any $\tau_k \in \mathbb{S}_n, \ k \in [K]$.

Having bounded both $\norm{\vH_k}$ and $\norm{\vr_k}$, we now proceed to derive the last-iterate convergence guarantee.

Unrolling \eqref{app-eqn:gda-as-epoch-update} for $K$ epochs and setting $\vz^1_0 = \vz_0$ gives us the following:
\begin{align*}
    \vz^{K+1}_0 - \vz^* = \left[\revprod_{k=1}^{K}\vH_k\right](\vz_0 - \vz^*) + \alpha^2 \sum_{k=1}^{K}\left[\revprod_{j=k+1}^{K}\vH_k\right]\vr_k.
\end{align*}
By the triangle inequality and the bound $\norm{\vH_k} \leq 1 - n\alpha \mu / 2$, we infer,
\begin{align*}
    \norm{\vz^{K+1}_0 - \vz^*} &\leq  \left[\revprod_{k=1}^{K}\norm{\vH_k}\right]\norm{\vz_0 - \vz^*} + \alpha^2 \sum_{k=1}^{K}\left[\revprod_{j=k+1}^{K}\norm{\vH_k}\right]\norm{\vr_k} \\
    &\leq (1 - n \alpha \mu /2)^K \norm{\vz_0 - \vz^*} + \alpha^2 \sum_{k=1}^{K}(1 - n\alpha\mu/2)^{K-k}\norm{\vr_k}.
\end{align*}
By applying Young's inequality, we can conclude that,
\begin{align*}
    \norm{\vz^{K+1}_0 - \vz^*}^2 &\leq 2(1 - n\alpha\mu/2)^{2K}\norm{\vz_0 - \vz^*}^2 + 2 \alpha^4 K \sum_{k=1}^{K} (1-n\alpha \mu/2)^{2(K-k)}|\vr_k|^2 \\
    &\leq 2e^{-n\alpha \mu K}\norm{\vz_0 - \vz^*}^2 + 2 \alpha^4 K \sum_{k=1}^{K} (1 - n\alpha \mu/2)^{2(K-k)} |\vr_k|^2 \\
    &\leq 2e^{-n\alpha \mu K}\norm{\vz_0 - \vz^*}^2 + (1.5 l^2 n^4 \sigma^2_*) \alpha^4 K \sum_{k=1}^{K} (1 - n\alpha \mu/2)^{2(K-k)} \\
    &\leq 2e^{-n\alpha \mu K}\norm{\vz_0 - \vz^*}^2 + \frac{3 l^2 \sigma^2_* \alpha^3 n^3 K}{\mu}.
\end{align*}
Since the above inequality holds for any sequence of permutations $\tau_1, \ldots, \tau_K$ chosen by the adversary, we obtain the following uniform guarantee for GDA-AS, whenever $\alpha \leq \frac{\mu}{5nl^2}$,
\begin{equation}
\label{app-eqn:gda-as-alpha-rate}
    \max_{\tau_1, \ldots, \tau_K \in \mathbb{S}_n} \norm{\vz^{K+1}_0 - \vz^*}^2 \leq 2e^{-n\alpha \mu K}\norm{\vz_0 - \vz^*}^2 + \frac{3l^2 \sigma_*^2 \alpha^3 n^3 K}{\mu}.
\end{equation}
To complete the proof, we substitute $\alpha = \min \{  \mu/5nl^2, 2 \log (\norm{\nu(\vz_0 )}K/\mu )/\mu n K \}$ in \eqref{app-eqn:gda-as-alpha-rate} and proceed in a manner similar to Theorem \ref{app-thm:gda-wor-convergence}.

Under this choice of step-size, using $\alpha \leq \frac{2 \log \left(\norm{\nu(\vz_0 )}K/\mu \right)}{\mu n K}$, we bound the second term in the right hand side of \eqref{app-eqn:gda-as-alpha-rate} as
\begin{align*}
    \frac{3l^2 \sigma_*^2 \alpha^3 n^3 K}{2 \mu} \leq \frac{24l^2 \sigma_*^2}{\mu^4}\frac{\log^3\left(\norm{\nu(\vz_0 )}K/\mu \right)}{K^2}.
\end{align*}
Substituting the above inequality into \eqref{app-eqn:gda-as-alpha-rate}, we obtain,
\begin{equation}
\label{app-eqn:gda-as-rate-eqn-1}
\max_{\tau_1, \ldots, \tau_K \in \mathbb{S}_n} \norm{\vz^{K+1}_0 - \vz^*}^2 \leq 2e^{-n\alpha \mu K}\norm{\vz_0 - \vz^*}^2 + \frac{24\kappa^2 \sigma_*^2}{\mu^2}\frac{\log^3\left(\norm{\nu(\vz_0 )}K/\mu \right)}{K^2}.
\end{equation}
We now consider the following cases:
\paragraph{Case 1: $\frac{\mu}{5nl^2} \leq \frac{2 \log \left(\norm{\nu(\vz_0 )}K/\mu \right)}{\mu n K}$} By our choice of the step-size, $\alpha = \frac{\mu}{5nl^2}$. It follows that:
\begin{align*}
    2e^{-n\alpha\mu K}\norm{\vz_0-\vz^*}^2 = 2e^{-K/(5\kappa^2)}\norm{\vz_0-\vz^*}^2.
\end{align*}
Substituting into \eqref{app-eqn:gda-as-rate-eqn-1}, we obtain the following rate for Case 1, 
\begin{equation}
\label{app-eqn:gda-as-convergence-rate-case1}
\max_{\tau_1, \ldots, \tau_K \in \mathbb{S}_n} \norm{\vz^{K+1}_0 - \vz^*}^2 \leq 2e^{\frac{-K}{5\kappa^2}}\norm{\vz_0 - \vz^*}^2 + \frac{24\kappa^2 \sigma_*^2}{\mu^2}\frac{\log^3\left(\norm{\nu(\vz_0 )}K/\mu \right)}{K^2}.
\end{equation}
\paragraph{Case 2: $\frac{2 \log \left(\norm{\nu(\vz_0 )}K/\mu \right)}{\mu n K} \leq \frac{\mu}{5nl^2}$}
By our choice of the step-size, $\alpha = \frac{2 \log \left(\norm{\nu(\vz_0 )}K/\mu \right)}{\mu n K}$. It follows that:
\begin{align*}
    2e^{-n\alpha\mu K}\norm{\vz_0-\vz^*}^2 &= 2e^{-2 \log \left(\norm{\nu(\vz_0 )}K/\mu \right)} 
    \leq \frac{\mu \norm{\vz_0 - \vz^*}^2}{\norm{\nu(\vz_0)}^2}\frac{2}{K^2} \leq \frac{2}{K^2},
\end{align*}
where the last inequality follows from the $\mu$-strong monotonicity of $\nu$.

Substituting the above inequality into \eqref{app-eqn:gda-as-rate-eqn-1}, we obtain the following rate for Case 2, 
\begin{equation}
\label{app-eqn:gda-as-convergence-rate-case2}
\max_{\tau_1, \ldots, \tau_K \in \mathbb{S}_n} \norm{\vz^{K+1}_0 - \vz^*}^2 \leq \frac{2\mu^2 + 24 \kappa^2 \sigma^2_* \log^3 (|\nu(\vz_0)|K/\mu)}{\mu^2 K^2}.
\end{equation}
By taking the maximum of the right hand side of \eqref{app-eqn:gda-as-convergence-rate-case1} and \eqref{app-eqn:gda-as-convergence-rate-case2}, we finally obtain the desired last-iterate convergence rate.
\begin{equation}
\label{app-eqn:gda-as-const-convergence-rate}
\max_{\tau_1, \ldots, \tau_K \in \mathbb{S}_n} \norm{\vz^{K+1}_0 - \vz^*}^2 \leq 2e^{\nicefrac{-K}{5\kappa^2}}|\vz_0 - \vz^*|^2  +  \frac{2\mu^2  +  24 \kappa^2 \sigma^2_* \log^3 (|\nu(\vz_0)|K/\mu)}{\mu^2 K^2}.
\end{equation}
Suppressing constant terms and logarithmic factors, we get,
\begin{equation}
\label{app-eqn:gda-as-convergence-rate}
\max_{\tau_1, \ldots, \tau_K \in \mathbb{S}_n} \norm{\vz^{K+1}_0 - \vz^*}^2 = \Tilde{O}(e^{\nicefrac{-K}{5\kappa^2}}  +  \nicefrac{1}{K^2}).
\end{equation}
\end{proof}
\paragraph{Comparison with lower bounds} By repeating the same arguments as those used for GDA-RR/SO, it is easy to see that our analysis of GDA-AS recovers the tight rates of full-batch GDA in the deterministic, i.e., $n=1$ regime. Furthermore, Theorem \ref{app-thm:gda-wor-convergence} implies an $\lO(1/K^2)$ convergence rate, which matches the lower bound modulo logarithmic factors, when $e^{\nicefrac{-K}{5\kappa^2}}  +  \nicefrac{1}{K^2} = O(1/K^2)$, i.e., when $K$ satisfies an epoch requirement of the form $K \geq 10 \kappa^2 \log(K)$. As before, since our analysis of GDA-AS recovers the rates of full-batch GDA, the $\kappa^2$ dependence of the epoch requirement cannot be improved for constant step-sizes.
\section{Analysis of PPM without Replacement}
\label{app-sec:ppm}
We now present the convergence proofs of the proximal point method without replacement (i.e. PPM-RR/SO/AS). The structure of these results largely mirror that of our analysis of GDA without replacement, since the key techniques used in our analysis is an adaptation of the linearization technique to PPM and the application of Lemma \ref{app-lem:sample-mean-wor} to control the influence of noise in PPM without replacement. To this end, we begin by presenting a linearization-based proof of convergence of full batch PPM, followed by a unified proof of convergence of PPM-RR and PPM-SO, and conclude with the extension of our proof techniques to the adversarial shuffling regime by analyzing PPM-AS.
\subsection{Analysis of Full-batch PPM by Linearization}
\label{app-sec:ppm-batch}
\begin{theorem}[Convergence of full-batch PPM]
\label{app-thm:batch-ppm-convergence}
Consider the $l$-Lipschitz and $\mu$-strongly monotone operator $\nu : \bR^d \rightarrow \bR^d$ and let $\vz^*$ denote the unique root of $\nu$. For any step-size $0 < \alpha < 1/l$, the iterates $\vz_{k}$ of full-batch PPM satisfy the following recurrence:
\begin{align*}
    \vz_{k+1} - \vz^* = (\vI + \alpha \vM_{k+1})^{-1} (\vz_{k} - \vz^*),
\end{align*}
where $\vI + \alpha \vM_{k+1}$ is invertible and its singular values are lower bounded as $\sigma_{\min}(\vI + \alpha \vM_{k+1}) \geq 1 + \alpha \mu$. Consequently, setting $\alpha = 1/2l$ gives us the following last iterate convergence guarantee:
\begin{align*}
    \norm{\vz_{K+1} - \vz^{*}}^2 \leq (1 + 1/2\kappa)^{-2K} \norm{\vz_{0} - \vz^{*}}^2.
\end{align*}
\end{theorem}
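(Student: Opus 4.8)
The plan is to mirror, essentially verbatim, the linearization argument used for full-batch GDA in Theorem~\ref{app-thm:batch-gda-convergence}, with the one structural difference that the linearized operator now enters the update \emph{implicitly}, i.e.\ through an inverse; this is exactly what turns the additive perturbation $-\alpha\vM$ of the GDA analysis into the resolvent $(\vI+\alpha\vM)^{-1}$ and produces the $(1+\alpha\mu)^{-1}$ contraction factor.

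First I would note that the PPM update $\vz_{k+1} = \vz_k - \alpha\nu(\vz_{k+1})$ is well-posed: for $0 < \alpha < 1/l$ the map $\zeta(\vz) = \vz_k - \alpha\nu(\vz)$ is a contraction (as already observed in Section~\ref{sec:sc-sc-analysis}), so $\vz_{k+1}$ exists and is unique. Next, exactly as in Theorem~\ref{app-thm:batch-gda-convergence}, I would linearize $\nu$ about $\vz^*$ along the segment $t \mapsto t\vz_{k+1} + (1-t)\vz^*$: since $\nu$ is $l$-Lipschitz, $g(t) = \nu(t\vz_{k+1}+(1-t)\vz^*)$ is Lipschitz (hence absolutely continuous) on $[0,1]$, and the Fundamental Theorem of Calculus for Lebesgue integrals gives $\nu(\vz_{k+1}) = \nu(\vz^*) + \vM_{k+1}(\vz_{k+1}-\vz^*) = \vM_{k+1}(\vz_{k+1}-\vz^*)$, where $\vM_{k+1} = \int_{0}^{1} \grad\nu(t\vz_{k+1}+(1-t)\vz^*)\,\textrm{d}t$ is well defined because $\grad\nu$ exists Lebesgue-almost everywhere by Lemma~\ref{app-lem:lip-operator-property}. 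That same lemma, together with standard properties of Lebesgue integrals and inner products, yields $\norm{\vM_{k+1}} \leq l$ and $\dotprod{\vv}{\vM_{k+1}\vv} \geq \mu\norm{\vv}^2$ for every $\vv \in \bR^d$. Substituting this expansion into the update and rearranging gives $(\vI + \alpha\vM_{k+1})(\vz_{k+1}-\vz^*) = \vz_k - \vz^*$.

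Then I would establish the singular-value lower bound. For any $\vv \in \bR^d$, Cauchy--Schwarz and the quadratic-form estimate above give
\[
\norm{(\vI+\alpha\vM_{k+1})\vv}\,\norm{\vv} \geq \dotprod{\vv}{(\vI+\alpha\vM_{k+1})\vv} = \norm{\vv}^2 + \alpha\dotprod{\vv}{\vM_{k+1}\vv} \geq (1+\alpha\mu)\norm{\vv}^2,
\]
so $\norm{(\vI+\alpha\vM_{k+1})\vv} \geq (1+\alpha\mu)\norm{\vv}$. In particular $\vI+\alpha\vM_{k+1}$ is injective, hence invertible as a square matrix, and $\sigma_{\min}(\vI+\alpha\vM_{k+1}) \geq 1+\alpha\mu$, equivalently $\norm{(\vI+\alpha\vM_{k+1})^{-1}} \leq (1+\alpha\mu)^{-1}$. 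This simultaneously yields the claimed recurrence $\vz_{k+1}-\vz^* = (\vI+\alpha\vM_{k+1})^{-1}(\vz_k-\vz^*)$ and the bound $\norm{\vz_{k+1}-\vz^*} \leq (1+\alpha\mu)^{-1}\norm{\vz_k-\vz^*}$. Unrolling over $K$ epochs gives $\norm{\vz_{K+1}-\vz^*}^2 \leq (1+\alpha\mu)^{-2K}\norm{\vz_0-\vz^*}^2$, and setting $\alpha = 1/2l$, so that $\alpha\mu = 1/2\kappa$, gives the stated rate.

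I do not anticipate a genuine obstacle: the argument is a near-verbatim adaptation of the full-batch GDA proof, and the only point requiring mild care is the familiar measure-zero technicality in defining $\vM_{k+1}$ as an integral of the almost-everywhere-defined Jacobian, which is dispatched exactly as in Theorem~\ref{app-thm:batch-gda-convergence}. If there is a "hard part", it is conceptual rather than technical — observing that passing to the implicit update replaces $\vI - \alpha\vM$ by $(\vI+\alpha\vM)^{-1}$, which is \emph{unconditionally} a contraction under strong monotonicity, so that (unlike GDA) no upper bound on $\alpha$ is needed for the rate itself; the constraint $\alpha < 1/l$ is used solely to guarantee that the iterates are well defined.
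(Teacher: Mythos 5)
Your proposal is correct and follows essentially the same route as the paper: well-posedness via the Banach fixed point argument, linearization of $\nu$ about $\vz^*$ along the segment to $\vz_{k+1}$ using Lemma \ref{app-lem:lip-operator-property}, the quadratic-form bound $\dotprod{\vv}{\vM_{k+1}\vv} \geq \mu\norm{\vv}^2$ to get $\sigma_{\min}(\vI+\alpha\vM_{k+1}) \geq 1+\alpha\mu$, and unrolling the resulting resolvent contraction. Your one-line Cauchy--Schwarz derivation of the singular-value bound is a harmless cosmetic variant of the paper's expansion of $\norm{(\vI+\alpha\vM_{k+1})\vv}^2$, and your closing observation (that $\alpha<1/l$ is needed only for well-posedness, not for the contraction) is consistent with the paper.
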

\begin{proof}
The iterates of full-batch PPM with step-size $\alpha$ satisfy
\begin{align*}
    \vz_{k+1} = \vz_k - \alpha \nu(\vz_{k+1}).
\end{align*}
From the above equation, we conclude that $\vz_{k+1}$ is a fixed point of the operator $\zeta(\vz) = \vz_{k} - \alpha \nu(\vz)$. Since $\alpha < 1/l$, $\zeta$ is a contraction mapping. Hence, by the Banach Fixed Point Theorem, we conclude that $\vz_{k+1}$ exists and is uniquely defined. This allows us to proceed in a manner similar to Theorem \ref{app-thm:batch-gda-convergence} and linearize $\nu(\vz_{k+1})$ about $\vz^*$. Thus, by the Lipschitz continuity of $\nu$ and the Fundamental Theorem of Calculus for Lebesgue Integrals:
\begin{align*}
    \nu(\vz_{k+1}) &= \nu(\vz^*) + \int_{0}^{1} \grad \nu(t \vz_{k+1} + (1 - t) \vz^*)(\vz_{k+1} - \vz^*) \textrm{d}t \\
    &= \vM_{k+1}(\vz_{k+1} - \vz^*),
\end{align*}
where $\vM_{k+1} = \int_{0}^{1} \grad \nu(t \vz_{k+1} + (1 - t) \vz^*)\textrm{d}t$ is well defined by Lemma \ref{app-lem:lip-operator-property}. Substituting this expansion into the PPM update rule gives us the following:
\begin{equation}
\label{app-eqn:batch-ppm-lds}
\vz_{k} - \vz^* = (\vI + \alpha \vM_{k+1})(\vz_{k+1} - \vz^*).
\end{equation}
Using the same arguments as in Theorem \ref{app-thm:batch-gda-convergence}, we conclude that the $\mu$-strong monotonicity of $\nu$ implies $\vv^T \vM_{k+1} \vv \geq \mu \norm{\vv}^2 \ \forall \vv \in \bR^d$. Using this, we lower bound the singular values of $\vI + \alpha \vM_{k+1}$ as follows:

Note that for any $\vv \in \bR^{d}$, 
\begin{align*}
    \norm{(\vI + \alpha \vM_{k+1}) \vv}^2 &= \norm{\vv}^2 + \alpha\vv^T \vM_{k+1} \vv + \alpha \vv^T \vM^T_{k+1} \vv + \alpha^2 \norm{\vM_{k+1} \vv}^2 \\
    &\geq (1 + 2 \alpha \mu + \alpha^2 \mu^2) \norm{\vv}^2,
\end{align*}
where the last inequality follows from the Cauchy-Schwarz inequality since $\norm{\vv}\norm{\vM_{k+1}\vv} \geq \vv^T \vM_{k+1} \vv \geq \mu \norm{\vv}^2$.

Thus, the singular values of $\vI + \alpha \vM_{k+1}$ are lower bounded as $\sigma_{\min}(\vI + \alpha \vM_{k+1}) \geq 1 + \alpha \mu > 0$. Since $\vI + \vM_{k+1}$ is a square matrix, this implies that $\vI + \alpha \vM_{k+1}$ is invertible and
\begin{align*}
    \norm{(\vI + \alpha \vM_{k+1})^{-1}} = \sigma_{\max}((\vI + \alpha \vM_{k+1})^{-1}) = 1/\sigma_{\min}(\vI + \alpha \vM_{k+1}) \leq (1 + \alpha \mu)^{-1}.
\end{align*}

It follows that,
\begin{align*}
    \vz_{k+1} - \vz^{*} &= (\vI + \alpha \vM_{k+1})^{-1}(\vz_k - \vz^*) \\
    \implies \norm{\vz_{k+1} - \vz^{*}}^2 &\leq (1 + \alpha \mu)^{-2}  \norm{\vz_{k} - \vz^{*}}^2.
\end{align*}
Thus, by setting $\alpha = 1/2l$, we obtain the following convergence guarantee
\begin{align*}
    \norm{\vz_{K+1} - \vz^{*}}^2 \leq (1 + 1/2\kappa)^{-2K} \norm{\vz_{0} - \vz^{*}}^2.
\end{align*}
\end{proof}
\subsection{A Unified Analysis of PPM-RR and PPM-SO}
\label{app-sec:ppm-wor}
\begin{theorem}[Convergence of PPM-RR/SO]
\label{app-thm:ppm-wor-convergence}
Consider Problem \eqref{p:strong-monotone-vi} for the $\mu$-strongly monotone operator $\nu(\vz) = \nicefrac{1}{n} \sum_{i=1}^{n} \omega_i(\vz)$ where each $\omega_i$ is $l$-Lipschitz, but not necessarily monotone. Let $\vz^*$ denote the unique root of $\nu$. Then, for any $\alpha \leq \frac{\mu}{5nl^2}$ and $K \geq 1$, the iterates of PPM-RR and PPM-SO satisfy the following:
\begin{equation*}
    \E{\norm{\vz^{K+1}_0 - \vz^*}^2} \leq 2e^{-n\alpha \mu K}\norm{\vz_0 - \vz^*}^2 + \frac{l^2 \sigma_*^2 \alpha^3 n^2 K}{\mu}.
\end{equation*}
Setting $\alpha = \min \{  \mu/5nl^2, 2 \log (\norm{\nu(\vz_0 )}n^{1/2}K/\mu )/\mu n K \}$ results in the following expected last-iterate convergence guarantee for both PPM-RR and PPM-SO, which holds for any $K \geq 1$:
\begin{align*}
\E{|\vz^{K+1}_0 - \vz^*|^2} &\leq  2e^{\nicefrac{-K}{5\kappa^2}}|\vz_0 - \vz^*|^2 + \frac{2\mu^2 + 8 \kappa^2 \sigma^2_* \log^3 (|\nu(\vz_0)|n^{1/2}K/\mu)}{\mu^2 nK^2} \\
&= \Tilde{O}(e^{\nicefrac{-K}{5\kappa^2}} + \nicefrac{1}{nK^2}).
\end{align*}
\end{theorem}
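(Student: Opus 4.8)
The plan is to mirror, step for step, the four-part proof of Theorem~\ref{app-thm:gda-wor-convergence} for GDA-RR/SO, replacing the explicit contraction matrix $\vI-\alpha\vJ$ by the implicit resolvent $(\vI+\alpha\vJ)^{-1}$ throughout, and to check that every bound survives with the same numerical constants. First, as already observed in the main text, for $\alpha<1/l$ the per-iterate map $\vz\mapsto\vz^{k}_{i-1}-\alpha\omega_{\tau_k(i)}(\vz)$ is a contraction, so each $\vz^k_i$ is uniquely defined by the Banach fixed point theorem. Linearizing $\omega_{\tau_k(i)}(\vz^k_i)$ around $\vz^k_0$ and then around $\vz^*$ via Lemma~\ref{app-lem:lip-operator-property} and the fundamental theorem of calculus, exactly as in Step~1 of Theorem~\ref{app-thm:gda-wor-convergence}, gives $\omega_{\tau_k(i)}(\vz^k_i)=\omega_{\tau_k(i)}(\vz^*)+\vM_{\tau_k(i)}(\vz^k_0-\vz^*)+\vJ_{\tau_k(i)}(\vz^k_i-\vz^k_0)$ with averaged Jacobians satisfying $\norm{\vM_{\tau_k(i)}},\norm{\vJ_{\tau_k(i)}}\le l$. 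Substituting into the implicit update and solving for $\vz^k_i$ yields, with $\mathbf{N}_{\tau_k(i)}:=(\vI+\alpha\vJ_{\tau_k(i)})^{-1}$ and the identity $\alpha\mathbf{N}_{\tau_k(i)}\vJ_{\tau_k(i)}=\vI-\mathbf{N}_{\tau_k(i)}$,
$$\vz^k_i-\vz^* = \mathbf{N}_{\tau_k(i)}(\vz^k_{i-1}-\vz^*) + (\vI-\mathbf{N}_{\tau_k(i)})(\vz^k_0-\vz^*) - \alpha\mathbf{N}_{\tau_k(i)}\vM_{\tau_k(i)}(\vz^k_0-\vz^*) - \alpha\mathbf{N}_{\tau_k(i)}\omega_{\tau_k(i)}(\vz^*),$$
where $\vI+\alpha\vJ_{\tau_k(i)}$ is invertible with $\norm{\mathbf{N}_{\tau_k(i)}}\le(1-\alpha l)^{-1}\le 5/4$, since $\alpha l\le 1/(5n)\le 1/5$.

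Second, I would unroll this recursion over $i=1,\dots,n$: the $(\vI-\mathbf{N})$-terms telescope, and the $\omega(\vz^*)$-terms are reorganized by summation by parts using $\sum_{j=1}^{n}\omega_{\tau_k(j)}(\vz^*)=n\nu(\vz^*)=0$. This produces an epoch-level update of exactly the GDA form $\vz^{k+1}_0-\vz^*=\vH_k(\vz^k_0-\vz^*)+\alpha^2\vr_k$, now with $\vH_k=\vI-\alpha\sum_{j=1}^{n}\left(\revprod_{t=j}^{n}\mathbf{N}_{\tau_k(t)}\right)\vM_{\tau_k(j)}$ and $\vr_k=\sum_{i=1}^{n-1}\left(\revprod_{t=i}^{n}\mathbf{N}_{\tau_k(t)}\right)\vJ_{\tau_k(i)}\sum_{j=1}^{i}\omega_{\tau_k(j)}(\vz^*)$; the explicit factor $\alpha^2$ is extracted by invoking $\mathbf{N}_{\tau_k(i)}-\vI=-\alpha\mathbf{N}_{\tau_k(i)}\vJ_{\tau_k(i)}$ when applying summation by parts. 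Note this is the exact analogue of \eqref{app-eqn:gda-wor-epoch-update}, and for $n=1$ it reduces to noiseless contraction, recovering full-batch behaviour (though only the slower $e^{-K/5\kappa^2}$ rate, since the constant step-size is tied to the noise term).

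Third, I would reproduce the two key estimates. For $\norm{\vH_k}$, write $\vH_k=(\vI-n\alpha\vM)-\alpha\sum_{j}\left(\revprod_{t=j}^{n}\mathbf{N}_{\tau_k(t)}-\vI\right)\vM_{\tau_k(j)}$ with $\vM=\tfrac1n\sum_j\vM_{\tau_k(j)}=\int_0^1\grad\nu(t\vz^k_0+(1-t)\vz^*)\,\mathrm{d}t$, which is $l$-bounded and $\mu$-monotone by Lemma~\ref{app-lem:lip-operator-property}, so $\norm{\vI-n\alpha\vM}\le 1-\tfrac{9n\alpha\mu}{10}$ as in Theorem~\ref{app-thm:batch-gda-convergence}; the correction obeys $\norm{\revprod_{t=j}^{n}\mathbf{N}_{\tau_k(t)}-\vI}\le(1+\tfrac{\alpha l}{1-\alpha l})^{n-j+1}-1$, which is $O((n-j+1)\alpha l)$, so the whole correction is $\alpha\cdot O(\alpha l n^2)\cdot l=O(\alpha^2 n^2 l^2)\le\tfrac{n\alpha\mu}{4}$ after substituting $\alpha\le\tfrac{\mu}{5nl^2}$, giving $\norm{\vH_k}\le 1-n\alpha\mu/2$ exactly as for GDA. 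For $\vr_k$, since $\norm{\revprod_{t=i}^{n}\mathbf{N}_{\tau_k(t)}}\le(1-\alpha l)^{-n}\le 2$ and $\norm{\vJ_{\tau_k(i)}}\le l$, we get $\norm{\vr_k}\le 2l\sum_{i=1}^{n-1}\norm{\sum_{j=1}^{i}\omega_{\tau_k(j)}(\vz^*)}$, and Young's inequality combined with Lemma~\ref{app-lem:sample-mean-wor} (applied identically for RR and SO, since each single $\tau_k$ is marginally uniform in both) gives $\E{\norm{\vr_k}^2}\le l^2 n^3\sigma_*^2/4$ --- the same bound as GDA.

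Finally, since Steps two and three produce an epoch recursion identical in form and with identical bounds to \eqref{app-eqn:gda-wor-epoch-update}, the remainder is verbatim that of Theorem~\ref{app-thm:gda-wor-convergence}: unroll over $K$ epochs, apply triangle and Young inequalities, take expectations to get $\E{\norm{\vz^{K+1}_0-\vz^*}^2}\le 2e^{-n\alpha\mu K}\norm{\vz_0-\vz^*}^2+\tfrac{l^2\sigma_*^2\alpha^3 n^2 K}{\mu}$, then substitute $\alpha=\min\{\mu/5nl^2,\;2\log(\norm{\nu(\vz_0)}n^{1/2}K/\mu)/\mu nK\}$ and split into the two cases. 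I expect the main obstacle to be Step three: one must verify both that the resolvent $(\vI+\alpha\vJ_{\tau_k(i)})^{-1}$ is well-defined and uniformly bounded even though the components $\omega_i$ (hence the $\vJ_{\tau_k(i)}$) are not assumed monotone --- handled by $\alpha l\le 1/(5n)$ --- and, the subtler point, that when these resolvents are composed into the $n$-fold products defining $\vH_k$ and $\vr_k$, the deviation of $\revprod_t\mathbf{N}_{\tau_k(t)}$ from the identity remains $O(\alpha)$ per factor, so the aggregate correction to $\vI-n\alpha\vM$ stays second order in $\alpha$ and is absorbed by the step-size constraint. A bound that substitutes $\alpha l\le 1/(5n)$ prematurely destroys this $\alpha$-dependence and fails for large $\kappa$; keeping $\alpha$ explicit until the very end is what makes the argument close with precisely the GDA constants.
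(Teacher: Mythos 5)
Your proposal is correct, but it takes a genuinely different route from the paper's. The paper never inverts the per-iterate implicit update: it linearizes $\omega_{\tau_k(i)}(\vz^k_i)$ around the \emph{terminal} iterate $\vz^k_n$ (not $\vz^k_0$), runs the recursion \emph{backwards} in $i$ using the transposed permutation $\ttau_k(i)=\tau_k(n+1-i)$, and arrives at $\vz^{k}_0-\vz^*=\vH_k(\vz^{k+1}_0-\vz^*)+\alpha^2\vr_k$ with $\vH_k$ built from explicit products of $(\vI+\alpha\vJ_{\ttau_k(t)})$; invertibility is invoked only once, for the single epoch matrix $\vH_k$, via a minimum-singular-value bound ($\sigma_{\min}(\vH_k)\geq 1+3n\alpha\mu/4$ using Weyl's inequality), giving $\norm{\vH_k^{-1}}\leq 1-n\alpha\mu/2$. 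You instead linearize around the \emph{initial} iterate $\vz^k_0$, solve each implicit step with the resolvent $\mathbf{N}_{\tau_k(i)}=(\vI+\alpha\vJ_{\tau_k(i)})^{-1}$ (well-defined by the Neumann series since $\alpha l<1$, with no monotonicity of the components needed), and obtain a \emph{forward} epoch recursion literally of the GDA form. Both close. What the paper's route buys is that all per-step matrices are explicit, so the $\norm{\vH_k}$-type estimate proceeds by the same binomial expansion in powers of $\alpha$ as for GDA, at the cost of the transposed-permutation bookkeeping when invoking Lemma \ref{app-lem:sample-mean-wor} (handled because the transpose of a uniform permutation is uniform). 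Your route buys a cleaner analogy to \eqref{app-eqn:gda-wor-epoch-update} and dispenses with $\ttau_k$ entirely (your partial sums $\sum_{j\leq i}\omega_{\tau_k(j)}(\vz^*)$ are in the original order, so the lemma applies directly), at the cost of controlling resolvent products and the deviation $\revprod_t\mathbf{N}_{\tau_k(t)}-\vI$, which you correctly identify as the crux and handle by keeping the $\alpha$-dependence explicit. One small caveat on constants: your crude bound $\norm{\revprod_t\mathbf{N}_{\tau_k(t)}}\leq 2$ gives $\bE[\norm{\vr_k}^2]\lesssim \tfrac{2}{3}l^2n^3\sigma_*^2$ rather than $l^2n^3\sigma_*^2/4$, which would inflate the stated noise term (and the constant $8$ in the final display) by a modest factor; using the sharper bound $(1-\alpha l)^{-n}\leq e^{1/4}$ throughout recovers the stated inequalities essentially as written, since the paper's own Step 4 has a spare factor of $2$ in the noise term.
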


\begin{proof}
Without loss of generality, we express the iterate-level update rule of PPM-RR and PPM-SO jointly as
\begin{equation}
\label{app-eqn:ppm-wor-itr}
    \vz^{k}_{i} = \vz^{k}_{i-1} - \alpha \vw_{\tau_k(i)}(\vz^k_{i}),
\end{equation}
where $k \in [K]$ and $i \in [n]$. Similar to Theorem \ref{app-thm:batch-ppm-convergence}, the Banach Fixed Point Theorem guarantees that $\vz^k_{i}$ is well defined and unique since $\alpha \leq \mu/5nl^2 < 1/l$.

We note that $\tau_k \sim \textrm{Uniform}(\mathbb{S}_n)$ for every $k \in [K]$ for PPM-RR (i.e., $\tau_k$ is a permutation of $[n]$ that is resampled uniformly at every epoch) whereas for PPM-SO, $\tau_k = \tau \ \forall k \in [K]$ where $\tau \sim \textrm{Uniform}(\mathbb{S}_n)$ (i.e., the permutation $\tau$ is uniformly sampled before the first epoch, then reused for all subsequent epochs). We also define the transposed permutation $\ttau_k$ as $\ttau_k(i) = \tau_k(n+1-i)$ for any $i \in [n]$. 

The remainder of our proof follows the structure of Theorem \ref{app-thm:gda-wor-convergence}. In particular, we apply the linearization technique as illustrated in Theorem \ref{app-thm:batch-ppm-convergence} to PPM without replacement and derive a linearized epoch-level update rule of the form $\vz^{k}_0 - \vz^* = \vH_k (\vz^{k+1}_0 - \vz^*) + \alpha^2 \vr_k$. This is followed by lower bounding $\sigma_{\min}(\vH_k)$ using the Lipschitz continuity and strong monotonicity of $\mu$ and upper bounding $\bE[\norm{\vr_k}^2]$ using Lemma \ref{app-lem:sample-mean-wor}. The proof is completed by unrolling the update rule for $K$ epochs and carefully choosing the step-size.

\paragraph{Step 1: Linearized epoch-level update rule}
The approach for this step mirrors that of Theorem \ref{app-thm:gda-wor-convergence} and hinges on the insight that for small enough step-sizes, the dynamics of the iterates $\vz^{k+1}_0$ of PPM without replacement can be treated as a noisy version of full-batch PPM. Complementing this insight with the linearization-based analysis of full-batch PPM in Theorem \ref{app-thm:batch-ppm-convergence} motivates the following decomposition:
\begin{align*}
    \vw_{\tau_k(i)}(\vz^k_{i}) &= \vw_{\tau_k(i)}(\vz^{*}) + [\vw_{\tau_k(i)}(\vz^k_n) - \vw_{\tau_k(i)}(\vz^*)] + [\vw_{\tau_k(i)}(\vz^k_{i}) - \vw_{\tau_k(i)}(\vz^k_n)] \\
    &= \vw_{\tau_k(i)}(\vz^{*}) + \int_{0}^{1} \grad \omega_{\tau_k(i)}(t\vz^k_n + (1 - t) \vz^*)(\vz^k_n - \vz^*) \textrm{d}t \\
    &+ \int_{0}^{1} \grad \omega_{\tau_k(i)}(t\vz^k_{i} + (1 - t) \vz^k_n)(\vz^k_{i} - \vz^k_n) \textrm{d}t. 
\end{align*}
We further define $\vM_{\tau_k(i)}$ and $\vJ_{\tau_k(i)}$ as follows:
\begin{align*}
    \vM_{\tau_k(i)} &= \int_{0}^{1} \grad \omega_{\tau_k(i)}(t\vz^k_n + (1 - t) \vz^*) \textrm{d}t, \\
    \vJ_{\tau_k(i)} &= \int_{0}^{1} \grad \omega_{\tau_k(i)}(t\vz^k_i + (1 - t) \vz^k_n) \textrm{d}t.
\end{align*}
By repeating the same arguments as in Theorem \ref{app-thm:batch-ppm-convergence}, we note that the $l$-Lipschitz continuity of $\omega_i(\vz) \ \forall i \in [n]$ implies that $\vM_{\tau_k(i)}$ and $\vJ_{\tau_k(i)}$ are well defined and bounded as  $\norm{\vM_{\tau_k(i)}} \leq l$ and $\norm{\vJ_{\tau_k(i)}} \leq l$. It follows that
\begin{equation}
\label{app-eqn:ppm-wor-grads}
    \vw_{\tau_k(i)}(\vz^k_{i}) = \vw_{\tau_k(i)}(\vz^{*}) + \vM_{\tau_k(i)}(\vz^k_n - \vz^*) + \vJ_{\tau_k(i)}(\vz^k_{i} - \vz^k_n).
\end{equation}
Substituting \eqref{app-eqn:ppm-wor-grads} into the update equation \eqref{app-eqn:gda-wor-itr} for $\vz^k_{n}$ yields,
\begin{align*}
    \vz^k_{n-1} - \vz^* &=
    (\vI + \alpha \vM_{\tau_k(n)})(\vz^k_n - \vz^*) + \alpha \vJ_{\sigma_k(n)}(\vz^k_n - \vz^k_n) +  \alpha \vw_{\tau_k(n)}(\vz^*) \\
    &= (\vI + \alpha \vM_{\ttau_k(1)})(\vz^k_n - \vz^*) + \alpha \vw_{\ttau_k(1)}(\vz^*).
\end{align*}
Repeating the same for $\vz^k_{n-1}$ results in the following:
\begin{align*}
    \vz^k_{n-2} - \vz^* &= \vz^k_{n-1} - \vz^* + \alpha \vw_{\tau_k(n-1)}(\vz^*) + \alpha \vM_{\tau_k(n-1)}(\vz^k_n - \vz^*) + \alpha \vJ_{\tau_k(n-1)}(\vz^k_{n-1} - \vz^k_n) \\
    &= \vz^k_{n-1} - \vz^* + \alpha \vw_{\ttau_k(2)}(\vz^*) + \alpha \vM_{\ttau_k(2)}(\vz^k_n - \vz^*) + \alpha \vJ_{\ttau_k(2)}(\vz^k_{n-1} - \vz^k_n) \\
    &= (\vI + \alpha \vJ_{\ttau_k(2)})(\vz^k_{n-1} - \vz^*) + \alpha (\vM_{\ttau_k(2)} - \vJ_{\ttau_k(2)})(\vz^k_n - \vz^*) + \alpha \vw_{\ttau_k(2)}(\vz^*) \\
    &= (\vI + \alpha \vJ_{\ttau_k(2)})[(\vI + \alpha \vM_{\ttau_k(1)})(\vz^k_n - \vz^*) + \alpha \vw_{\ttau_k(1)}(\vz^*)] \\
    &+ \alpha (\vM_{\ttau_k(2)} - \vJ_{\ttau_k(2)})(\vz^k_n - \vz^*) + \alpha \vw_{\ttau_k(2)}(\vz^*) \\
    &= [(\vI + \alpha \vJ_{\ttau_k(2)})(\vI + \alpha \vM_{\ttau_k(1)}) + \alpha (\vM_{\ttau_k(2)} - \vJ_{\ttau_k(2)})](\vz^k_n - \vz^*) \\
    &+ \alpha [\vw_{\ttau_k(2)}(\vz^*) + (\vI + \alpha \vJ_{\ttau_k(2)})\vw_{\ttau_k(1)}(\vz^*)] \\
    &= [\vI + \alpha \vM_{\ttau_k(2)} + \alpha(\vI + \alpha \vJ_{\ttau_k(2)})\vM_{\ttau_k(1)}](\vz^k_n - \vz^*) \\
    &+ \alpha [\vw_{\ttau_k(2)}(\vz^*) + (\vI + \alpha \vJ_{\ttau_k(2)})\vw_{\ttau_k(1)}(\vz^*)].
\end{align*}
By applying the same process for preceeding iterates and substituting $\vz^k_n = \vz^{k+1}_0$, we obtain the following epoch-level update rule for PPM-RR and PPM-SO
\begin{align}
    \vz^{k}_0 - \vz^* &= [\vI + \alpha \sum_{j=1}^{n} \left(\revprod_{t=j+1}^{n}(\vI + \alpha \vJ_{\ttau_k(t)})\right)\vM_{\ttau_k(j)}](\vz^{k+1}_0 - \vz^*) \nonumber \\
    &+ \alpha \sum_{j=1}^{n}\left(\revprod_{t=j+1}^{n}(\vI + \alpha \vJ_{\ttau_k(t)})\right)\vw_{\ttau_k(j)}(\vz^*).
\label{app-eqn:ppm-wor-incomplete-epoch}
\end{align}
We clarify that the matrix products in \eqref{app-eqn:ppm-wor-incomplete-epoch} are in reverse order, and hence reduce to the empty product, which is defined to be $\vI$, when $j=i$. Furthermore $\ttau_k$ denotes the transposed permutation $\ttau_k(i) = \tau_k(n+1-i)$.

We simplify the second term in the right hand side of \eqref{app-eqn:ppm-wor-incomplete-epoch} using the summation by parts identity. To this end, we define $a_j$ and $b_j$ as
\begin{align*}
    a_j &= \revprod_{t=j+1}^{n}(\vI + \alpha \vJ_{\ttau_k(t)}), \\
    b_j &= \vw_{\ttau_k(j)}(\vz^*),
\end{align*}
and observe that,
\begin{align*}
    \sum_{j=1}^{n}b_j = \sum_{j=1}^{n} \vw_{\ttau_k(j)}(\vz^*) = n \nu(\vz^{*}) = 0,
\end{align*}
since $\vz^*$ is the unique root of $\nu$. We now apply the summation by parts identity to obtain the following:
\begin{align*}
    \sum_{j=1}^{n} a_j b_j &= a_n \sum_{j=1}^{n}b_j - \sum_{i=1}^{n-1} (a_{i+1} - a_i)\sum_{j=1}^{i}b_j \\
    &= - \sum_{i=1}^{n-1} \left[\revprod_{t=i+2}^{n}(\vI + \alpha \vJ_{\ttau_k(t)}) - \revprod_{t=i+1}^{n}(\vI + \alpha \vJ_{\ttau_k(t)}) \right]\sum_{j=1}^{i}\vw_{\ttau_k(j)}(\vz^*) \\
    &= \alpha \sum_{i=1}^{n-1}\left[\revprod_{t=i+2}^{n}(\vI + \alpha \vJ_{\ttau_k(t)}) \right]\vJ_{\ttau_k(i+1)}\sum_{j=1}^{i}\vw_{\ttau_k(j)}(\vz^*).
\end{align*}
Hence, we conclude that:
\begin{align*}
    \sum_{j=1}^{n}\left[\revprod_{t=j+1}^{n}(\vI + \alpha \vJ_{\ttau_k(t)})\right]\vw_{\ttau_k(j)}(\vz^*) = \alpha \sum_{i=1}^{n-1}\left[\revprod_{t=i+2}^{n}(\vI + \alpha \vJ_{\ttau_k(t)}) \right]\vJ_{\ttau_k(i+1)}\sum_{j=1}^{i}\vw_{\ttau_k(j)}(\vz^*).
\end{align*}
We define $\vH_k$ and $\vr_k$ as
\begin{align*}
    \vH_k &= \vI + \alpha \sum_{j=1}^{n} \left(\revprod_{t=j+1}^{n}(\vI + \alpha \vJ_{\ttau_k(t)})\right)\vM_{\ttau_k(j)}, \\
    \vr_k &= \sum_{i=1}^{n-1}\left[\revprod_{t=i+2}^{n}(\vI + \alpha \vJ_{\ttau_k(t)}) \right]\vJ_{\ttau_k(i+1)}\sum_{j=1}^{i}\vw_{\ttau_k(j)}(\vz^*),
\end{align*}
and substitute the above expressions in \eqref{app-eqn:ppm-wor-incomplete-epoch}. This gives us the following noisy linearized update rule for the epoch iterates $\vz^k_0$ of PPM-RR and PPM-SO:
\begin{equation}
\label{app-eqn:ppm-wor-epoch-update}
    \vz^{k}_0 - \vz^* = \vH_k (\vz^{k+1}_0 - \vz^*) + \alpha^2 \vr_k.
\end{equation}
We observe that \eqref{app-eqn:ppm-wor-epoch-update} closely resembles the linearized update rule \eqref{app-eqn:batch-ppm-lds} of full-batch PPM as derived in Theorem \ref{app-thm:batch-ppm-convergence} with an additive noise term $\alpha^2 \vr_k$. In fact, for $n=1$, which corresponds to the full-batch regime, it is easy to see that $\vr_k = 0$ and \eqref{app-eqn:ppm-wor-epoch-update} reduces to \eqref{app-eqn:batch-ppm-lds}. As we shall see in Theorem \ref{app-thm:ppm-as-convergence}, the same update rule also applies to PPM-AS. Similar to our analysis of GDA, the derivation of an epoch-level update rule that simultaneously handles PPM-RR, PPM-SO and PPM-AS is a key component of our analysis. 

Thus, we proceed in a manner similar to Theorem \ref{app-thm:batch-ppm-convergence} by lower bounding the minimum singular value of $\vH_k$.

\paragraph{Step 2: Lower bounding $\sigma_{\min}(\vH_k)$} We define the matrix $\vM$ as,
\begin{align*}
    \vM &= 1/n\sum_{j=1}^{n}\vM_{\ttau_k(j)} = 1/n \sum_{j=1}^{n} \int_{0}^{1} \grad \omega_{\ttau_k(j)}(t\vz^k_n + (1-t)\vz^*)\textrm{d}t = \int_{0}^{1} \grad \nu(t\vz^k_n + (1-t)\vz^*)\textrm{d}t.
\end{align*}
By following the same arguments as Theorem \ref{app-thm:batch-ppm-convergence}, we note that the $l$-smoothness and $\mu$-strong monotonicity of $\nu$ implies that $\norm{\vM} \leq l$ and $\vv^T \vM \vv \geq \mu \norm{\vv}^2 \ \forall \vv \in \bR^d$.  

By expanding the product terms in $\vH_k$, we obtain the following:
\begin{align*}
    \vH_k &= \vI + \alpha \sum_{j=1}^{n} \left(\revprod_{t=j+1}^{n}(\vI + \alpha \vJ_{\ttau_k(t)})\right)\vM_{\ttau_k(j)} \\
    &= \vI + n\alpha \vM + \sum_{j=2}^{n}\alpha^j\sum_{1 \leq t_1 < t_2 < \ldots < t_j \leq n} \vJ_{\ttau_k(t_j)} \vJ_{\ttau_k(t_{j-1})} \ldots \vJ_{\ttau_k(t_2)}\vM_{\ttau_k(t_1)}.
\end{align*}
We now apply Weyl's inequality for singular value perturbations, which gives us the following bound,
\begin{equation}
\label{app-eqn:ppm-wor-incomplete-spec-bound}
    \sigma_{\min}(\vH_k) \geq \sigma_{\min}(\vI + n \alpha \vM) - \sum_{j=2}^{n}\alpha^j\sum_{1 \leq t_1 < t_2 < \ldots < t_j \leq n} \norm{\vJ_{\ttau_k(t_j)}} \norm{\vJ_{\ttau_k(t_{j-1})}} \ldots \norm{\vJ_{\ttau_k(t_2)}}\norm{\vM_{\ttau_k(t_1)}}.
\end{equation}
Following the same steps as Theorem \ref{app-thm:batch-ppm-convergence}, we conclude that $\vv^T \vM \vv \geq \mu \norm{\vv}^2$ implies $\sigma_{\min}(\vI + n \alpha \vM) \geq 1 + n \alpha \mu$. Furthermore, we observe that,
\begin{align*}
    \sum_{j=2}^{n}\alpha^j\sum_{1 \leq t_1 < t_2 < \ldots < t_j \leq n} \norm{\vJ_{\ttau_k(t_j)}} \norm{\vJ_{\ttau_k(t_{j-1})}} \ldots \norm{\vJ_{\ttau_k(t_2)}}\norm{\vM_{\ttau_k(t_1)}} &\leq \sum_{j=2}^{n}\alpha^j \binom{n}{j}l^j.
\end{align*}
Substituting the above bound in \eqref{app-eqn:ppm-wor-incomplete-spec-bound} gives us the following:
\begin{align*}
    \sigma_{\min}(\vH_k) &\geq 1 + n\alpha\mu - \sum_{j=2}^{n}(\alpha l)^j \binom{n}{j} \geq 1 + n\alpha\mu - \sum_{j=2}^{n} (\alpha n l)^j \geq 1 + n\alpha\mu - \frac{\alpha^2 n^2 l^2}{1 - \alpha n l} \\
    &\geq 1 + n\alpha \mu - \frac{5\alpha^2 n^2 l^2}{4} \geq 1 + 3n\alpha \mu/4,
\end{align*}
where we substitute $\alpha \leq \frac{\mu}{5nl^2} \leq 1/5$ wherever required. Furthermore, since $\sigma_{\min}(\vH_k) \geq 1 + 3n\alpha \mu/4 > 0$, $\vH_k$ is invertible and $\norm{\vH^{-1}_k} \leq (1 + 3n \alpha \mu/4)^{-1} \leq (1 - n \alpha \mu / 2)$, which holds since $n \alpha \mu \leq 1/5$.

As before, our analysis so far holds for any permutation $\tau_k \in \mathbb{S}_n$. Consequently, Steps 1 and 2 directly generalize to the adversarial shuffling regime without any modifications. We use this as a starting point for our analysis of PPM-AS in Theorem \ref{app-thm:ppm-as-convergence}. 

We now control the magnitude of the noise term $\vr_k$ by bounding $\E{\norm{\vr_k}^2}$ similar to Theorem \ref{app-thm:gda-wor-convergence}. We note that the expectation is taken over the single uniform permutation $\tau$ for PPM-SO and over independent uniform permutations $\tau_1, \tau_2, \ldots, \tau_K$ for PPM-RR. 

\paragraph{Step 3: Upper bounding $\bE[\norm{\vr_k}^2]$}
We follow the same procedure as in Step 3 of Theorem \ref{app-thm:gda-wor-convergence} and obtain the following bound by successively applying $\norm{\vJ_{\ttau_k(i)}} \leq l$ and Young's inequality:
\begin{align*}
    \norm{\vr_k}^2 &\leq l^2 e^{2/5}(n-1)\sum_{i=1}^{n-1}i^2 |1/i\sum_{j=1}^{i}\vw_{\ttau_k(j)}(\vz^*)|^2.
\end{align*}
We now convert the above inequality into an upper bound for $\E{\norm{\vr_k}^2}$ by applying Lemma \ref{app-lem:sample-mean-wor} to the term $|1/i\sum_{j=1}^{i}\vw_{\ttau_k(j)}(\vz^*)|^2$. The case of PPM-RR and PPM-SO are discussed separately.

\paragraph{Step 3A: $\bE[\norm{\vr_k}^2]$ for PPM-RR}
We write $\bE[\norm{\vr_k}^2] = \bE_{\tau_1, \ldots, \tau_K}[\norm{\vr_k}^2]$ to explicitly denote the dependence of the expectation on the uniformly sampled random permutations $\tau_1, \ldots, \tau_K$, and then proceed as follows:
\begin{align*}
    \bE[\norm{\vr_k}^2]
    &\leq l^2 e^{2/5}(n-1)\sum_{i=1}^{n-1}i^2 \bE_{\tau_k}[|1/i\sum_{j=1}^{i}\vw_{\ttau_k(j)}(\vz^*)|^2].
\end{align*}
Since $\tau_k$ is a uniformly sampled random permutation and $\mathbb{S}_n$ is a finite group with $n!$ elements, it follows that for any $\pi \in \mathbb{S}_n$,
\begin{align*}
    \mathbb{P}_{\textrm{Uniform}}\left[\tau_k = \pi\right] = \mathbb{P}_{\textrm{Uniform}}\left[\ttau_k = \pi\right] = 1/n! \ .
\end{align*}
Hence, we infer that,
\begin{align*}
    \bE_{\tau_k}[|1/i\sum_{j=1}^{i}\vw_{\ttau_k(j)}(\vz^*)|^2] = \bE_{\ttau_k}[|1/i\sum_{j=1}^{i}\vw_{\ttau_k(j)}(\vz^*)|^2] = \frac{n-i}{i(n-1)}\sigma_*^2,
\end{align*}
where the last equality follows from Lemma \ref{app-lem:sample-mean-wor}. Note that $\sigma_*^{2}$ denotes the gradient variance at the minimax point $\vz^*$, defined as $\sigma_*^2 = 1/n\sum_{i=1}^{n}\norm{\vw_{i}(\vz^*)}^2$. Substituting the above into the upper bound for $\bE[\norm{\vr_k}^2]$ yields,
\begin{align*}
    \bE[\norm{\vr_k}^2]
    &\leq l^2 e^{2/5}(n-1)\sum_{i=1}^{n-1}i^2 \bE_{\tau_k}[|1/i\sum_{j=1}^{i}\vw_{\ttau_k(j)}(\vz^*)|^2] \\
    &\leq l^2 e^{2/5}(n-1) \sum_{i=1}^{n-1}i^2 \frac{n-i}{i(n-1)}\sigma_*^2 = l^2 e^{2/5} \sigma_*^2 \sum_{i=1}^{n-1}i(n-i) \\
    &\leq l^2 n^3 \sigma_*^{2}/4.
\end{align*}

\paragraph{Step 3B: $\bE[\norm{\vr_k}^2]$ for PPM-SO}
For PPM-SO, $\tau_1 = \ldots = \tau_K = \tau$ where $\tau$ is a uniformly sampled random permutation. Consequently, $\ttau_1 = \ldots = \ttau_K = \ttau$ where $\ttau$ is the transposed permutation of $\tau$. Hence, we conclude that,
\begin{align*}
    \norm{\vr_k}^2 \leq l^2 e^{2/5}(n-1)\sum_{i=1}^{n-1}i^2 |1/i\sum_{j=1}^{i}\vw_{\ttau(j)}(\vz^*)|^2.
\end{align*}
Proceeding along the same lines as Step 3A, we use the fact that $\tau$ is a uniformly sampled random permutation to obtain the following:
\begin{align*}
    \E{\norm{\vr_k}^2} &\leq l^2 e^{2/5}(n-1)\sum_{i=1}^{n-1}i^2 \bE_{\tau}[ |1/i\sum_{j=1}^{i}\vw_{\ttau(j)}(\vz^*)|^2] \\
    &= l^2 e^{2/5}(n-1)\sum_{i=1}^{n-1}i^2 \bE_{\ttau}[ |1/i\sum_{j=1}^{i}\vw_{\ttau(j)}(\vz^*)|^2] \\
    &\leq l^2 e^{2/5}(n-1) \sum_{i=1}^{n-1}i^2 \frac{n-i}{i(n-1)}\sigma_*^2 \\
    &\leq l^2 n^3 \sigma_*^{2}/4.
\end{align*}
Hence, for both PPM-RR and PPM-SO, $\norm{\vr_k}$ is bounded in expectation as $\E{\norm{\vr_k}^2} \leq l^2 n^3 \sigma_*^{2}/4$.

\paragraph{Step 4: Convergence analysis}
Since $\vH_k$ is invertible, we write \eqref{app-eqn:ppm-wor-epoch-update} as
\begin{align*}
    \vz^{k+1}_0 - \vz^* = \vH^{-1}_k[\vz^k_0 - \vz^* - \alpha^2 \vr_k].
\end{align*}

Unrolling for $K$ epochs and setting $\vz^1_0 = \vz_0$ yields,
\begin{align*}
    \vz^{K+1}_0 - \vz^* = \left[\revprod_{k=1}^{K}\vH^{-1}_k\right](\vz_0 - \vz^*) - \alpha^2 \sum_{k=1}^{K}\left[\revprod_{j=k}^{K}\vH^{-1}_j\right]\vr_k.
\end{align*}
By the triangle inequality and the bound $\norm{\vH^{-1}_k} \leq (1 - n \alpha \mu / 2)$, we obtain:
\begin{align*}
    \norm{\vz^{K+1}_0 - \vz^*} &\leq  \left[\revprod_{k=1}^{K}\norm{\vH^{-1}_k}\right]\norm{\vz_0 - \vz^*} + \alpha^2 \sum_{k=1}^{K}\left[\revprod_{j=k}^{K}\norm{\vH^{-1}_j}\right]\norm{\vr_k} \\
    &\leq (1 - n \alpha \mu /2)^K \norm{\vz_0 - \vz^*} + \alpha^2 \sum_{k=1}^{K}(1 - n\alpha\mu/2)^{K-k}\norm{\vr_k},
\end{align*}
and as a result of Young's inequality, 
\begin{align*}
    \norm{\vz^{K+1}_0 - \vz^*}^2 &\leq 2(1 - n\alpha\mu/2)^{2K}\norm{\vz_0 - \vz^*}^2 + 2 \alpha^4 K \sum_{k=1}^{K} (1-n\alpha \mu/2)^{2(K-k)}|\vr_k|^2.
\end{align*}
By taking expectations (with respect to the uniform random permutations $\tau_1, \ldots, \tau_K$ for RR and $\tau$ for SO respectively) on both sides of the above inequality and substituting the upper bound for $\bE[|\vr_k|{}^2]$ for PPM-RR and PPM-SO as derived in Step 3, we obtain
\begin{align*}
    \E{\norm{\vz^{K+1}_0 - \vz^*}^2} &\leq 2e^{-n\alpha \mu K}\norm{\vz_0 - \vz^*}^2 + 2 \alpha^4 K \sum_{k=1}^K (1 - n \alpha \mu/2)^{2(K-k)} \E{\norm{\vr_k}^2} \\
    &\leq 2e^{-n\alpha \mu K}\norm{\vz_0 - \vz^*}^2 + 0.5l^2 n^3 \sigma_*^2  \alpha^4 K \sum_{k=1}^K (1 - n \alpha \mu/2)^{2(K-k)}.
\end{align*}
This yields the following guarantee that holds for PPM-RR and PPM-SO whenever $\alpha \leq \frac{\mu}{5nl^2}$,
\begin{equation}
\label{app-eqn:ppm-wor-alpha-rate}
    \E{\norm{\vz^{K+1}_0 - \vz^*}^2} \leq 2e^{-n\alpha \mu K}\norm{\vz_0 - \vz^*}^2 + \frac{l^2 \sigma_*^2 \alpha^3 n^2 K}{\mu}.
\end{equation}
We note that the above inequality is identical to the inequality \eqref{app-eqn:gda-wor-alpha-rate} obtained in Theorem \ref{app-thm:gda-wor-convergence}. Hence, we substitute $\alpha = \min \{  \mu/5nl^2, 2 \log (\norm{\nu(\vz_0 )}n^{1/2}K/\mu )/\mu n K \}$ in \eqref{app-eqn:ppm-wor-alpha-rate} and follow the same steps as in Theorem \ref{app-thm:gda-wor-convergence} to obtain the following last-iterate convergence guarantee for both PPM-RR and PPM-SO, which holds for any $K \geq 1$:
\begin{equation}
\label{app-eqn:ppm-wor-const-convergence-rate}
\E{\norm{\vz^{K+1}_0 - \vz^*}^2} \leq 2e^{-\frac{K}{5\kappa^2}}\norm{\vz_0 - \vz^*}^2 +  \frac{2\mu^2 + 8\kappa^2 \sigma_*^2\log^3\left(\norm{\nu(\vz_0 )}n^{1/2}K/\mu \right)}{\mu^2 n K^2}.
\end{equation}
Suppressing constant terms and logarithmic factors, we obtain the following:
\begin{equation}
\label{app-eqn:ppm-wor-convergence-rate}
\E{\norm{\vz^{K+1}_0 - \vz^*}^2} = \Tilde{O}(e^{\nicefrac{-K}{5\kappa^2}} + \nicefrac{1}{nK^2}).
\end{equation}
\end{proof}
\subsection{Analysis of PPM-AS}
\label{app-sec:ppm-as}
\begin{theorem}[Convergence of PPM-AS]
\label{app-thm:ppm-as-convergence}
Consider Problem \eqref{p:strong-monotone-vi} for the $\mu$-strongly monotone operator $\nu(\vz) = \nicefrac{1}{n} \sum_{i=1}^{n} \omega_i(\vz)$ where each $\omega_i$ is $l$-Lipschitz, but not necessarily monotone. Let $\vz^*$ denote the unique root of $\nu$. Then, for any $\alpha \leq \frac{\mu}{5nl^2}$ and $K \geq 1$, the iterates of PPM-AS satisfy the following:
\begin{equation*}
    \max_{\tau_1, \ldots, \tau_K \in \mathbb{S}_n} \norm{\vz^{K+1}_0 - \vz^*}^2 \leq 2e^{-n\alpha \mu K}\norm{\vz_0 - \vz^*}^2 + \frac{3l^2 \sigma_*^2 \alpha^3 n^3 K}{ \mu}.
\end{equation*}
Setting $\alpha = \min \{  \mu/5nl^2, 2 \log (\norm{\nu(\vz_0 )}K/\mu )/\mu n K \}$ results in the following last-iterate convergence guarantee for PPM-AS, which holds for any $K \geq 1$:
\begin{align*}
\max_{\tau_1, \ldots, \tau_K \in \mathbb{S}_n} \norm{\vz^{K+1}_0 - \vz^*}^2 &\leq 2e^{\nicefrac{-K}{5\kappa^2}}|\vz_0 - \vz^*|^2  +  \frac{2\mu^2  +  24 \kappa^2 \sigma^2_* \log^3 (|\nu(\vz_0)|K/\mu)}{\mu^2 K^2} \\
&= \Tilde{O}(e^{\nicefrac{-K}{5\kappa^2}}  +  \nicefrac{1}{K^2}).
\end{align*}
\end{theorem}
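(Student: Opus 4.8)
The plan is to combine the unified PPM-RR/SO analysis of Theorem~\ref{app-thm:ppm-wor-convergence} with the adversarial-shuffling device already used in Theorem~\ref{app-thm:gda-as-convergence}. The first observation is that Steps~1 and~2 of the proof of Theorem~\ref{app-thm:ppm-wor-convergence} --- the derivation of the linearized epoch-level update rule and the lower bound on $\sigma_{\min}(\vH_k)$ --- never use the randomness of $\tau_k$, so they hold verbatim for each fixed permutation $\tau_k \in \mathbb{S}_n$. Hence, for any adversarially chosen sequence $\tau_1,\ldots,\tau_K$, the epoch iterates of PPM-AS still satisfy $\vz^k_0 - \vz^* = \vH_k(\vz^{k+1}_0 - \vz^*) + \alpha^2 \vr_k$, with $\vH_k$ and $\vr_k$ built from the transposed permutation $\ttau_k$ exactly as in that proof, and with $\sigma_{\min}(\vH_k) \geq 1 + 3n\alpha\mu/4 > 0$ via Weyl's inequality. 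Thus $\vH_k$ is invertible and $\norm{\vH^{-1}_k} \leq (1 + 3n\alpha\mu/4)^{-1} \leq 1 - n\alpha\mu/2$, using $n\alpha\mu \leq 1/5$.

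Next I would control $\vr_k$ \emph{uniformly} over permutations, since Lemma~\ref{app-lem:sample-mean-wor} is no longer available. Repeating the opening of Step~3 of Theorem~\ref{app-thm:gda-as-convergence} gives $\norm{\vr_k} \leq le^{1/5}\sum_{i=1}^{n-1}\norm{\sum_{j=1}^i \omega_{\ttau_k(j)}(\vz^*)}$; applying Young's inequality twice and then the crude estimate $\sum_{j=1}^i \norm{\omega_{\ttau_k(j)}(\vz^*)}^2 \leq \sum_{j=1}^n \norm{\omega_j(\vz^*)}^2 = n\sigma_*^2$ yields $\norm{\vr_k}^2 \leq 3l^2 n^4 \sigma_*^2/4$ for every $\tau_k \in \mathbb{S}_n$. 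This is a factor of $n$ worse than the $O(l^2 n^3 \sigma_*^2)$ bound available in expectation under reshuffling, and is precisely what turns the $1/nK^2$ rate into $1/K^2$.

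Then I would rewrite the recursion in forward form $\vz^{k+1}_0 - \vz^* = \vH^{-1}_k[\vz^k_0 - \vz^* - \alpha^2 \vr_k]$, unroll for $K$ epochs to get $\vz^{K+1}_0 - \vz^* = (\revprod_{k=1}^K \vH^{-1}_k)(\vz_0 - \vz^*) - \alpha^2 \sum_{k=1}^K (\revprod_{j=k}^K \vH^{-1}_j)\vr_k$, take norms, use $\norm{\vH^{-1}_k} \leq 1 - n\alpha\mu/2$, apply Young's inequality to split the square, and sum the geometric series $\sum_k (1-n\alpha\mu/2)^{2(K-k)} \leq 2/(n\alpha\mu)$. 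This produces $\max_{\tau_1,\ldots,\tau_K} \norm{\vz^{K+1}_0 - \vz^*}^2 \leq 2e^{-n\alpha\mu K}\norm{\vz_0 - \vz^*}^2 + 3l^2\sigma_*^2\alpha^3 n^3 K/\mu$, the intermediate bound in the statement. Finally, plugging in $\alpha = \min\{\mu/5nl^2,\ 2\log(\norm{\nu(\vz_0)}K/\mu)/\mu n K\}$ and splitting into the two cases exactly as in Theorem~\ref{app-thm:gda-as-convergence} --- bounding $2e^{-2\log(\norm{\nu(\vz_0)}K/\mu)}\norm{\vz_0-\vz^*}^2 \leq 2/K^2$ via $\mu$-strong monotonicity of $\nu$ --- gives the claimed $\Tilde{O}(e^{-K/5\kappa^2} + 1/K^2)$ guarantee.

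The main obstacle is mostly bookkeeping rather than conceptual: one must carefully track the transposed permutation $\ttau_k$ through the summation-by-parts step (so that the empty-product conventions and index ranges match those of Theorem~\ref{app-thm:ppm-wor-convergence}) and, crucially, verify that $\vH_k$ is invertible before the recursion can be solved forward --- which is exactly what the Weyl-inequality lower bound on $\sigma_{\min}(\vH_k)$ delivers. Once the recursion is in forward form, the remaining estimates are routine and run in parallel to the GDA-AS proof.
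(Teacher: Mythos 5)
Your proposal is correct and follows essentially the same route as the paper's own proof: reuse Steps 1--2 of the PPM-RR/SO analysis for a fixed permutation to get the invertible-$\vH_k$ epoch recursion, replace the expectation bound on $\vr_k$ by the uniform bound $\norm{\vr_k}^2 \leq 3l^2 n^4 \sigma_*^2/4$, unroll the forward recursion, and finish with the same step-size choice and two-case argument as in the GDA-AS theorem. No gaps to flag.
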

\begin{proof}
The iterate-level update rule of PPM-AS can be expressed as
\begin{equation}
\label{app-eqn:ppm-as-itr}
    \vz^{k}_{i} = \vz^{k}_{i-1} - \alpha \vw_{\tau_k(i)}(\vz^k_{i}),
\end{equation}
where $k \in [K]$, $i \in [n]$ and $\tau_k$ denotes an arbitrary permutation of $[n]$ which is chosen by the adversary at the start of epoch $k$. For any permutation $\tau_k$ chosen by the adversary, we denote the transposed permutation $\ttau_k$ as $\ttau_k(i) = \tau_k(n+1-i)$. Furthermore $\vz^k_i$ is well defined and unique due to the Banach Fixed Point Theorem as $\alpha \leq \mu/5nl^2 < 1/l$.

We recall that Steps 1 and 2 of Theorem \ref{app-thm:ppm-wor-convergence} are applicable for any permutation $\tau_k \in \mathbb{S}_n$, and hence, can be applied directly to PPM-AS. Consequently, the epoch-level update rule of PPM-AS is the same as that of PPM-RR/SO and is given by:
\begin{equation}
\label{app-eqn:ppm-as-epoch-update}
    \vz^{k}_0 - \vz^* = \vH_k (\vz^{k+1}_0 - \vz^*) + \alpha^2 \vr_k,
\end{equation}
where $\vH_k$ and $\vr_k$ are as defined in Theorem \ref{app-thm:ppm-wor-convergence}, and are given by:
\begin{align*}
    \vH_k &= \vI + \alpha \sum_{j=1}^{n} \left(\revprod_{t=j+1}^{n}(\vI + \alpha \vJ_{\ttau_k(t)})\right)\vM_{\ttau_k(j)}, \\
    \vr_k &= \sum_{i=1}^{n-1}\left[\revprod_{t=i+2}^{n}(\vI + \alpha \vJ_{\ttau_k(t)}) \right]\vJ_{\ttau_k(i+1)}\sum_{j=1}^{i}\vw_{\ttau_k(j)}(\vz^*),
\end{align*}
where, as before, $\vM_{\tau_k(i)} = \int_{0}^{1} \grad \omega_{\tau_k(i)}(t\vz^{k+1}_0 + (1 - t) \vz^*) \textrm{d}t$ and $\vJ_{\tau_k(i)} = \int_{0}^{1} \grad \omega_{\tau_k(i)}(t\vz^k_i + (1 - t) \vz^{k+1}_{0}) \textrm{d}t$. Furthermore, by Step 2 of Theorem \ref{app-thm:ppm-wor-convergence}, $\vH_k$ is invertible and $\norm{\vH^{-1}_{k}} \leq 1 - n\alpha \mu / 2$.

We now proceed to control the noise term $\vr_k$ by deriving an upper bound for $\norm{\vr_k}^2$ that holds uniformly over all permutations $\tau_1, \ldots, \tau_K \in \mathbb{S}_n$. This is done by following the same steps as that of Theorem \ref{app-thm:gda-as-convergence}, which gives us the following bound by successive applications of $\norm{\vJ_{\ttau_k(i)}} \leq l$ and Young's inequality:

\begin{align*}
    \norm{\vr_k}^2 &\leq l^2 e^{2/5}\left(\sum_{i=1}^{n-1}|\sum_{j=1}^{i}\vw_{\ttau_k(j)}(\vz^*)|\right)^2  \\ 
    &\leq l^2 e^{2/5}(n-1)\sum_{i=1}^{n-1}|\sum_{j=1}^{i}\vw_{\ttau_k(j)}(\vz^*)|^2 \\
    &\leq l^2 e^{2/5}(n-1)\sum_{i=1}^{n-1} i \sum_{j=1}^{i}|\vw_{\ttau_k(j)}(\vz^*)|^2 \\
    &\leq l^2 e^{2/5}(n-1)\sum_{i=1}^{n-1} i \sum_{j=1}^{n}|\vw_{\ttau_k(j)}(\vz^*)|^2 = l^2 e^{2/5}(n-1)\sum_{i=1}^{n-1} i \sum_{j=1}^{n}|\vw_{j}(\vz^*)|^2 \\
    &= l^2 e^{2/5}(n-1)(n \sigma^2_*)\sum_{i=1}^{n-1} i
    \leq 3 l^2 n^4 \sigma_*^2 / 4.
\end{align*}
We note that the above sequence of inequalities hold for any permutation $\tau_k$, and hence, the upper bound $\norm{\vr_k}^2 \leq 3 l^2 n^4 \sigma_*^2 / 4$ holds for any $\tau_k \in \mathbb{S}_n, \ k \in [K]$.

Since $\vH_k$ is invertible, we write \eqref{app-eqn:ppm-as-epoch-update} as
\begin{align*}
    \vz^{k+1}_0 - \vz^* = \vH^{-1}_k[\vz^k_0 - \vz^* - \alpha^2 \vr_k].
\end{align*}

By unrolling for $K$ epochs and setting $\vz^1_0 = \vz_0$, we obtain:
\begin{align*}
    \vz^{K+1}_0 - \vz^* = \left[\revprod_{k=1}^{K}\vH^{-1}_k\right](\vz_0 - \vz^*) - \alpha^2 \sum_{k=1}^{K}\left[\revprod_{j=k}^{K}\vH^{-1}_j\right]\vr_k.
\end{align*}
By using the triangle inequality and the bound $\norm{\vH^{-1}_k} \leq (1 - n \alpha \mu / 2)$, we get:
\begin{align*}
    \norm{\vz^{K+1}_0 - \vz^*} &\leq  \left[\revprod_{k=1}^{K}\norm{\vH^{-1}_k}\right]\norm{\vz_0 - \vz^*} + \alpha^2 \sum_{k=1}^{K}\left[\revprod_{j=k}^{K}\norm{\vH^{-1}_j}\right]\norm{\vr_k} \\
    &\leq (1 - n \alpha \mu /2)^K \norm{\vz_0 - \vz^*} + \alpha^2 \sum_{k=1}^{K}(1 - n\alpha\mu/2)^{K-k}\norm{\vr_k}.
\end{align*}

Applying Young's inequality yields the following: 
\begin{align*}
    \norm{\vz^{K+1}_0 - \vz^*}^2 &\leq 2(1 - n\alpha\mu/2)^{2K}\norm{\vz_0 - \vz^*}^2 + 2 \alpha^4 K \sum_{k=1}^{K} (1-n\alpha \mu/2)^{2(K-k)}|\vr_k|^2 \\
    &\leq 2e^{-n\alpha \mu K}\norm{\vz_0 - \vz^*}^2 + 2 \alpha^4 K \sum_{k=1}^{K} (1 - n\alpha \mu/2)^{2(K-k)} |\vr_k|^2 \\
    &\leq 2e^{-n\alpha \mu K}\norm{\vz_0 - \vz^*}^2 + (1.5 l^2 n^4 \sigma^2_*) \alpha^4 K \sum_{k=1}^{K} (1 - n\alpha \mu/2)^{2(K-k)} \\
    &\leq 2e^{-n\alpha \mu K}\norm{\vz_0 - \vz^*}^2 + \frac{3 l^2 \sigma^2_* n^3 \alpha^3 K}{\mu}.
\end{align*}
Since the above inequality holds for any sequence of permutations $\tau_1, \ldots, \tau_K$ chosen by the adversary, we obtain the following uniform guarantee for GDA-AS whenever $\alpha \leq \frac{\mu}{5nl^2}$,
\begin{equation}
\label{app-eqn:ppm-as-alpha-rate}
    \max_{\tau_1, \ldots, \tau_K \in \mathbb{S}_n} \norm{\vz^{K+1}_0 - \vz^*}^2 \leq 2e^{-n\alpha \mu K}\norm{\vz_0 - \vz^*}^2 + \frac{3l^2 \sigma_*^2 \alpha^3 n^3 K}{ \mu}.
\end{equation}
We note that the above inequality is exactly identical to the inequality \eqref{app-eqn:gda-as-alpha-rate} obtained in Theorem \ref{app-thm:gda-as-convergence}. Hence, we substitute $\alpha = \min \{  \mu/5nl^2, 2 \log (\norm{\nu(\vz_0 )}K/\mu )/\mu n K \}$ in \eqref{app-eqn:ppm-as-alpha-rate} and follow the same steps as in Theorem \ref{app-thm:gda-as-convergence} to obtain the following last-iterate convergence guarantee for PPM-AS, which holds for any $K \geq 1$:
\begin{equation}
\label{app-eqn:ppm-as-const-convergence-rate}
\max_{\tau_1, \ldots, \tau_K \in \mathbb{S}_n} \norm{\vz^{K+1}_0 - \vz^*}^2 \leq 2e^{\nicefrac{-K}{5\kappa^2}}|\vz_0 - \vz^*|^2  +  \frac{2\mu^2  +  24 \kappa^2 \sigma^2_* \log^3 (|\nu(\vz_0)|K/\mu)}{\mu^2 K^2}.
\end{equation}
Suppressing constant terms and logarithmic factors, we get,
\begin{equation}
\label{app-eqn:ppm-as-convergence-rate}
\max_{\tau_1, \ldots, \tau_K \in \mathbb{S}_n} \norm{\vz^{K+1}_0 - \vz^*}^2 = \Tilde{O}(e^{\nicefrac{-K}{5\kappa^2}}  +  \nicefrac{1}{K^2}).
\end{equation}
\end{proof}
\section{Analysis of AGDA-RR and AGDA-AS}
\label{app-sec:agda}
In this section, we present the convergence analysis of AGDA-RR and AGDA-AS for smooth finite-sum objectives that satisfy a two-sided P\L{} inequality. We begin by stating some useful properties of two-sided P\L{} functions which have been established in prior works and are crucial to our analysis. We then present our convergence analysis of AGDA-RR and AGDA-AS, which follows the same broad structure as our analysis of GDA-RR/AS. 
\subsection{Properties of Two-sided P\L{} Functions}
\label{app-sec:agda-lemmas}
We begin by presenting three important properties of two-sided P\L{} functions that are used throughout our analysis of AGDA-RR/AS
\begin{lemma}[Equivalent optimality conditions for two-sided P\L{} functions]
\label{app-lem:2pl-eq-optimal}
Let $F : \bR^{d_\vx} \times \bR^{d_\vy} \rightarrow \bR$ satisfy Assumption \ref{as:2pl} and let $(\vx^*, \vy^*)$ be any point in $\bR^{d_\vx} \times \bR^{d_\vy}$. Then the following conditions are equivalent:
\begin{itemize}
    \item $(\vx^*, \vy^*)$ is a global minimax point, i.e., $\vx^* \in \arg \min_{\vx \in \bR^{d_{\vx}}} \Phi(\vx)$ where $\Phi$ is the best response function defined by $\Phi(\vx) = \max_{\vy \in \bR^{d_{\vy}}} F(\vx, \vy)$, and $\vy^* \in \arg \max_{\vy \in \bR^{d_{\vy}}} F(\vx^*, \vy)$.

    \item  $(\vx^*, \vy^*)$ is a saddle point, i.e., 
    \begin{align*}
        F(\vx^{*}, \vy) \leq F(\vx^{*}, \vy^{*}) \leq F(\vx, \vy^{*}), \ \forall \vx \in \bR^{d_\vx}, \vy \in \bR^{d_\vy}.
    \end{align*}
    \item $(\vx^*, \vy^*)$ is a stationary point, i.e., 
    \begin{align*}
        \grad_\vx F(\vx^*, \vy^*) = \grad_\vy F(\vx^*, \vy^*) = 0.
    \end{align*}
\end{itemize}
\end{lemma}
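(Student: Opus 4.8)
The plan is to prove the equivalence via the cycle $(2)\Rightarrow(1)\Rightarrow(3)\Rightarrow(2)$, invoking Assumption \ref{as:2pl} (together with the standing $\ell$-smoothness of $F$ from Assumption \ref{as:comp-smooth}) only where it is genuinely needed. The implication $(2)\Rightarrow(1)$ is purely definitional: if $(\vx^*,\vy^*)$ is a saddle point, then $F(\vx^*,\vy)\le F(\vx^*,\vy^*)$ for all $\vy$ shows $\vy^*\in\arg\max_\vy F(\vx^*,\vy)$ and $\Phi(\vx^*)=F(\vx^*,\vy^*)$, while $\Phi(\vx)=\max_\vy F(\vx,\vy)\ge F(\vx,\vy^*)\ge F(\vx^*,\vy^*)=\Phi(\vx^*)$ shows $\vx^*\in\arg\min_\vx\Phi(\vx)$. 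For $(3)\Rightarrow(2)$, the P\L{} structure does the work: putting $\vy=\vy^*$ in the first inequality of Assumption \ref{as:2pl} gives $0=\norm{\grad_\vx F(\vx^*,\vy^*)}^2\ge 2\mu_1\big[F(\vx^*,\vy^*)-\min_{\vx'}F(\vx',\vy^*)\big]\ge 0$, hence $F(\vx^*,\vy^*)=\min_{\vx'}F(\vx',\vy^*)$; symmetrically, putting $\vx=\vx^*$ in the second inequality gives $F(\vx^*,\vy^*)=\max_{\vy'}F(\vx^*,\vy')$, and together these are exactly the two saddle-point inequalities.

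The substance of the argument — and the step I expect to be the main obstacle — is $(1)\Rightarrow(3)$. The $\vy$-stationarity $\grad_\vy F(\vx^*,\vy^*)=0$ is immediate, since $\vy^*$ maximizes the differentiable function $F(\vx^*,\cdot)$ over all of $\bR^{d_\vy}$. For the $\vx$-stationarity I would set $\vu:=\grad_\vx F(\vx^*,\vy^*)$ and probe $\Phi$ along the direction $-\vu$. The descent lemma for the $\ell$-smooth function $F(\cdot,\vy^*)$ gives $F(\vx^*-t\vu,\vy^*)\le F(\vx^*,\vy^*)-t\norm{\vu}^2+\tfrac{\ell}{2}t^2\norm{\vu}^2$; the second P\L{} inequality of Assumption \ref{as:2pl} at the point $(\vx^*-t\vu,\vy^*)$, combined with $\grad_\vy F(\vx^*,\vy^*)=0$ and $\ell$-smoothness (which yield $\norm{\grad_\vy F(\vx^*-t\vu,\vy^*)}\le \ell t\norm{\vu}$), bounds the envelope gap $\Phi(\vx^*-t\vu)-F(\vx^*-t\vu,\vy^*)\le \tfrac{\ell^2}{2\mu_2}t^2\norm{\vu}^2$. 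Using $F(\vx^*,\vy^*)=\Phi(\vx^*)$ (because $\vy^*\in\arg\max_\vy F(\vx^*,\vy)$) and $\Phi(\vx^*)\le\Phi(\vx^*-t\vu)$ (because $\vx^*\in\arg\min\Phi$), chaining these estimates gives $t\norm{\vu}^2\le\tfrac{t^2}{2}\big(\ell+\tfrac{\ell^2}{\mu_2}\big)\norm{\vu}^2$, so taking any fixed $t<2/(\ell+\ell^2/\mu_2)$ (equivalently, letting $t\downarrow 0$) forces $\vu=0$. This establishes $(3)$ and closes the cycle.

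The one point that needs care is that the best-response function $\Phi$ is well defined (Assumption \ref{as:2pl} guarantees the relevant maximizers and minimizers exist) but need not be differentiable, so the proof of $(1)\Rightarrow(3)$ must avoid differentiating $\Phi$: it uses only the scalar minimality inequality $\Phi(\vx^*)\le\Phi(\vx^*-t\vu)$ and controls the difference between $\Phi$ and the slice $F(\cdot,\vy^*)$ purely through the P\L{} inequality in $\vy$. Everything else is elementary, and no tool beyond Assumptions \ref{as:comp-smooth}--\ref{as:2pl} and the definition of $\Phi$ is needed.
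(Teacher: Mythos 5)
Your proposal is correct, but it is not the paper's route: the paper offers no in-text argument at all and simply defers to Lemma~2.1 of \citet{YangAGDA2020}. Your cycle $(2)\Rightarrow(1)\Rightarrow(3)\Rightarrow(2)$ is sound throughout: $(2)\Rightarrow(1)$ and $(3)\Rightarrow(2)$ are exactly the easy definitional/P\L{} steps, and your treatment of the hard implication $(1)\Rightarrow(3)$ checks out — the descent lemma on the slice $F(\cdot,\vy^*)$, the bound $\norm{\grad_\vy F(\vx^*-t\vu,\vy^*)}\le \ell t\norm{\vu}$ from $\grad_\vy F(\vx^*,\vy^*)=0$ plus joint smoothness, and the P\L{} inequality in $\vy$ together give $\Phi(\vx^*)\le \Phi(\vx^*-t\vu)\le \Phi(\vx^*)-t\norm{\vu}^2+\tfrac{t^2}{2}\bigl(\ell+\tfrac{\ell^2}{\mu_2}\bigr)\norm{\vu}^2$, forcing $\vu=0$ as $t\downarrow 0$. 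This is arguably nicer than the cited route, which goes through the differentiability and $L$-smoothness of the best-response $\Phi$ (a Danskin-type fact, Lemma~\ref{app-lem:agda-phi-prop} here); you bypass that machinery entirely by controlling the envelope gap $\Phi-F(\cdot,\vy^*)$ purely through the $\vy$-side P\L{} inequality. One caveat you already flag but that deserves emphasis: the lemma as stated assumes only Assumption~\ref{as:2pl}, whereas your $(1)\Rightarrow(3)$ genuinely uses the $\ell$-smoothness of $F$ (Assumption~\ref{as:comp-smooth}); this matches the paper's standing hypotheses (and those of the cited reference), so it is acceptable, but strictly speaking your proof establishes the lemma under smoothness plus 2P\L{}, not under 2P\L{} alone.
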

\begin{proof}
See Lemma 2.1 of \citet{YangAGDA2020}.
\end{proof}
\begin{lemma}[Properties of the best response]\label{app-lem:agda-phi-prop}
Let $F : \bR^{d_\vx} \times \bR^{d_\vy} \rightarrow \bR$ be an $l$-smooth function satisfying Assumption \ref{as:2pl} (with constants $\mu_1$ and $\mu_2$) and let $\Phi : \bR^{d_\vx} \rightarrow \bR$ be the best response function defined as $\Phi(\vx) = \max_{\vy \in \bR^{d_\vy}} F(\vx, \vy)$. Then $\Phi$ satisfies the following properties:
\begin{itemize}
    \item $\Phi$ is differentiable and $L$-smooth where $L = l + l^2/\mu_2$.
    \item $\grad \Phi(\vx) = \grad_\vx F(\vx, \vy^*(\vx))$ where $\vy^{*}(\vx)$ is any arbitrary point in $\arg \max_{\vy \in \bR^{d_\vy}} F(\vx, \vy)$. 
    \item $\Phi$ satisfies the (one-sided) P\L{} inequality with constant $\mu_1$, i.e., 
    \begin{align*}
        \norm{\grad \Phi(\vx)}^2 \geq 2 \mu_1 [\Phi(\vx) - \Phi^*],
    \end{align*}
    where $\Phi^* = \min_{\vx \in \bR^{d_{\vx}}} \Phi(\vx)$.
\end{itemize}
\end{lemma}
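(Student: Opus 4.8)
The plan is to establish the three bullets in the order: gradient formula, then $L$-smoothness, then the one-sided P\L{} inequality, since each leans on the previous one. Write $\sY^*(\vx) := \arg\max_{\vy \in \bR^{d_\vy}} F(\vx,\vy)$, which is nonempty by Assumption \ref{as:2pl}. Two preliminary facts do most of the work. First, for each fixed $\vx$ the map $\vy \mapsto F(\vx,\vy)$ is $l$-smooth and satisfies the P\L{} inequality with constant $\mu_2$ (this is exactly the second inequality in Assumption \ref{as:2pl}), so every $\vy$-stationary point is a global maximizer; hence $\sY^*(\vx) = \{\vy : \grad_\vy F(\vx,\vy) = 0\}$ and $F(\vx,\vy) = \Phi(\vx)$ for all $\vy \in \sY^*(\vx)$. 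Second, a smooth P\L{} function obeys the error bound $\textrm{dist}(\vy, \sY^*(\vx)) \leq \tfrac{1}{\mu_2}\norm{\grad_\vy F(\vx,\vy)}$: along the gradient-ascent flow $\dot{\vy} = \grad_\vy F(\vx,\vy)$ one has $\tfrac{d}{dt}\sqrt{\Phi(\vx) - F(\vx,\vy(t))} \leq -\sqrt{\mu_2/2}\,\norm{\grad_\vy F(\vx,\vy(t))}$, so the trajectory converges to a point of $\sY^*(\vx)$ with total length at most $\sqrt{2/\mu_2}\sqrt{\Phi(\vx) - F(\vx,\vy(0))}$, and combining this with P\L{} gives the bound. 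Applying the error bound at $(\vx_2, \vy_1^*)$ for $\vy_1^* \in \sY^*(\vx_1)$, together with $\grad_\vy F(\vx_1,\vy_1^*) = 0$ and $l$-smoothness, yields the crucial \emph{Lipschitz stability of the maximizer set}: $\textrm{dist}(\vy_1^*, \sY^*(\vx_2)) \leq \tfrac{1}{\mu_2}\norm{\grad_\vy F(\vx_2,\vy_1^*) - \grad_\vy F(\vx_1,\vy_1^*)} \leq \tfrac{l}{\mu_2}\norm{\vx_1 - \vx_2}$.

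For the gradient formula I would first argue that $\grad_\vx F(\vx,\cdot)$ takes a single value $g_{\vx}$ on all of $\sY^*(\vx)$, following \citet{YangAGDA2020}; this is the one genuinely delicate step, since $\sY^*(\vx)$ may be neither a singleton nor bounded. Granting it, differentiability of $\Phi$ at $\vx$ with $\grad\Phi(\vx) = g_{\vx}$ comes from a two-sided quadratic sandwich. The lower bound is immediate: for $\vy^* \in \sY^*(\vx)$, $l$-smoothness in $\vx$ gives $\Phi(\vx') \geq F(\vx',\vy^*) \geq \Phi(\vx) + \dotprod{g_{\vx}}{\vx' - \vx} - \tfrac{l}{2}\norm{\vx' - \vx}^2$. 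For the upper bound, pick any $\vy^*(\vx') \in \sY^*(\vx')$ and let $\bar\vy$ be its projection onto $\sY^*(\vx)$, so that $\Phi(\vx') = F(\vx',\vy^*(\vx')) \leq F(\vx,\vy^*(\vx')) + \dotprod{\grad_\vx F(\vx,\vy^*(\vx'))}{\vx'-\vx} + \tfrac{l}{2}\norm{\vx'-\vx}^2 \leq \Phi(\vx) + \dotprod{g_{\vx}}{\vx'-\vx} + \big(\tfrac{l}{2} + \tfrac{l^2}{\mu_2}\big)\norm{\vx'-\vx}^2$, using $F(\vx,\vy^*(\vx')) \leq \Phi(\vx)$, $\grad_\vx F(\vx,\bar\vy) = g_{\vx}$, and $\norm{\vy^*(\vx') - \bar\vy} = \textrm{dist}(\vy^*(\vx'), \sY^*(\vx)) \leq \tfrac{l}{\mu_2}\norm{\vx'-\vx}$ from Lipschitz stability. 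Together these force $\big|\Phi(\vx') - \Phi(\vx) - \dotprod{g_{\vx}}{\vx'-\vx}\big| = O(\norm{\vx'-\vx}^2)$, so $\Phi$ is differentiable at $\vx$ with $\grad\Phi(\vx) = g_{\vx} = \grad_\vx F(\vx,\vy^*(\vx))$ for \emph{every} $\vy^*(\vx) \in \sY^*(\vx)$.

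The $L$-smoothness bound then drops out: for $\vx_1, \vx_2$, take $\vy_1^* \in \sY^*(\vx_1)$ and let $\vy_2^*$ be its projection onto $\sY^*(\vx_2)$; applying the gradient formula at both points and adding and subtracting $\grad_\vx F(\vx_2,\vy_1^*)$ gives $\norm{\grad\Phi(\vx_1) - \grad\Phi(\vx_2)} \leq l\norm{\vx_1 - \vx_2} + l\norm{\vy_1^* - \vy_2^*} \leq \big(l + \tfrac{l^2}{\mu_2}\big)\norm{\vx_1 - \vx_2}$ by Lipschitz stability, i.e. $L = l + l^2/\mu_2$. For the one-sided P\L{} inequality I would simply substitute the gradient formula into the $\vx$-side of Assumption \ref{as:2pl}: for any $\vy^*(\vx) \in \sY^*(\vx)$, $\norm{\grad\Phi(\vx)}^2 = \norm{\grad_\vx F(\vx,\vy^*(\vx))}^2 \geq 2\mu_1\big[F(\vx,\vy^*(\vx)) - \min_{\tilde{\vx}}F(\tilde{\vx},\vy^*(\vx))\big] = 2\mu_1\big[\Phi(\vx) - \min_{\tilde{\vx}}F(\tilde{\vx},\vy^*(\vx))\big] \geq 2\mu_1[\Phi(\vx) - \Phi^*]$, where the last step uses $\Phi(\tilde{\vx}) \geq F(\tilde{\vx},\vy^*(\vx))$ for all $\tilde{\vx}$, hence $\Phi^* = \min_{\tilde{\vx}}\Phi(\tilde{\vx}) \geq \min_{\tilde{\vx}}F(\tilde{\vx},\vy^*(\vx))$. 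The main obstacle throughout is the constancy of $\grad_\vx F(\vx,\cdot)$ on $\sY^*(\vx)$ — equivalently, well-posedness of the gradient formula when the inner maximizer is non-unique and possibly unbounded; once that is in hand, everything else is routine smoothness bookkeeping resting on the error-bound / Lipschitz-stability estimate.
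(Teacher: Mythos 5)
Your argument is correct, and it is worth pointing out that the paper itself does not prove this lemma at all: its ``proof'' is a pointer to Lemmas A.2 and A.3 of \citet{YangAGDA2020}, which in turn rest on the Lipschitz behaviour of the argmax correspondence from \citet{NouhiedGDMax2019} (recorded in the paper as Lemma C.5, with the sharper constant $l/(2\mu_2)$). What you wrote is essentially a self-contained reconstruction of that cited chain: the P\L{}-in-$\vy$ error bound $\textrm{dist}(\vy,\sY^*(\vx)) \leq \norm{\grad_\vy F(\vx,\vy)}/\mu_2$ via the gradient-flow length argument, the resulting Lipschitz stability of $\sY^*$ with modulus $l/\mu_2$ (looser than $l/(2\mu_2)$, but exactly what reproduces the stated $L = l + l^2/\mu_2$), the two-sided quadratic sandwich giving differentiability and the Danskin-type gradient formula, and then $L$-smoothness and the one-sided P\L{} inequality by routine substitution into Assumption \ref{as:2pl}. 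Each of these steps checks out, so your proposal is strictly more informative than the paper's citation-only proof.

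The one step you defer to \citet{YangAGDA2020} --- that $\grad_\vx F(\vx,\cdot)$ takes a single value $g_\vx$ on $\sY^*(\vx)$ --- can in fact be closed with the machinery you already set up, so nothing needs to be imported. For $\vy_1^*,\vy_2^* \in \sY^*(\vx)$ with $g_i = \grad_\vx F(\vx,\vy_i^*)$ and any direction $\vv$, your lower bound gives $\Phi(\vx+t\vv) \geq \Phi(\vx) + t\dotprod{g_1}{\vv} - \tfrac{l}{2}t^2\norm{\vv}^2$, while running your upper-bound computation through a maximizer $\vy_2'(t) \in \sY^*(\vx+t\vv)$ with $\norm{\vy_2'(t)-\vy_2^*} \leq (l/\mu_2)\,t\norm{\vv}$ (Lipschitz stability applied at $\vy_2^*$) gives $\Phi(\vx+t\vv) \leq \Phi(\vx) + t\dotprod{g_2}{\vv} + \bigl(\tfrac{l}{2}+\tfrac{l^2}{\mu_2}\bigr)t^2\norm{\vv}^2$; combining and letting $t \downarrow 0$ yields $\dotprod{g_1-g_2}{\vv} \leq 0$ for every $\vv$, hence $g_1 = g_2$. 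Two small hygiene points: $\sY^*(\vx)$ is closed (it is the zero set of the continuous map $\grad_\vy F(\vx,\cdot)$, by the P\L{} condition in $\vy$), so the projections you invoke exist, and $\Phi^*$ is finite because the paper assumes a global minimax point exists.
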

\begin{proof}
See Lemma A.2 and Lemma A.3 of \citet{YangAGDA2020}.
\end{proof}
\begin{lemma}[Quadratic growth properties of P\L{} functions]
\label{as:pl-quad-growth}
Let $f : \bR^{d_{\vx}} \rightarrow \bR$ be a P\L{} function, i.e., there exists a positive constant $\mu$ such that $\norm{\grad f(\vx)}^2 \geq 2 \mu [f(\vx) - f^*]$, where $f^* = \min_{\vx} f(\vx)$. Then, $f$ satisfies the quadratic growth property with constant $\mu$, i.e., for any $\vx \in \bR^{d_{\vx}}$, $f(\vx) - f^* \geq \frac{\mu}{2}\norm{\vx^* - \vx}^2$ where $\vx^*$ is the projection of $\vx$ on the set $\arg \min_{\vx} f(\vx)$.
\end{lemma}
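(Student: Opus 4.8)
The plan is to reduce the quadratic growth inequality to an equivalent \emph{distance estimate}: for every point $\vx \in \bR^{d_{\vx}}$,
$\textrm{dist}(\vx, \sS) \leq \sqrt{(2/\mu)(f(\vx) - f^*)}$, where $\sS = \arg\min_{\vx} f = \{\vx : f(\vx) = f^*\}$ is the solution set (nonempty since the minimum is attained, closed since $f$ is continuous) and $\textrm{dist}$ is the Euclidean distance to it. Once this estimate holds, squaring it and using that the projection $\vx^*$ of $\vx$ onto $\sS$ realizes the distance gives $f(\vx) - f^* \geq \tfrac{\mu}{2}\norm{\vx^* - \vx}^2$, which is exactly the claim. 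So the whole task becomes bounding how far one must travel from a given point to reach $\sS$, and I would do this by following the gradient flow.

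Concretely, I would fix $\vx_0 \in \bR^{d_{\vx}}$ and let $\vx(t)$ solve $\dot{\vx}(t) = -\grad f(\vx(t))$ with $\vx(0) = \vx_0$. Writing $\phi(t) = \sqrt{f(\vx(t)) - f^*} \geq 0$, the chain rule gives $\tfrac{d}{dt}(f(\vx(t)) - f^*) = -\norm{\grad f(\vx(t))}^2$, so wherever $\phi(t) > 0$ one has $\dot{\phi}(t) = -\norm{\grad f(\vx(t))}^2 / (2\phi(t))$. The P\L{} inequality $\norm{\grad f(\vx(t))}^2 \geq 2\mu\,\phi(t)^2$ then yields two consequences. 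First, $\dot{\phi}(t) \leq -\mu\,\phi(t)$, hence $\phi(t) \leq \phi(0)e^{-\mu t} \to 0$ and therefore $f(\vx(t)) \to f^*$. Second, using $\norm{\grad f} = \norm{\grad f}^2 / \norm{\grad f} \leq \norm{\grad f}^2 / (\sqrt{2\mu}\,\phi)$ together with $\norm{\grad f}^2 = -2\phi\dot{\phi}$, one obtains $\norm{\grad f(\vx(t))} \leq -\sqrt{2/\mu}\,\dot{\phi}(t)$. Integrating this last bound over $[0,\infty)$ controls the arc length of the trajectory, $\int_0^\infty \norm{\dot{\vx}(t)}\,dt = \int_0^\infty \norm{\grad f(\vx(t))}\,dt \leq \sqrt{2/\mu}\,\bigl(\phi(0) - \lim_{t\to\infty}\phi(t)\bigr) = \sqrt{(2/\mu)(f(\vx_0) - f^*)}$. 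A trajectory of finite length is Cauchy, so $\vx(t) \to \vx_\infty$ for some $\vx_\infty$ with $f(\vx_\infty) = f^*$ by continuity, i.e.\ $\vx_\infty \in \sS$, and $\norm{\vx_\infty - \vx_0} \leq \sqrt{(2/\mu)(f(\vx_0) - f^*)}$. The distance estimate follows since $\textrm{dist}(\vx_0, \sS) \leq \norm{\vx_\infty - \vx_0}$.

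The one genuinely non-routine point — the step I expect to require the most care — is justifying that this gradient flow exists and extends to all of $[0,\infty)$, since the lemma as stated does not postulate smoothness of $f$. This I would handle by the standard continuation argument: on any maximal interval of existence the arc-length bound above already confines $\vx(t)$ to the closed ball of radius $\sqrt{(2/\mu)(f(\vx_0) - f^*)}$ about $\vx_0$, so the solution cannot blow up in finite time and hence extends to $[0,\infty)$; local existence of a (not necessarily unique) solution needs only continuity of $\grad f$, which is available in every use of this lemma in the paper, where it is invoked for the $L$-smooth best-response function $\Phi$ of Lemma \ref{app-lem:agda-phi-prop}. An alternative is to simply cite the classical implication ``P\L{} with constant $\mu$ $\Rightarrow$ quadratic growth with constant $\mu$'' from the optimization literature; I include the flow argument because it recovers precisely the constant $\mu$ asserted in the statement.
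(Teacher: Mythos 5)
Your argument is correct, but it follows a genuinely different route from the paper: the paper does not prove this lemma at all, it simply defers to Lemma A.1 of \citet{YangAGDA2020} (which in turn records the classical implication ``P\L{} $\Rightarrow$ quadratic growth'' from the optimization literature). What you give instead is the standard Łojasiewicz-type length estimate: run the gradient flow, use the P\L{} inequality to show $\norm{\grad f(\vx(t))} \leq -\sqrt{2/\mu}\,\frac{d}{dt}\sqrt{f(\vx(t))-f^*}$, integrate to bound the arc length by $\sqrt{(2/\mu)(f(\vx_0)-f^*)}$, and conclude that the limit point lies in $\arg\min f$ at distance at most that bound, which squares to exactly the stated inequality with the stated constant $\mu$. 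This is self-contained and recovers the sharp constant, which the citation-only approach leaves opaque; the price is the extra hypothesis you correctly flag, namely continuity of $\grad f$ for (Peano) local existence of the flow, which is not literally in the lemma's hypotheses but holds in every invocation in the paper (the lemma is applied to the $L$-smooth best response $\Phi$ of Lemma \ref{app-lem:agda-phi-prop} and to $-F(\vx,\cdot)$, which is $l$-smooth by Assumption \ref{as:comp-smooth}). Your continuation argument for global existence is also sound, since the arc-length bound traps the trajectory in a compact ball on which $\grad f$ is bounded. Two small technicalities worth a sentence if this were written out in full: if $\phi(t)=\sqrt{f(\vx(t))-f^*}$ vanishes at a finite time, the trajectory has already entered $\arg\min f$ and the arc-length bound up to that time suffices (monotonicity of $f$ along the flow makes this case harmless); and non-uniqueness of Peano solutions is irrelevant because any one solution witnesses the distance estimate. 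If one wanted to match the lemma's literal hypotheses (differentiability only), one would instead use a discrete descent-sequence or Ekeland-type argument, which is essentially what the cited reference packages.
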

\begin{proof}
See Lemma A.1 of \citet{YangAGDA2020}.
\end{proof}
Finally, we motivate the necessity of the bounded gradient variance assumption by presenting the construction of a two-sided P\L{} function whose set of global minimax points is unbounded.
\begin{lemma}[2P\L{} function with unbounded set of global minimax points]
Consider the function $f : \bR^2 \times \bR^2 \rightarrow \bR$ given by $f(\vx, \vy) = (\vx_1 + \vx_2)^2/2 - (\vy_1 + \vy_2)^2/2$. Then, $f$ is a two-sided P\L{} function with constants $\mu_1 = \mu_2 = 2$. Furthermore, the set of global minimax points of $F$ is an unbounded proper subset of $\bR^4$.
\end{lemma}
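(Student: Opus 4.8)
The plan is to verify Assumption \ref{as:2pl} for $f$ by an explicit computation, and then to read off the set of global minimax points using Lemma \ref{app-lem:2pl-eq-optimal}, which identifies global minimax points with stationary points once the two-sided P\L{} condition is established. First I would note that $f$ is a quadratic, hence continuously differentiable, with $\grad_\vx f(\vx,\vy) = (\vx_1+\vx_2)(1,1)$ and $\grad_\vy f(\vx,\vy) = -(\vy_1+\vy_2)(1,1)$, so, writing $s_\vx = \vx_1+\vx_2$ and $s_\vy = \vy_1+\vy_2$, we have $\norm{\grad_\vx f(\vx,\vy)}^2 = 2 s_\vx^2$ and $\norm{\grad_\vy f(\vx,\vy)}^2 = 2 s_\vy^2$.

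Next I would compute the two best-response quantities. For fixed $\vy$, the map $\tilde{\vx} \mapsto f(\tilde{\vx},\vy) = \tfrac12(\tilde{\vx}_1+\tilde{\vx}_2)^2 - \tfrac12 s_\vy^2$ attains its minimum $-\tfrac12 s_\vy^2$ on the non-empty affine set $\{\tilde{\vx} : \tilde{\vx}_1+\tilde{\vx}_2 = 0\}$, so $f(\vx,\vy) - \min_{\tilde{\vx}} f(\tilde{\vx},\vy) = \tfrac12 s_\vx^2$; symmetrically, $\max_{\tilde{\vy}} f(\vx,\tilde{\vy}) - f(\vx,\vy) = \tfrac12 s_\vy^2$, the maximum being attained on $\{\tilde{\vy} : \tilde{\vy}_1+\tilde{\vy}_2 = 0\}$. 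In particular both the $\arg\min$ and $\arg\max$ sets are non-empty, so the first part of Assumption \ref{as:2pl} holds. The two P\L{} inequalities then read $2 s_\vx^2 \geq 2\mu_1 \cdot \tfrac12 s_\vx^2$ and $2 s_\vy^2 \geq 2\mu_2 \cdot \tfrac12 s_\vy^2$, i.e., $(2-\mu_1)s_\vx^2 \geq 0$ and $(2-\mu_2)s_\vy^2 \geq 0$, which hold for all $(\vx,\vy)$ precisely when $\mu_1 \leq 2$ and $\mu_2 \leq 2$, with equality throughout when $\mu_1 = \mu_2 = 2$; hence $f$ is $2$P\L{} with $\mu_1 = \mu_2 = 2$.

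Finally, since $f$ satisfies Assumption \ref{as:2pl}, Lemma \ref{app-lem:2pl-eq-optimal} applies, so the set of global minimax points of $f$ coincides with its set of stationary points $\{(\vx,\vy) : \grad_\vx f(\vx,\vy) = \grad_\vy f(\vx,\vy) = 0\}$. By the gradient formulas above this is $\sS = \{(\vx,\vy)\in\bR^4 : \vx_1+\vx_2 = 0,\ \vy_1+\vy_2 = 0\}$, a two-dimensional linear subspace of $\bR^4$; it is unbounded, since it contains $(t,-t,0,0)$ for every $t\in\bR$, and it is a proper subset of $\bR^4$, since $(1,0,0,0)\notin\sS$. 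This is exactly the claimed statement. There is no genuinely hard step: the only points to watch are the correct evaluation of the two best-response values and the non-emptiness of the associated $\arg\min$/$\arg\max$ sets, after which both P\L{} inequalities are immediate (in fact they hold with equality for the stated constants).
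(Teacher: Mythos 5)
Your proposal is correct and follows essentially the same route as the paper: compute the gradients and the best-response values, observe that $\norm{\grad_\vx f}^2 = 4[f(\vx,\vy)-\min_{\tilde\vx} f(\tilde\vx,\vy)]$ and $\norm{\grad_\vy f}^2 = 4[\max_{\tilde\vy} f(\vx,\tilde\vy)-f(\vx,\vy)]$ so that the two-sided P\L{} condition holds with $\mu_1=\mu_2=2$, and then invoke Lemma \ref{app-lem:2pl-eq-optimal} to identify the global minimax points with the stationary set $\{\vx_1+\vx_2=0,\ \vy_1+\vy_2=0\}$. Your only additions — explicitly checking non-emptiness of the $\arg\min$/$\arg\max$ sets and exhibiting witnesses for unboundedness and properness — are fine and merely make explicit what the paper leaves as an assertion.
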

\begin{proof}
We define the functions $\Phi : \bR^2 \rightarrow \bR$ and $\Psi : \bR^2 \rightarrow \bR$ as follows:
\begin{align*}
\Phi(\vx) &= \max_{\vy \in \bR^2} f(\vx, \vy) = \frac{(\vx_1 + \vx_2)^2}{2}, \\
\Psi(\vy) &= \min_{\vx \in \bR^2} f(\vx, \vy) = - \frac{(\vy_1 + \vy_2)^2}{2}.
\end{align*}
Furthermore, we note that:
\begin{align*}
\grad_{\vx} f(\vx, \vy) &= [\vx_1 + \vx_2, \vx_1 + \vx_2], \\
\grad_{\vy} f(\vx, \vy) &= -[\vy_1 + \vy_2, \vy_1 + \vy_2].
\end{align*}
From the above inequalities, it follows that:
\begin{align*}
\norm{\grad_{\vx} f(\vx, \vy)}^2 &= 4[f(\vx, \vy) - \Psi(\vy)] = 4[f(\vx, \vy) - \min_{\vx \in \bR^2} f(\vx, \vy)], \\
\norm{\grad_{\vy} f(\vx, \vy)}^2 &= 4[\Phi(\vx) - f(\vx, \vy)] = 4[\max_{\vy \in \bR^2} f(\vx, \vy) - f(\vx, \vy)].
\end{align*}
Thus, $f$ satisfies the two-sided P\L{} inequality with constants $\mu_1 = \mu_2 = 2$. Consequently, Lemma \ref{app-lem:2pl-eq-optimal} implies that the set of minimax points are exactly the ones satisfying $\grad_{\vx} f(\vx, \vy) = \grad_{\vy} f(\vx, \vy) = 0$, i.e., the set of global minimax points is $\{ (\vx, \vy) \in \bR^4 \ | \ \vx_1 + \vx_2 = 0, \vy_1 + \vy_2 = 0 \}$, which is an unbounded proper subset of $\bR^4$.
\end{proof}
\subsection{Analysis of AGDA-RR}
\label{app-sec:agda-rr}
\begin{theorem}[Convergence of AGDA-RR]
\label{app-thm:agda-rr-convergence}
Let Assumptions \ref{as:comp-smooth}, \ref{as:2pl}, and \ref{as:bgv} be satisfied and let $\eta = \nicefrac{73l^2}{2\mu^2_2}$. Then, for any $\alpha \leq \nicefrac{1}{5\eta n l}$,  $\beta = \eta \alpha$, and $K \geq 1$, the iterates of AGDA-RR satisfy the following:
$$\bE[V_{\lambda}(\vz^{K+1}_0)] \leq (1 - \mu_1 n \alpha/2)^K V_{\lambda}(\vz_0) + 0.42 n \alpha^2 l^2 \sigma^2(3 + \eta^3)/\mu_1.$$
Setting $\alpha = \min \{ 1/5\eta n l, 4 \log(V_{\lambda}(\vz_0)n^{1/2}K)/\mu_1 nK \}$ results in the following expected last iterate convergence guarantee, which holds for any $K \geq 1$:
\begin{align*}
    \mathbb{E}[V_{\lambda}(\vz^{K+1}_0)] &\leq e^{\nicefrac{-K}{365\kappa^3}}V_{\lambda}(\vz_0) + \frac{\mu_1 + c\kappa^8\sigma^2 \log^2(V_{\lambda}(\vz_0)n^{1/2}K)}{\mu_1 nK^2} \\
    &= \Tilde{O}(e^{\nicefrac{-K}{365\kappa^3}} + \nicefrac{1}{nK^2}),
\end{align*}
where $\kappa = \max \{ \nicefrac{l}{\mu_1}, \nicefrac{l}{\mu_2} \}$ and $c > 0$ is a numerical constant that is independent of $l, \mu_1, \mu_2$ and $\sigma^2$.
\end{theorem}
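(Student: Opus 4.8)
The plan is to follow the three-stage template of Theorem~\ref{app-thm:gda-wor-convergence}, adapted to the Lyapunov function $V_\lambda$ and to the two-loop, two-timescale structure of AGDA-RR. First I would show that each epoch is a perturbed step of full-batch two-timescale AGDA. The inner $\vx$-loop is exactly GD-with-RR applied to $g_k(\vx) = F(\vx,\vy^k_0)$, so linearising $\grad_\vx f_{\tau_k(i)}(\vx^k_{i-1},\vy^k_0)$ about $\vx^k_0$ (using $l$-smoothness to bound the Jacobian integrals by $l$, as in Step~1 of Theorem~\ref{app-thm:gda-wor-convergence}) and then writing $\grad_\vx f_{\tau_k(i)}(\vx^k_0,\vy^k_0) = \grad_\vx F(\vx^k_0,\vy^k_0) + [\grad_\vx f_{\tau_k(i)}(\vx^k_0,\vy^k_0) - \grad_\vx F(\vx^k_0,\vy^k_0)]$ gives $\vx^{k+1}_0 = \vx^k_0 - n\alpha\,\grad_\vx F(\vx^k_0,\vy^k_0) + \alpha^2\vr^{\vx}_k$; since there is no distinguished root to linearise about, the residual $\vr^{\vx}_k$ is centred at the full-batch gradient and controlled through Assumption~\ref{as:bgv} rather than through $\sigma_*^2$. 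The same manipulation gives $\vy^{k+1}_0 = \vy^k_0 + n\beta\,\grad_\vy F(\vx^{k+1}_0,\vy^k_0) + \beta^2\vr^{\vy}_k$ for the ascent loop.

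Second, I would turn these two perturbed steps into a one-epoch contraction of $V_\lambda$. Since $\Phi$ is $L$-smooth with $L = l + l^2/\mu_2$ and $\grad\Phi(\vx) = \grad_\vx F(\vx,\vy^*(\vx))$ (Lemma~\ref{app-lem:agda-phi-prop}), a descent estimate for $\Phi$ along the $\vx$-loop gives, up to noise, $\Phi(\vx^{k+1}_0) - \Phi^* \leq (1 - c_1 n\alpha\mu_1)[\Phi(\vx^k_0)-\Phi^*] + c_2 n\alpha\,l^2\norm{\vy^k_0 - \vy^*(\vx^k_0)}^2$, the last term arising because the loop uses $\grad_\vx F(\cdot,\vy^k_0)$ rather than $\grad\Phi(\cdot)$; the quadratic-growth property of the P\L{} function $-F(\vx^k_0,\cdot)$ (Lemma~\ref{as:pl-quad-growth}) then bounds $\norm{\vy^k_0-\vy^*(\vx^k_0)}^2 \leq \frac{2}{\mu_2}[\Phi(\vx^k_0) - F(\vx^k_0,\vy^k_0)]$, i.e.\ by the gap part of $V_\lambda$. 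For the $\vy$-loop, the one-sided P\L{} inequality in $\vy$ together with the $L$-smoothness of $\Phi$ and the $l$-smoothness of $F$ shows the gap $\Phi(\vx^{k+1}_0) - F(\vx^{k+1}_0,\vy^{k+1}_0)$ contracts by a factor $1 - c_3 n\beta\mu_2$ up to an additive term of order $n\beta L^2\norm{\vx^{k+1}_0-\vx^k_0}^2$ (the drift of the $\vx$-argument, itself bounded by $n^2\alpha^2$ times both pieces of $V_\lambda$) plus noise. Choosing $\beta = \eta\alpha$ with $\eta = 73l^2/(2\mu_2^2)$ makes the $\vy$-contraction strong enough to absorb the gap inflation from the $\vx$-loop, the restriction $\alpha \leq 1/(5\eta nl)$ makes the $\vx$-drift term small, and $\lambda = 1/10$ weights the two pieces of $V_\lambda$ so the cross-terms cancel; this leaves the per-epoch recursion $\bE[V_\lambda(\vz^{k+1}_0)] \leq (1 - \mu_1 n\alpha/2)\,\bE[V_\lambda(\vz^k_0)] + O(\mu_1 n^2\alpha^3 l^2\sigma^2\eta^3)$, which unrolls to the intermediate bound $\bE[V_\lambda(\vz^{K+1}_0)] \leq (1-\mu_1 n\alpha/2)^K V_\lambda(\vz_0) + 0.42\,n\alpha^2 l^2\sigma^2(3+\eta^3)/\mu_1$ in the statement, the $\eta^3$ tracking the $\beta^2=\eta^2\alpha^2$ scaling of the $\vy$-loop residual (with one further power of $\eta$ lost on dividing by $\mu_1$ inside $V_\lambda$).

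Third, the residual norms are controlled by a summation-by-parts step identical to Step~3 of Theorem~\ref{app-thm:gda-wor-convergence}: after using $\sum_{i=1}^{n}\omega_{\tau_k(i)}(\vz) = n\,\nu(\vz)$ (and its $\vx$- and $\vy$-block analogues), each residual takes the form $\sum_{i=1}^{n-1}(\cdot)\sum_{j=1}^{i}[\grad f_{\tau_k(j)}(\vz)-\nu(\vz)]$, and Lemma~\ref{app-lem:sample-mean-wor} applied to these without-replacement partial sums, together with Assumption~\ref{as:bgv}, gives $\bE[\norm{\vr^{\vx}_k}^2],\bE[\norm{\vr^{\vy}_k}^2] = O(n^3 l^2\sigma^2)$ --- the implicit variance reduction behind the $1/(nK^2)$ term. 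It then remains to substitute $\alpha = \min\{1/(5\eta nl),\ 4\log(V_\lambda(\vz_0)n^{1/2}K)/(\mu_1 nK)\}$ into the intermediate bound and split into two cases as in Theorem~\ref{app-thm:gda-wor-convergence}: when the first term of the min is active, $\mu_1 n\alpha/2 = \mu_1\mu_2^2/(365 l^3) \geq 1/(365\kappa^3)$ produces the exponential term $e^{-K/(365\kappa^3)}V_\lambda(\vz_0)$; when the second is active, $(1-\mu_1 n\alpha/2)^K \leq e^{-2\log(V_\lambda(\vz_0)n^{1/2}K)} \leq 1/(V_\lambda(\vz_0)^2 nK^2)$ and $n\alpha^2 \leq 16\log^2(V_\lambda(\vz_0)n^{1/2}K)/(\mu_1^2 nK^2)$, and collecting the constants using $\eta^3 = O(l^6/\mu_2^6)$ together with $\kappa^8 \geq l^8/(\mu_1^2\mu_2^6)$ turns the additive term into the stated $c\,\kappa^8\sigma^2\log^2(V_\lambda(\vz_0)n^{1/2}K)/(\mu_1 nK^2)$. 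The AGDA-AS claim follows by the same argument with Lemma~\ref{app-lem:sample-mean-wor} replaced by the deterministic bound $\norm{\vr_k}^2 = O(n^4 l^2\sigma^2)$ valid for every permutation (as in Theorem~\ref{app-thm:gda-as-convergence}), which costs one factor of $n$ and replaces $nK^2$ by $K^2$.

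The step I expect to be the main obstacle is the second one: extracting the two single-loop estimates with sharp enough constants and then finding one pair $(\lambda,\eta)$ for which the coupled two-dimensional system in $\Phi(\vx)-\Phi^*$ and $\Phi(\vx)-F(\vx,\vy)$ contracts at rate $1-\mu_1 n\alpha/2$. The bilinear coupling --- the mismatch between $\grad_\vx F(\cdot,\vy^k_0)$ and $\grad\Phi(\cdot)$ in the descent loop, and the $\vx$-drift felt by the ascent loop --- is exactly what forces the $\kappa^3$ in the exponent and the $\kappa^8$ in the noise, and propagating it through both inner loops without leaking a worse step-size threshold than $\alpha\lesssim 1/(\eta nl)$ is the technical heart of the argument; by contrast, the without-replacement noise control is a direct transcription of the GDA-RR computation with $\sigma_*^2$ replaced by the uniform bound $\sigma^2$ of Assumption~\ref{as:bgv}.
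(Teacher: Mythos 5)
Your overall route is the same as the paper's: linearize the inner loops about the epoch anchors, use summation by parts to rewrite each epoch as a full-batch two-timescale AGDA step plus residuals $\alpha^2\vr_k,\ \beta^2\vs_k$, control the residuals via Lemma \ref{app-lem:sample-mean-wor} and Assumption \ref{as:bgv}, and close a one-epoch recursion for $V_\lambda$ with $\lambda=1/10$, $\beta=\eta\alpha$, $\eta=73l^2/2\mu_2^2$, followed by the same step-size choice and two-case analysis. However, there is one concrete gap in your noise-control step. After summation by parts the residual is
$\vr_k=\sum_{i=1}^{n-1}\bigl[\revprod_{t=i+2}^{n}(\vI-\alpha\vH_{\tau_k(t)})\bigr]\vH_{\tau_k(i+1)}\sum_{j=1}^{i}\grad_\vx f_{\tau_k(j)}(\vx^k_0,\vy^k_0)$,
and the inner partial sums are of the \emph{raw} gradients, not of the centred deviations: writing $\sum_{j\le i}\grad_\vx f_{\tau_k(j)}=i\,\grad_\vx F(\vx^k_0,\vy^k_0)+\sum_{j\le i}[\grad_\vx f_{\tau_k(j)}-\grad_\vx F]$, only the second piece is controlled by Lemma \ref{app-lem:sample-mean-wor} and $\sigma^2$. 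The first piece does not vanish, because the anchor $(\vx^k_0,\vy^k_0)$ is not a stationary point — this is exactly the structural difference from GDA-RR, where the partial sums are of $\omega_{\tau_k(j)}(\vz^*)$ and $\nu(\vz^*)=0$. Consequently your claimed bound $\bE[\norm{\vr^\vx_k}^2]=O(n^3l^2\sigma^2)$ is false as stated (take $\sigma=0$ with a nonstationary anchor: the residual is still of order $n^2l\norm{\grad_\vx F}$, reflecting within-epoch drift). The correct bound has the form $\bE[\norm{\vr_k}^2]\lesssim n^4l^2\norm{\grad_\vx F(\vx^k_0,\vy^k_0)}^2+n^3l^2\sigma^2$ (and analogously for $\vs_k$ with $\grad_\vy F(\vx^{k+1}_0,\vy^k_0)$), so the noise step is not a "direct transcription" of the GDA-RR computation.

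This matters because the gradient-dependent part of the residual has to be carried into your stage two and absorbed by the descent terms of the Lyapunov recursion; that absorption (cross terms like $\alpha^2\norm{\grad_\vx F}\norm{\vr_k}$, terms $n^4\alpha^4l^3\norm{\grad_\vx F}^2$, etc.) is where the paper spends most of its effort and is precisely what forces the specific constants ($\varepsilon=3/7$ in the Young split, $\lambda=1/10$, $\eta=73l^2/2\mu_2^2$) and relies on $\alpha\le 1/(5\eta nl)$. Your sketch of stage two only accounts for the $\grad_\vx F(\cdot,\vy^k_0)$-versus-$\grad\Phi$ mismatch and the $\vx$-drift seen by the ascent loop, not for the gradient part of the shuffling residuals themselves. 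The repair is routine but necessary, and with it the per-epoch noise floor comes out as $O(n^2\alpha^3l^2\sigma^2(3+\eta^3))$ (no factor $\mu_1$ — your stated $O(\mu_1 n^2\alpha^3l^2\sigma^2\eta^3)$ is inconsistent with the unrolled bound $0.42\,n\alpha^2l^2\sigma^2(3+\eta^3)/\mu_1$ you quote). The remainder of your argument (step-size substitution, the two cases, and the AGDA-AS variant via a uniform $O(n^4l^2\sigma^2)$ bound) matches the paper.
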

\begin{proof}
Before presenting the convergence analysis of AGDA-RR, we introduce some additional notation. For any $k \in [K]$ and any random variable $Y$, we use $\bE_k[Y]$ to denote the expectation of $Y$ conditioned on all previous epoch iterates $(\vx^j_0, \vy^j_0)_{j=1}^{k}$, i.e., $\bE_k[Y] = \bE[Y \ | \ \vx^1_0, \vy^1_0, \ldots, \vx^k_0, \vy^k_0 ]$. Similarly, for any two random variables $Y, Z$, we use $\bE_k[Y \ | \ Z]$ to denote the expectation of $Y$ conditioned on all previous epoch iterates $(\vx^j_0, \vy^j_0)_{j=1}^{k}$ and $Z$, i.e. $\bE_k[Y \ | \ Z] = \bE[Y \ | \ \vx^1_0, \vy^1_0, \ldots, \vx^k_0, \vy^k_0, Z ]$.

Our analysis follows a structure similar to that of GDA-RR/SO and consists of three major steps: 1. obtaining an epoch level update rule for AGDA-RR that resembles the update of deterministic AGDA with noise, 2. bounding the noise in the update rule using Lemma \ref{app-lem:sample-mean-wor},  3. presenting a convergence analysis with respect to the Lyapunov function $V_{\lambda}$.

\textbf{Step 1: Epoch level update rule}
We begin with the observation that the $\vx$-iterates of AGDA-RR are the same as that of gradient descent without replacement on $F(\vx, \vy^k_0)$ and thus, can be treated as a noisy version of full-batch gradient descent. To this end, we linearize the stochastic gradients $\grad_{\vx} f_{\tau_k(i)}(\vx^k_{i-1}, \vy^k_0)$ about $(\vx^k_0, \vy^k_0)$ and obtain the following decomposition:
\begin{align*}
    \grad_{\vx} f_{\tau_k(i)}(\vx^{k}_{i-1}, \vy^k_0) &= \grad_{\vx} f_{\tau_k(i)}(\vx^{k}_{0}, \vy^{k}_{0}) + \left[\int_{0}^{1} \grad^2_{\vx \vx} f_{\tau_k(i)}(\vx^k_0 + t(\vx^{k}_{i-1} - \vx^k_0), \vy^k_0) \textrm{d}t\right](\vx^{k}_{i-1} - \vx^{k}_0) \\
    &= \grad_{\vx} f_{\tau_k(i)}(\vx^{k}_{0}, \vy^{k}_{0}) + \vH_{\tau_k(i)}(\vx^{k}_{i-1} - \vx^{k}_0),
\end{align*}
where $\vH_{\tau_k(i)} = \int_{0}^{1} \grad^2_{\vx \vx} f_{\tau_k(i)}(\vx^k_0 + t(\vx^{k}_{i-1} - \vx^k_0), \vy^k_0) \textrm{d}t$. As before, by Rademacher's Theorem, the $l$-smoothness of $f_{\tau_k(i)}$ guarantees that $\vH_{\tau_k(i)}$ is well defined and satisfies $\norm{\vH_{\tau_k(i)}} \leq l$.

When substituting the above linearization into the $\vx$-iterate updates, we observe the following:
\begin{align*}
    \vx^{k}_1 &= \vx^{k}_0 - \alpha \grad_{\vx}f_{\tau_k(1)}(\vx^{k}_{0}, \vy^{k}_{0}), \\
    \vx^{k}_2 &= \vx^{k}_1 - \alpha \grad_{\vx}f_{\tau_k(2)}(\vx^{k}_{1}, \vy^{k}_{0}) = \vx^{k}_1 - \alpha \grad_{\vx}f_{\tau_k(2)}(\vx^k_0, \vy^k_0) - \alpha \vH_{\tau_k(2)}(\vx^k_1 - \vx^k_0) \\
    &= \vx^k_0 - \alpha \grad_{\vx} f_{\tau_k(2)}(\vx^k_0, \vy^k_0) - \alpha \grad_{\vx}f_{\tau_k(1)}(\vx^k_0, \vy^k_0) + \alpha^2 \vH_{\tau_k(2)} \grad_{\vx}f_{\tau_k(1)}(\vx^k_0, \vy^k_0) \\
    &= \vx^k_0 - \alpha (\vI - \alpha \vH_{\tau_k(2)})\grad_{\vx}f_{\tau_k(1)}(\vx^k_0, \vy^k_0) - \alpha \grad_{\vx}f_{\tau_k(2)}(\vx^k_0, \vy^k_0).
\end{align*}
Repeating the above steps for subsequent iterates $\vx^k_3, \ldots, \vx^k_n$, we obtain the following epoch-level update rule for $\vx^{k+1}_0 = \vx^k_n$:
\begin{equation}
\label{app-eqn:agda-rr-x-unroll}
    \vx^{k+1}_0 = \vx^k_0 - \alpha \sum_{j=1}^{n} \left( \revprod_{t=j+1}^{n} (\vI - \alpha \vH_{\tau_k(t)}) \right) \grad_{\vx}f_{\tau_k(j)}(\vx^k_0, \vy^k_0).
\end{equation}
We further simplify \eqref{app-eqn:agda-rr-x-unroll} using the summation by parts identity. To this end, we define $c_j$ and $e_j$ as follows:
\begin{align*}
    c_j &= \revprod_{t=j+1}^{n} (\vI - \alpha \vH_{\tau_k(t)}), \\
    e_j &= \grad_{\vx}f_{\tau_k(j)}(\vx^k_0, \vy^k_0).
\end{align*}
We note that $c_n = \vI$, since $\revprod_{t=n+1}^{n} (\vI - \alpha \vH_{\tau_k(t)})$ denotes the empty reverse product. Hence, we conclude,
\begin{align*}
    c_n \sum_{j=1}^{n}e_j &= \sum_{j=1}^{n}\grad_{\vx}f_{\tau_k(j)}(\vx^k_0, \vy^k_0) = n \grad_{\vx} F(\vx^k_0, \vy^k_0), \\
    \sum_{i=1}^{n-1}(c_{i+1}-c_{i})\sum_{j=1}^{i}e_j &= \sum_{i=1}^{n-1} \left[\revprod_{t=i+2}^{n} (\vI - \alpha \vH_{\tau_k(t)}) - \revprod_{t=i+1}^{n} (\vI - \alpha \vH_{\tau_k(t)}) \right]\sum_{j=1}^{i} \grad_{\vx} f_{\tau_k(j)}(\vx^k_0, \vy^k_0) \\
    &= \alpha \sum_{i=1}^{n-1}\left[\revprod_{t=i+2}^{n} (\vI - \alpha \vH_{\tau_k(t)}) \right] \vH_{\tau_k(i+1)}\sum_{j=1}^{i} \grad_{\vx} f_{\tau_k(j)}(\vx^k_0, \vy^k_0).
\end{align*}
We define $\vr_k$ as,
\begin{equation}
\label{app-eqn:agda-rr-rk}
    \vr_k = \sum_{i=1}^{n-1}\left[\revprod_{t=i+2}^{n} (\vI - \alpha \vH_{\tau_k(t)}) \right] \vH_{\tau_k(i+1)}\sum_{j=1}^{i} \grad_{\vx} f_{\tau_k(j)}(\vx^k_0, \vy^k_0).
\end{equation}
From the summation by parts identity $\sum_{i=1}^{n} c_i e_i = c_n \sum_{j=1}^{n}e_j - \sum_{i=1}^{n-1}(c_{i+1}-c_{i})\sum_{j=1}^{i}e_j$, we obtain the following:
\begin{equation*}
    \sum_{j=1}^{n} \left[ \revprod_{t=j+1}^{n} [\vI - \alpha \vH_{\tau_k(t)}] \right] \grad_{\vx}f_{\tau_k(j)}(\vx^k_0, \vy^k_0) = n \grad_{\vx} F(\vx^k_0, \vy^k_0) - \alpha \vr_k.
\end{equation*}
By substituting the above equation into  \eqref{app-eqn:agda-rr-x-unroll}, we obtain the following update rule for the epoch iterates $\vx^k_0$:
\begin{equation}
\label{app-eqn:agda-rr-x-update}
    \vx^{k+1}_0 = \vx^k_0 - n\alpha \grad_{\vx} F(\vx^k_0, \vy^k_0) + \alpha^2 \vr_k.
\end{equation}
The epoch-level update rule for the $\vy$-iterate is derived in a similar fashion. We begin by linearizing $\grad_{\vy} f_{\pi_k(i)}(\vx^{k+1}_0, \vy^k_{i-1})$ as:

\begin{align*}
    \grad_{\vy} f_{\pi_k(i)}(\vx^{k+1}_0, \vy^k_{i-1}) = \grad_{\vy} f(\vx^{k+1}_0, \vy^k_{0}) + \vJ_{\pi_k(i)}(\vy^k_{i-1}-\vy^k_0), 
\end{align*}
where $\vJ_{\pi_k(i)} = \int_{0}^{1} \grad^2_{\vy \vy} f_{\pi_k(i)}(\vx^{k+1}_0,\vy^k_0 + t(\vy^{k}_{i-1} - \vy^k_0)) \textrm{d}t$ satisfies $\norm{\vJ_{\pi_k(i)}} \leq l$. As before, by substituting into the $\vy$ iterate updates and unrolling, we obtain,
\begin{equation}
\label{app-eqn:agda-rr-y-unroll}
    \vy^{k+1}_0 = \vy^k_0 + \beta \sum_{j=1}^{n} \left[ \revprod_{t=j+1}^{n} [\vI + \beta \vJ_{\pi_k(t)}] \right] \grad_{\vy}f_{\pi_k(j)}(\vx^{k+1}_0, \vy^k_0).
\end{equation}
Once again, applying the summation by parts technique gives us the following update rule for the epoch iterates $\vy^{k}_0$:
\begin{equation}
\label{app-eqn:agda-rr-y-update}
\vy^{k+1}_0 = \vy^k_0 + n \beta \grad_{\vy} F(\vx^{k+1}_0, \vy^k_0) + \beta^2 \vs_k,
\end{equation}
where $\vs_k$ is given by,
\begin{equation}
\label{app-eqn:agda-rr-sk}
    \vs_k = \sum_{i=1}^{n-1}\left[\revprod_{t=i+2}^{n} (\vI + \beta \vJ_{\pi_k(t)}) \right] \vJ_{\pi_k(i+1)}\sum_{j=1}^{i} \grad_{\vy} f_{\pi_k(j)}(\vx^{k+1}_0, \vy^k_0).
\end{equation}
\textbf{Step 2: Controlling the noise terms}
By applying the triangle inequality on \eqref{app-eqn:agda-rr-rk} and substituting $\norm{\vH_{\tau_k}(i)} \leq l$ and $\alpha \leq 1/5nl$ wherever required, we bound $\norm{\vr_k}$ as follows:
\begin{align*}
    \norm{\vr_k} &\leq \sum_{i=1}^{n-1}\left[\revprod_{t=i+2}^{n} \norm{\vI - \alpha \vH_{\tau_k(t)}} \right] \norm{\vH_{\tau_k(i+1)}}|\sum_{j=1}^{i} \grad_{\vx} f_{\tau_k(j)}(\vx^k_0, \vy^k_0)| \\
    &\leq l(1 + \alpha l)^n \sum_{i=1}^{n-1} |\sum_{j=1}^{i} \grad_{\vx} f_{\tau_k(j)}(\vx^k_0, \vy^k_0)| \leq le^{1/5} \sum_{i=1}^{n-1} |\sum_{j=1}^{i} \grad_{\vx} f_{\tau_k(j)}(\vx^k_0, \vy^k_0)| \\
    &\leq le^{1/5} \sum_{i=1}^{n-1}i |\grad_{\vx}F(\vx^k_0, \vy^k_0) - \grad_{\vx}F(\vx^k_0, \vy^k_0) +  1/i\sum_{j=1}^{i} \grad_{\vx} f_{\tau_k(j)}(\vx^k_0, \vy^k_0)| \\
    &\leq le^{1/5} \left[ n^2 \norm{\grad_{\vx}F(\vx^k_0, \vy^k_0)}/2 + \sum_{i=1}^{n-1} i |\grad_{\vx}F(\vx^k_0, \vy^k_0) -  1/i\sum_{j=1}^{i} \grad_{\vx} f_{\tau_k(j)}(\vx^k_0, \vy^k_0)|  \right].
\end{align*}
By defining $d_{k,i} = |\grad_{\vx}F(\vx^k_0, \vy^k_0) -  1/i\sum_{j=1}^{i} \grad_{\vx} f_{\tau_k(j)}(\vx^k_0, \vy^k_0)|$ and substituting into the above inequality, we obtain,
\begin{equation}
\label{app-eqn:agda-rr-rk-mid}
\norm{\vr_k} \leq le^{1/5} \left[ n^2 \norm{\grad_{\vx}F(\vx^k_0, \vy^k_0)}/2 + \sum_{i=1}^{n-1} i d_{k,i}  \right].
\end{equation}
By applying Young's inequality to \eqref{app-eqn:agda-rr-rk-mid}, we conclude:
\begin{equation}
\label{app-eqn:agda-rr-rk-sq-mid}
\norm{\vr_k}^2 \leq 2l^2e^{2/5} \left[ n^4 \norm{\grad_{\vx}F(\vx^k_0, \vy^k_0)}^2/4 + (n-1)\sum_{i=1}^{n-1} i^2 d^2_{k,i}  \right].
\end{equation}
By Lemma \ref{app-lem:sample-mean-wor} and the bounded gradient variance assumption, we bound $\bE[d^2_{k,i}]$ as follows:
\begin{equation}
\label{app-eqn:agda-rr-dki-sq}
    \bE[d^2_{k,i}] = \frac{n-i}{i(n-1)}\left[1/n\sum_{i=1}^{n}|\grad_{\vx}f_i(\vx^k_0, \vy^k_0) - \grad_{\vx}F(\vx^k_0, \vy^k_0)|^2\right] \leq \frac{n-i}{i(n-1)} \sigma^2,
\end{equation}
where the expectation is taken over the uniformly sampled random permutation $\tau_k$. Furthermore, applying Jensen's inequality to \eqref{app-eqn:agda-rr-dki-sq} gives us the following bound:
\begin{align}
\label{app-eqn:agda-rr-dki}
    \bE[d_{k,i}] \leq \sqrt{\bE[d^2_{k,i}]} \leq \sigma \sqrt{\frac{n-i}{i(n-1)}}.
\end{align}
From \eqref{app-eqn:agda-rr-dki}, it follows that,
\begin{align*}
    \sum_{i=1}^{n-1} i \bE[d_{k,i}] &\leq \sigma \sum_{i=1}^{n-1} \sqrt{i} \sqrt{\frac{n-i}{n-1}} \leq \sigma \sum_{i=1}^{n-1} \sqrt{i} \leq \frac{2\sigma n^{3/2}}{3}.
\end{align*}
By taking expectations of \eqref{app-eqn:agda-rr-rk-mid} and substituting the above, we get,
\begin{equation}
\label{app-eqn:agda-rr-rk-bound}
    \bE[\norm{\vr_k}] \leq \frac{2n^2l}{3}\norm{\grad_{\vx} F(\vx^k_0, \vy^k_0)} +  \sigma ln^{3/2}.
\end{equation}
Similarly, from \eqref{app-eqn:agda-rr-dki-sq}, it follows that,
\begin{align*}
    (n-1)\sum_{i=1}^{n-1} i^2 \bE[d_{k, i}^2] \leq \sigma^2 \sum_{i=1}^{n-1} i(n-i) \leq \frac{\sigma^2 n^3}{6}.
\end{align*}
By taking expectations of \eqref{app-eqn:agda-rr-rk-sq-mid} and substituting the above, we obtain,
\begin{equation}
\label{app-eqn:agda-rr-rk-sq-bound}
\bE[\norm{\vr_k}^2] \leq \frac{3n^4 l^2}{4}\norm{\grad_{\vx} F(\vx^k_0, \vy^k_0)}^2 + \frac{\sigma^2 l^2 n^3}{2}.
\end{equation}
We bound $\bE[\norm{\vs_k}]$ and $\bE[\norm{\vs_k}^2]$ by following a similar strategy and obtain:
\begin{equation}
\label{app-eqn:agda-rr-sk-bound}
    \bE[\norm{\vs_k}] \leq \frac{2n^2l}{3}\norm{\grad_{\vy} F(\vx^{k+1}_0, \vy^k_0)} +  \sigma ln^{3/2},
\end{equation}
\begin{equation}
\label{app-eqn:agda-rr-sk-sq-bound}
\bE[\norm{\vs_k}^2] \leq \frac{3n^4 l^2}{4}\norm{\grad_{\vy} F(\vx^{k+1}_0, \vy^k_0)}^2 + \frac{\sigma^2 l^2 n^3}{2}.
\end{equation}

\textbf{Step 3: Obtaining a convergence guarantee} We begin by defining $a_k$ and $b_k$ as:
\begin{align*}
    a_k &= \Phi(\vx^k_0) - \Phi^*, \\
    b_k &= \Phi(\vx^k_0) - F(\vx^k_0, \vy^k_0).
\end{align*}
By definition of $V_{\lambda}$, we conclude that $V_{\lambda}(\vz^k_0) = a_k + \lambda b_k$. Furthermore, we note that by Lemma \ref{app-lem:agda-phi-prop}, $\Phi$ is $L$-smooth where $L = l + l^2/\mu_2$. Since $\alpha \leq 1/5\eta n l$, it follows that $n \alpha L \leq 1$. Using these properties, we obtain the following:
\begin{align*}
    \Phi(\vx^{k+1}_0) - \Phi(\vx^{k}_0) &\leq \dotprod{\grad \Phi(\vx^{k}_0)}{\vx^{k+1}_0 - \vx^{k}_0} + \frac{L}{2}\norm{\vx^{k+1}_0 - \vx^{k}_0}^{2} \\ 
    &\leq \dotprod{\nabla \Phi(\vx^{k}_0)}{-n \alpha \grad_\vx F(\vx^{k}_0, \vy^{k}_0) + \alpha^2 \vr_k} + \frac{L}{2}\norm{- n \alpha \grad_\vx F(\vx^{k}_0, \vy^{k}_0) + \alpha^2 \vr_k}^2 \\ 
    &\leq -n \alpha \dotprod{\nabla \Phi(\vx^{k}_0)}{\grad_\vx F(\vx^{k}_0, \vy^{k}_0) - \frac{\alpha}{n} \vr_k} + \frac{L}{2}n^2 \alpha^2 \norm{\grad_\vx F(\vx^{k}_0, \vy^{k}_0) - \frac{\alpha}{n} \vr_k}^2 \\
    &\leq \frac{n \alpha}{2} \norm{\grad_\vx F(\vx^{k}_0, \vy^{k}_0) - \nabla \Phi(\vx^{k}_0) - \frac{\alpha}{n} \vr_k}^2 - \frac{n \alpha}{2} \norm{\nabla \Phi(\vx^{k}_0)}^2 \\
    &\leq n \alpha \norm{\grad_\vx F(\vx^{k}_0, \vy^{k}_0) - \nabla \Phi(\vx^{k}_0)}^2 - \frac{n \alpha}{2} \norm{\nabla \Phi(\vx^{k}_0)}^2  + \frac{\alpha^3}{n} \norm{\vr_k}^2.
\end{align*}
By taking expectations conditioned on the epoch iterates $(\vx^j_0, \vy^j_0)_{j=1}^{k}$ and substituting the bound \eqref{app-eqn:agda-rr-rk-sq-bound}, we obtain the following:
\begin{align}
\label{app-eqn:agda-rr-phi-update}
\bE_k[\Phi(\vx^{k+1}_0)] - \Phi(\vx^{k}_0) &\leq n \alpha \norm{\grad_\vx F(\vx^{k}_0, \vy^{k}_0) - \nabla \Phi(\vx^{k}_0)}^2 - \frac{n \alpha}{2} \norm{\nabla \Phi(\vx^{k}_0)}^2   \nonumber\\
    &+ \frac{3 n^3 \alpha^3 l^2}{4} \norm{\grad_\vx F(\vx_{0}^{k}, \vy_{0}^{k})}^2 + \frac{n^2 \alpha^3 l^2 \sigma^2}{2}.
\end{align}
We proceed by applying the $l$-smoothness of $-F(\vx,\vy)$ in $\vx$, to obtain the following:
\begin{align}
    F(\vx^{k}_0, \vy^{k}_0) - F(\vx^{k+1}_0, \vy^{k}_0) &\leq -\dotprod{\grad_\vx  F(\vx^{k}_0, \vy^{k}_0)}{\vx^{k+1}_0 - \vx^{k}_0} + \frac{l}{2}\norm{\vx^{k+1}_0 - \vx^{k}_0}^2 \nonumber \\
    &\leq \dotprod{\grad_\vx F(\vx^{k}_0, \vy^{k}_0)}{n \alpha \grad_\vx F(\vx^{k}_0, \vy^{k}_0) - \alpha^2 \vr_k} \nonumber\\ 
    &+  \frac{l}{2} \norm{n \alpha \grad_\vx F(\vx^{k}_0, \vy^{k}_0) - \alpha^2 \vr_k}^2 \nonumber \\
    &\leq (n \alpha + \frac{l}{2}n^2 \alpha^2)\norm{\grad_\vx F(\vx^{k}_0, \vy^{k}_0)}^2 + \frac{l}{2}\norm{\alpha^2 \vr_k}^2 \nonumber \\
    &-(\alpha^2 + ln\alpha^3)\dotprod{ \grad_\vx F(\vx^{k}_0, \vy^{k}_0)}{ \vr_k} \nonumber \\
    &\leq  (n \alpha + \frac{l}{2}n^2 \alpha^2)\norm{\grad_\vx F(\vx^{k}_0, \vy^{k}_0)}^2 + \frac{l \alpha^4}{2} \norm{\vr_k}^2 \nonumber \\
    &+ \frac{6 \alpha^2}{5}\norm{\grad_\vx F(\vx^{k}_0, \vy^{k}_0)} \norm{\vr_k}. 
\label{app-eqn:agda-rr-fx-mid}
\end{align}
From \eqref{app-eqn:agda-rr-rk-bound} and \eqref{app-eqn:agda-rr-rk-sq-bound}, we infer that:
\begin{align*}
    \frac{l \alpha^4}{2} \bE[\norm{\vr_k}^2] &\leq \frac{3n^4 \alpha^4 l^3}{8}\norm{\grad_{\vx} F(\vx^k_0, \vy^k_0)}^2 + \frac{\sigma^2 l^3 \alpha^4 n^3}{4} \\
    &\leq \frac{3n^4 \alpha^4 l^3}{8}\norm{\grad_{\vx} F(\vx^k_0, \vy^k_0)}^2 + \frac{\sigma^2 l^2 \alpha^3 n^2}{20}, \\
     \frac{6\alpha^2}{5}\norm{\grad_{\vx}F(\vx_{0}^{k}, \vy_{0}^{k})}\bE[\norm{\vr_k}] &\leq \frac{6\alpha^2n^{\frac{3}{2}}\sigma l}{5} \norm{\grad_\vx F(\vx_{0}^{k}, \vy_{0}^{k})} + \frac{4 n^2 \alpha^2 l}{5}\norm{\grad_{\vx}F(\vx_{0}^{k}, \vy_{0}^{k})}^2 \\
    &= (\frac{6\alpha^{\frac{3}{2}}nl \sigma}{5})( \alpha^{\frac{1}{2}}n^{\frac{1}{2}}\norm{\grad_{\vx}F(x_{0}^{k}, y_{0}^{k})}) + \frac{4 n^2 \alpha^2 l}{5}\norm{\grad_{\vx}F(\vx_{0}^{k}, \vy_{0}^{k})}^2 \\
    &\leq (\frac{n \alpha}{2} + \frac{4 n^2 \alpha^2 l}{5})\norm{\grad_{\vx}F(\vx_{0}^{k}, \vy_{0}^{k})}^2 + \frac{18\alpha^3 n^2 l^2 \sigma^2}{25}.
\end{align*}
Taking expectations on both sides of \eqref{app-eqn:agda-rr-fx-mid} by conditioning on the epoch iterates $(\vx^j_0, \vy^j_0)_{j=1}^{k}$ and substituting the above inequalities, we obtain:
\begin{equation}
\label{app-eqn:agda-rr-fx-bound}
    F(\vx^{k}_0, \vy^{k}_0) - \bE_k[F(\vx^{k+1}_0, \vy^{k}_0)] \leq (\frac{3 n \alpha}{2} + \frac{13}{10}l n^2 \alpha^2 + \frac{3 n^4 \alpha^4 l^3}{8})\norm{\grad_\vx F(\vx_{0}^{k}, \vy_{0}^{k})}^2 + \frac{4\alpha^3 n^2 l^2 \sigma^2}{5}.
\end{equation}
We now apply the $l$-smoothness of $-F(\vx,\vy)$ in $\vy$ to obtain the following:
\begin{align}
    F(\vx^{k+1}_0, \vy^{k}_0) -F(\vx^{k+1}_0, \vy^{k+1}_0) &\leq  - \dotprod{\grad_{\vy} F(\vx^{k+1}_0, \vy^{k}_0)}{ \vy^{k+1}_0 - \vy^{k}_0} + \frac{l}{2}\norm{\vy^{k+1}_0 - \vy^{k}_0}^{2} \nonumber \\
    &\leq - \dotprod{\grad_{\vy} F(\vx^{k+1}_0, \vy^{k}_0)}{n \beta \grad_{\vy} F(\vx^{k+1}_0, \vy^{k}_0) + \beta^2 \vs_k} \nonumber \\
    &+ \frac{l}{2}\norm{n \beta \grad_{\vy} F(\vx^{k+1}_0, \vy^{k}_0) + \beta^2 \vs_k}^2 \nonumber \\
    &\leq (-n\beta + l n^2 \beta^2)\norm{\grad_{\vy} F(\vx^{k+1}_0, \vy^{k}_0)}^2 \nonumber \\
    &+ \beta^2 \norm{\grad_{\vy} F(\vx^{k+1}_0, \vy^{k}_0)} \norm{\vs_k} + l \beta^4\norm{\vs_k}^2.
\label{app-eqn:agda-rr-fy-mid}
\end{align}
From \eqref{app-eqn:agda-rr-sk-bound} and \eqref{app-eqn:agda-rr-sk-sq-bound}, we conclude,
\begin{align*}
    l \beta^4 \bE[\norm{\vs_k}^2] &\leq \frac{3n^4 l^3 \beta^4}{4}\norm{\grad_{\vy} F(\vx^{k+1}_0, \vy^k_0)}^2 + \frac{\sigma^2 l^3 n^3 \beta^4}{2} \\
    &\leq \frac{3n^4 l^3 \beta^4}{4}\norm{\grad_{\vy} F(\vx^{k+1}_0, \vy^k_0)}^2 + \frac{\sigma^2 l^2 n^2 \beta^3}{10}, \\
    \beta^2 \norm{\grad_{\vy} F(\vx^{k+1}_0, \vy^{k}_0)} \bE[\norm{\vs_k}] &\leq \sigma l n^{3/2} \beta^2 \norm{\grad_{\vy} F(\vx^{k+1}_0, \vy^{k}_0)} + \frac{2 n^2 \beta^2 l}{3}\norm{\grad_{\vy} F(\vx^{k+1}_0, \vy^{k}_0)}^2 \\
    &\leq (\frac{n^{\frac{1}{2}}\beta^{\frac{1}{2}}}{2}\norm{\grad_{\vy} F(\vx^{k+1}_0, \vy^{k}_0)})(2n\beta^{\frac{3}{2}}l\sigma) +  \frac{2 n^2 \beta^2 l}{3}\norm{\grad_{\vy} F(\vx^{k+1}_0, \vy^{k}_0)}^2 \\
    &\leq (\frac{n \beta}{8} + \frac{2 n^2 \beta^2 l}{3})\norm{\grad_{\vy} F(\vx^{k+1}_0, \vy^{k}_0)}^2 + 2n^2 \beta^3 l^2 \sigma^2.
\end{align*}
We proceed by taking expectations on both sides of \eqref{app-eqn:agda-rr-fy-mid} by conditioning on the epoch iterates $(\vx^j_0, \vy^j_0)_{j=1}^{k}$  and $\vx^{k+1}_0$ and substituting the above bounds. Consequently, we obtain the following inequality:
\begin{align*}
    F(\vx^{k+1}_0, \vy^{k}_0) - \bE_k[F(\vx^{k+1}_0, \vy^{k+1}_0) | \vx^{k+1}_0] &\leq (-\frac{7n\beta}{8} + \frac{5 n^2 \beta^2 l}{3} + \frac{3n^4 \beta^4 l^3}{4})\norm{\grad_{\vy} F(\vx^{k+1}_0, \vy^{k}_0)}^2 \\
    &+ \frac{21n^2 \beta^3 l^2 \sigma^2}{10} \\
    &\leq -\frac{n \beta}{2}\norm{\grad_{\vy} F(\vx^{k+1}_0, \vy^{k}_0)}^2 + \frac{21n^2 \beta^3 l^2 \sigma^2}{10}.
\end{align*}
The two-sided P\L{} condition on $F$ implies that,
\begin{align*}
    -\frac{n \beta}{2} \norm{\grad_{\vy} F(\vx^{k+1}_0, \vy^{k}_0)}^2 &\leq -n \beta \mu_2 [\Phi(\vx^{k+1}_0) - F(\vx^{k+1}_0, \vy^{k}_0)],
\end{align*}
which gives us the following:
\begin{equation*}
\bE_k[\Phi(\vx^{k+1}_0) -F(\vx^{k+1}_0, \vy^{k+1}_0) | \vx^{k+1}_0] \leq (1 - n \beta \mu_2) [\Phi(\vx^{k+1}_0) - F(\vx^{k+1}_0, \vy^{k}_0)] + \frac{21n^2 \beta^3 l^2 \sigma^2}{10}.
\end{equation*}
Taking expectations of the above with respect to $\vx^{k+1}_0$ and applying the law of iterated expectation, we obtain:
\begin{equation}
\label{app-eqn:agda-rr-fy-bound}
\bE_k[\Phi(\vx^{k+1}_0) -F(\vx^{k+1}_0, \vy^{k+1}_0)] \leq (1 - n \beta \mu_2) \bE_k[\Phi(\vx^{k+1}_0) - F(\vx^{k+1}_0, \vy^{k}_0)] + \frac{21n^2 \beta^3 l^2 \sigma^2}{10}.
\end{equation}
From \eqref{app-eqn:agda-rr-phi-update}, we obtain the following:
\begin{multline}
\bE_k[a_{k+1}] \leq a_k + n \alpha \norm{\grad_\vx F(\vx^{k}_0, \vy^{k}_0) - \nabla \Phi(\vx^{k}_0)}^2 - \frac{n \alpha}{2} \norm{\nabla \Phi(\vx^{k}_0)}^2 \\ + \frac{3 n \alpha}{100} \norm{\grad_\vx F(\vx_{0}^{k}, \vy_{0}^{k})}^2 + \frac{n^2 \alpha^3 l^2 \sigma^2}{2}.
\label{app-eqn:agda-rr-a-update}
\end{multline}
Furthermore, we note that,
\begin{align*}
    \Phi(\vx^{k+1}_0) - F(\vx^{k+1}_0, \vy^{k}_0) = b_k + F(\vx^{k}_0, \vy^{k}_0) - F(\vx^{k+1}_0, \vy^{k}_0) + \Phi(\vx^{k+1}_0) - \Phi(\vx^{k}_0).
\end{align*}
By substituting the above into \eqref{app-eqn:agda-rr-fy-bound}, we obtain the following:
\begin{multline*}
    \bE_k[b_{k+1}] \leq (1 - n\beta \mu_2)b_k + (1 - n \beta \mu_2)\bE_k[F(\vx^k_0, \vy^k_0) - F(\vx^{k+1}_0, \vy^{k}_0)] \\
    + (1 - n\beta \mu_2)\bE_k[\Phi(\vx^{k+1}_0) - \Phi(\vx^k_0)] + \frac{21 n^2 \beta^3 l^2 \sigma^2}{10}.
\end{multline*}
By substituting \eqref{app-eqn:agda-rr-fx-bound} and \eqref{app-eqn:agda-rr-phi-update} into the above, we obtain:
\begin{align}
    \bE_k[b_{k+1}] &\leq (1 - n \beta \mu_2)b_k + (1 - n \beta \mu_2) n \alpha \norm{\grad_\vx F(\vx^{k}_0, \vy^{k}_0) - \grad \Phi(\vx^{k}_0)}^2 \nonumber \\ 
    &- (1 - n \beta \mu_2)\frac{n \alpha}{2} \norm{\grad \Phi(\vx^{k}_0)}^2  + (1 - n \beta \mu_2)\frac{9 n \alpha}{5}\norm{\grad_{\vx}F(\vx^{k}_0, \vy^{k}_0)}^2 \nonumber \\
    &+ (1 - n \beta \mu_2)\frac{13n^2 \alpha^3 l^2 \sigma^2}{10}  + \frac{21n^2 \beta^3 l^2 \sigma^2}{10}.
\label{app-eqn:agda-rr-b-update}
\end{align}
From \eqref{app-eqn:agda-rr-a-update} and \eqref{app-eqn:agda-rr-b-update}, we get the following by substituting $V_{\lambda}(\vz^{k+1}_0) = a_{k+1} + \lambda b_{k+1}$,
\begin{multline}
    \bE_k[V_{\lambda}(\vz^{k+1}_0)] \leq a_k + (1 - n \beta \mu_2) \lambda b_k + [n\alpha + \lambda n \alpha(1 - n \beta \mu_2)] \norm{\grad_\vx F(\vx^{k}_0, \vy^{k}_0) - \nabla \Phi(\vx^{k}_{0})}^2 + C_3  \\
    - [\frac{n \alpha}{2} + \frac{\lambda n \alpha}{2}(1 - n \beta \mu_2)]\norm{\nabla \Phi(\vx^{k}_{0})}^2 + [\frac{3n\alpha}{100}+\lambda(1 - n\beta \mu_2)\frac{9 n \alpha}{5}] \norm{\grad_{\vx}F(\vx^{k}_{0}, \vy^{k}_{0})}^2,
\label{app-eqn:agda-rr-v-partial-update}
\end{multline}
where $C_3$ is defined as follows:
\begin{align*}
    C_3 =  [\frac{1}{2} + \frac{13\lambda}{10}(1 - n \beta \mu_2)]n^2 \alpha^3 l^2 \sigma^2  + \frac{21\lambda n^2 \beta^3 l^2 \sigma^2}{10}.
\end{align*}
By Young's inequality, the following holds for any $\varepsilon \in (0,1]$:
\begin{align*}
    \norm{\grad_{\vx}F(\vx^{k}_0, \vy^{k}_0)}^2 &\leq (1 + \varepsilon)\norm{\nabla \Phi(\vx^{k}_0)}^2 + (1 + 1/\varepsilon)\norm{\nabla_\vx F(\vx^{k}_0, \vy^{k}_0) - \nabla \Phi(\vx^{k}_0)}^2. 
\end{align*}
Substituting this into \eqref{app-eqn:agda-rr-v-partial-update}, we obtain the following:
\begin{multline}
    \bE_k[V_{\lambda}(\vz^{k+1}_0)] \leq a_k + (1 - n \beta \mu_2) \lambda b_k + C_3  \\
    - [ \frac{n \alpha}{2} - (1 + \varepsilon)\frac{3n \alpha}{100} - \lambda(1 - n \beta \mu_2)[(1 + \varepsilon)\frac{9 n \alpha}{5} - \frac{n \alpha}{2}] ]\norm{\grad \Phi(\vx^{k}_0)}^2  \\
    + [n \alpha + (1 + 1/\varepsilon)\frac{3n \alpha}{100} + \lambda(1 - n \beta \mu_2)[n \alpha + (1 + 1/\varepsilon)\frac{9n \alpha}{5}]]\norm{\grad_{\vx} F(\vx^{k}_0, \vy^{k}_0) - \nabla \Phi(\vx^{k}_{0})}^2.
\label{app-eqn:agda-rr-v-quarter-update}
\end{multline}
From Lemma \ref{app-lem:agda-phi-prop}, we know that $\Phi(\vx)$ satisfies the P\L{} inequality with constant $\mu_1$. Thus, we conclude that
\begin{equation}
\label{app-eqn:phi-pl-ak}
    \norm{\nabla \Phi(\vx^k_0)}^2 \geq 2\mu_1(\Phi(\vx^k_0) - \Phi^*) = 2\mu_1 a_k.
\end{equation}
Let $\vy^{*}(\vx^k_0)$ denote the projection of $\vy^k_0$ on the set $\arg \max_{\vy} F(\vx^k_0, \vy)$. From Lemma \ref{app-lem:agda-phi-prop}, we conclude that $\grad \Phi(\vx^k_0) = \grad_{\vx} F(\vx^k_0, \vy^{*}(\vx^k_0))$. Thus,   
\begin{align}
    \norm{\grad_{\vx} F(\vx^{k}_0, \vy^{k}_0) - \nabla \Phi(\vx^{k}_{0})}^2 &\leq \norm{\grad_{\vx} F(\vx^{k}_0, \vy^{k}_0) - \grad_{\vx} F(\vx^{k}_0, \vy^{*}(\vx^{k}_0))}^2 \nonumber \\
    &\leq l^2 \norm{\vy^k_0 - \vy^*(\vx^k_0)}^2
\label{app-eqn:agda-rr-pl-grad-update}
\end{align}
Furthermore, we note that Assumption \ref{as:2pl} implies that $-F(\vx^k_0, \vy)$ satisfies the P\L{} inequality with constant $\mu_2$. Thus, by Lemma \ref{as:pl-quad-growth}, we conclude that,
\begin{align*}
    b_k = \Phi(\vx^k_0) - F(\vx^k_0, \vy^k_0) \geq \frac{\mu_2}{2}\norm{\vy^k_0 - \vy^*(\vx^k_0)}^2.
\end{align*}
By substituting the above inequality in \eqref{app-eqn:agda-rr-pl-grad-update}, we conclude that,
\begin{align*}
    \norm{\grad_{\vx} F(\vx^{k}_0, \vy^{k}_0) - \nabla \Phi(\vx^{k}_{0})}^2 \leq \frac{2 l^2 b_k}{\mu_2}.
\end{align*}
We proceed by substituting the above bound in \eqref{app-eqn:agda-rr-v-quarter-update} to obtain the following upper bound on $\bE_k[V_{\lambda}(\vz^{k+1}_0)]$:
\begin{multline*}
   \bE_k[V_{\lambda}(\vz^{k+1}_0)] \leq \{ 1 - [\mu_1 n \alpha - (1 + \varepsilon)\frac{3 \mu_1 n \alpha}{50} - \lambda(1 - n \beta \mu_2)[(1 + \varepsilon)\frac{18 \mu_1 n\alpha}{5} - \mu_1 n \alpha]] \}a_k + C_3 \\
    + [1 - n\beta \mu_2 + \frac{2l^2n\alpha}{\mu_2\lambda} + (1+1/\varepsilon)\frac{3l^2n\alpha}{50\mu_2\lambda} + (1 - n\beta \mu_2)[\frac{2l^2n\alpha}{\mu_2} + (1 + 1/\varepsilon)\frac{18l^2 n\alpha}{5\mu_2}]] \lambda b_k.
\end{multline*}
Hence, we obtain the following update equation for $V_{\lambda}(\vz^{k+1}_0)$:
\begin{equation}
\label{app-eqn:agda-rr-v-mid-update}
    \bE_k[V_{\lambda}(\vz^{k+1}_0)] \leq C_1 a_k + C_2 \lambda b_k + C_3,
\end{equation}
where $C_1$, $C_2$ and $C_3$ are given by,
\begin{align*}
    C_1 &=  1 - [\mu_1 n \alpha - (1 + \varepsilon)\frac{3 \mu_1 n \alpha}{50} - \lambda(1 - n \beta \mu_2)[(1 + \varepsilon)\frac{18 \mu_1 n\alpha}{5} - \mu_1 n \alpha]] \\
    &= 1 - \mu_1 n \alpha [1 - \frac{3(1+\varepsilon)}{50} - \lambda(1 - n\beta \mu_2)[\frac{18(1 + \varepsilon)}{5} - 1]], \\
    C_2 &= 1 - n\beta \mu_2 + \frac{2l^2n\alpha}{\mu_2\lambda} + (1+1/\varepsilon)\frac{3l^2n\alpha}{50\mu_2\lambda} + (1 - n\beta \mu_2)[\frac{2l^2n\alpha}{\mu_2} + (1 + 1/\varepsilon)\frac{18l^2 n\alpha}{5\mu_2}] \\
    &= 1 - \frac{2l^2n\alpha}{\mu_2}[\frac{\beta \mu_2^2}{2 \alpha l^2} - \frac{1}{\lambda} - (1 + 1/\varepsilon)\frac{3}{100\lambda} - (1 - n\beta\mu_2)[1 + (1+1/\varepsilon)\frac{9}{5}]], \\
    C_3 &=  [\frac{1}{2} + \frac{13\lambda}{10}(1 - n \beta \mu_2)]n^2 \alpha^3 l^2 \sigma^2  + \frac{21\lambda n^2 \beta^3 l^2 \sigma^2}{10}.
\end{align*}
By setting $\lambda = \frac{1}{10}, \varepsilon = \frac{3}{7}$ and $\eta = \frac{73 l^2}{2 \mu_2^2}$ and using $l^2 \geq \mu_1 \mu_2$, we conclude:
\begin{align*}
    C_1 &\leq 1 - \mu_1 n \alpha / 2, \\
    C_2 &\leq 1- \frac{l^2 n \alpha}{2\mu_2} \leq 1 - \mu_1 n\alpha / 2, \\
    C_3 &\leq 0.21 n^2 \alpha^3 l^2\sigma^2 (3 + \eta^3)
\end{align*}
From the above bounds, it follows that:
\begin{align*}
    \bE_k[V_{\lambda}(\vz^{k+1}_0)] \leq (1 - \mu_1 n \alpha/2) V_{\lambda}(\vz^{k}_0) +  0.21 n^2 \alpha^3 l^2\sigma^2(3 + \eta^3).
\end{align*}
By taking expectations with respect to the epoch iterates $(\vx^j_0, \vy^j_0)_{j=1}^{k}$ and applying the law of iterated expectation, we obtain the following:
\begin{align*}
    \bE[V_{\lambda}(\vz^{k+1}_0)] \leq (1 - \mu_1 n \alpha/2) \bE[V_{\lambda}(\vz^{k}_0)] +  0.21 n^2 \alpha^3 l^2\sigma^2(3 + \eta^3).
\end{align*}
By unrolling the above inequality for $K$ steps, we obtain the following:
\begin{equation}
\label{app-eqn:agda-rr-v-unroll}
\bE[V_{\lambda}(\vz^{K+1}_0)] \leq (1 - \mu_1 n \alpha/2)^K V_{\lambda}(\vz_0) + 0.42 n \alpha^2 l^2 \sigma^2(3 + \eta^3)/\mu_1.
\end{equation}
By setting $\alpha = \min \{ 1/5\eta n l, 4 \log(V_{\lambda}(\vz_0)n^{1/2}K)/\mu_1 nK \}$ and using $\eta \leq 37 \kappa^2$, we observe that:
\begin{align*}
 n \alpha^2 l^2 \sigma^2(3 + \eta^3)/\mu_1 &\leq  \frac{16l^2 \sigma^2(3 + \eta^3)}{\mu_1^3} \frac{\log^2(V_{\lambda}(\vz_0)n^{1/2}K)}{nK^2} \\
 &\leq \frac{c \kappa^8 \sigma^2 \log^2(V_{\lambda}(\vz_0)n^{1/2}K)}{\mu_1 n K^2},
\end{align*}
where $c > 0$ is a constant that is independent of $\kappa, \mu$ and $\sigma^2$. By substituting the above into \eqref{app-eqn:agda-rr-v-unroll}, we get:
\begin{equation}
\label{app-eqn:agda-rr-v-unroll-mid}
    \bE[V_{\lambda}(\vz^{K+1}_0)] \leq (1 - \mu_1 n \alpha/2)^K V_{\lambda}(\vz_0) + \frac{c \kappa^8 \sigma^2 \log^2(V_{\lambda}(\vz_0)n^{1/2}K)}{\mu_1 n K^2}.
\end{equation}
To complete the proof, we consider the following cases:

\textbf{Case 1: $1/5\eta n l \leq 4 \log(V_{\lambda}(\vz_0)n^{1/2}K)/\mu_1 nK $}

In this case, $\alpha = 1/5 \eta n l$ and hence, we get:
\begin{align*}
    (1 - n\alpha \mu_1 /2)^K V_{\lambda}(\vz_0) \leq e^{-n\alpha \mu_1 K/2} V_{\lambda}(\vz_0) \leq e^{-\frac{K}{365\kappa^3}} V_{\lambda}(\vz_0).
\end{align*}
Substituting into \eqref{app-eqn:agda-rr-v-unroll-mid}, we get:
\begin{equation}
\label{app-eqn:agda-rr-case1-proof}
\bE[V_{\lambda}(\vz^{K+1}_0)] \leq e^{-\frac{K}{365\kappa^3}} V_{\lambda}(\vz_0) + \frac{c \kappa^8 \sigma^2 \log^2(V_{\lambda}(\vz_0)n^{1/2}K)}{\mu_1 n K^2}.
\end{equation}
\textbf{Case 2: $1/5\eta n l \leq 4\log(V_{\lambda}(\vz_0)n^{1/2}K)/(\mu_1 nK)$} 

In this case, $\alpha = 4 \log(V_{\lambda}(\vz_0)n^{1/2}K)/\mu_1 nK$ and we conclude that,
\begin{align*}
    (1 - n\alpha \mu_1 / 2)^KV_{\lambda}(\vz_0) &\leq e^{-n\alpha \mu_1 K/2}V_{\lambda}(\vz_0) \leq e^{-2\log(V_{\lambda}(\vz_0)^{1/2}n^{1/2}K)}V_{\lambda}(\vz_0) \leq \frac{1}{nK^2}.
\end{align*}
By substituting into \eqref{app-eqn:agda-rr-v-unroll-mid}, we get:
\begin{equation}
\label{app-eqn:agda-rr-case2-proof}
\bE[V_{\lambda}(\vz^{K+1}_0)] \leq \frac{\mu_1 + c \kappa^8 \sigma^2 \log^2(V_{\lambda}(\vz_0)n^{1/2}K)}{\mu_1 n K^2}.
\end{equation}
By taking the maximum of the right hand side of \eqref{app-eqn:agda-rr-case1-proof} and \eqref{app-eqn:agda-rr-case2-proof}, we obtain the following rate, which holds for any $K \geq 1$:
\begin{align*}
\bE[V_{\lambda}(\vz^{K+1}_0)] \leq  e^{-\frac{K}{365\kappa^3}} V_{\lambda}(\vz_0) + \frac{\mu_1 + c \kappa^8 \sigma^2 \log^2(V_{\lambda}(\vz_0)n^{1/2}K)}{\mu_1 n K^2}.
\end{align*}
Suppressing constant terms and logarithmic factors, we get,
\begin{align*}
    \bE[V_{\lambda}(\vz^{K+1}_0)] = \Tilde{O}(e^{-\frac{K}{365\kappa^3}} + \nicefrac{1}{nK^2}).
\end{align*}
\end{proof}
\paragraph{Bounded iterate assumption} We now demonstrate that our analysis of AGDA-RR easily adapts to the bounded iterate setting. To this end, we assume that all the iterates of AGDA-RR, i.e., the iterates $(\vx^k_i, \vy^k_0)$ and $(\vx^k_n, \vy^k_i)$ where $i \in [n]$ and $k \in [K]$ are contained within a compact set $\mathcal{W}$. Furthermore, we note that the $l$-smoothness of the $f_i$'s guarantees that the function $G_{\sigma}(\vx, \vy) = 1/n \sum_{i=1}^{n} \norm{\omega_i(\vx, \vy) - \nu(\vx, \vy)}^2$ is non-negative and continuous. Hence, $\sigma^2 = \max_{(\vx, \vy) \in \mathcal{W}} G_{\sigma}(\vx, \vy)$ is non-negative and finite. Since $(\vx^k_0, \vy^k_0) \in \mathcal{W}$ and $(\vx^{k+1}_0, \vy^k_0) \in \mathcal{W}$, we conclude that 
\begin{align*}
1/n \sum_{i=1}^{n} \norm{\omega_i(\vx^k_0, \vy^k_0) - \nu(\vx^k_0, \vy^k_0)}^2 \leq \sigma^2, \\
1/n \sum_{i=1}^{n} \norm{\omega_i(\vx^{k+1}_0, \vy^k_0) - \nu(\vx^{k+1}_0, \vy^k_0)}^2 \leq \sigma^2.
\end{align*}
Substituting the above inequalities into Step 2 of Theorem \ref{app-thm:agda-rr-convergence} shows that our convergence analysis of AGDA-RR straightforwardly adapts to the bounded iterate setting. The same argument also applies to our analysis of AGDA-AS.

\paragraph{Convergence in terms of $\textrm{dist}(\vz, \sZ^*)^2$} Let $\sZ^{*}$ denote the set of all saddle points of the objective $F$ and let $\textrm{dist}(\vz, \sZ^*)$ denote the distance of a point $\vz \in \bR^d$ from $\sZ^*$. As we demonstrate in Appendix \ref{app-sec:agda-conv-guarantee}, the Lyapunov function $V_{\lambda}$ used in our analysis satisfies the following:

 $$\textrm{dist}(\vz, \sZ^*)^2 \leq \max \left \{ \frac{2}{\mu_1}(\frac{l^2}{2\mu_2}+1) , \frac{4}{\lambda \mu_2} \right \} V_{\lambda}(\vz).$$

Using the above relation, the convergence guarantee of Theorem \ref{app-thm:agda-rr-convergence} can be translated to a guarantee of the form $\bE[\textrm{dist}(\vz^{K+1}_0, \sZ^*)^2] = \Tilde{O}(e^{-\nicefrac{K}{365\kappa^3}} + \nicefrac{1}{nK^2})$. Thus, the epoch iterates $\vz^{K+1}_0$ of AGDA-RR converge (in expectation) to some saddle point of $F$ at a rate of $\Tilde{O}(e^{-\nicefrac{K}{365\kappa^3}} + \nicefrac{1}{nK^2})$. A similar argument also applies to AGDA-AS.
\subsection{Analysis of AGDA-AS}
\label{app-sec:agda-as}
\begin{theorem}[Convergence of AGDA-AS]
\label{app-thm:agda-as-convergence}
Let Assumptions \ref{as:comp-smooth}, \ref{as:2pl}, and \ref{as:bgv} be satisfied and let $\eta = \nicefrac{73l^2}{2\mu^2_2}$. Then, for any $\alpha \leq \nicefrac{1}{5\eta n l}$,  $\beta = \eta \alpha$, and $K \geq 1$, the iterates of AGDA-AS satisfy the following:
$$\max_{\tau_1, \pi_1, \ldots, \tau_K, \pi_K \in \mathbb{S}_n}V_{\lambda}(\vz^{K+1}_0) \leq (1 - \mu_1 n \alpha/2)^K V_{\lambda}(\vz_0) + 2 n^2 \alpha^2 l^2 \sigma^2(2 + 3\eta^3)/\mu_1.$$
Setting $\alpha = \min \{ 1/5\eta n l, 4 \log(V_{\lambda}(\vz_0)K)/\mu_1 nK \}$ results in the following last iterate convergence guarantee which holds for any $K \geq 1$:
\begin{align*}
    \max_{\tau_1, \pi_1, \ldots, \tau_K, \pi_K \in \mathbb{S}_n} V_{\lambda}(\vz^{K+1}_0) &\leq e^{\nicefrac{-K}{365\kappa^3}}V_{\lambda}(\vz_0) + \frac{\mu_1 + \hat{c}\kappa^8\sigma^2 \log^2(V_{\lambda}(\vz_0)K)}{\mu_1 K^2} \\
    &= \Tilde{O}(e^{\nicefrac{-K}{365\kappa^3}} + \nicefrac{1}{K^2}),
\end{align*}
where $\kappa = \max \{ \nicefrac{l}{\mu_1}, \nicefrac{l}{\mu_2} \}$ and $\hat{c} > 0$ is a numerical constant that is independent of $l, \mu_1, \mu_2$ and $\sigma^2$.
\end{theorem}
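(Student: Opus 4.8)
The plan is to repeat the proof of Theorem~\ref{app-thm:agda-rr-convergence} essentially line for line, replacing the probabilistic control of the reshuffling noise by a worst-case, permutation-uniform bound --- precisely the modification that turns Theorem~\ref{app-thm:gda-wor-convergence} into Theorem~\ref{app-thm:gda-as-convergence}. \textbf{Step 1 (epoch-level update rule).} The linearizations of $\grad_\vx f_{\tau_k(i)}$ about $(\vx^k_0,\vy^k_0)$ and of $\grad_\vy f_{\pi_k(i)}$ about $(\vx^{k+1}_0,\vy^k_0)$, together with the summation-by-parts step, hold for \emph{any} permutation, so Step 1 of Theorem~\ref{app-thm:agda-rr-convergence} carries over verbatim and gives
\begin{align*}
\vx^{k+1}_0 &= \vx^k_0 - n\alpha\,\grad_\vx F(\vx^k_0,\vy^k_0) + \alpha^2 \vr_k, \\
\vy^{k+1}_0 &= \vy^k_0 + n\beta\,\grad_\vy F(\vx^{k+1}_0,\vy^k_0) + \beta^2 \vs_k,
\end{align*}
with $\vr_k$ and $\vs_k$ the same remainders as in \eqref{app-eqn:agda-rr-rk} and \eqref{app-eqn:agda-rr-sk}, and $\norm{\vH_{\tau_k(i)}},\norm{\vJ_{\pi_k(i)}}\le l$.

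\textbf{Step 2 (uniform noise bound --- the crux).} This is where AGDA-AS departs from AGDA-RR. In the reshuffling case one invokes Lemma~\ref{app-lem:sample-mean-wor} to get $\E{d_{k,i}^2}\le\nicefrac{(n-i)}{i(n-1)}\,\sigma^2$ for the centered partial average $d_{k,i}=\norm{\grad_\vx F(\vx^k_0,\vy^k_0)-\nicefrac1i\sum_{j=1}^i\grad_\vx f_{\tau_k(j)}(\vx^k_0,\vy^k_0)}$, and this is what produces the variance-reduced $\nicefrac1{nK^2}$. Against an adversary no partial average helps, so instead I would bound, for \emph{every} $\tau_k\in\mathbb{S}_n$,
\begin{align*}
i^2 d_{k,i}^2 &= \norm{\sum_{j=1}^i \bigl(\grad_\vx f_{\tau_k(j)}(\vx^k_0,\vy^k_0)-\grad_\vx F(\vx^k_0,\vy^k_0)\bigr)}^2 \\
&\le i\sum_{j=1}^i \norm{\grad_\vx f_{\tau_k(j)}(\vx^k_0,\vy^k_0)-\grad_\vx F(\vx^k_0,\vy^k_0)}^2 \le i\,n\,\sigma^2,
\end{align*}
by Young's inequality and Assumption~\ref{as:bgv}. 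Summing, $\sum_{i=1}^{n-1}i^2 d_{k,i}^2\le\nicefrac{n^3\sigma^2}{2}$ and $\sum_{i=1}^{n-1}i\,d_{k,i}\le\nicefrac{2\sigma n^2}{3}$, so uniformly over permutations $\norm{\vr_k}\le\tfrac{2n^2 l}{3}\norm{\grad_\vx F(\vx^k_0,\vy^k_0)}+O(\sigma l n^2)$ and $\norm{\vr_k}^2\le\tfrac{3n^4 l^2}{4}\norm{\grad_\vx F(\vx^k_0,\vy^k_0)}^2+O(\sigma^2 l^2 n^4)$, with the analogous bounds for $\vs_k$ in terms of $\grad_\vy F(\vx^{k+1}_0,\vy^k_0)$. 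The coefficients of the gradient-norm terms coincide with those in \eqref{app-eqn:agda-rr-rk-bound}--\eqref{app-eqn:agda-rr-sk-sq-bound}; only the pure-$\sigma^2$ term picks up a factor $n$, which is exactly the source of the $\nicefrac1n$ slowdown relative to AGDA-RR.

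\textbf{Step 3 (Lyapunov descent, unrolling, step-size).} With $a_k=\Phi(\vx^k_0)-\Phi^*$, $b_k=\Phi(\vx^k_0)-F(\vx^k_0,\vy^k_0)$ and $V_\lambda(\vz^k_0)=a_k+\lambda b_k$, I would reproduce the three descent estimates of Theorem~\ref{app-thm:agda-rr-convergence} --- on $\Phi(\vx^{k+1}_0)-\Phi(\vx^k_0)$, on $F(\vx^k_0,\vy^k_0)-F(\vx^{k+1}_0,\vy^k_0)$, and on $F(\vx^{k+1}_0,\vy^k_0)-F(\vx^{k+1}_0,\vy^{k+1}_0)$ --- using $L$-smoothness of $\Phi$ (Lemma~\ref{app-lem:agda-phi-prop}), $l$-smoothness of $\pm F$, the two-sided P\L{} inequality, and the quadratic growth bound $b_k\ge\nicefrac{\mu_2}{2}\norm{\vy^k_0-\vy^*(\vx^k_0)}^2$ (Lemma~\ref{as:pl-quad-growth}), the only change being the enlarged additive constant from Step 2. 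Since the noise bounds now hold pointwise in $\tau_k,\pi_k$, no conditional expectations are needed: one simply takes the maximum over the permutations at each step. Setting $\lambda=\nicefrac1{10}$, $\varepsilon=\nicefrac37$, $\eta=\nicefrac{73l^2}{2\mu_2^2}$, and using $l^2\ge\mu_1\mu_2$, the contraction factors still satisfy $C_1,C_2\le 1-\nicefrac{\mu_1 n\alpha}{2}$, while $C_3=O(n^3\alpha^3 l^2\sigma^2(2+3\eta^3))$; unrolling over $K$ epochs then shows that $V_\lambda(\vz^{K+1}_0)$ is, uniformly over all adversarial permutation sequences, at most $(1-\nicefrac{\mu_1 n\alpha}{2})^K V_\lambda(\vz_0)+\nicefrac{2n^2\alpha^2 l^2\sigma^2(2+3\eta^3)}{\mu_1}$. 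Finally, plugging in $\alpha=\min\{\nicefrac1{5\eta n l},\,\nicefrac{4\log(V_\lambda(\vz_0)K)}{\mu_1 nK}\}$ and splitting into the two cases exactly as in Theorem~\ref{app-thm:agda-rr-convergence} --- using $\eta\le 37\kappa^2$ so that $\nicefrac{n\alpha\mu_1 K}{2}\ge\nicefrac{K}{365\kappa^3}$ in Case 1, and noting $n^2\alpha^2=O\!\bigl(\nicefrac{\log^2(V_\lambda(\vz_0)K)}{\mu_1^2 K^2}\bigr)$ (no $n$) in Case 2 --- gives the stated $\Tilde{O}(e^{\nicefrac{-K}{365\kappa^3}}+\nicefrac1{K^2})$ rate after taking the maximum of the two cases.

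\textbf{Main obstacle.} The only genuinely new ingredient is the permutation-uniform control of $\vr_k$ and $\vs_k$ in Step 2; everything else is a transcription of the AGDA-RR argument with a weaker constant. The point needing care is that the crude estimate $i^2 d_{k,i}^2\le in\sigma^2$ must be applied at the level of \emph{partial sums} of centered component gradients, and that Assumption~\ref{as:bgv}, being uniform over $\vz$ rather than a variance-at-the-solution quantity (which is meaningless here since $F$ need not have a unique saddle), is exactly what makes the resulting bound independent of the adversary's choice of permutations.
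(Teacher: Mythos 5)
Your proposal matches the paper's own proof essentially step for step: the permutation-independent epoch-level update rule carried over from the AGDA-RR analysis, the worst-case bound on the partial sums of centered component gradients via Young's inequality and Assumption \ref{as:bgv} (yielding $\norm{\vr_k}^2\le\tfrac{3n^4l^2}{4}\norm{\grad_\vx F(\vx^k_0,\vy^k_0)}^2+\tfrac{3\sigma^2l^2n^4}{2}$, i.e.\ exactly the extra factor $n$ in the pure-noise term), and the identical Lyapunov descent, unrolling, and two-case step-size argument with $\lambda=\nicefrac1{10}$, $\varepsilon=\nicefrac37$, $\eta=\nicefrac{73l^2}{2\mu_2^2}$. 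The proposal is correct and takes the same route as the paper, with only immaterial differences in constants (e.g.\ $\nicefrac{2n^2\sigma}{3}$ versus $\nicefrac{n^2\sigma}{\sqrt2}$ for $\sum_i i\,d_{k,i}$).
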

\begin{proof}
\textbf{Step 1: Epoch level update rule}
We highlight that the derivation of the epoch level update rule in Step 1 of Theorem \ref{app-thm:agda-rr-convergence} applies to any permutations $\tau_k, \pi_k \in \mathbb{S}_n$. Thus, repeating the same arguments gives us the following epoch level update rule for AGDA-AS:
\begin{equation}
\label{app-eqn:agda-as-x-update}
    \vx^{k+1}_0 = \vx^k_0 - n\alpha \grad_{\vx} F(\vx^k_0, \vy^k_0) + \alpha^2 \vr_k,
\end{equation}
\begin{equation}
\label{app-eqn:agda-as-y-update}
\vy^{k+1}_0 = \vy^k_0 + n \beta \grad_{\vy} F(\vx^{k+1}_0, \vy^k_0) + \beta^2 \vs_k,
\end{equation}
where $\vr_k$ and $\vs_k$ are defined as follows:
\begin{equation}
\label{app-eqn:agda-as-rk}
    \vr_k = \sum_{i=1}^{n-1}\left[\revprod_{t=i+2}^{n} (\vI - \alpha \vH_{\tau_k(t)}) \right] \vH_{\tau_k(i+1)}\sum_{j=1}^{i} \grad_{\vx} f_{\tau_k(j)}(\vx^k_0, \vy^k_0),
\end{equation}
\begin{equation}
\label{app-eqn:agda-as-sk}
    \vs_k = \sum_{i=1}^{n-1}\left[\revprod_{t=i+2}^{n} (\vI + \beta \vJ_{\pi_k(t)}) \right] \vJ_{\pi_k(i+1)}\sum_{j=1}^{i} \grad_{\vy} f_{\pi_k(j)}(\vx^{k+1}_0, \vy^k_0).
\end{equation}
As before, $\vH_{\tau_k(i)}$ and $\vJ_{\pi_k(i)}$ are defined as,
\begin{align*}
    \vH_{\tau_k(i)} &= \int_{0}^{1} \grad^2_{\vx \vx} f_{\tau_k(i)}(\vx^k_0 + t(\vx^{k}_{i-1} - \vx^k_0), \vy^k_0) \textrm{d}t, \\
    \vJ_{\pi_k(i)} &= \int_{0}^{1} \grad^2_{\vy \vy} f_{\pi_k(i)}(\vx^{k+1}_0,\vy^k_0 + t(\vy^{k}_{i-1} - \vy^k_0)) \textrm{d}t.
\end{align*}
\textbf{Step 2: Uniform bound on the noise terms}
Once again, we repeat the same computations as in Step 2 of Theorem \ref{app-thm:agda-rr-convergence} to obtain:
\begin{equation*}
    \norm{\vr_k} \leq le^{1/5} \left[ n^2 \norm{\grad_{\vx}F(\vx^k_0, \vy^k_0)}/2 + \sum_{i=1}^{n-1} i |\grad_{\vx}F(\vx^k_0, \vy^k_0) -  1/i\sum_{j=1}^{i} \grad_{\vx} f_{\tau_k(j)}(\vx^k_0, \vy^k_0)|  \right].
\end{equation*}
Defining $d_{k,i} = |\grad_{\vx}F(\vx^k_0, \vy^k_0) -  1/i\sum_{j=1}^{i} \grad_{\vx} f_{\tau_k(j)}(\vx^k_0, \vy^k_0)|$ and using Young's inequality gives us the following:
\begin{equation}
\label{app-eqn:agda-as-rk-mid}
\norm{\vr_k} \leq le^{1/5} \left[ n^2 \norm{\grad_{\vx}F(\vx^k_0, \vy^k_0)}/2 + \sum_{i=1}^{n-1} i d_{k,i}  \right],
\end{equation}
\begin{equation}
\label{app-eqn:agda-as-rk-sq-mid}
\norm{\vr_k}^2 \leq 2l^2e^{2/5} \left[ n^4 \norm{\grad_{\vx}F(\vx^k_0, \vy^k_0)}^2/4 + (n-1)\sum_{i=1}^{n-1} i^2 d^2_{k,i}  \right].
\end{equation}
We proceed by using the bounded gradient variance assumption as follows:
\begin{align*}
    (n-1)\sum_{i=1}^{n-1} i^2 d^2_{k,i} &= (n-1)\sum_{i=1}^{n-1} |\sum_{j=1}^{i}\grad_{\vx}F(\vx^k_0, \vy^k_0) - \grad_{\vx} f_{\tau_k(j)}(\vx^k_0, \vy^k_0)|^2 \\
    &\leq (n-1) \sum_{i=1}^{n-1} i \sum_{j=1}^{i}|\grad_{\vx}F(\vx^k_0, \vy^k_0) - \grad_{\vx} f_{\tau_k(j)}(\vx^k_0, \vy^k_0)|^2 \\
    &\leq (n-1) \sum_{i=1}^{n-1} i \sum_{j=1}^{n}|\grad_{\vx}F(\vx^k_0, \vy^k_0) - \grad_{\vx} f_{\tau_k(j)}(\vx^k_0, \vy^k_0)|^2 \\
    &\leq (n-1) \sum_{i=1}^{n-1} i \sum_{j=1}^{n}|\grad_{\vx}F(\vx^k_0, \vy^k_0) - \grad_{\vx} f_{\tau_k(j)}(\vx^k_0, \vy^k_0)|^2 \\
    &\leq (n-1)(n \sigma^2)\sum_{i=1}^{n-1} i \leq n^4 \sigma^2 / 2.
\end{align*}
Substituting the above into \eqref{app-eqn:agda-as-rk-sq-mid} gives us the following:
\begin{equation}
\label{app-eqn:agda-as-rk-sq-bound}
\norm{\vr_k}^2 \leq \frac{3n^4 l^2}{4}\norm{\grad_{\vx} F(\vx^k_0, \vy^k_0)}^2 + \frac{3\sigma^2 l^2 n^4}{2}.
\end{equation}
Furthermore, by Young's inequality, we obtain,
\begin{align*}
    \sum_{i=1}^{n-1}i d_{k,i} \leq [(n-1) \sum_{i=1}^{n-1} i^2 d^2_{k,i}]^{1/2} \leq n^2 \sigma / \sqrt{2}.
\end{align*}
By substituting the above into \eqref{app-eqn:agda-as-rk-mid}, we obtain the following:
\begin{equation}
\label{app-eqn:agda-as-rk-bound}
    \norm{\vr_k} \leq \frac{2n^2l}{3}\norm{\grad_{\vx} F(\vx^k_0, \vy^k_0)} +  \sigma ln^{2}.
\end{equation}

Performing similar computations for $\vs_k$ gives us the following bounds:
\begin{equation}
\label{app-eqn:agda-as-sk-bound}
    \norm{\vs_k} \leq \frac{2n^2l}{3}\norm{\grad_{\vy} F(\vx^{k+1}_0, \vy^k_0)} +  \sigma ln^{2},
\end{equation}
\begin{equation}
\label{app-eqn:agda-as-sk-sq-bound}
\norm{\vs_k}^2 \leq \frac{3n^4 l^2}{4}\norm{\grad_{\vy} F(\vx^{k+1}_0, \vy^k_0)}^2 + \frac{3\sigma^2 l^2 n^4}{2}.
\end{equation}
We highlight that the bounds \eqref{app-eqn:agda-rr-rk-bound}, \eqref{app-eqn:agda-rr-rk-sq-bound} \eqref{app-eqn:agda-rr-sk-bound} and \eqref{app-eqn:agda-rr-sk-sq-bound} hold uniformly for any permutations $\tau_k, \pi_k \in \mathbb{S}_n$.

\textbf{Step 3: Obtaining a convergence guarantee}  As before, we define $a_k = \Phi(\vx^k_0) - \Phi^*$ and $b_k = \Phi(\vx^k_0) - F(\vx^k_0, \vy^k_0)$ which implies that $V_{\lambda}(\vz^k_0) = a_k + \lambda b_k$.

We follow the same procedure as Step 3 of Theorem \ref{app-thm:agda-rr-convergence} and substitute the uniform bounds \eqref{app-eqn:agda-rr-rk-bound}, \eqref{app-eqn:agda-rr-rk-sq-bound} \eqref{app-eqn:agda-rr-sk-bound} and \eqref{app-eqn:agda-rr-sk-sq-bound} wherever appropriate to obtain the following inequalities:
\begin{align}
\label{app-eqn:agda-as-phi-update}
\Phi(\vx^{k+1}_0) - \Phi(\vx^{k}_0) &\leq n \alpha \norm{\grad_\vx F(\vx^{k}_0, \vy^{k}_0) - \nabla \Phi(\vx^{k}_0)}^2 - \frac{n \alpha}{2} \norm{\nabla \Phi(\vx^{k}_0)}^2 \nonumber \\  &+ \frac{3 n^3 \alpha^3 l^2}{4} \norm{\grad_\vx F(\vx_{0}^{k}, \vy_{0}^{k})}^2
    + \frac{3n^3 \alpha^3 l^2 \sigma^2}{2},
\end{align}
\begin{equation}
\label{app-eqn:agda-as-fx-bound}
    F(\vx^{k}_0, \vy^{k}_0) - F(\vx^{k+1}_0, \vy^{k}_0) \leq (\frac{3 n \alpha}{2} + \frac{13}{10}l n^2 \alpha^2 + \frac{3 n^4 \alpha^4 l^3}{8})\norm{\grad_\vx F(\vx_{0}^{k}, \vy_{0}^{k})}^2 + \frac{9\alpha^3 n^3 l^2 \sigma^2}{10},
\end{equation}
\begin{equation}
\label{app-eqn:agda-as-fy-bound}
\Phi(\vx^{k+1}_0) -F(\vx^{k+1}_0, \vy^{k+1}_0) \leq (1 - n \beta \mu_2) [\Phi(\vx^{k+1}_0) - F(\vx^{k+1}_0, \vy^{k}_0)] + \frac{23n^2 \beta^3 l^2 \sigma^2}{10}.
\end{equation}
Once again, following the same steps as Step 3 of Theorem \ref{app-thm:agda-rr-convergence}, we conclude that,
\begin{align}
a_{k+1} &\leq a_k + n \alpha \norm{\grad_\vx F(\vx^{k}_0, \vy^{k}_0) - \nabla \Phi(\vx^{k}_0)}^2 - \frac{n \alpha}{2} \norm{\nabla \Phi(\vx^{k}_0)}^2  \nonumber\\
&+ \frac{3 n \alpha}{100} \norm{\grad_\vx F(\vx_{0}^{k}, \vy_{0}^{k})}^2 + \frac{3n^3 \alpha^3 l^2 \sigma^2}{2}.
\label{app-eqn:agda-as-a-update}
\end{align}
\begin{multline}
    b_{k+1} \leq (1 - n \beta \mu_2)b_k + (1 - n \beta \mu_2) n \alpha \norm{\grad_\vx F(\vx^{k}_0, \vy^{k}_0) - \grad \Phi(\vx^{k}_0)}^2 - (1 - n \beta \mu_2)\frac{n \alpha}{2} \norm{\grad \Phi(\vx^{k}_0)}^2 \\
    + (1 - n \beta \mu_2)\frac{9 n \alpha}{5}\norm{\grad_{\vx}F(\vx^{k}_0, \vy^{k}_0)}^2 + (1 - n \beta \mu_2)\frac{12n^3 \alpha^3 l^2 \sigma^2}{5}  + \frac{23n^3 \beta^3 l^2 \sigma^2}{10}.
\label{app-eqn:agda-as-b-update}
\end{multline}
By setting $\lambda = \frac{1}{10}$ and $\eta = \frac{73l^2}{2\mu_2^2}$ and following the same steps as in Step 3 of Theorem \ref{app-thm:agda-rr-convergence}, we obtain the following update equation for $V_{\lambda}(\vz^{k+1}_0)$:
\begin{align*}
    V_{\lambda}(\vz^{k+1}_0) \leq (1 - \mu_1 n\alpha/2)V_{\lambda}(\vz^{k+1}_0) +  n^3 \alpha^3 l^2 \sigma^2 (2 + 3\eta^3).
\end{align*}
By unrolling the above inequality for $K$ steps we obtain the following:
\begin{equation*}
V_{\lambda}(\vz^{K+1}_0) \leq (1 - \mu_1 n \alpha/2)^K V_{\lambda}(\vz_0) + 2 n^2 \alpha^2 l^2 \sigma^2(2 + 3\eta^3)/\mu_1.
\end{equation*}
Since the above holds for any sequence of permutations $\tau_1, \pi_1, \ldots \tau_K, \pi_K \in \mathbb{S}_n$, we conclude that:
\begin{equation}
\label{app-eqn:agda-as-v-unroll}
\max_{\tau_1, \pi_1, \ldots, \tau_K, \pi_K \in \mathbb{S}_n}V_{\lambda}(\vz^{K+1}_0) \leq (1 - \mu_1 n \alpha/2)^K V_{\lambda}(\vz_0) + 2 n^2 \alpha^2 l^2 \sigma^2(2 + 3\eta^3)/\mu_1.
\end{equation}
Setting $\alpha = \min \{ 1/5\eta n l, 4 \log(V_{\lambda}(\vz_0)K)/\mu_1 nK \}$ and using $\eta \leq 37 \kappa^2$, we observe that:
\begin{align*}
 n^2 \alpha^2 l^2 \sigma^2(2 + 3\eta^3)/\mu_1 &\leq  \frac{16l^2 \sigma^2(2 + 3\eta^3)}{\mu_1^3} \frac{\log^2(V_{\lambda}(\vz_0)n^{1/2}K)}{K^2} \\
 &\leq \frac{\hat{c}\kappa^8 \sigma^2 \log^2(V_{\lambda}(\vz_0)K)}{\mu_1 K^2},
\end{align*}
where $\hat{c} > 0$ is a numerical constant that is independent of $\kappa, \mu$ and $\sigma^2$. By substituting the above into \eqref{app-eqn:agda-as-v-unroll}, we get:
\begin{equation}
\label{app-eqn:agda-as-v-unroll-mid}
    \max_{\tau_1, \pi_1, \ldots, \tau_K, \pi_K \in \mathbb{S}_n} V_{\lambda}(\vz^{K+1}_0) \leq (1 - \mu_1 n \alpha/2)^K V_{\lambda}(\vz_0) + \frac{\hat{c}\kappa^8 \sigma^2 \log^2(V_{\lambda}(\vz_0)K)}{\mu_1 K^2}.
\end{equation}
\textbf{Case 1: $1/5\eta n l \leq 4 \log(V_{\lambda}(\vz_0)n^{1/2}K)/\mu_1 nK $}

In this case, $\alpha = 1/5 \eta n l$ and hence, we get:
\begin{align*}
    (1 - n\alpha \mu_1 /2)^K V_{\lambda}(\vz_0) \leq e^{-n\alpha \mu_1 K/2} V_{\lambda}(\vz_0) \leq e^{-\frac{K}{365\kappa^3}} V_{\lambda}(\vz_0).
\end{align*}
By substituting into \eqref{app-eqn:agda-as-v-unroll-mid}, we get:
\begin{equation}
\label{app-eqn:agda-as-case1-proof}
\max_{\tau_1, \pi_1, \ldots, \tau_K, \pi_K \in \mathbb{S}_n} V_{\lambda}(\vz^{K+1}_0) \leq e^{-\frac{K}{365\kappa^3}} V_{\lambda}(\vz_0) + \frac{\hat{c}\kappa^8 \sigma^2 \log^2(V_{\lambda}(\vz_0)K)}{\mu_1 K^2}.
\end{equation}
\textbf{Case 2: $1/5\eta n l \leq 4\log(V_{\lambda}(\vz_0)n^{1/2}K)/(\mu_1 nK)$} 

In this case, $\alpha = 4 \log(V_{\lambda}(\vz_0)n^{1/2}K)/\mu_1 nK$ and we conclude that,
\begin{align*}
    (1 - n\alpha \mu_1 / 2)^KV_{\lambda}(\vz_0) &\leq e^{-n\alpha \mu_1 K/2}V_{\lambda}(\vz_0) \leq e^{-2\log(V_{\lambda}(\vz_0)^{1/2}n^{1/2}K)}V_{\lambda}(\vz_0) \leq \frac{1}{nK^2}.
\end{align*}
By substituting into \eqref{app-eqn:agda-as-v-unroll-mid}, we get:
\begin{equation}
\label{app-eqn:agda-as-case2-proof}
\max_{\tau_1, \pi_1, \ldots, \tau_K, \pi_K \in \mathbb{S}_n} V_{\lambda}(\vz^{K+1}_0) \leq \frac{\mu_1 + \hat{c}\kappa^8 \sigma^2 \log^2(V_{\lambda}(\vz_0)K)}{\mu_1 K^2}.
\end{equation}
Taking the maximum of the right hand side of \eqref{app-eqn:agda-as-case1-proof} and \eqref{app-eqn:agda-as-case2-proof}, we obtain the following rate, which holds for any $K \geq 1$:
\begin{align*}
\max_{\tau_1, \pi_1, \ldots, \tau_K, \pi_K \in \mathbb{S}_n} V_{\lambda}(\vz^{K+1}_0) \leq e^{-\frac{K}{365\kappa^3}} V_{\lambda}(\vz_0) + \frac{\mu_1 + \hat{c}\kappa^8 \sigma^2 \log^2(V_{\lambda}(\vz_0)K)}{\mu_1 K^2}.
\end{align*}
Suppressing constant terms and logarithmic factors, we get,
\begin{align*}
    \bE[V_{\lambda}(\vz^{K+1}_0)] = \Tilde{O}(e^{-\frac{K}{365\kappa^3}} + \nicefrac{1}{K^2}).
\end{align*}
\end{proof}
\subsection{Convergence of AGDA-RR and AGDA-AS to a Saddle Point}
\label{app-sec:agda-conv-guarantee}
In this section, we demonstrate that our convergence guarantees for AGDA-RR and AGDA-AS can be expressed in terms of squared distance to the set of saddle points of $F$, denoted as $\sZ^*$. We do so by upper bounding $\textrm{dist}(\vz, \sZ^*)^2$ in terms of $V_{\lambda}(\vz)$. We begin by establishing some basic properties of $\sZ^*$ as follows.
\begin{lemma}[Set of Saddle Points of 2P\L{} functions]
\label{app-lem:2pl-saddle-closed}
Let $F : \bR^{d_\vx} \times \bR^{d_\vy} \rightarrow \bR$ be an $l$-smooth function satisfying Assumption \ref{as:2pl} (with constants $\mu_1$ and $\mu_2$) and let $\sZ^*$ denote the set of saddle points of $F$. Then $\sZ^*$ is a closed set and $\textrm{dist}(\vz, \sZ^*) = \min_{\vz^* \in \sZ^*} \norm{\vz - \vz^*}$ is a well defined continuous function of $\vz$. Furthermore, for any $\vx \in \bR^{d_\vx}$, the set $\sY^{*}(\vx) = \arg \max_{\hat{\vy} \in \bR^{d_\vy}} F(\vx, \hat{\vy})$ is a closed set.
\end{lemma}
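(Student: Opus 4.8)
The plan is to reduce the statement to elementary facts about zero sets of continuous maps and about distance-to-a-set functions, with Lemma \ref{app-lem:2pl-eq-optimal} as the key structural input. First I would invoke Lemma \ref{app-lem:2pl-eq-optimal}, which asserts that for a 2P\L{} objective the saddle points, the stationary points, and the global minimax points all coincide. Hence $\sZ^* = \{\vz \in \bR^d : \grad_\vx F(\vz) = 0 \text{ and } \grad_\vy F(\vz) = 0\}$, i.e. $\sZ^*$ is the preimage of $\{0\}$ under the map $\vz \mapsto (\grad_\vx F(\vz), \grad_\vy F(\vz))$. Since $F$ is $l$-smooth (Assumption \ref{as:comp-smooth}), this map is continuous, so $\sZ^*$ is closed as the preimage of a closed set. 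Its nonemptiness follows from the standing assumption in Section \ref{sec:notation} that a global minimax point of $F$ exists, together with Lemma \ref{app-lem:2pl-eq-optimal}.

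Next I would check that $\textrm{dist}(\vz, \sZ^*) = \inf_{\vz^* \in \sZ^*} \norm{\vz - \vz^*}$ is well defined with the infimum attained, so that writing $\min$ is legitimate. Fix $\vz \in \bR^d$ and any $\vz_0^* \in \sZ^*$; the infimum over $\sZ^*$ equals the infimum over $\sZ^* \cap \{\vu \in \bR^d : \norm{\vz - \vu} \le \norm{\vz - \vz_0^*}\}$, which is the intersection of the closed set $\sZ^*$ with a closed ball, hence compact and nonempty. Since $\vu \mapsto \norm{\vz - \vu}$ is continuous, its minimum over this compact set is attained. Continuity of $\vz \mapsto \textrm{dist}(\vz, \sZ^*)$ is then the usual $1$-Lipschitz bound: for any $\vz_1, \vz_2 \in \bR^d$ and any $\vz^* \in \sZ^*$ one has $\textrm{dist}(\vz_1, \sZ^*) \le \norm{\vz_1 - \vz^*} \le \norm{\vz_1 - \vz_2} + \norm{\vz_2 - \vz^*}$; taking the infimum over $\vz^*$ and then exchanging the roles of $\vz_1$ and $\vz_2$ gives $\norm{\textrm{dist}(\vz_1, \sZ^*) - \textrm{dist}(\vz_2, \sZ^*)} \le \norm{\vz_1 - \vz_2}$.

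For the final claim, I would fix $\vx \in \bR^{d_\vx}$. Assumption \ref{as:2pl} guarantees $\sY^*(\vx) = \arg\max_{\hat\vy \in \bR^{d_\vy}} F(\vx, \hat\vy)$ is nonempty, so $\Phi(\vx) = \max_{\hat\vy} F(\vx, \hat\vy)$ is finite and $\sY^*(\vx) = \{\hat\vy \in \bR^{d_\vy} : F(\vx, \hat\vy) \ge \Phi(\vx)\}$. Because $F(\vx, \cdot)$ is continuous (again by $l$-smoothness), this superlevel set is closed, which proves the claim. An equivalent route is to note that Assumption \ref{as:2pl} makes $-F(\vx, \cdot)$ a P\L{} function with constant $\mu_2$, so every stationary point of $F(\vx, \cdot)$ is a global maximizer; then $\sY^*(\vx) = \{\hat\vy : \grad_\vy F(\vx, \hat\vy) = 0\}$, once more a preimage of $\{0\}$ under a continuous map.

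I do not expect a genuine obstacle here; the lemma is a collection of standard topological facts. The one point that requires care is establishing nonemptiness of $\sZ^*$ (and of $\sY^*(\vx)$) before speaking of $\textrm{dist}(\cdot, \sZ^*)$ and of an attained minimum — this is precisely where the standing existence assumption on a global minimax point, combined with the equivalence in Lemma \ref{app-lem:2pl-eq-optimal} and the nonemptiness clause of Assumption \ref{as:2pl}, is used. Everything else follows from continuity of $\grad_\vx F$ and $\grad_\vy F$ and the triangle inequality.
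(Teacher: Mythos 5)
Your proposal is correct and follows essentially the same route as the paper: closedness of $\sZ^*$ via Lemma \ref{app-lem:2pl-eq-optimal} (saddle points $=$ stationary points) and continuity of the gradient map, and closedness of $\sY^*(\vx)$ either as a superlevel set or, as in the paper, as the zero set of $\grad_\vy F(\vx,\cdot)$ using the one-sided P\L{} property. The extra details you supply --- nonemptiness of $\sZ^*$ from the standing existence assumption, attainment of the minimum by intersecting with a closed ball, and the $1$-Lipschitz bound giving continuity of $\textrm{dist}(\cdot,\sZ^*)$ --- are points the paper leaves implicit, and your treatment of them is sound.
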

\begin{proof}
By the $l$-smoothness of $F$, $\grad_{\vx} F(\vx, \vy)$ and $\grad_{\vy} F(\vx, \vy)$ are Lipschitz continuous functions, which implies that $\eta(\vz) = [\grad_{\vx} F(\vz), \grad_{\vy} F(\vz)]$ is a continuous map. Furthermore, by Lemma \ref{app-lem:2pl-eq-optimal}, the set of saddle points of $F$ are exactly the set of stationary points of $F$, which means,
\begin{align*}
    \sZ^* = \{ \vz \in \bR^d \ | \  \grad_{\vx} F(\vz) = 0, \grad_{\vy} F(\vz) = 0\} = \eta^{-1}(\{ 0 \}).
\end{align*}
Hence, $\sZ^*$ is a closed set as it is the inverse image of the closed set $\{ 0 \}$ under a continuous map. Consequently, $\textrm{dist}(\vz, \sZ^*) = \min_{\vz^* \in \sZ^*} \norm{\vz - \vz^*}$ is a well defined continuous function of $\vz$.

Consider any $\vx \in \bR^{d_{\vx}}$. Since $F$ is a 2P\L{} function, $-F(\vx, .)$ satisfies the (one-sided) P\L{} inequality. Thus, $\vy \in \sY^*(\vx)$ if and only if $\grad_{\vy} F(\vx, \vy) = 0$, i.e., $\sY^*(\vx) = \{ \vy \in \bR^{d_\vy} \ | \ \grad_{\vy} F(\vx, \vy) = 0 \}$. By the continuity of $\grad_{\vy} F(\vx, .)$, we conclude that $\sY^*(\vx)$ is a closed set.
\end{proof}
The following lemma, which is key to establishing the relation between $V_{\lambda}(\vz)$ and $\textrm{dist}(\vz, \sZ^*)$, is an adaptation of Lemma A.3 of \citet{NouhiedGDMax2019} for 2P\L{} functions. 
\begin{lemma}[Lipschitzness of $\sY^{*}(\vx)$]
\label{app-lem:2pl-argmax-lip}
Let $F : \bR^{d_\vx} \times \bR^{d_\vy} \rightarrow \bR$ be an $l$-smooth function satisfying Assumption \ref{as:2pl} (with constants $\mu_1$ and $\mu_2$). For any $\vx \in \bR^{d_{\vx}}$, let $\sY^{*}(\vx) = \arg \max_{\vy \in \bR^{d_\vy}} F(\vx, \vy)$. Then, for any $\vx_1, \vx_2 \in \bR^{d_{\vx}}$ and $\vy^{*}(\vx_1) \in \sY^{*}(\vx_1)$, there exists a $\vy^{*}(\vx_2) \in \sY^{*}(\vx_2)$ such that,
\begin{align*}
    \norm{\vy^{*}(\vx_1) - \vy^{*}(\vx_2)} \leq \frac{l}{2 \mu_2}\norm{\vx_1 - \vx_2}. 
\end{align*}
\end{lemma}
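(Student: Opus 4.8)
The plan is to fix $\vx_1,\vx_2$ together with a maximizer $\vy^*(\vx_1)\in\sY^*(\vx_1)$, and to take as the required point $\vy^*(\vx_2)$ the Euclidean projection of $\vy^*(\vx_1)$ onto $\sY^*(\vx_2)$. This projection is well defined and lies in $\sY^*(\vx_2)$ because Assumption~\ref{as:2pl} makes $\sY^*(\vx_2)=\arg\max_{\vy}F(\vx_2,\vy)$ non-empty and Lemma~\ref{app-lem:2pl-saddle-closed} shows it is closed. The conceptual obstacle is that $F(\vx_2,\cdot)$ is only P\L{} and need not be strongly concave, so $\sY^*(\vx_2)$ may fail to be a singleton and the usual implicit-function / strong-concavity argument for Lipschitzness of the argmax map is unavailable; instead I would route the estimate through the quadratic-growth property of P\L{} functions (Lemma~\ref{as:pl-quad-growth}).

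Concretely, I would first apply Lemma~\ref{as:pl-quad-growth} to $g:=-F(\vx_2,\cdot)$, which by Assumption~\ref{as:2pl} satisfies the one-sided P\L{} inequality with constant $\mu_2$ and whose minimizer set is exactly $\sY^*(\vx_2)$. Since $\vy^*(\vx_2)$ is by construction the projection of $\vy^*(\vx_1)$ onto $\arg\min g$, quadratic growth yields
\[\max_{\vy}F(\vx_2,\vy)-F(\vx_2,\vy^*(\vx_1))\;\geq\;\tfrac{\mu_2}{2}\,\norm{\vy^*(\vx_1)-\vy^*(\vx_2)}^2 .\]
Next I would bound this left-hand side from above. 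The $\vy$-side P\L{} inequality of Assumption~\ref{as:2pl}, evaluated at $(\vx_2,\vy^*(\vx_1))$, gives $2\mu_2\big[\max_{\vy}F(\vx_2,\vy)-F(\vx_2,\vy^*(\vx_1))\big]\leq\norm{\grad_\vy F(\vx_2,\vy^*(\vx_1))}^2$. Because $\vy^*(\vx_1)$ is an unconstrained maximizer of the differentiable function $F(\vx_1,\cdot)$ we have $\grad_\vy F(\vx_1,\vy^*(\vx_1))=0$, so Assumption~\ref{as:comp-smooth} gives $\norm{\grad_\vy F(\vx_2,\vy^*(\vx_1))}=\norm{\grad_\vy F(\vx_2,\vy^*(\vx_1))-\grad_\vy F(\vx_1,\vy^*(\vx_1))}\leq l\,\norm{\vx_1-\vx_2}$.

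Chaining the three bounds gives $\tfrac{\mu_2}{2}\norm{\vy^*(\vx_1)-\vy^*(\vx_2)}^2\leq\tfrac{l^2}{2\mu_2}\norm{\vx_1-\vx_2}^2$, hence a Lipschitz estimate of the form $\norm{\vy^*(\vx_1)-\vy^*(\vx_2)}\leq\tfrac{l}{\mu_2}\norm{\vx_1-\vx_2}$; recovering the sharper constant $\tfrac{l}{2\mu_2}$ stated in the lemma would require a more careful tracking of the quadratic-growth and smoothness constants (alternatively, one can compare $F(\vx_2,\vy^*(\vx_2))$ and $F(\vx_2,\vy^*(\vx_1))$ directly through the descent lemma, using the stationarity relations $\grad_\vy F(\vx_2,\vy^*(\vx_2))=0$ and $\grad_\vy F(\vx_1,\vy^*(\vx_1))=0$, so that the first-order cross terms cancel). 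I expect the only step demanding care beyond routine calculation to be the very first one --- certifying that the projection onto $\sY^*(\vx_2)$ is a legitimate ``optimal response'' to which Lemma~\ref{as:pl-quad-growth} applies --- which is precisely the reason the closedness statement of Lemma~\ref{app-lem:2pl-saddle-closed} is invoked.
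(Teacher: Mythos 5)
Your route is genuinely different from the paper's: the paper never derives the bound itself, it only verifies the hypotheses (non-emptiness and closedness of $\sY^{*}(\vx)$ via Lemma \ref{app-lem:2pl-saddle-closed}, and the $\mu_2$-P\L{} property of $-F(\vx,\cdot)$) and then invokes Lemma A.3 of \citet{NouhiedGDMax2019} as a black box. Your argument is self-contained, and the chain is sound as far as it goes: a nearest point $\vy^{*}(\vx_2)$ of $\vy^{*}(\vx_1)$ in the closed non-empty set $\sY^{*}(\vx_2)$ exists; Lemma \ref{as:pl-quad-growth} applied to $-F(\vx_2,\cdot)$ gives $\max_{\vy}F(\vx_2,\vy)-F(\vx_2,\vy^{*}(\vx_1))\geq\frac{\mu_2}{2}\norm{\vy^{*}(\vx_1)-\vy^{*}(\vx_2)}^2$; the $\vy$-side inequality of Assumption \ref{as:2pl} bounds that gap by $\frac{1}{2\mu_2}\norm{\grad_\vy F(\vx_2,\vy^{*}(\vx_1))}^2$; and $\grad_\vy F(\vx_1,\vy^{*}(\vx_1))=0$ plus $l$-smoothness of $F$ (a hypothesis of the lemma; Assumption \ref{as:comp-smooth} is about the components) bounds the gradient by $l\norm{\vx_1-\vx_2}$. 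This proves the estimate with constant $\frac{l}{\mu_2}$, exactly as you state.

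The genuine gap is the deferred factor of two, and it cannot be closed: the constant $\frac{l}{2\mu_2}$ in the statement is not attainable under these hypotheses. Consider $F(x,y)=\frac{\mu_2}{2}x^2+\mu_2 xy-\frac{\mu_2}{2}y^2$ on $\bR\times\bR$. Both inequalities of Assumption \ref{as:2pl} hold with equality with $\mu_1=\mu_2$, the Hessian has operator norm $\sqrt{2}\,\mu_2$, so $l=\sqrt{2}\,\mu_2$, and $\sY^{*}(x)=\{x\}$, so the argmax map is exactly $1$-Lipschitz while $\frac{l}{2\mu_2}=\frac{1}{\sqrt{2}}<1$; the worst case here is of order $\frac{l}{\mu_2}$, i.e., what your argument already delivers. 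Your alternative descent-lemma suggestion also cannot help: expanding $F(\vx_2,\cdot)$ around either stationary point yields, with $d=\norm{\vy^{*}(\vx_1)-\vy^{*}(\vx_2)}$, an inequality of the form $\frac{\mu_2}{2}d^2\leq l\norm{\vx_1-\vx_2}\,d+\frac{l}{2}d^2$, which is vacuous since $l\geq\mu_2$. So the sharper constant rests entirely on the cited external lemma and appears incorrect under this paper's normalization of the P\L{} condition; your $\frac{l}{\mu_2}$ is the provable constant, and it is all that is needed downstream (e.g., in Lemma \ref{app-lem:2pl-lyapunov-dist} and the AGDA rates), where replacing $\frac{l}{2\mu_2}$ by $\frac{l}{\mu_2}$ only changes absolute constants.
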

\begin{proof}
Consider any  $\vx_1, \vx_2 \in \bR^{d_{\vx}}$ and $\vy^{*}(\vx_1) \in \sY^{*}(\vx_1)$. Since $F$ is 2P\L{} with constants $\mu_1$ and $\mu_2$, $-F(\vx, .)$ satisfies the P\L{} inequality for any $\vx \in \bR^{d_{\vx}}$ with constant $\mu_2$. Furthermore, by Lemma \ref{app-lem:2pl-saddle-closed}, $\sY^*(\vx)$ is a closed set for any $\vx \in \bR^{d_{\vx}}$. Thus, by Lemma A.3 of \citet{NouhiedGDMax2019}, there exists a $\vy^{*}(\vx_2) \in \sY^{*}(\vx_2)$ satisfying $\norm{\vy^{*}(\vx_1) - \vy^{*}(\vx_2)} \leq \frac{l}{2 \mu_2}\norm{\vx_1 - \vx_2}$.
\end{proof}
Equipped with the above lemmas, we can finally upper bound $\textrm{dist}(\vz, \sZ^*)$ in terms of $V_{\lambda}(\vz)$.
\begin{lemma}
\label{app-lem:2pl-lyapunov-dist}
Let $F : \bR^{d_\vx} \times \bR^{d_\vy} \rightarrow \bR$ be an $l$-smooth function satisfying Assumption \ref{as:2pl} (with constants $\mu_1$ and $\mu_2$). Define $\Phi(\vx) = \max_{\vy \in \bR^{d_\vy}} F(\vx, \vy)$, $\Phi^* = \min_{\vx \in \bR^{d_{\vx}}} \Phi(\vx)$, $\sX^* = \arg \min_{\vx \in \bR^{d_{\vx}}} \Phi(\vx)$ and $\sY^{*}(\vx) = \arg \max_{\vy \in \bR^{d_\vy}} F(\vx, \vy)$. Moreover, for any $\lambda > 0$, define $V_{\lambda}(\vx, \vy) = [(\Phi(\vx) - \Phi^*) + \lambda(\Phi(\vx) - F(\vx, \vy)]$. Then the following holds for any $\vz \in \bR^d$:
\begin{align*}
    \textrm{dist}(\vz, \sZ^*)^2 \leq \max \left \{ \frac{2}{\mu_1}(\frac{l^2}{2\mu^2_2} + 1), \frac{4}{\lambda \mu_2} \right \} V_{\lambda}(\vz).
\end{align*}
\end{lemma}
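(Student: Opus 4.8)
The plan is to produce a saddle point $\vz^* \in \sZ^*$ that is explicitly close to $\vz = (\vx, \vy)$, and then control $\norm{\vz - \vz^*}^2$ by the two nonnegative quantities $a := \Phi(\vx) - \Phi^*$ and $b := \Phi(\vx) - F(\vx, \vy)$ that compose $V_\lambda(\vz) = a + \lambda b$. First I would let $\vx^*$ be a closest point to $\vx$ in $\sX^* = \arg\min_{\vx} \Phi(\vx)$; this set is closed (it equals $\{\vx : \grad \Phi(\vx) = 0\}$ by the one-sided P\L{} property of $\Phi$ from Lemma \ref{app-lem:agda-phi-prop}, together with continuity of $\grad \Phi$) and nonempty (a global minimax point exists by assumption, and its $\vx$-component lies in $\sX^*$). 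I would let $\vy^*(\vx)$ be a closest point to $\vy$ in $\sY^*(\vx) = \arg\max_{\vy} F(\vx, \vy)$, which is closed by Lemma \ref{app-lem:2pl-saddle-closed} and nonempty by Assumption \ref{as:2pl}. Applying Lemma \ref{app-lem:2pl-argmax-lip} with $\vx_1 = \vx$ and $\vx_2 = \vx^*$ then yields a point $\vy^*(\vx^*) \in \sY^*(\vx^*)$ with $\norm{\vy^*(\vx) - \vy^*(\vx^*)} \le \tfrac{l}{2\mu_2}\norm{\vx - \vx^*}$, and by Lemma \ref{app-lem:2pl-eq-optimal} the pair $\vz^* := (\vx^*, \vy^*(\vx^*))$ is a global minimax point, hence $\vz^* \in \sZ^*$.

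Next I would bound $\textrm{dist}(\vz, \sZ^*)^2 \le \norm{\vx - \vx^*}^2 + \norm{\vy - \vy^*(\vx^*)}^2$ and split the second term by Young's inequality: $\norm{\vy - \vy^*(\vx^*)}^2 \le 2\norm{\vy - \vy^*(\vx)}^2 + 2\norm{\vy^*(\vx) - \vy^*(\vx^*)}^2 \le 2\norm{\vy - \vy^*(\vx)}^2 + \tfrac{l^2}{2\mu_2^2}\norm{\vx - \vx^*}^2$, so that $\textrm{dist}(\vz, \sZ^*)^2 \le \left(1 + \tfrac{l^2}{2\mu_2^2}\right)\norm{\vx - \vx^*}^2 + 2\norm{\vy - \vy^*(\vx)}^2$.

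Then I would invoke the quadratic growth property (Lemma \ref{as:pl-quad-growth}) twice. Applied to $\Phi$, which is P\L{} with constant $\mu_1$ (Lemma \ref{app-lem:agda-phi-prop}), it gives $\norm{\vx - \vx^*}^2 \le \tfrac{2}{\mu_1} a$. Applied to $-F(\vx, \cdot)$, which is P\L{} with constant $\mu_2$ by Assumption \ref{as:2pl} and whose minimum over the second argument equals $-\Phi(\vx)$, it gives $\norm{\vy - \vy^*(\vx)}^2 \le \tfrac{2}{\mu_2} b$. Combining, $\textrm{dist}(\vz, \sZ^*)^2 \le \tfrac{2}{\mu_1}\left(1 + \tfrac{l^2}{2\mu_2^2}\right) a + \tfrac{4}{\mu_2} b$. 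Since $a \ge 0$ and $b \ge 0$, this is at most $C\,(a + \lambda b) = C\, V_\lambda(\vz)$ with $C = \max\left\{\tfrac{2}{\mu_1}\left(\tfrac{l^2}{2\mu_2^2} + 1\right), \tfrac{4}{\lambda\mu_2}\right\}$, which is the claimed inequality.

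The one genuinely delicate point — and the step I would handle most carefully — is the selection of $\vy^*(\vx^*)$: because $F$ may admit a whole unbounded set of saddle points, an arbitrary element of $\sY^*(\vx^*)$ could be far from $\vy$, so it is essential to use the Lipschitz-type selection of Lemma \ref{app-lem:2pl-argmax-lip} to tie the $\vy$-displacement to the $\vx$-displacement. Everything else is a routine assembly of Young's inequality and the two quadratic growth estimates.
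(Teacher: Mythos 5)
Your proposal is correct and follows essentially the same route as the paper's proof: project $\vx$ onto $\sX^*$ and $\vy$ onto $\sY^*(\vx)$, use Lemma \ref{app-lem:2pl-argmax-lip} to select a nearby point of $\sY^*(\vx^*)$ so that $(\vx^*,\vy^*(\vx^*))$ is a saddle point, then combine Young's inequality with the two quadratic growth bounds, yielding the same constants. No gaps to report.
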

\begin{proof}
Since the set of saddle points of $F$ is non-empty and $\Phi$ is a P\L{} function (with constant $\mu_1$) as per Lemma \ref{app-lem:agda-phi-prop}, we can use the same arguments as Lemma \ref{app-lem:2pl-saddle-closed} to show that $\sX^*$ is a closed set.

Consider any $\vz = (\vx, \vy)$. Let $\vx^{*}$ be the projection of $\vx$ on the closed set $\sX^{*}$ and let $\vy^*(\vx)$ be the projection of $\vy$ on the closed set (as per Lemma \ref{app-lem:2pl-saddle-closed}) $\sY^*(\vx)$. 

Applying Lemma \ref{as:pl-quad-growth} to $\Phi(\vx)$, we obtain,
\begin{equation}
\label{app-eqn:2pl-phi-qg}
\Phi(\vx) - \Phi^* \geq \frac{\mu_1}{2}\norm{\vx - \vx^*}^2.
\end{equation}
Similarly, applying Lemma \ref{as:pl-quad-growth} to $-F(\vx, \vy)$, we obtain,
\begin{equation}
\label{app-eqn:2pl-fy-qg}
\Phi(\vx) - F(\vx, \vy) \geq \frac{\mu_2}{2}\norm{\vy - \vy^*(\vx)}^2.
\end{equation}
By Lemma \ref{app-lem:2pl-argmax-lip}, there exists a $\vy^* \in \sY^{*}(\vx^*)$ satisfying,
\begin{equation}
    \norm{\vy^*(\vx) - \vy^{*}} \leq \frac{l}{2\mu_2}\norm{\vx - \vx^*}.
\end{equation}
Let $\vz^* = (\vx^*, \vy^*)$. We note that since $\vx^* \in \sX^*$ and $\vy^* \in \sY^*(\vx^*)$, $\vz^*$ is a global minimax point of $F$. Thus by Lemma \ref{app-lem:2pl-eq-optimal}, $\vz^*$ is a saddle point of $F$, i.e., $\vz^* \in \sZ^*$. We now proceed as follows:
\begin{align*}
    \norm{\vy - \vy^*} &\leq \norm{\vy - \vy^{*}(\vx)} + \norm{\vy^{*}(\vx) - \vy^*} \\
    &\leq \norm{\vy - \vy^{*}(\vx)} + \frac{l}{2\mu_2}\norm{\vx - \vx^*}.
\end{align*}
Applying Young's Inequality, we obtain,
\begin{equation}
    \norm{\vy - \vy^*}^2 \leq 2\norm{\vy - \vy^{*}(\vx)}^2 + \frac{l^2}{2\mu^2_2}\norm{\vx - \vx^*}^2,
\end{equation}
which implies that.
\begin{align*}
    \norm{\vz - \vz^*}^2 &= \norm{\vx - \vx^*}^2 + \norm{\vy - \vy^*}^2 \\
    &\leq 2\norm{\vy - \vy^{*}(\vx)}^2 + (\frac{l^2}{2\mu^2_2} + 1)\norm{\vx - \vx^*}^2.
\end{align*}
Substituting \eqref{app-eqn:2pl-phi-qg} and \eqref{app-eqn:2pl-fy-qg} in the above inequality, we obtain,
\begin{align*}
    \norm{\vz - \vz^*}^2 &\leq \frac{4}{\mu_2}[\Phi(\vx) - F(\vx, \vy)] + \frac{2}{\mu_1}(\frac{l^2}{2\mu^2_2} + 1)[\Phi(\vx) - \Phi^*] \\
    &\leq \max \left \{ \frac{2}{\mu_1}(\frac{l^2}{2\mu^2_2} + 1), \frac{4}{\lambda \mu_2} \right \} [(\Phi(\vx) - \Phi^*) + \lambda(\Phi(\vx) - F(\vx, \vy))] \\
    &\leq \max \left \{ \frac{2}{\mu_1}(\frac{l^2}{2\mu^2_2} + 1), \frac{4}{\lambda \mu_2} \right \} V_{\lambda}(\vz).
\end{align*}
Since $\vz^* \in \sZ^*$, we conclude that,
\begin{equation}
    \textrm{dist}(\vz, \sZ^*)^2 \leq \max \left \{ \frac{2}{\mu_1}(\frac{l^2}{2\mu^2_2} + 1), \frac{4}{\lambda \mu_2} \right \} V_{\lambda}(\vz).
\end{equation}
\end{proof}
Using Lemma \ref{app-lem:2pl-lyapunov-dist} and \ref{app-thm:agda-rr-convergence}, we can conclude that AGDA-RR converges in terms of $\textrm{dist}(\vz, \sZ^*)^2$ as $\bE[\textrm{dist}(\vz^{K+1}_0, \sZ^*)^2] = \Tilde{O}(e^{-\nicefrac{K}{365\kappa^3}} + \nicefrac{1}{nK^2})$. Since $\sZ^*$ is a closed set, this implies that the final epoch iterate $\vz^{K+1}_0$ of AGDA-RR converges (in expectation) to some saddle point of $F$, with the rate $\Tilde{O}(e^{-\nicefrac{K}{365\kappa^3}} + \nicefrac{1}{nK^2})$. Similarly, using Using Lemma \ref{app-lem:2pl-lyapunov-dist} and \ref{app-thm:agda-as-convergence}, we conclude that the final epoch iterate $\vz^{K+1}_0$ of AGDA-AS converges  to some saddle point of $F$ with the rate $\textrm{dist}(\vz^{K+1}_0, \sZ^*)^2 = \Tilde{O}(e^{-\nicefrac{K}{365\kappa^3}} + \nicefrac{1}{K^2})$.

\section{Literature Review}
While the convergence properties of SGD (and SGDA) with uniform sampling have been well studied for a long time, with matching upper and lower bounds of $\Theta(\nicefrac{1}{nK})$ for both strongly convex minimization and strongly convex-strongly concave minimax optimization \cite{RakhlinSridharan, FacchineiPang} ($n$ is the number of components in the finite-sum objective and $K$ is the number of epochs), advances in the theoretical understanding of without-replacement sampling for SGD are a fairly recent phenomenon. This is generally attributed to the unavailability of provably unbiased gradient estimates, leading to a significantly more complicated analysis that often requires sophisticated mathematical tools. 

Initial progress was made by the pioneering work of \citet{RechtReAMGM} who proposed the \emph{noncommutative AM-GM conjecture}, and proved that this conjecture implies the faster convergence of without-replacement SGD. Unfortunately, the conjecture was recently disproved by \citet{LaiLimAMGM}. Subsequent progress was made in \citet{GurbuzRR2019}, where the authors establish an asymptotic $O(\nicefrac{1}{K^2})$ convergence rate for GD with Random Reshuffling (or RR) on functions of the form $F(x) = 1/n \sum_{i=1}^{n} f_i(x)$ where $F$ is a strongly convex quadratic and the components $f_i$ satisfy a Hessian Lipschitz assumption. For the same function class, \citet{GurbuzIG2019} established asymptotic $O(\nicefrac{1}{K^2})$ rates for Gradient Descent with Shuffle Once (SO) and Incremental Gradient (IG). The analysis in both these works relied on the following intuition, also proposed earlier in \citet{NedicBertsekas}: \emph{since sampling without replacement ensures that each component function is used exactly once within an epoch, the overall progress made by RR (and IG) over a complete epoch closely tracks that of full batch GD}. To this end, the key technique in both these works was to interpret the time evolution of the epoch iterates $\vx^k_0$ as full batch GD with added noise, and to subsequently obtain a convergence rate by determining the (asymptotic) rate of decay for the noise using Chung's Lemma.  This approach still remains a highly useful strategy for understanding SGD without replacement, and has served as a key insight for several future works such as \citet{HaochenSraRR19, AhnRR2020}. In fact, formally extending this insight to minimax optimization (and more generally, to variational inequalities) in a manner that can simultaneously handle RR, SO and adversarial shuffling, is a cornerstone of our analysis.

The first nonasymptotic guarantees for RR, which also characterize the dependence of the convergence rate on $n$, were given by \citet{HaochenSraRR19}, who establish a convergence rate of $\Tilde{O}(\nicefrac{1}{n^2 K^2} + \nicefrac{1}{K^3})$ assuming $K \geq C \kappa \log(nK)$ for some $C > 0$, for the case when $F$ is a strongly convex quadratic and the components $f_i$ are convex quadratics. Under the assumption that $F$ satisfies the Polyak-\L{}ojasiewicz inequality and the components $f_i$ are smooth and Hessian Lipschitz, \citet{HaochenSraRR19} also establish an $\Tilde{O}(\nicefrac{1}{n^2 K^2} + \nicefrac{1}{K^3})$ rate assuming $K \geq C \kappa^2 \log(nK)$. These rates imply the provable superiority of RR when $K \geq \Omega(\sqrt{n})$. Subsequent advances were made by \citet{NagarajRR2020}, who used sophisticated Wasserstein coupling arguments to establish a rate of $O(\nicefrac{1}{nK^2})$ for RR which holds under the epoch requirement of $K \geq C \kappa^2 \log(nK)$, assuming $F$ is strongly convex and $f_i$ are smooth and Lipschitz. This rate demonstrates the provable improvement of RR over uniform sampling as soon as the epoch requirement is satisfied. 

Development of lower bounds for SGD without replacement began with \citet{SafranShamirRR2020} where the authors establish a lower bound of $\Omega(\nicefrac{1}{n^2K^2} + \nicefrac{1}{n K^3})$ for RR on strongly convex quadratic $F$ with $f_i$ being convex quadratics, assuming constant step-sizes. Under the same assumptions, \citet{SafranShamirRR2020} also establish an $\Omega(\nicefrac{1}{nK^2})$ lower bound for SO and an $\Omega(\nicefrac{1}{K^2})$ lower bound for IG. Further progress on lower bounds and their tightness was made in \citet{RajputRR2020} where the authors establish a $\Tilde{O}(\nicefrac{1}{n^2K^2} + \nicefrac{1}{n K^3})$ convergence rate for RR on strongly convex quadratic $F$ with convex quadratic $f_i$. \citet{RajputRR2020} also develop a lower bound of $\Omega(\nicefrac{1}{nK^2})$ for strongly convex $F$ with smooth $f_i$, thereby establishing the tightness of the upper bounds in \citet{NagarajRR2020} modulo logarithmic factors.

Recently, the theory of SGD without replacement has been significantly advanced by the parallel works of \citet{AhnRR2020} and \citet{MischenkoRR2020}. For the case of Polyak-\L{}ojasiewicz $F$ with smooth (but not necessarily convex) $f_i$, \citet{AhnRR2020} show that RR converges at an optimal rate (modulo logarithmic factors) of $\Tilde{O}(\nicefrac{1}{nK^2})$ when $K \geq C \kappa \log(n^{1/2}K)$. For the same problem class, \citet{MischenkoRR2020} obtain an anytime-valid near-optimal rate of $\Tilde{O}(e^{\nicefrac{-K}{4\kappa}} + \nicefrac{1}{nK^2})$ under a bounded gradient variance assumption. This rate matches the optimal rate modulo logarithmic factors when $K \geq C \kappa \log(n^{1/2}K)$. \citet{AhnRR2020} also analyze SO for this problem class and derive $\Tilde{O}(\nicefrac{1}{nK^2})$ rates assuming $K \geq C \kappa^2 \log(n^{1/2}K)$. The obtained rate matches the lower bound of \citet{SafranShamirRR2020} modulo logarithmic factors. Concurrently, \citet{MischenkoRR2020} present an analysis that covers both RR and SO for strongly convex $F$ with smooth convex $f_i$ and obtain a rate of $\Tilde{O}(e^{\nicefrac{-K}{4\kappa}} + \nicefrac{1}{nK^2})$. For IG, \citet{NgyuenRR2020} obtain a rate of $\Tilde{O}(\nicefrac{1}{K^2})$ assuming strongly convex $F$, smooth $f_i$ and an epoch requirement of $K \geq 12 \kappa^2 \log(K)$. Moving beyond the P\L{} class, the recent work of \citet{KLRR2021} analyze RR for the case when $f_i$ are smooth and $F$ satisfies a K\L{} inequality and, using variable step-sizes, establish an asymptotic convergence rate of $O(\nicefrac{1}{K^2})$ when the K\L{} exponent lies in $[0, \nicefrac{1}{2}]$.

Recent works have also analyzed RR and SO in the federated minimization setting. Notably, \citet{SuvritFED} developed an extension of the techniques used in \citet{AhnRR2020} to analyze RR variants of minibatch and local SGD, while \citet{MischenkoFED} extended the approach of \citet{MischenkoRR2020} to develop federated analogs of RR and SO. Among other developments, \citet{NgyuenMomentum} propose a variant of SGD with momentum that is amenable to theoretical analysis under without-replacement sampling. These developments are also complemented by the recent negative result of \citet{SafranShamir2021} which analyzes the condition number dependence of RR, and through an intricate lower bound analysis on convex quadratics, show that RR and SO do not significantly outperform uniform sampling unless $K$ is larger than the condition number.

Despite the recent wave of developments on SGD without replacement, theoretical analysis of sampling without replacement for minimax optimization has remained relatively unexplored. To the best of our knowledge, \citet{Eric_DRO} and \citet{Eric_DRO_OGDA} are the only prior works in this domain.  Both these works focus only on Random Reshuffling and are restricted to finite-sum minimax problems of the form $F(x, y) = 1/n \sum_{i=1}^{n} f_i(x, y)$ where each $f_i$ is smooth, Lipschitz, convex-concave and has bounded domain. Under this setting, \citet{Eric_DRO} proposes a stochastic proximal point method with RR that exhibits a rate of $\Tilde{O}(\nicefrac{1}{\sqrt{nK}})$ whereas \citet{Eric_DRO_OGDA} proposes a zeroth-order optimistic gradient method with RR that exhibits a rate of $\Tilde{O}(\nicefrac{nd^2}{K^{1/4}})$, where $d$ is the dimension of the domain. The analysis in both these works are based on an application of the Wasserstein coupling technique of \citet{NagarajRR2020} (for the convex case) to smooth, Lipschitz convex-concave minimax optimization. 
\section{Experimental Details}
\label{app-sec:experiments}
As described in Section \ref{sec:exps} of the main text, we perform our benchmarks on finite-sum strongly convex-strongly concave quadratic minimax games. The objective $F$ and the components $f_i$ are given by:
\begin{align*}
F(\vx, \vy) &= 1/n \sum_{i=1}^{n} f_i(\vx, \vy) =  \frac{1}{2} \vx^T \vA \vx + \vx^T \vB \vy - \frac{1}{2} \vy^T \vC \vy, \\
f_i(\vx, \vy) &= \frac{1}{2} \vx^T \vA_i \vx + \vx^T \vB_i \vy - \frac{1}{2} \vy^T \vC_i \vy - \vu_i^T \vx - \vv_i^T \vy,
\end{align*}
where $\vA$ and $\vC$ are positive definite and $\vu_i$ and $\vv_i$ are chosen such that $\sum_{i=1}^{n} \vu_i = \sum_{i=1}^{n} \vv_i = 0$. In all our experiments, we set $n=100$ and $\textrm{dim } \vx = \textrm{dim } \vy = 25$. We randomly generate the components $f_i$ such that 20 randomly selected components are strongly convex-strongly concave. To this end, the components $\vA_i$ are selected as follows. We begin by sampling a random orthogonal matrix $\vO_A$, and then generate two $d$-dimensional vectors $\vm_A$ and $\delta_A$, such that each component of $\vm_A$ is uniformly sampled from $[\mu_A, L_A]$ whereas that of $\delta_A$ is uniformly sampled from $[\mu_\delta, L_\delta]$. Subsequently, for each $i \in [n]$, we set $\vA_i = \vO_A \Lambda_i \vO_A^T$ where $\Lambda_i = \textrm{diag}(-\delta_A)$ or 20 randomly selected indexes and $\Lambda_i = \textrm{diag}( 5\vm_A/4 + \delta_A/4)$ for the other 80 components. A similar procedure is followed for generating the matrices $\vB$ and $\vC$. For the vectors $\vu_i$, we generate a vector $\delta_u$ with components that are uniformly sampled from $[\mu_\delta, L_\delta]$. Then, for 20 randomly sampled indexes, we set $\vu_i = -\delta_u$ and for the rest, we set $\vu_i = \delta_u / 4$. The vectors $\vv_i$ are sampled in a similar fashion. We run each algorithm for $K = 100$ epochs using constant step-sizes of the form $\alpha = \nicefrac{\gamma}{n}$ where $\gamma$ is chosen independently for each algorithm via grid search. In all our experiments, we set $\mu_A = \mu_C = 0.5$, $\mu_B = 5.0$ and $\mu_{\delta} = 50.0$. Furthermore, we set $L_A = L_C = 2 \mu_A$, $L_B = 2 \mu_B$ and $L_\delta = 2 \mu_\delta$. We perform our experiments on a Jupyter notebook with a Python 3.7 kernel, executed on a 2.8 GHz Intel Core i7 processor with 8 GB of memory. The total running time is 40 minutes.

\end{document}